\definecolor{Nord0}{HTML}{2e3440}
\definecolor{Nord4}{HTML}{d8dee9}
\definecolor{Nord7}{HTML}{8fbcbb}
\definecolor{Nord8}{HTML}{88c0d0}
\definecolor{Nord9}{HTML}{81a1c1}
\definecolor{Nord10}{HTML}{5e81ac}
\definecolor{Nord11}{HTML}{bf616a}
\definecolor{Nord12}{HTML}{d08770}
\definecolor{Nord13}{HTML}{ebcb8b}
\definecolor{Nord14}{HTML}{a3be8c} 
\definecolor{Nord15}{HTML}{b48ead} 
\newtheorem{theorem}{Theorem}[section]
\newtheorem{proposition}[theorem]{Proposition}
\newtheorem{corollary}[theorem]{Corollary}
\newtheorem{lemma}[theorem]{Lemma}
\theoremstyle{definition}
\newtheorem{definition}[theorem]{Definition}
\newtheorem{construction}[theorem]{Construction}
\newtheorem{assumption}[theorem]{Assumption}
\newtheorem{convention}[theorem]{Convention}
\theoremstyle{remark}
\newtheorem{remark}[theorem]{Remark}
\newtheorem*{remark*}{Remark}
\newtheorem{claim}{Claim}
\newtheorem*{claim*}{Claim}
\newtheorem{case}{Case}
\numberwithin{equation}{subsection}
\newcommand{\mc}{\mathcal}
\newcommand{\mb}{\mathbb}
\newcommand{\ms}{\mathscr}
\newcommand{\A}{\mathbb{A}}
\newcommand{\N}{\mathbb{N}}
\newcommand{\Gbb}{\mb{G}}
\newcommand{\Hbb}{\mb{H}}
\newcommand{\K}{\mathbb{K}}
\renewcommand{\a}{\alpha}
\newcommand{\g}{\gamma}
\renewcommand{\d}{\delta}
\renewcommand{\l}{\lambda}
\renewcommand{\t}{\tau}
\newcommand{\D}{\Delta}
\renewcommand{\L}{\Lambda}
\newcommand{\G}{\Gamma}
\newcommand{\Acal}{\mc{A}}
\newcommand{\Bcal}{\mc{B}}
\newcommand{\Dcal}{\mc{D}}
\newcommand{\Ecal}{\mc{E}}
\newcommand{\Hcal}{\mc{H}}
\newcommand{\Ncal}{\mc{N}}
\newcommand{\Ocal}{\mc{O}}
\newcommand{\Pcal}{\mc{P}}
\newcommand{\Qcal}{\mc{Q}}
\newcommand{\Rcal}{\mc{R}}
\newcommand{\Scal}{\mc{S}}
\newcommand{\Xcal}{\mc{X}}
\newcommand{\Ycal}{\mc{Y}}
\newcommand{\he}{\hookrightarrow_h }
\newcommand{\gp}{( G,\Pcal )}
\newcommand{\hd}{( H,\Dcal )}
\newcommand{\GG}{\hat\G(G,\Pcal,X) }
\newcommand{\GH}{\hat\G(H,\Dcal,Y) }
\newcommand{\GGx}[1]{\hat\G(G,\Pcal,#1)}
\newcommand{\Gd}{\G(G,\Pcal\sqcup X)}
\newcommand{\CAT} {\ensuremath{\operatorname{CAT}}}
\newcommand{\CATz}{\CAT$(0)$ }
\renewcommand{\emptyset}{\varnothing}
\renewcommand{\tilde}{\widetilde}
\renewcommand{\hat}{\widehat}
\renewcommand{\bar}{\overline}
\title[Quasiconvex subgroups in Acylindrically hyperbolic groups]{A notion of quasiconvex subgroups in acylindrically hyperbolic groups }
\author{Ping Wan}
\address{Department of Mathematics, Statistics, and Computer Science,
University of Illinois at Chicago,
322 Science and Engineering Offices (M/C 249),
851 S. Morgan St.,
Chicago, IL 60607-7045}
\email{pwan5@uic.edu}
\begin{document}

\begin{abstract}
In this paper, we present a notion of quasiconvexity in the setting of finitely-generated groups with hyperbolically embedded subgroups. Our main result shows that this notion yields uniform quasiconvex constants in the setting of coned-off cusped spaces. We also prove that this notion of quasiconvexity is preserved under sufficiently long Dehn filling. As an application, we generalize a theorem of Groves-Manning on groups acting on \CATz  cube complexes. 
\end{abstract}
\maketitle


\tableofcontents
\newpage

\section{Introduction}

Acylindrically hyperbolic groups were introduced by Osin in \cite{osinAcylindricallyHyperbolicGroups2016}. This class of groups is broad enough to cover many interesting groups, such as relatively hyperbolic groups, all but finitely many mapping class groups of finite type, and many more. In the meantime, acylindrical hyperbolicity is strong enough to prove many interesting results. For a detailed survey we refer to \cite{osinAcylindricallyHyperbolicGroups2016} and \cite{osinGroupsActingAcylindrically2018}. 

One equivalent characterization of acylindrical hyperbolicity is the existence of a proper infinite hyperbolically embedded subgroup, as proved in \cite[Theorem 1.2]{osinAcylindricallyHyperbolicGroups2016}. Groups with hyperbolically embedded subgroups were introduced by Dahmani, Guirardel and Osin in \cite{dahmaniHyperbolicallyEmbeddedSubgroups2017} as a generalization of relatively hyperbolic groups. They prove a Dehn filling theorem for groups with hyperbolically embedded subgroups, which could be viewed as a generalization of the relatively hyperbolic Dehn filling theorem in \cite{osinPeripheralFillingsRelatively2007} or \cite{grovesDehnFillingRelatively2008}.

It is natural to ask: what is a proper notion of quasiconvex subgroup of acylindrically hyperbolic groups? As Balasubramanya shows in \cite{balasubramanyaAcylindricalGroupActions2017}, the notion of quasiconvexity is not sufficient. Abbott and Manning introduce the notion of A/QI triple in \cite{abbottAcylindricallyHyperbolicGroups2024}. Examples of A/QI triples include strongly quasiconvex subgroups of relatively hyperbolic groups, convex cocompact subgroups of mapping class groups, and many more. 
While A/QI triple emphasizes the acylindrical action, it does not address the structure of hyperbolically embedded subgroups, and therefore has no direct application together with Dehn filling theorems. 

In this paper, we give an analog of relatively quasiconvexity in the setting of finitely generated groups with finitely many hyperbolically embedded subgroups. We adopt the equivalent definition of groups with hyperbolically embedded subgroups by Martínez-Pedroza and Rashid in \cite{martinez-pedrozaNoteHyperbolicallyEmbedded2022}, and define \emph{$\gp$--quasiconvex group} in this setting. 
In particular, we prove that a $\gp$--quasiconvex group has uniform quasiconvex constants in the setting of \cite{dahmaniHyperbolicallyEmbeddedSubgroups2017}.   

\begin{restatable}{theorem}{UniformQC}
    \label{thm:uniform_qc}
    Let $G$ be a finitely generated group, $\Pcal$ a finite family of infinite subgroups that are hyperbolically embedded into $G$. Let $H$ be a finitely generated subgroup of $G$. 
    If $H$ is $\gp$--quasiconvex,  then there exists a subset $Y\subseteq H$, a finite collection of infinite subgroups $\Dcal$, and a subset $X\subseteq G$ such that
    \begin{enumerate}
        \item $\Dcal\he(H,Y)$,
        \item $\Pcal\he (G,X)$, and
        \item $\forall r >0$, there exists a quasi-isometric embedding between coned cusped spaces (See \cref{def:coned-off-cayley-graph}) $\phi_r:  \K_r(H, Y, \cup_j Y_j, \Dcal) \to \K_r(G,X, \cup_i X_i, \Pcal)$.
    \end{enumerate}

    Moreover, there exists $r_\l, \l$ such that  $\forall r\geq r_\l$, $\phi_r(\K_r(H, Y, \cup_j Y_j, \Dcal)) \subseteq \K_r(G,X, \cup_i X_i, \Pcal)$ is $\l$--quasiconvex. That is, given any $\bar x, \bar  y \in \phi_r(\K_r(H, Y, \cup_j Y_j, \Dcal))$, any geodesic $\eta = [\bar x, \bar y]$ is contained in the $\l$ neighborhood of $\phi_r(\K_r(H, Y, \cup_j Y_j, \Dcal))$.

    In particular, $\l$ does not depend on $r$.
\end{restatable}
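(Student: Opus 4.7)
My plan is to first construct the data $(X, Y, \Dcal)$ from the $\gp$--quasiconvexity hypothesis, then define the map $\phi_r$ as a natural extension of the inclusion $H \hookrightarrow G$ that respects cones, and finally deduce uniform quasiconvexity from the hyperbolicity of the coned cusped spaces together with the standard principle that quasi-isometrically embedded subspaces of hyperbolic spaces are quasiconvex with constant depending only on the QI data.

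First, applying the Martínez-Pedroza--Rashid characterization of hyperbolically embedded subgroups, the hypothesis $\Pcal \he G$ produces a subset $X \subseteq G$ realizing this embedding. The definition of $\gp$--quasiconvexity should directly encode how $H$ interacts with cones over cosets of $\Pcal$, and from it one extracts a finite collection $\Dcal = \{D_j\}$ of infinite subgroups of $H$ (essentially the nontrivial infinite intersections $H \cap gP_ig^{-1}$, up to $H$--conjugacy) together with a subset $Y \subseteq H$ such that $\Dcal \he (H,Y)$, verified again via the Martínez-Pedroza--Rashid criterion but now inside $H$. This mirrors the induced peripheral structure familiar from relatively quasiconvex subgroups of relatively hyperbolic groups.

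Next, for each $r > 0$, I would define $\phi_r$ on vertices by sending the Cayley graph of $H$ into that of $G$ via inclusion, and by sending each cone vertex over a coset of some $D_j$ to the cone vertex over the corresponding coset of the $P_i$ containing a conjugate of $D_j$. The upper bound on distances is immediate from the structural inclusion. The lower bound is what packages the $\gp$--quasiconvexity: whenever a geodesic in $\K_r(G,X, \cup_i X_i, \Pcal)$ between two points of $\phi_r(H)$ enters a cone over a coset of $P_i$, the corresponding combinatorial travel must be realizable, up to bounded error, by travel within a coset of some $D_j \in \Dcal$ followed by a bounded excursion. A bounded-coset-penetration style argument then yields the matching lower bound, showing $\phi_r$ is a $(K,C)$--quasi-isometric embedding.

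Finally, the uniform quasiconvexity rests on three inputs: (i) $\K_r(G,X, \cup_i X_i, \Pcal)$ is $\d$--hyperbolic with $\d$ independent of $r$ once $r \geq r_0$, a standard feature of the coning construction for hyperbolically embedded families; (ii) the QI constants $(K,C)$ of $\phi_r$ are independent of $r$ for $r$ sufficiently large; and (iii) a $(K,C)$--quasi-isometrically embedded geodesic subspace of a $\d$--hyperbolic space is $\l = \l(K,C,\d)$--quasiconvex, via the Morse-lemma argument that the $(K,C)$--quasi-geodesic image of any geodesic in the domain fellow-travels with the corresponding geodesic in the target at bounded distance. Combining (i)--(iii) produces the threshold $r_\l$ and the $r$--independent $\l$. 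The main obstacle I anticipate is step (ii): one must verify that the coning parameter $r$ does not degrade the QI constants, i.e., that as $r$ grows and more shortcuts through cones become available in $\K_r(G,X, \cup_i X_i, \Pcal)$, the matching shortcuts are available in $\K_r(H, Y, \cup_j Y_j, \Dcal)$. This should follow from the compatibility of the induced peripheral structure with the ambient one, but is the step where the delicate bookkeeping lives.
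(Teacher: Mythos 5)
Your proposal correctly identifies the construction of $(X,Y,\Dcal)$ and the map $\phi_r$, and you are right that parts (1)--(3) reduce to the Mart\'inez-Pedroza--Rashid machinery and an extension-of-inclusion construction. You also correctly flag step (ii) as the crux. But that is precisely where the argument breaks: the QI embedding constants of $\phi_r$ are \emph{not} uniform in $r$, and there is no reason to expect them to be. Concretely, the paper establishes the QI-embedding property of $\phi_r$ by factoring it (on shallow vertices) through $\iota_q^r \circ \hat\phi \circ \pi_q^r$, where $\pi_q^r : \K_r \to \hat\G$ is a $(2(r+1),0)$-quasi-isometry and $\iota_q^r : \hat\G \to \K_r$ is an $(r+1,0)$-quasi-isometry (Lemmas on $\pi_q^r$ and $\iota_q^r$); the composition therefore carries constants of order $r^2$. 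The Lipschitz constant $\l_\phi$ of $\phi_r$ \emph{is} uniform, but the lower bound is not, and ``compatibility of the induced peripheral structure with the ambient one'' is not a proof that it becomes uniform. If step (ii) held with a uniform $(K,C)$, the remainder would indeed be the one-line Morse-lemma deduction you describe, and the paper would not need its much longer argument.

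The actual proof does not attempt to upgrade $\phi_r$ to a uniform QI embedding. Instead it fixes a depth cutoff $q$ independent of $r$, decomposes a geodesic $\g \subseteq \Hbb_r$ into $q$-shallow segments $\g_i$ and deep horoball segments $\epsilon_j$, and treats the two types separately. For the shallow segments, it pushes forward to the coned-off Cayley graph $\hat L$, where the resulting paths are $(q+N,0)$-quasi-geodesics (with $N$ depending only on $q,\d$, not $r$) that avoid large angles at apices; passing through $\hat\phi$ into $\hat K$ and pulling back to $\Gbb_r$ preserves these uniform controls, using the angle-tracking lemmas (\cref{lem:AngleUpperBoundForHatSpace}, \cref{cor:hat-space-path-angle-bound}, \cref{cor:iota_shallow_geod_is_good}). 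For the deep segments, it uses the explicit shape of horoball geodesics and the fact that $\phi_r$ preserves depth. The ``extended pullback'' construction and the Guessing Geodesics Lemma (\cref{prop:guess-geodesic}, \cref{lem:ext-pullback_close_to_geodesic}) are then what replaces the naive Morse-lemma step. None of this machinery appears in your proposal, and without it the uniformity of $\l$ is not established. So the proposal has a genuine gap at exactly the step you singled out as delicate: it is not bookkeeping, it is the central difficulty, and the correct resolution requires a fundamentally different strategy from ``$\phi_r$ is a uniformly QI embedding.''
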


\cref{thm:uniform_qc} indicates that $\gp$-quasiconvexity is compatible with  \cite[Theorem 7.19]{dahmaniHyperbolicallyEmbeddedSubgroups2017} which states that hyperbolic embeddedness of subgroups is preserved by sufficiently long Dehn fillings. In particular, this notion of quasiconvexity is preserved under sufficiently long $H$-fillings. (The definition of sufficiently long $H$-fillings is in \cref{sec:Dehn-filling-prepare}.)

\begin{theorem}
    \label{prop:preserve-QC-rephrase}
    Under the setting of \cref{thm:uniform_qc}, for all sufficiently long \(H\)--fillings $\bar G$ of \(G\) with kernel \(K\), let $\bar H = H/H\cap K$, $\bar G = G/K$, and $\bar {\Pcal} = \{ P_\l / P_\l\cap K \mid P_\l \in \Pcal \}$. Then $\bar  H$ is $(\bar G, \bar {\Pcal})$--quasiconvex.
\end{theorem}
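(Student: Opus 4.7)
The plan is to produce the cusped-space data witnessing $(\bar G, \bar{\mathcal{P}})$--quasiconvexity of $\bar H$ by pushing forward, through the filling, the data that \cref{thm:uniform_qc} supplies for $H$ in $(G,\mathcal{P})$. Let $X \subseteq G$, $Y \subseteq H$, $\mathcal{D}$, $r_\lambda$, $\lambda$, and the family $\{\phi_r\}_{r \geq r_\lambda}$ be as in \cref{thm:uniform_qc}. The critical feature is that neither $r_\lambda$ nor $\lambda$ depends on the filling; this lets me fix the geometric data $(r, X, Y, \mathcal{D})$ first and then require the $H$--filling to be sufficiently long with respect to all of it.

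First I would invoke the Dahmani--Guirardel--Osin Dehn filling theorem (\cite[Theorem 7.19]{dahmaniHyperbolicallyEmbeddedSubgroups2017}) applied to $(G,\mathcal{P})$ relative to $X$: for sufficiently long fillings with kernel $K$ one obtains $\bar{\mathcal{P}} \he (\bar G, \bar X)$, where $\bar X$ is the image of $X$ in $\bar G$, together with hyperbolicity of the quotient cusped spaces. The $H$--filling hypothesis is exactly what ensures that $K \cap H$ is the kernel of a compatible filling of $(H,\mathcal{D})$, and that by enlarging the lower bound on the filling length one may also arrange $\bar{\mathcal{D}} \he (\bar H, \bar Y)$, where $\bar Y$ is the image of $Y$. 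This gives the analogues of conditions (1) and (2) of \cref{thm:uniform_qc}.

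Next I would construct the candidate map and verify condition (3). The quotient maps $G \onto \bar G$ and $H \onto \bar H$ commute with the coning construction, so $\phi_r$ descends to a natural map
\[
\bar\phi_r : \K_r(\bar H, \bar Y, \textstyle\bigcup_j \bar Y_j, \bar{\mathcal{D}}) \longrightarrow \K_r(\bar G, \bar X, \textstyle\bigcup_i \bar X_i, \bar{\mathcal{P}}).
\]
Showing $\bar\phi_r$ is a quasi--isometric embedding reduces to the standard filling principle: a geodesic in the target cusped space lifts, up to error supported in the coned--off horoballs killed by the filling, to a quasigeodesic in $\K_r(G, X, \cup_i X_i, \mathcal{P})$ with constants depending only on $r$ and the hyperbolicity constants. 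Combined with the $\lambda$--quasiconvexity of $\phi_r(\K_r(H, Y, \cup_j Y_j, \mathcal{D}))$ given by \cref{thm:uniform_qc}, this yields a quasi--isometric embedding on the quotient side.

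The main obstacle is uniform control of constants in this descent: one must verify that sufficiently long fillings do not introduce new short paths between points of the image of $\bar\phi_r$, and that the lift of a geodesic stays within a controlled neighborhood of $\phi_r(\K_r(H, \ldots))$ so that quasiconvexity, not merely quasi--isometric embedding, transfers. This is where the independence of $\lambda$ from $r$ is indispensable: I would first fix $r \geq r_\lambda$, then choose the filling length large enough (in particular, larger than a function of $r$, $\lambda$, the hyperbolicity constants, and the diameters of the relevant pieces of $X$ and $Y$) so that any geodesic in $\K_r(\bar G, \bar X, \cup_i \bar X_i, \bar{\mathcal{P}})$ between two points of $\bar\phi_r(\K_r(\bar H,\ldots))$ stays in a $\lambda'$--neighborhood of the image, for $\lambda'$ depending only on $\lambda$ and $r$. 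Feeding this back into the characterization from \cref{thm:uniform_qc} yields $(\bar G, \bar{\mathcal{P}})$--quasiconvexity of $\bar H$.
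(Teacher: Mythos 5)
Your proposal follows the same broad arc as the paper's detailed version of this result (Theorem~\ref{prop:image-of-H-convex}): fix $r$, $\lambda$ uniformly via Theorem~\ref{thm:uniform_qc}, push $\phi_r$ through the filling, and argue quasiconvexity of the quotient image. But there are two places where you appeal to a ``standard'' step that is in fact the technical heart of the argument and does not go through as stated.

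First, your ``standard filling principle'' --- that a geodesic in $\K_r(\bar G, \bar X, \cup_i \bar X_i, \bar{\mathcal{P}})$ lifts to a quasigeodesic in $\K_r(G, X, \cup_i X_i, \mathcal{P})$ up to controlled error --- is not a black box here. The correct statement is a Greendlinger-type dichotomy (the paper's Lemma~\ref{thm:GendGreendlinger}, a generalization of Agol--Groves--Manning): given a geodesic $\gamma$ in $\Gbb_r$ from $1$ to some $h \in H$, either $\pi_r(\gamma)$ is a controlled quasigeodesic, or $\gamma$ dives deep into a horoball that the filling kills and one can find $k \in K_H$ with $|kh|_{\Gbb_r} < |h|_{\Gbb_r}$. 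The second alternative is not an ``error supported in horoballs'' that you can ignore; it forces an induction on $|h|_{\Gbb_r}$, replacing $h$ by a shorter representative in the same $K_H$-coset. Crucially, finding such a $k$ inside $H$ (rather than merely in $K$) is Lemma~\ref{thm:GendGreendlinger-qc}, which uses both the uniform quasiconvexity constant $\lambda$ from Theorem~\ref{thm:uniform_qc} and the $H$-filling hypothesis to land back in $K_H$. Without running this minimal-representative argument explicitly, you cannot conclude the distance comparison $|\bar h|_{\bar\Hbb_r} \asymp |\bar h|_{\bar\Gbb_r}$ that drives the quasi-isometric embedding.

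Second, your last sentence --- ``feeding this back into the characterization from Theorem~\ref{thm:uniform_qc}'' --- invokes the wrong direction. Theorem~\ref{thm:uniform_qc} is a forward implication: $\gp$-quasiconvexity implies a quasiconvex image in $\K_r$. To \emph{conclude} that $\bar H$ is $(\bar G, \bar{\mathcal{P}})$-quasiconvex you need the converse characterization, namely Proposition~\ref{prop:equiv-gp-qc-by-coned-off-cayley-graph}: it suffices to exhibit $\bar{\mathcal{D}} \he (\bar H, \bar Y)$ with $\bar Y \subseteq \bar H \cap \bar X$ such that the extension of inclusion $\check\iota : \hat\G(\bar H, \bar{\mathcal{D}}, \bar Y) \to \hat\G(\bar G, \bar{\mathcal{P}}, \bar X)$ is a quasi-isometric embedding. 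The paper bridges the cusped-space estimate to the coned-off Cayley graph estimate by chaining the coarse equivalences through $\pi_q^r$ and $\iota_q^r$ (its Constructions~\ref{construction:pi_Kr_to_hatK} and~\ref{construction:iota_hatK_to_Kr}) and handling apices separately; this translation is not automatic and needs to be stated. As written, your proposal would establish quasiconvexity of an image in a cusped space without a bridge back to the definition of $(\bar G, \bar{\mathcal{P}})$-quasiconvexity.
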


We can thus generalize certain theorems involving relative quasiconvex subgroups and relative Dehn fillings to the setting of acylindrically hyperbolic groups. As an example, we generalize \cite[Corollary 6.6]{grovesHyperbolicGroupsActing2023} to the context of finitely generated groups with finitely many hyperbolically embedded subgroups using \cref{thm:uniform_qc} and \cite[Theorem 7.19]{dahmaniHyperbolicallyEmbeddedSubgroups2017}.
See \cref{sec:Dehn-filling-prepare} for definition of $\Qcal$-filling.

\begin{restatable}{theorem}{MainCube}
\label{thm:main_cube}
        Let $G$ be a finitely generated group and $\Pcal$ be a finite collection of infinite subgroups of $G$. Suppose that $\Pcal\he G$ and that $G$ acts cocompactly on a $\operatorname{CAT}(0)$ cube complex $C$. Suppose each element in the hyperbolically embedded subgroups of $G$ fixes some point of $C$  and that cell-stabilizers are finitely generated and full $\gp$--quasiconvex. Let $\sigma_1, \dots, \sigma_k$ be representatives of the $G$-orbits of cubes of $C$. For each $i$ let $Q_i$ be the finite index subgroup of $G_i =\textnormal{Stab}(\sigma_i)$ consisting of elements which fix $\sigma_i$ pointwise. Let $\Qcal = \{Q_1,\dots,Q_k\}$.

    For sufficiently long \emph{$\Qcal$--fillings} 
    \begin{equation*}
        G \rightarrow \Bar{G} = G(N_1,\dots,N_m)
    \end{equation*}
    of $G$, with kernel $K$, the quotient $C/K$ is a $\operatorname{CAT}(0)$ cube complex.
\end{restatable}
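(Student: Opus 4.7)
The plan is to follow the template of \cite[Corollary 6.6]{grovesHyperbolicGroupsActing2023}, substituting the relatively-hyperbolic inputs by their hyperbolically-embedded analogues. Specifically, the relative Dehn filling theorem is replaced by \cite[Theorem 7.19]{dahmaniHyperbolicallyEmbeddedSubgroups2017}; the preservation of relative quasiconvexity under Dehn filling is replaced by \cref{prop:preserve-QC-rephrase}; and the uniformity of quasiconvex constants needed for small-cancellation-style estimates comes from \cref{thm:uniform_qc}.

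First I would take the $\Qcal$--filling sufficiently long that simultaneously: (a) \cite[Theorem 7.19]{dahmaniHyperbolicallyEmbeddedSubgroups2017} applies, yielding a quotient $\bar G = G/K$ with $\bar \Pcal \he \bar G$; (b) \cref{prop:preserve-QC-rephrase} applies to each cell stabilizer $G_i = \stab{\sigma_i}$, giving a $(\bar G,\bar \Pcal)$--quasiconvex image $\bar G_i$; and (c) a Cohen--Lyndon-style intersection formula holds, namely $G_i \cap K$ equals the normal closure of $N_i$ in $G_i$. Because $Q_i$ is normal in $G_i$ (as the pointwise stabilizer of $\sigma_i$) and $N_i \triangleleft Q_i$, this forces $G_i \cap K \subseteq Q_i$, so every element of $K$ that stabilizes $\sigma_i$ fixes $\sigma_i$ pointwise. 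Conjugating across a fundamental domain of cubes, the same property holds at every cube of $C$.

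With the cube-wise triviality of $K$--stabilizers in hand, the local cube-complex conditions follow essentially as in Groves--Manning. For any cube $\sigma$ and any $k \in K$ with $k\sigma \cap \sigma \neq \emptyset$, the element $k$ lies in $K \cap \stab{\sigma}$ and hence fixes $\sigma$ pointwise; this rules out non-trivial identifications inside cubes. Therefore $\bar C := C/K$ inherits a well-defined cube complex structure acted on by $\bar G$. Each vertex link in $\bar C$ is the quotient of the corresponding link in $C$ by a subgroup fixing all incident cubes pointwise, which preserves flagness, so $\bar C$ is non-positively curved.

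The main obstacle is upgrading non-positive curvature to CAT$(0)$, i.e.\ verifying simple connectivity of $\bar C$. Following Groves--Manning, I would argue via van Kampen diagrams in the coned-off cusped space: a nullhomotopy in $\bar C$ lifts to a disk diagram in $C$ whose boundary word is a product of ``short'' relators coming from the long $\Qcal$--fillings, and these relators can be filled by cubes in $C$ using the fullness hypothesis on cell stabilizers. The uniformity asserted by \cref{thm:uniform_qc} --- namely that the quasiconvexity constant $\l$ of the embedded coned-off cusped space does not depend on the horoball parameter $r$ --- is exactly what makes this filling argument uniform across all sufficiently long $\Qcal$--fillings, and is also what allows the Cohen--Lyndon step (c) to go through at the level of the cusped space. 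Once simple connectivity of $\bar C$ is established, the induced action of $\bar G$ on $\bar C$ inherits cocompactness from $G \curvearrowright C$, and the theorem follows.
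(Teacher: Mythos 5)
Your proposal diverges from the paper's actual proof in several places, and at two of the three key steps there are genuine gaps.

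For the containment $G_i \cap K \subseteq Q_i$, the paper argues directly: since $Q_i$ has finite index in $G_i$, it lists coset representatives $t_1,\dots,t_s$, and applies the separation result \cref{thm:seperate} (the generalization of Agol--Groves--Manning's quasiconvex separation to the hyperbolically embedded setting) to conclude that for sufficiently long $\Qcal$--fillings $\pi(t_j)\notin\pi(Q_i)$; the containment is then immediate. You instead invoke a ``Cohen--Lyndon-style intersection formula'' asserting that $G_i\cap K$ equals the normal closure of $N_i$ in $G_i$. This is not established anywhere in the paper, nowhere cited, and the notation does not even parse: the $N_i$ are filling kernels living in the peripherals $P_i$, which are generally unrelated to the cube stabilizers $G_i$ (the indices range over different sets). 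A Cohen--Lyndon theorem for hyperbolically embedded subgroups would be a genuinely substantial input and would need its own proof or reference; the paper deliberately avoids it by using \cref{thm:seperate}.

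Two further issues. First, the claim ``for any $k\in K$ with $k\sigma\cap\sigma\neq\emptyset$, the element $k$ lies in $K\cap\stab{\sigma}$'' is false as written: in a cube complex two distinct cubes may intersect in a common proper face, so $k\sigma\cap\sigma\neq\emptyset$ does not force $k\sigma=\sigma$. The correct statement concerns identifications of \emph{interior} points of a cube, and even then this only gives that $C/K$ is a cube complex (the role of \cite[Proposition 4.3]{grovesHyperbolicGroupsActing2023}); it does not give non-positive curvature. Second, and more seriously, the claim that links in $\bar C$ remain flag because ``the corresponding link in $C$ is quotiented by a subgroup fixing all incident cubes pointwise'' is unjustified: $K\cap\stab{v}$ is only known to fix pointwise those cubes it \emph{stabilizes}, and it may permute distinct cubes at $v$ nontrivially. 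Controlling the resulting link quotient is exactly where the meta-conditions theorem \cref{thm:metacondition} (the generalization of Groves--Manning's Theorem 6.5, proved in \cref{sec:meta} and relying on \cref{thm:uniform_qc} via \cref{thm:closehoroball}) is needed, together with \cite[Theorems 5.6--5.11]{grovesHyperbolicGroupsActing2023}. Your proposal skips this machinery entirely and so does not actually establish that $C/K$ is non-positively curved. Finally, for simple connectivity the paper uses the much shorter observation that the kernel is normally generated by parabolic elements, which act elliptically by hypothesis, so \cite[Theorem 4.1]{grovesHyperbolicGroupsActing2023} applies directly; the van Kampen diagram route you sketch is unnecessary extra work.
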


Recall that a subgroup $H$ is \emph{full} if whenever $P$ is a parabolic subgroup so that $H\cap P$ is infinite, $H\cap P$ is a finite index subgroup of $P$.

This article is structured as follows. We first recall some basic notations in \cref{sec:prelim}. In particular, we prove \cref{prop:qi_to_conedoff} as an alternative to \cite[Proposition 5.4]{martinez-pedrozaNoteHyperbolicallyEmbedded2022}.  We then define the generalization of relatively quasiconvexity in \cref{sec:qc}. In \cref{sec:prepare-uniform-QC} we prove some useful technical Lemmas about angle metric. \cref{sec:uniformqc} is dedicated to the proof of \cref{thm:uniform_qc}. 

In \cref{sec:main}, we generalize a series of key ingredients for \cite[Corollary 6.6]{grovesHyperbolicGroupsActing2023} to the setting of hyperbolically embedding groups. 
In \cref{sec:image-of-H-under-dehn-filling}, we proved \cref{prop:induce-filling-map-injective} and \cref{prop:image-of-H-convex} (a more detailed version of \cref{prop:preserve-QC-rephrase}) which are not used in the proof of \cref{thm:main_cube} but may be of independent interest. 
Finally, we prove \cref{thm:main_cube} in \cref{subsec:main-proof}.

\textbf{Acknowledgments:}     The author would like to thank their advisor, Daniel Groves, for his support and guidance. They would also like to thank their academic siblings (in alphabetical order) Darius Alizadeh, Carl Tang, and Jagerynn Verano for helpful discussions. 

\section{Preliminaries}
\label{sec:prelim}

\subsection{Hyperbolically Embedded Subgroups}
\label{sec:heg}

Let $G$ be a group and $\Pcal= \{ P_\lambda \}_{\lambda\in \Lambda}$ be a collection of subgroups of $G$. Recall that $X$ is a \emph{relative generating set} for $(G,\Pcal)$ if $G$ is generated by the union of $X$ and $\Pcal$. Let $\G(G, X\sqcup \Pcal)$ be the Cayley graph of $G$ with respect to $X\sqcup \Pcal$. 

For $h,k\in P_\l$, let $\hat d_{\l}(h,k)$ be the length of the shortest edge-path from $h$ to $k$ with the property that if an edge is labeled by an element of $P_\l$ then the endpoints of the edge are not elements of the subgroup $P_\l$; if no such path between $h$ and $k$ exists, then $\hat d_{\lambda}(h,k) = \infty$.

\begin{definition}[{\cite[Definition  4.25]{dahmaniHyperbolicallyEmbeddedSubgroups2017}}]
    \label{def:heg-dgo}
    Let $G$ be a group, $\Pcal = \{ P_\lambda \}_{\lambda\in \Lambda}$ a collection of subgroups of $G$, $X$ a relative generating set of $G$ with respect to $\Pcal$. We say that $\Pcal$ is \emph{hyperbolically embedded in $G$ with respect to $X$}, denoted as $\Pcal\he (G,X)$, if the Cayley graph $\G(G,X\sqcup \Pcal)$ is hyperbolic and for every $\lambda\in\Lambda$, the metric space $(P_\lambda,\hat d_\lambda)$ is locally finite.
    
    We say $\Pcal$ is \emph{hyperbolically embedded in $G$} if $\Pcal\he (G,X)$ for some $X$, and we write  $\Pcal\he G$. 
\end{definition}

Note that $\Pcal$ is not necessarily a finite set. In the case where $G$ is finitely generated and $\Pcal$ contains only finitely many elements, Martínez-Pedroza and Rashid give a characterization of finitely-generated groups with hyperbolically embedded subgroups in \cite{martinez-pedrozaNoteHyperbolicallyEmbedded2022} using fineness via the angle metric. 

\begin{definition}
    \label{def:angle}
    Let $\G$ be a simplicial graph and $v$ be a vertex. We denote the set of vertices adjacent to $v$ by $T_v\G$. For $x,y\in T_v\G$, the \emph{angle between $x,y$ at $v$}, denoted  $\angle_v(x,y)$,  is  the (combinatorial) length of the shortest path between $x$ and $y$  in the graph $\G -\{v\}$. If no such path exists, then $\angle_v(x,y)=\infty$. 
\end{definition}

\begin{definition}
    \label{def:fine_local}
    The graph $\G$ is \emph{fine at $v$} if $(T_v\G, \angle_v)$ is a locally finite metric space. The graph is \emph{fine} if it is fine at every vertex.
\end{definition}

\begin{definition}
    \label{def:angle_of_paths}
    Let $c_1$ (resp. $c_2$) be two paths starting at $v$. Let $x$ (resp. $y$) be the first vertex on $c_1$ (resp. $c_2$) adjacent to $v$. We define $\angle_v(c_1,c_2) = \angle_v(x,y)$.
\end{definition}

\begin{definition}[Coned-off Cayley Graph]
    \label{def:coned-off-cayley-graph}
    Let $G$ be a group, $X\subset G$, and $\Pcal$ a finite collection of subgroups of $G$. The \emph{coned-off Cayley graph} $\hat \G(G,\Pcal,X)$ is the graph with vertex set $G\cup_{{P\in \Pcal}} \{G/P\}$, where $G/P$ is the set of left cosets of $P$ in $G$; and edge set $\{ \{g,gx\}:g\in G,x\in X \} \cup \{ \{g,gP\}: g\in G, P\in \Pcal \}$. A vertex in $G/P\subseteq \hat\G(G,\Pcal, X)$ is called an \emph{apex}. Edges in $\{ \{g,gP\}: g\in G, P\in \Pcal \}$ are called \emph{cone edges of $gP$ based at $g$}.
\end{definition}

It is well-known that $\GG$ is quasi-isometric to $\Gd$. Martínez-Pedroza and Rashid proved that \cref{def:heg_conedoff} is equivalent to \cref{def:heg-dgo} when $G$ is finitely-generated and $\Pcal$ is a finite collection of infinite subgroups.

\begin{definition}[{\cite[Proposition 5.8]{martinez-pedrozaNoteHyperbolicallyEmbedded2022}}]
    \label{def:heg_conedoff}
    Let $G$ be a group, $X\subseteq G$, and $\Pcal$ a finite collection of infinite subgroups. Then $\Pcal$ is \emph{hyperbolically embedded in $G$ with respect to $X$} if and only if $\GG$ is connected, hyperbolic, and fine at cone vertices.
\end{definition}

More generally, they captured the fineness at cone vertices in the definition of $\gp$--graphs.

\begin{definition}[{\cite[Definition 1.1]{martinez-pedrozaNoteHyperbolicallyEmbedded2022}}]
    \label{def:GPgraph}
     Let $G$ be a group and $\Pcal$ be a finite collection of subgroups. A graph $\G$ is a \emph{$\gp$--graph} if $G$ acts on $\G$ and
     \begin{enumerate}
         \item $\G$ is connected and hyperbolic,
         \item there are finitely many G-orbits of vertices,
         \item $G$-stabilizers of vertices are finite or conjugates of $P\in\Pcal$, and for every $P\in\Pcal$, there is a vertex with $G$-stabilizer equal to $P$,
         \item $G$-stabilizers of edges are finite, and
         \item $\G$ is fine at each vertex of $V_\infty(\G) = \{v \in V (\G) | v \text{ has infinite stabilizer}\}$.
     \end{enumerate}
\end{definition}

They then proved that \cref{def:heg_finegraph}, using the notion of $\gp$--graph, is equivalent to \cref{def:heg_conedoff}. 

\begin{definition}[{\cite[Theorem 5.9]{martinez-pedrozaNoteHyperbolicallyEmbedded2022}}]
    \label{def:heg_finegraph}
    Let $G$ be a finitely generated group. A finite collection of infinite subgroups $\Pcal$ is \emph{hyperbolically embedded in $G$} if and only if there exists a $\gp$--graph. 
\end{definition}

\subsection{Thick Graphs}

The key ingredient of \cite[Theorem 5.9]{martinez-pedrozaNoteHyperbolicallyEmbedded2022} is \cite[Proposition 5.4]{martinez-pedrozaNoteHyperbolicallyEmbedded2022}, which constructs a quasi-isometry between any thick $\gp$--graph and  $\GG$ for some well-chosen $X$. However, their construction does not give rise to the desired statement. For example, let $P$ be an infinite subgroup of $G$, and let $S$ be a finite relatively generating set of $G$ with respect to $P$. Suppose $\G =\hat\G (G,P,S)$ is a thick $(G,P)$--graph. That is, $\G$ contains vertices $u_0$ and $v_0$ such that $\{u_0 ,v_0\} \in E(\G)$, $G_{u_0} = \{1\}$ and $G_{v_0} = P$. We may also assume that for all $s\in S$, ${u_0, s.u_0}\in E(\G)$.
Following the construction of \cite[Proposition 5.4]{martinez-pedrozaNoteHyperbolicallyEmbedded2022}, we obtain:
\begin{align*}
    X = & \{ g\in G : d_\G (u_0, g.u_0)=1  \textit{ or } d_\G(u_0, g.v_0)=1 \} \\
    = & S\cup P.
\end{align*}
Since $P\subseteq X$, at the cone vertex $P$, there are infinitely many loops of length $3$. Hence $\hat \G(G,P,X)$ is not fine at cone vertices.

In this section we provide an alternative proof to \cite[Proposition 5.4]{martinez-pedrozaNoteHyperbolicallyEmbedded2022} in \cref{prop:qi_to_conedoff}. We denote the stabilizer of a cell $v$ under the action of $G$ as $G_v$.

\begin{definition}[Generalization of {\cite[Definition 5.1]{martinez-pedrozaNoteHyperbolicallyEmbedded2022}}]
    \label{def:thick_graph}
    Suppose $\Pcal = \{ P_1,\dots,P_n \}$. A $\gp$--graph $\G$ is \emph{thick} if it satisfies the following conditions: 
    \begin{enumerate}
        
        \item $\G$ contains vertices $u_0$ and $\{v_{1},\dots v_{n}\}$ such that $G_{ u_0} = 1$, and so that for $k = 1,\dots, n $, \[\{u_0,v_{k} \} \in E(\G), \quad  \text{and} \quad G_{ v_{k}} = P_k.\]
        
        \item There is a finite relative generating set $S$ of $G$ with respect to $\Pcal$ so that for all $s\in S$, \[\{u_0, s.u_0 \} \in E(\G).\]
        
        \item There exists a collection $\{u_0,u_1,...,u_l \}$ of representatives of $G$-orbits of vertices of $\G$ with finite $G$-stabilizers, where $u_0$ is the vertex with trivial stabilizer,  so that for $j=1,\dots, l$, \[    \{u_0,u_j \} \in E(\G).\]
         
    \end{enumerate}
\end{definition}

The proof of \cite[Proposition 5.2]{martinez-pedrozaNoteHyperbolicallyEmbedded2022} works mutatis mutandis to prove the following statement:

\begin{proposition}[Generalization of {\cite[Proposition 5.2]{martinez-pedrozaNoteHyperbolicallyEmbedded2022}}]
\label{prop:exist_thick_graph}
    Suppose $G$ is finitely generated relative to a finite collection of infinite subgroups $\Pcal$. If there exists a $\gp$--graph $\G$, then by attaching a vertex $u_0$ with trivial $G$--stabilizer and adding finitely many $G$--orbits of edges to $\G$ if necessary, we obtain a thick $\gp$--graph $\D$. Moreover, the inclusion map $\G\to\Delta$ is a $G$-equivariant quasi-isometry.
\end{proposition}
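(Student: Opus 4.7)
The plan is to mimic the single-peripheral construction of the cited Proposition 5.2, performing the analogous edge-adjunction once for each subgroup $P_k \in \Pcal$. I would build $\Delta$ from $\Gamma$ in three stages: first, adjoin a new free $G$-orbit of vertices with representative $u_0$ (so $G_{u_0} = 1$); second, for each $P_k \in \Pcal$, select a vertex $v_k \in V(\Gamma)$ with $G_{v_k} = P_k$ (which exists by the third axiom of a $\gp$--graph) and adjoin the $G$-orbit of the edge $\{u_0, v_k\}$; third, choose representatives $u_1, \ldots, u_l$ of the finitely many finite-stabilizer $G$-orbits in $\Gamma$ together with a finite relative generating set $S$ of $\gp$, and adjoin the $G$-orbits of $\{u_0, u_j\}$ and $\{u_0, s.u_0\}$. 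By construction, $\Delta$ then satisfies the three thickness conditions of \cref{def:thick_graph}.

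Next, I would verify that the inclusion $\iota : \Gamma \hookrightarrow \Delta$ is a $G$-equivariant quasi-isometry. $G$-equivariance and the $1$-Lipschitz property of $\iota$ are immediate, and its image is $1$-dense because each $g.u_0$ is adjacent to $g.v_1 \in V(\Gamma)$. For the reverse Lipschitz estimate I would rewrite any path in $\Delta$ between vertices of $\Gamma$ as a path in $\Gamma$, replacing each maximal excursion into the $u_0$-orbit -- of the form $g.w \to g.u_0 \to g s_1.u_0 \to \cdots \to g s_1 \cdots s_m.u_0 \to g s_1 \cdots s_m.w'$ for some $w, w' \in \{v_k\} \cup \{u_j\}$ and $s_i \in S$ -- by a path in $\Gamma$ of length at most $mC$, where $C$ uniformly bounds the $\Gamma$-distance from $v_1$ to each element of the finite set $\{s.v_1 : s \in S\} \cup \{v_k\} \cup \{u_j\}$.

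The remaining $\gp$--graph axioms for $\Delta$ are then routine: connectivity, finite $G$-orbit count, and the vertex/edge stabilizer conditions all follow at once from the construction (in particular, new edges such as $\{u_0, v_k\}$ have stabilizer $G_{u_0} \cap G_{v_k} = \{1\}$, and the $u_0$-orbit contributes no new infinite-stabilizer vertex); hyperbolicity transfers across the quasi-isometry of the previous paragraph.

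The main obstacle, as in the single-peripheral case, is verifying fineness at each $v_k \in V_\infty(\Delta)$, since $v_k$ has acquired a new infinite $P_k$-orbit of neighbors $\{p.u_0 : p \in P_k\}$. To bound $\angle_{v_k}^{\Delta}(p_1.u_0, p_2.u_0)$ I would take a short path in $\Delta \setminus \{v_k\}$ and apply the same rewriting as above to produce a path in $\Gamma \setminus \{v_k\}$ of comparable length between vertices in the $p_1$- and $p_2$-translates of $\{u_j\} \cup \{v_i : i \neq k\}$; fineness of $\Gamma$ at $v_k$ then restricts $p_1^{-1} p_2$ to a finite subset of $P_k$, yielding local finiteness of the angle metric at $v_k$ in $\Delta$.
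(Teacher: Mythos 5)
Your construction of $\Delta$, the verification of the thickness conditions, and the quasi-isometry argument are all correct and agree with the intended strategy. However, the verification of fineness at each $v_k$ has a genuine gap. You claim the rewriting sends a short path in $\Delta \setminus \{v_k\}$ to a path in $\G \setminus \{v_k\}$, but the rewriting replaces each excursion into the $u_0$-orbit by a concatenation of $\G$-geodesics between nearby vertices, and nothing prevents those geodesics from passing through $v_k$. Indeed, they sometimes must: if $v_k$ is a cut vertex of $\G$ separating the two endpoints of an excursion, every $\G$-path between them passes through $v_k$, while the original $\Delta$-path avoids $v_k$ only by detouring through $G.u_0$. And even if the rewritten path did avoid $v_k$, the closing step---invoking fineness of $\G$ at $v_k$ to restrict $p_1^{-1}p_2$---does not follow: the rewritten path joins $p_1.w$ to $p_2.w'$, and these vertices are generally not in $T_{v_k}\G$, so the angle metric $\angle_{v_k}$ is not even defined on them and fineness says nothing directly.

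The proof the paper defers to almost certainly avoids this by working in two stages. First form $\Delta_0$ by attaching to $\G$ only the single $G$-orbit of edges $\{g.u_0, g.v_1\}$. In $\Delta_0$ each $g.u_0$ has degree one, so any path can visit $G.u_0$ only by backtracking; deleting those backtracks sends paths in $\Delta_0 \setminus \{v_k\}$ to paths in $\G \setminus \{v_k\}$ of no greater length, and the new neighbors $p.u_0$ of $v_1$ are isolated in $\Delta_0 \setminus \{v_1\}$. Hence fineness of $\Delta_0$ at every $v_k \in V_\infty$ follows immediately from fineness of $\G$, with no rewriting needed. Then attach the remaining $G$-orbits of edges (those of $\{u_0, v_k\}$ for $k \geq 2$, of $\{u_0, u_j\}$, and of $\{u_0, s.u_0\}$, all with endpoints at bounded $\Delta_0$-distance) and invoke \cite[Corollary 4.3]{martinez-pedrozaNoteHyperbolicallyEmbedded2022}---the same result the paper already uses in \cref{lem:coned-off-cayley-graph-add-edge-in-P}---to conclude the result is a $\gp$-graph quasi-isometric to $\Delta_0$. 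This delegates the remaining fineness bookkeeping for the later edge orbits to the cited result and sidesteps the difficulty in your rewriting.
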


Given a $\gp$--graph $\G$, we denote the combinatorial metric on $\G$ as $d_\G$. That is, each edge has length 1.

\begin{proposition}[Correction of {\cite[Proposition 5.4]{martinez-pedrozaNoteHyperbolicallyEmbedded2022}}]
\label{prop:qi_to_conedoff}
    Let $G$ be a finitely generated group. Suppose that $\Pcal = \{ P_k \}_{1\leq k \leq n }$ is a collection of infinite subgroups of $G$. Let $\G$ be a thick $\gp$--graph, and let $\{v_1,\dots,v_n\}$, $\{u_0,\dots,u_l \}$ and $S$ be as in \cref{def:thick_graph}.
  
    Observe that for every $1\leq k \leq n$, the set $Z_k=\{ t \in G \mid \operatorname{dist}_\G (u_i, t.v_k)=1 \text{ for some } 0\leq i\leq l \}$ is a union of cosets of $P_k$. Choose one representative from each coset of $P_k$ in $Z_k$ and let $T_k$ be the set of all such representatives. Let $ T = \bigcup_{1\leq k\leq n} T_k,$ and 
    \[    X = \{ g\in G \mid \operatorname{dist}_\G (u_i, g. u_j)= 1 \text{ for some } 0\leq i,j\leq l  \} \cup T .\] 
    
    Then $q: \GG \to \G$ given by $g\mapsto g.u_0$ and $gP_k\mapsto g.v_k$ is a quasi-isometry. Moreover, $\GG$ is fine at cone vertices. In particular $\GG$ is a $\gp$--graph.
\end{proposition}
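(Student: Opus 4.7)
The plan is to verify three properties of $q$ in sequence: well-definedness with $G$-equivariance, coarse surjectivity, and a two-sided Lipschitz bound. Well-definedness on apex vertices follows since $P_k$ stabilizes $v_k$, so $gP_k \mapsto g.v_k$ is independent of the coset representative, and $G$-equivariance is immediate from the construction. Coarse surjectivity uses the orbit structure of the $\gp$--graph: every vertex of $\G$ lies in the $G$-orbit of some $u_j$ with $0\leq j\leq l$ or some $v_k$; by thick condition (3), $d_\G(g.u_0, g.u_j) \leq 1$, while apex vertices map onto $v_k$-orbits exactly.

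The Lipschitz bound is case analysis on the edges of $\GG$. For an edge $\{g,gx\}$ with $x$ from the first part of $X$, the defining condition $d_\G(u_i, x.u_j) = 1$ combined with thick condition (3) bounds $d_\G(g.u_0, gx.u_0)$ by a constant depending only on $\G$. For $x = t \in T_k$, the condition $d_\G(u_i, t.v_k) = 1$ together with thick condition (1), giving $d_\G(u_0, v_k) = 1$, yields an analogous bound. Cone edges $\{g,gP_k\}$ map to edges $\{g.u_0, g.v_k\}$, which are present by thick condition (1).

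The inverse Lipschitz bound is the main step. I would define a lift $p: V(\G) \to V(\GG)$ by sending each $g.u_j$ to a fixed chosen element $g$ of its fiber and each $g.v_k$ to $gP_k$, and then uniformly bound $d_{\hat\G}(p(w), p(w'))$ over all edges $\{w,w'\}$ of $\G$ by case analysis on the stabilizer types of the endpoints. When both endpoints have finite stabilizer, the translated edge puts $g_1^{-1}g_2$ directly into the first part of $X$, giving lifted distance one. When one endpoint is finite- and one infinite-stabilizer, the coset decomposition $g_1^{-1}g_2 = tp$ with $t \in T_k$ and $p \in P_k$ produces a length-two path (an $X$-edge to $g_1 t$ followed by a cone edge to $g_1 t P_k = g_2 P_k$). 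The main obstacle is the third case, in which both endpoints are infinite-stabilizer, since $X$ supplies no direct generator. I plan to handle it by using thick condition (1) to detour the edge $\{g_1.v_{k_1}, g_2.v_{k_2}\}$ through the length-three walk $g_1.u_0, g_1.v_{k_1}, g_2.v_{k_2}, g_2.u_0$ in $\G$, so that the endpoints of the detour are finite-stabilizer and the previous case applies at each end. A strong induction on $d_\G(u_0, w)$ then yields a uniform bound $d_{\GG}(1, p(w)) \leq C \, d_\G(u_0, w) + D$: in the inductive step, if the $\G$-geodesic terminates in an infinite-infinite edge, one replaces that terminal edge by its length-three detour, producing a path whose terminal step is of finite-infinite type and whose prefix is already controlled by the inductive hypothesis.

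Fineness of $\GG$ at each cone vertex $gP_k$ then follows from the established quasi-isometry together with fineness of $\G$ at $g.v_k$ guaranteed by the $\gp$--graph axioms: $q$ sends paths in $\GG \setminus \{gP_k\}$ between elements of the coset $gP_k$ to paths in $\G \setminus \{g.v_k\}$ of comparable length, so local finiteness of the angle metric $\angle_{g.v_k}$ forces local finiteness of $\angle_{gP_k}$. Hyperbolicity of $\GG$ transfers from $\G$ via the quasi-isometry, which together with fineness at cone vertices shows $\GG$ is a $\gp$--graph.
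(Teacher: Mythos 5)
Your overall architecture matches the paper's: check well-definedness and equivariance, establish coarse surjectivity, prove a two-sided Lipschitz bound by lifting edges, and then transfer fineness via the quasi-isometry. The forward Lipschitz bound and the finite--finite and finite--infinite lifts agree in substance with the paper's argument. However, two steps of the proposal do not go through.

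First, the treatment of the infinite--infinite case is circular. You propose to detour the edge $\{g_1.v_{k_1}, g_2.v_{k_2}\}$ through the walk $g_1.u_0, g_1.v_{k_1}, g_2.v_{k_2}, g_2.u_0$, but the middle step of that walk \emph{is} the original edge, so the detour does not eliminate the problematic step. Lifting the two outer edges gives cone edges $g_1P_{k_1}\leftrightarrow g_1$ and $g_2\leftrightarrow g_2P_{k_2}$, but nothing in the construction bounds $d_{\GG}(g_1,g_2)$; the only thing you know is $d_{\G}(g_1.u_0, g_2.u_0)\le 3$, which is exactly the direction you are trying to prove. The strong induction also fails to close: when the terminal edge of a $\G$-geodesic to $w=g_2.v_{k_2}$ is infinite--infinite, you would need the inductive hypothesis applied to $g_2.u_0$, but $d_\G(u_0, g_2.u_0)$ can equal $d_\G(u_0,w)+1$, which is \emph{larger}, so the hypothesis is not available. (The paper's written proof also only treats edges with at least one finite-stabilizer endpoint, so your instinct that this case deserves attention is not misplaced; but the proposed repair does not work.)

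Second, the fineness argument is too quick, and the sentence ``$q$ sends paths in $\GG\setminus\{gP_k\}$ $\ldots$ to paths in $\G\setminus\{g.v_k\}$'' is not correct as stated. The map $q$ sends an edge of $\GG$ labeled by $t\in T$ to a length-$3$ path in $\G$ of the form $[g_{t-1}.u_0,\, g_{t-1}.u_i,\, g_t.v_{k'},\, g_t.u_0]$, and if $k'=k$ and $g_t\in P_k$ this path does pass through $v_k$ even though the original edge in $\GG$ avoided the apex $P_k$. This is precisely the delicate case that the paper's proof handles by decomposing $\alpha(c)$ at each passage through $v_k$, counting the finitely many possibilities for each arc using fineness of $\G$ and finiteness of edge stabilizers, and running an induction on the length of $c$. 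Your proposal omits this case analysis entirely, so the transfer of fineness is not established.
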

\begin{proof} 
    For simplicity we denote $\GG$ by $\hat \G$. 
    
    Let $f,g\in G$ be such that $\{f,g\}$ is an edge in $\hat \G$ labeled by $x$. If $x\in X\backslash T$, then for some $i,j$, $\{f.u_i, g.u_j\}$ is an edge in $\G$. Therefore $[f.u_0, f.u_i, g.u_j, g.u_0]$ is a path of length $3$ in $\G$. If $x\in T$, then for some $i,k$, $\{f.u_i, g.v_k\}$ is an edge.  Therefore $[f.u_0, f.u_i, g.v_k, g.u_0]$ is a path of length $3$ in $\G$. Observe now that if $\{g,gP_k\}$ is an edge in $\hat \G$, then there is an edge $\{g.u_0,g.v_k\}$ in $\G$. It follows that for any $a,b \in V(\hat \G)$ we have:
    \[
        \operatorname{dist}_\G(q(a), q(b) ) \leq 3 \operatorname{dist}_{\hat\G} (a,b).
    \]
    
    On the other hand, any vertex of $\G$ is of the form $g.u_i$ or $g.v_k$ for some $g\in G$, $0\leq i\leq l$ and $0\leq k\leq n$. If $\{f.u_i, g.u_j\}$ is an edge in $\G$, then $f^{-1}g \in X$. That is, $\{f,g\}$ is an edge in $\hat \G$. If $\{f.u_i, g.v_k\}$ is an edge, then there exists $t\in f^{-1}g P_k $ such that $t\in T_k\subseteq X$. Since $gP_k = ff^{-1}g P_k = ftP_k$, $[f,ft, gP_k]$ is a path of length $2$ in $\hat \G$. It follows that for any $a,b\in V(\hat \G)$ we have:
    \[
        \operatorname{dist}_{\hat\G} (a,b) \leq 2 \operatorname{dist}_{\G} (q(a),q(b)).
    \]
    
    Observe also $\{g.u_i,g.u_0\} \in E(\G)$ for all $1\leq i\leq l $ and all $g\in G$, so $\G\subseteq N_1(\hat \G)$. Therefore $q:\hat \G\to \G$ is a quasi-isometry. It is clear that $q$ is $G$-equivariant. 
    
    It remains to prove that $\hat\G$ is fine at cone vertices. It suffices to show that for every $N>0$, for every $1\leq k \leq n$, the set \( \Omega_k = \{ h\in P_k\mid \angle_{P_k}(e, h) = N \} \) is finite. We prove the statement by induction. 
    
    When $N=1$, the vertex $e$ is connected to $h$ by an edge. That is, $h\in X\cap P_k$.  If $h\in X\backslash T$, then $\{e,h\}$ corresponds to the path $[u_0, u_i, h.u_j, h.u_0]$ in $\G$ for some $0\leq i,j\leq l$.  This path does not go through $v_k$.  
    Since $\G$ is fine at $v_k$, there are finitely many $h.u_0 \in T_{v_k}\G$ such that $\angle_{v_k}(u_0, h.u_0)\leq 3$. 
    It follows that there are finitely many $h\in X\cap P_k$ which are adjacent to $e$.  
    Suppose on the other hand that $h\in T \cap P_k$. Since $T$ is the collection of representatives of cosets of $P_k$, there is only one such $h$. It follows that there are finitely many such $h\in P_k\cap X$ in total.  
    
    Suppose now the statement is true for $1, 2, \dots, N-1$. Consider a path $c\subseteq \hat \G$ of length $N$ between $e$ and some $h\in P_k$ so that $c$ does not go through $P_k$. As shown above, for each edge in $c$, there exists a path $\a(c)$ of length at most $3$ that connects the image of endpoints of $e$. Let $\a(c)$ be the concatenation of those paths, then $\a(c)$ is a path in $\G$. Moreover, different paths in $\hat \G$ give rise to different paths in $\G$. 
    
    Let $z_0, z_1,\dots, z_N $ be the vertices on the path. For all $0\leq t \leq N$, $z_k$ is labeled by either $g_t\in G$ or  $g_t P_t$ for some $g_t\in G$. Observe that if $z_t$ is labeled by $g_tP_t$ and $P_t = P_k$, then $g_t\notin P_k$. In particular $z_0$ is labeled by $e$ and $z_N$ is labeled by $h$.  
    
    Let $e_t = [z_{t-1},z_t]$ for all $1\leq t \leq N$. If one of $z_{t-1}, z_t$ is a coset of $P_k$,  then $e_t$ corresponds to an edge in $\G$ that does not go through $v_k$. If none of $z_{t-1}, z_t$ is a coset of $P_k$, then $e_t$ is labeled by some $x_t\in X$. 
    
    If $x_t\in X\backslash T$, then it corresponds to $[g_{t-1}.u_0, g_{t-1}u_i, g_t.u_j, g_t.u_0]$ for some $0\leq i,j\leq l$. This path does not contain $v_k$ and has length at most $3$. If $x_t\in T$, then  $e_t$ corresponds to the path $[g_{t-1}.u_0, g_{t-1}.u_i,g_t.v_k,g_t.u_0]$. If $g_t\notin P_k$, then this path does not go through $v_t$ neither.  Therefore, if the path $c\subseteq \hat\G$ does not contain vertices in $P_k$ other than $e,h$, then $\a(c)\subseteq \G$ does not go through $v_k$. By the fineness of $\G$ at $v_k$, there are only finitely many such $c$.
    
    This leaves us with the case where for some $t$,  $x_t\in T$ and $g_t\in P_k$. In this case $g_t.v_k = v_k$ so $\a(c)$ passes through $v_k$. 
    We remark first $\a(c)$ passes through $v_k$ exactly once for each such an edge, that is, $\a(c)$ passes through $v_k$ finitely many times. Let $x_{t^1},\dots,x_ {t^M}$ be all such edges in $c$, then $\a(c)$ is the concatenation of:
    \[
        \a_0,\{g_{t^1-1}.u_{i^1},v_k\}, \{v_k,g_{t^1}.u_0\}, \a_1, \dots, \{g_{t^M-1}.u_{i^M},v_k\}, \{v_k,g_{t^M}.u_0\}, \a_M,
    \]
    where $\a_j$ is the subpath of $\a(c)$ between $g_{t^j}.u_0$ and $g_{t^{j+1}-1}.u_{i^{j+1}}$. (For simplicity of notation we denote $e$ as $g_{t^0}$.) Each $\a_j$ does not contain $v_k$ and has length at most $3N$. Therefore there are only finitely many choices of $\a_j$. That means for each fixed  $g_{t^j}.u_0$, there are finitely many choices of $g_{t^{j+1}-1}.u_{i^{j+1}}$. 
    
    Fix $g.u_i$ and suppose $\{ g.u_i, v_k \}$ is an edge. Then there exists $t\in T$ such that $tg.u_i = v_k$. If there exists $t'\neq t \in T$ such that $t'g.u_i = v_k$, then $t^{-1}t'\in \operatorname{Stab}(g.u_i)$. Since $\operatorname{Stab}(u_i)$ is finite, there are only finitely many such $t'$.
    It follows that for each fixed $\a_j$, there are only finitely many choices of $g_{t^{j+1}}.u_0$. This yields to finitely many $\a(c)$ and hence finitely many $c$. It follows that $\hat\G$ is fine at cone vertices.
\end{proof}

We construct below a quasi-inverse for the quasi-isometry from \cref{prop:qi_to_conedoff}.
\begin{construction}
    \label{cons:inverse_qi_from_conedoff}
    We make the same assumption as \cref{prop:qi_to_conedoff}. Recall that any vertex in $\G$  we construct $i:\G \to \GG$ as follows:
    \begin{itemize}
        \item Any vertex of the form $g.u_j$ is sent to $g$, where $i\in \{ 0, 1,\dots, l\}$.
        \item Any vertex of the form $g.v_k$ is sent to $g.P_k$, where $k\in \{ 1, \dots, n \}$
    \end{itemize}
    Clearly $i\circ q (g) = g$, $i\circ q (g.P_k) = g.P_k$. On the other hand, $q\circ i (g.u_0) = g.u_0$ and $q\circ i(g.v_k) = g.v_k$. As for $u_j$ where $j\in \{1, \dots, l\}$, $q\circ i (g.u_j) = g.u_0$, which is connected to $g.u_j$ by $g.\{u_0, u_j\} $. 
\end{construction}

We make the following observation for future use.
\begin{lemma}
    \label{lem:coned-off-cayley-graph-add-edge-in-P}
    Let $G$ be a group, $\Pcal$ a collection of infinite subgroup of $G$, and $X\subseteq G$ a set such that $\Pcal\he(G,X)$. Let $\hat \G_0 = \hat \G(G,\Pcal ,X)$ be the coned-off Cayley graph. 
    Let $Y$ be a finite subset $G$. Then $\hat \G_0$ is quasi-isometric to $\hat \G = \hat \G_0 \cup (\G(G,X\cup Y)\backslash \G(G,X))$. 
    Moreover, if $\hat \G_0$ is a $\gp$-graph (respectively thick $\gp$--graph), then so is $\hat \G$.  
\end{lemma}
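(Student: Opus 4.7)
The lemma splits into a quasi-isometry assertion and preservation of the (thick) $\gp$-graph structure. Set $L=\max_{y\in Y}|y|_{X\sqcup\Pcal}<\infty$, which is finite because $Y$ is finite and $X\sqcup\Pcal$ generates $G$.

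For the quasi-isometry, each new edge $\{h,hy\}$ of $\hat\G$ (with $y\in Y$) unfolds into a path of length at most $2L$ in $\hat\G_0$: write $y=s_1\cdots s_{L_y}$ with $s_i\in X\sqcup\Pcal$, convert each $X$-letter to an $X$-edge, and convert each letter $s_j$ in some $P_\mu$ to the pair of cone edges through the cone vertex $hs_1\cdots s_{j-1}P_\mu$. This gives $d_{\hat\G_0}\le 2L\cdot d_{\hat\G}$, while $d_{\hat\G}\le d_{\hat\G_0}$ is immediate from $\hat\G_0\subseteq\hat\G$. Since the vertex sets coincide, the identity map is a quasi-isometry.

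To preserve the $\gp$-graph structure I would verify the five conditions of \cref{def:GPgraph}. Connectedness, finitely many $G$-orbits of vertices, and the description of vertex stabilizers are immediate from the shared vertex set and $G$-action; hyperbolicity follows from the quasi-isometry just established. A new edge $\{h,hy\}$ has setwise stabilizer of order at most $2$ (trivial if $y^2\ne 1$, with a possible involution flip otherwise), so edge stabilizers remain finite. The crux is fineness at each cone vertex $v=gP_\lambda$. The key observation is that $gP_\lambda g^{-1}$ stabilizes $v$ and acts transitively on $T_v=gP_\lambda$, so for any $g'\in gP_\lambda$ and $s\in P_\lambda$, $G$-equivariance gives $\angle_v^{\hat\G_0}(g',g's)=\angle_v^{\hat\G_0}(g,gs)$, a finite constant by fineness of $\hat\G_0$. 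Taking the maximum over $s$ ranging through $Y\cap P_\lambda$ together with the $P_\lambda$-letters appearing in the fixed decompositions of each $y\in Y$, and over the finitely many $G$-orbits of cone vertices, produces a uniform constant $C$. Each new edge $\{g',g'y\}$ appearing along a path in $\hat\G\setminus\{v\}$ can then be replaced by a path in $\hat\G_0\setminus\{v\}$ of length at most $C$: if both endpoints already lie in $T_v$, use equivariance directly; otherwise unfold the word for $y$ in $\hat\G_0$ and replace each cone-vertex pass at $v$ by a $C$-bounded $\hat\G_0\setminus\{v\}$ detour between the corresponding pair of elements of $T_v$. The resulting bound $\angle_v^{\hat\G_0}\le C\cdot\angle_v^{\hat\G}$, combined with fineness of $\hat\G_0$, implies fineness of $\hat\G$.

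For the thick case, the same witnessing data $(u_0,v_1,\ldots,v_n,u_1,\ldots,u_l,S)$ from \cref{def:thick_graph} continues to work for $\hat\G$, since every required vertex and edge is already present in $\hat\G_0\subseteq\hat\G$. The main obstacle in the whole argument is the fineness verification; every other condition reduces to routine bookkeeping, and the hardest conceptual point is recognizing that the transitivity of $gP_\lambda g^{-1}$ on $T_v$ converts the potentially unbounded distortion from new edges inside a link into a constant depending only on $Y$ and the finitely many orbits of cone vertices.
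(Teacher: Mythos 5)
Your quasi-isometry argument (unfold each $y\in Y$ as a word in $X\sqcup\Pcal$, pass through cone vertices for $\Pcal$-letters) is fine, and the observation that new-edge stabilizers have order at most $2$ is correct. The paper itself simply cites Mart\'inez-Pedroza--Rashid's Corollary~4.3, so a from-scratch argument is a reasonable thing to attempt. However, the fineness step contains a genuine error.

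You assert that $\angle_v^{\hat\G_0}(g,gs)$ is ``a finite constant by fineness of $\hat\G_0$'' for each relevant $P_\lambda$-letter $s$. That is not what fineness gives you. Fineness at $v$ means $(T_v\hat\G_0,\angle_v)$ is \emph{locally finite}, i.e.\ each ball of finite radius is finite; it does not force individual angles to be finite, and in fact they can be $\infty$. Concretely, take $G=F(a,b)=\Z\ast\Z$, $\Pcal=\{\langle a\rangle\}$, $X=\{b\}$, $Y=\{a\}$. Then $\Pcal\he(G,X)$, but removing the cone vertex $v=\langle a\rangle$ from $\hat\G_0$ disconnects the orbit $\langle a\rangle\subseteq T_v$ into infinitely many components, so $\angle_v^{\hat\G_0}(1,a)=\infty$. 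Your constant $C$ is therefore $\infty$, the promised $\hat\G_0\setminus\{v\}$-detour simply does not exist, and the claimed inequality $\angle_v^{\hat\G_0}\le C\cdot\angle_v^{\hat\G}$ is vacuous and useless: in this example $\angle_v^{\hat\G}(1,a^n)=n$ while $\angle_v^{\hat\G_0}(1,a^n)=\infty$ for all $n$, so no multiplicative comparison between the two angle metrics can possibly hold.

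The conclusion is still true (and indeed $\hat\G$ is fine at $v$ in the example), but proving it requires a different organization. Rather than comparing $\angle_v^{\hat\G_0}$ to $\angle_v^{\hat\G}$, one should directly enumerate the vertices $w\in T_v$ with $\angle_v^{\hat\G}(u,w)\le N$: lift the short $\hat\G$-path to a walk in $\hat\G_0$ of length at most $2LN$ (now allowed to pass through $v$), break it into maximal $v$-avoiding segments, and observe that each pass through $v$ right-multiplies the current vertex of $T_v$ by one of finitely many $P_\lambda$-letters occurring in the fixed words for $Y$, while each $v$-avoiding segment of bounded length admits only finitely many possible endpoints by fineness of $\hat\G_0$. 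The number of passes through $v$ is at most $LN$, so $w$ ranges over a finite set. This gives local finiteness of $\angle_v^{\hat\G}$ without ever needing the (possibly infinite) detour constant $C$.
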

\begin{proof}
    Observe that we obtain $\hat \G$ by attaching finitely many $G$-orbits of edges to $\hat \G_0$. By \cite[Corollary 4.3]{martinez-pedrozaNoteHyperbolicallyEmbedded2022} $\hat \G_0\hookrightarrow \hat \G$ is a quasi-isometry and $\hat \G$ is a $\gp$--graph. It is clear that $\hat \G$ remains a thick $\gp$--graph if $\hat \G_0$ is thick.
\end{proof}

\subsection{Coned-off Cusped Cayley Graph}
\label{sec:kr}

Given a graph $\G$, we construct a combinatorial horoball $\Hcal(\G)$ as in \cite[Definition 3.1]{grovesDehnFillingRelatively2008},  and define $\mathcal{D}_r$ as the subset of $\Hcal(\G)$ with depth at least $r$. We call $\G$ the \emph{base graph} of this horoball. 

Geodesics in combinatorial horoballs are easy to understand. 

\begin{lemma}[{\cite[Lemma 3.10]{grovesDehnFillingRelatively2008}}]
\label{lem:shape-of-geodesic-horoball}
    Let $\Hcal (\G) $ be a combinatorial horoball. Suppose that $x, y \in \Hcal (\G)$ are distinct vertices. Then there is a geodesic $\g(x, y) = \g(y, x)$ between $x$ and $y$ which consists of at most two vertical segments and a single horizontal segment of length at most $3$.
    
    Moreover, any other geodesic between $x$ and $y$ is Hausdorff distance at most $4$ from this geodesic.
\end{lemma}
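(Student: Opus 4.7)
The plan is to work directly with the construction of the combinatorial horoball $\Hcal(\G)$, whose vertex set is $V(\G) \times \Z_{\geq 0}$, equipped with vertical edges $(v,n) \sim (v,n+1)$ and a horizontal edge $(v,n) \sim (w,n)$ whenever $0 < d_\G(v,w) \leq 2^n$. I would prove both parts by performing two elementary exchange moves on an arbitrary geodesic from $x$ to $y$, then comparing any two geodesics once they are brought into canonical form.

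For the existence statement, I first argue that the depth profile along any geodesic is unimodal. The obstruction is a ``down--up'' detour of the form $(u, n) \to (u, n-1) \to (u', n-1) \to (u', n)$, a subpath of length three; the middle horizontal edge forces $d_\G(u, u') \leq 2^{n-1} \leq 2^n$, so the horizontal edge $(u,n) \sim (u',n)$ is available, giving a replacement of length one and saving two edges. Iterating this local move yields a path consisting of a vertical ascent, a single horizontal segment at the maximal depth, and a vertical descent. Next, I bound the horizontal segment length by $3$: if it has length $L$ at depth $n$, replacing it by $\lceil L/2 \rceil$ horizontal edges at depth $n+1$ together with two extra vertical edges changes the total length by $2 + \lceil L/2 \rceil - L$, which is $\leq 0$ exactly when $L \geq 4$. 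Each such lift strictly decreases $L$ (since $\lceil L/2 \rceil < L$ for $L \geq 2$), so finitely many lifts yield a geodesic with $L \leq 3$.

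For the Hausdorff bound, I would compare two geodesics $\g, \g'$ between $x$ and $y$, each assumed to be in canonical shape by the argument above. Let their peak depths be $n, n'$ and horizontal lengths $L, L'$. Equating the two length formulas gives $2(n - n') = L' - L$; since $L, L' \in \{0, 1, 2, 3\}$, this forces $|n - n'| \leq 1$. The vertical legs above $x$ and $y$ are uniquely determined (there is a single vertical edge at each vertex), so $\g$ and $\g'$ agree outside a bounded ``cap'' consisting of their two short horizontal segments and at most one intervening layer of vertical edges. Any vertex on $\g$ inside this cap is reached from a vertex on $\g'$ by at most one vertical step followed by at most three horizontal steps, which yields the advertised Hausdorff bound $4$.

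The main obstacle I expect is the case analysis needed to extract the sharp constant $4$ in the Hausdorff estimate, especially when the peak depths differ by one and the horizontal segments are of unequal length and sit over different vertices of $\G$. The existence and shape claims, by contrast, follow mechanically from the two local exchange moves together with the doubling property of horizontal edges across adjacent depths.
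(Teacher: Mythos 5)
The paper itself quotes this as \cite[Lemma 3.10]{grovesDehnFillingRelatively2008} without reproducing a proof, so the comparison is to that source rather than to an argument in the present paper.

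Your plan is sound in outline and the ``lift'' computation for shrinking a long horizontal segment is correct, but the unimodality step has a genuine gap. You assert that \emph{the} obstruction to unimodality is a length-three subpath $(u,n)\to(u,n-1)\to(u',n-1)\to(u',n)$, and you iterate that single local exchange. But a geodesic (or a candidate path) could a priori have a dip whose floor contains several consecutive horizontal edges, e.g.\ $(u_0,n)\to(u_0,n-1)\to(u_1,n-1)\to(u_2,n-1)\to(u_2,n)$, which contains no length-three subpath of your prescribed form; your move never fires on it, so the iteration stalls. The remedy is to treat the entire dip at once: if a subpath descends from depth $n$ to depth $n-1$, traverses $k\ge 0$ horizontal edges at depth $n-1$, and reascends to depth $n$, then $d_\G(u_0,u_k)\le k\cdot 2^{n-1}$, so it can be replaced by $\lceil k/2\rceil$ horizontal edges at depth $n$, giving a net change of $\lceil k/2\rceil - (k+2) \le -2$ for every $k\ge 0$. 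This is strictly shorter, so no geodesic has any local minimum in depth; with that statement in hand the rest of your existence argument goes through.

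On the Hausdorff bound there is a second, smaller gap that you flagged yourself but didn't resolve: you compare two geodesics \emph{after} both have been brought into canonical form, whereas the lemma bounds the distance from the canonical geodesic to an \emph{arbitrary} geodesic. The canonicalization moves are not the identity, so you also need that every geodesic is close to its canonicalization. This is true and not hard — the unimodality argument forces any geodesic's horizontal segment to have length at most $5$ (the lift is non-increasing once $L\ge 4$, so $L\ge 6$ is impossible in a geodesic), and a single lift from $L\in\{4,5\}$ moves each vertex by at most $2$ — but it needs to be said, and it affects the bookkeeping for the constant $4$: with $L'\le 5$ the peak depths can in fact differ by $2$, not just $1$, so the case analysis you sketch with $L,L'\in\{0,1,2,3\}$ and $|n-n'|\le 1$ is not quite the case analysis you need.
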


By the construction of the combinatorial horoballs, it is easy to see that the length of those vertical segments are closely related to the distance between two endpoints in the base graph.

\begin{lemma}
\label{lem:length-of-geodesic-in-horoball}
    Let $\Hcal(\G)$ be a combinatorial horoball.    Let $p,q \in \G$ and let $\g$ be a geodesic of the shape in \cref{lem:shape-of-geodesic-horoball} in the horoball. Let $L$ be the length of vertical segment, then:
    \[
        2^L \leq d_\G(p,q) \leq 2^{L+2}.
    \]
\end{lemma}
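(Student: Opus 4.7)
The plan is to use the explicit combinatorial definition of $\Hcal(\G)$, in which a horizontal edge at depth $n$ joins two vertices $(u,n),(v,n)$ exactly when $0 < d_\G(u,v) \le 2^n$. By \cref{lem:shape-of-geodesic-horoball}, $\g$ has the form: vertical ascent of length $L$ from $(p,0)$ to $(p,L)$, a horizontal segment of length $k \le 3$ at depth $L$ ending at $(q,L)$, and a vertical descent of length $L$ back to $(q,0)$, for a total length of $2L + k$. (Both vertical segments must have the same length, since the two endpoints lie at depth $0$.)

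For the upper bound, each of the $k$ horizontal edges at depth $L$ corresponds to a pair of base-graph vertices at distance at most $2^L$ in $\G$. Concatenating the corresponding shortest $\G$-paths yields a path in $\G$ from $p$ to $q$ of length at most $k \cdot 2^L \le 3 \cdot 2^L < 2^{L+2}$, hence $d_\G(p,q) \le 2^{L+2}$.

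For the lower bound, assume $p \ne q$ (otherwise the statement is vacuous and $L=0$). If $L = 0$, then $d_\G(p,q) \ge 1 = 2^L$ immediately. Suppose $L \ge 1$ and set $k' = \lceil d_\G(p,q)/2^{L-1} \rceil$. Partitioning a shortest $\G$-path from $p$ to $q$ into $k'$ subpaths, each of length at most $2^{L-1}$, each subpath lifts to a single horizontal edge at depth $L-1$ in $\Hcal(\G)$. This exhibits a path in $\Hcal(\G)$ of the form ``up $L-1$, across $k'$, down $L-1$'' of total length $2(L-1) + k'$. Since $\g$ is a geodesic,
\[
    2L + k \;\le\; 2(L-1) + k',
\]
so $k' \ge k + 2 \ge 3$. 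Unwinding the ceiling gives $d_\G(p,q) > 2 \cdot 2^{L-1} = 2^L$, as required.

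The only thing to be careful about is the comparison of the two horoball paths, which forces $k' \ge 3$, together with the degenerate cases $L = 0$ and $p = q$. Once these are handled, the argument is a short bookkeeping computation directly from the combinatorial definition of $\Hcal(\G)$; I do not anticipate any serious obstacle.
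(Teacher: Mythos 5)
Your proof is correct. The paper states this lemma without proof, so there is no argument to compare against, but the approach you take is the natural one: read off the structure of the canonical ``up $L$, across $k\le 3$, down $L$'' geodesic, and use the defining property of horizontal edges in a Groves--Manning horoball (a horizontal edge at depth $n$ exists precisely when the endpoints have $\G$-distance in $(0,2^n]$). The upper bound $d_\G(p,q)\le k\cdot 2^L\le 3\cdot 2^L<2^{L+2}$ is immediate, and for the lower bound you correctly construct the competing path at depth $L-1$, invoke geodesy of $\g$ to force $k'\ge k+2\ge 3$, and unwind the ceiling to get $d_\G(p,q)>2^L$. One small remark: this actually yields the \emph{strict} inequality $d_\G(p,q)>2^L$ when $L\ge 1$ (with equality only possible at $L=0$), which is slightly stronger than the stated $2^L\le d_\G(p,q)$; both are consistent, and your separate treatment of $L=0$ and $p=q$ closes the remaining cases. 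The only implicit facts you rely on — that both vertical legs have the same length (both endpoints at depth $0$), that $k\ge 1$ once $p\ne q$, and that a shortest $\G$-path can be greedily partitioned into $k'=\lceil d_\G(p,q)/2^{L-1}\rceil$ nonempty pieces each of length $\le 2^{L-1}$ — are all routine and hold as you say.
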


For all $r\in \N \cup \{0\} $, we construct a coned-off horoball $\Hcal_r(\G)$ as follows: we add one vertex $v$ and edges connecting $v$ to all vertices in $\mathcal{D}_r$. We call $v$ the \emph{apex}. The additional edges are called the \emph{cone edges} of $\Hcal_r(\G)$. 

\begin{remark}
    \label{rmk:coned-off-horoball-connected}
    Notice that when $\G$ is disconnected,  $\Hcal(\G)$ is also disconnected. However, $\Hcal_r(\G)$ is always connected.
\end{remark}

\begin{definition}
    \label{def:penetrate}
    Let $c$ be a path in a cusped Cayley graph. Let $Q$ be a horoball. We say $c$ \emph{penetrates the horoball $Q$ to depth $d$} if the maximal depth of vertices in $c\cap Q$ is $d$. We say $c$ \emph{penetrates the horoball trivially} if $c$ doesn't intersect the horoball or intersects the horoball only at its base graph (and thus have depth $0$).
    
    In a cusped horoball, we declare the apices to be at depth $r+1$.
\end{definition}

\begin{lemma}
\label{lem:shape-geodesic-cusped-horoball}
    Let $\Hcal_r (\G) $ be a cusped combinatorial horoball. Suppose that $x, y \in \Hcal_r (\G)$ are distinct vertices. Then there is a geodesic $\g(x, y) = \g(y, x)$ between $x$ and $y$ which consists of at most two vertical segments and either a single horizontal segment of length at most $3$, or at most two cone edges.
    
    Moreover, any other geodesic between $x$ and $y$ is Hausdorff distance at most $5$ from this geodesic.
\end{lemma}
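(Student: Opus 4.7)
The plan is to extend \cref{lem:shape-of-geodesic-horoball} to the cusped setting. Since every cone edge is incident to the apex $v$, a geodesic between $x$ and $y$ in $\Hcal_r(\G)$ either avoids $v$ (and hence every cone edge) or passes through $v$, so the argument splits on this dichotomy.

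If some geodesic avoids $v$, it lies in $\Hcal(\G) \subseteq \Hcal_r(\G)$, so $d_{\Hcal_r(\G)}(x,y) = d_{\Hcal(\G)}(x,y)$ and \cref{lem:shape-of-geodesic-horoball} directly produces a geodesic of the first prescribed form with Hausdorff distance at most $4 \leq 5$ from any other apex-avoiding geodesic. If every geodesic uses $v$, then by minimality such a geodesic visits $v$ exactly once and uses exactly two cone edges. Since every edge of the horoball changes depth by at most $1$, any path from $x$ to $\Dcal_r$ has length at least $\max(0, r - d_x)$, and attaining equality forces the path to be purely vertical in the column above $x$. I therefore take the geodesic to enter $v$ from the vertex at depth $r$ directly above $x$ and to exit toward the vertex at depth $r$ directly above $y$. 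This yields the second prescribed form — two vertical segments separated by two cone edges — and simultaneously shows that the apex-using geodesic is unique.

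For the moreover clause, let $\g$ denote the geodesic just constructed and let $\g'$ be any other. If $\g'$ also passes through $v$, the uniqueness in the previous paragraph forces $\g' = \g$. Otherwise $\g' \subseteq \Hcal(\G)$ has the standard shape of \cref{lem:shape-of-geodesic-horoball} with some maximum depth $n$ and horizontal length $h \in \{0, 1, 2, 3\}$, and equating lengths gives $2n + h = 2r + 2$. Parity rules out $h \in \{1, 3\}$, leaving $h \in \{0, 2\}$ and $n \in \{r, r+1\}$. In either subcase the vertical portions of $\g$ and $\g'$ coincide outside a one-vertex collar near the top, while every vertex on the horizontal part of $\g'$ lies at depth $\geq r$ and is therefore joined to $v \in \g$ by a single cone edge. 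The resulting Hausdorff distance is bounded by a small absolute constant, well within $5$.

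The main obstacle is the length-matching rigidity in the last step, which is what extracts enough structural information about the non-apex competitor $\g'$ to turn proximity in depth into proximity in the graph. Edge cases $d_x \geq r$ or $d_y \geq r$ collapse some vertical segments or cone edges but follow the same pattern and only need a short separate verification.
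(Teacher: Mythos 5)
Your overall approach matches the paper's: compare the candidate geodesic in the coned-off horoball against the standard horoball geodesic from Lemma~3.6, and separately pin down the apex-using geodesic by a depth-counting argument. However, there is a genuine organizational gap in the ``moreover'' step, and it matters because that is where the mixed case lives.

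You split on the dichotomy ``some geodesic avoids $v$'' versus ``every geodesic uses $v$,'' and in the second case you construct the apex geodesic and note it is unique. Your third paragraph then frames the ``moreover'' clause by letting $\g$ be ``the geodesic just constructed'' (hence apex-using, i.e.\ Case~2), and the branch ``Otherwise $\g'\subseteq \Hcal(\G)$'' is where you run the length-matching identity $2n+h=2r+2$ and the parity argument. But in Case~2 by hypothesis \emph{every} geodesic uses $v$, so there is no apex-avoiding $\g'$ and that branch is vacuous exactly where you placed it. Meanwhile, Case~1 (some geodesic avoids $v$) can still admit apex-using geodesics as well — this happens precisely when the penetration depth of the Groves--Manning geodesic $\g_0$ is $r$ with horizontal length $2$, where $l(\g_0)$ equals the apex-route length — and your Case~1 sentence only bounds the Hausdorff distance to ``any other apex-avoiding geodesic.'' So the case that actually needs the parity argument (apex-avoiding $\g$ versus apex-using $\g'$) is never invoked. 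The content is present in your proof, just attached to a case where it proves nothing; a reorganization rather than a new idea would fix it. The paper sidesteps this by always starting from $\g_0$, modifying it into $\g$, and bounding Hausdorff distance via the chain $\g' \to \g_0 \to \g$ (giving $4+1=5$) together with the rigidity of the two vertical segments in an apex-using competitor — a cleaner decomposition than the parity computation, and one that automatically covers the mixed case.
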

\begin{proof}
    Let $\Hcal(\G)\subseteq \Hcal_r(\G)$ be the combinatorial horoball and let $a$ be the apex.    Let $\g_0$ be the geodesic $\Hcal(\G)$ from \cref{lem:shape-of-geodesic-horoball} that connects $x,y$. 
    
    If $\g_0$ penetrates $\Hcal(\G)$ to depth less than $r$, then it remains a geodesic in $\Hcal_r(\G)$. 
    Suppose now $\g_0$ penetrates $\Hcal(\G)$ to depth equal to or larger than $r$. 
    Let $u$ (resp. $v$) be the vertex on $\g_0$ closes to $x$ (resp. $y$) with depth equal to or larger than $r$. Notice that it is possible that $x=u$ or $y=v$. 
    
    If the subsegment of $\g_0$ between $u,v$ has length one, then $\g_0$ remains a geodesic in $ \Hcal_r(\G)$. 
    Suppose now  the subsegment of $\g_0$ between $u,v$ has length larger than one. 
    Let $\g$ be the concatenation of $[x,u]$, $\{u,a\}$, $\{a,v\}$ and $[v,y]$. Then $l(\g)\leq l(\g_0)$.
    On the other hand, observe that if $\g$ is not a geodesic, then there exists some path $c$ shorter than $\g$. But then $c$ can not contain the apex, hence is a path in $\Hcal(\G)$, contradicting the fact that $\g_0$ is a geodesic in $\Hcal(\G)$. Therefore $\g$ is a geodesic.

    Consider now any other geodesic $\g'$ connecting $x,y$. 

    If $\g'$ does not contain the apex, then $\g'$ is a path in $\Hcal(\G)$. Therefore we must have $l(\g') = l(\g) = l(\g_0)$. In this case, $\g_0$ and $\g$ has Hausdorff distance $1$. It follows that $\g'$ is Hausdorff distance at most $5$ away from $\g$.

    Suppose now $\g'$ contains the apex.  Then $\g'$ consists of a (possible trivial) vertical segment $\g'_x$ starting at $x$, two cone edges, and a (possible trivial) vertical segment $\g'_y$ ending at $y$. Observe that $\g'_x, \g'_y$ must satisfy the following inequalities:
    \begin{align*}
        l(\g'_x)\geq & d[x,u] ,\\
        l(\g'_y) \geq & d[v,y] ,\\
        l(\g'_x ) + l(\g'_y) = & d[x,u] + d[v,y],
    \end{align*}
    It follows that $\g'$ has to be $\g$.
\end{proof}

Let $G$ be a group and let $\Pcal = \{P_i\}_{i\in I}$ be a collection of infinite subgroups of $G$. Suppose $\Pcal \he (G,X)$ for some relative generating set $X\subseteq G$.
The group $G$ can be regarded as a quotient of the free product $F=(*_{i\in I} P_i)*F(X)$ where $F(X)$ is the free group with the basis $X$.  
Let $\Rcal \subseteq F$ be a set whose normal closure is the kernel of the quotient map $F\to G$. For every $i\in I$, let $S_i$ be the set of all words over the alphabet $P_i$ that represents the identity in $P_i$. Let $\Scal = \cup_{i\in I} \Scal_i$. Then $G$ has a \emph{relative presentation}:  
\begin{align}
\label{eq:g-pre}
    G = \langle X, \Pcal \mid \Scal \cup \Rcal \rangle .
\end{align}

A relative presentation is \emph{strongly bounded} if it is bounded (that is, words in $\Rcal$ have bounded length), and for every $i\in I$, the set of letters from $P_i$ that appears in relators in $\Rcal$ is finite. By \cite[Theorem 4.24]{dahmaniHyperbolicallyEmbeddedSubgroups2017}, $\Pcal \he (G,X)$ if and only if there exists a strongly bounded relative presentation of $G$ with respect to $X$ and $\Pcal$  with linear relative isoperimetric function. 

Let $\langle X, \Pcal \mid \Scal \cup \Rcal \rangle$ be a strongly bounded presentation of $G$. Let $X_i\subseteq P_i$ be the set of all letters from $P_i \backslash \{1\}$ that appear in words from $\Rcal$.  Fix also $r\in \mathbb{N} \cup \{0\}$. To those data we associate a graph $\K_r = \K(G,X,\cup_{i} X_i,\{ P_i \})$. 

\begin{definition}[{\cite[Definition 6.44]{dahmaniHyperbolicallyEmbeddedSubgroups2017}}]
    \label{def:kr}
    Let $\Xcal = (\cup_{i} X_i) \cup X$. Let $\G(G, \Xcal)$ be the Cayley graph of $G$ with respect to the set $\Xcal$. Let $\G(P_i, X_i)$ be the Cayley graph of $P_i$ with respect to $X_i$.

    In what follows, $g\G (P_i,X_i)$ denotes the image of $\G (P_i,X_i)$ under the left action of $G$ on $\G (G,\Xcal)$. For each $i$ we fix a set of representatives $T_i$ of the left cosets of $P_i$ on $G$. Let 
\begin{align}
    \label{def:base-graph}
    \Qcal = \{ g\G (P_i,X_i)\mid 1\leq i\leq m, g\in T_i \}.
\end{align}
    Let $\K_r(G,X,\cup_{i} X_i, \{P_i\})$ be the graph obtained from $\G(G,\Xcal)$ by attaching $\Hcal_r(Q)$ to every $Q\in \Qcal$ via the attaching map $(q,0)\mapsto q$, $q\in Q$. 
    We call it \emph{the cusped Cayley graph coned-off at depth $r$}, or simply \emph{the coned-off cusped Cayley graph} when $r$ is clear.
\end{definition}

\begin{definition}
    \label{def:regular-geodesic}
    We say a geodesic in $\Hcal_r (\G) $ is \emph{regular} if it has the shape given by \cref{lem:shape-geodesic-cusped-horoball}. We say a geodesic in a coned-off cusped Cayley graph is \emph{regular} if whenever it intersects a horoball, the maximal subsegment in the horoball is regular.  
\end{definition}

In the rest of the paper, we always assume that geodesics in any coned-off cusped Cayley graph are regular.

\begin{remark}
\label{rmk:base-graph-of-Kr-connect}
    Note that $\G(G,\Xcal)$ is not necessarily connected. Indeed it is connected if and only if $X$ is a relative generating set of $G$ with respect to the subgroups $\langle X_i \rangle$, which is not always the case. However, recall that by \cite[Corollary 4.27]{dahmaniHyperbolicallyEmbeddedSubgroups2017}, if $\Pcal\he (G,X)$, then  $\Pcal$ is hyperbolically embedded in $G$ with respect to  the union of $X$ and  any  finite subset of $G$. Therefore if $G$ is finitely generated, by adding finitely many elements to $X$ if necessary, we may assume that $X$ generates $G$ and thus $\G(G,\Xcal)$ is connected. 
\end{remark}

\begin{lemma}[{\cite[Lemma 6.45]{dahmaniHyperbolicallyEmbeddedSubgroups2017}}]
     \label{lem:uniform-hyperbolicity-for-coned-off-cusped-space-Kr}
     There exists $\d >0$ such that for every $r\in \N \cup \{0\}$, the graph $ \K_r(G,X,\cup_{i} X_i, \{P_i\})$ is $\d$--hyperbolic.
\end{lemma}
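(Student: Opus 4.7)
The statement is cited as \cite[Lemma 6.45]{dahmaniHyperbolicallyEmbeddedSubgroups2017}, so one could simply invoke the reference. To sketch a proof, the strategy is to (i) first establish that the ``un-apexed'' cusped Cayley graph $X := \G(G,\Xcal) \cup \bigcup_{Q\in\Qcal} \Hcal(Q)$ is $\delta_0$--hyperbolic for a constant $\delta_0$ depending only on the relative presentation, and then (ii) show that adding the apex cones at any finite depth $r$ perturbs the hyperbolicity constant by at most a bounded amount that does not depend on $r$.

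For step (i), we exploit the hypothesis $\Pcal\he(G,X)$: by \cite[Theorem 4.24]{dahmaniHyperbolicallyEmbeddedSubgroups2017}, this is equivalent to the existence of a strongly bounded relative presentation with linear relative isoperimetric function. Using this presentation, one would construct van Kampen-type disk diagrams for loops in $X$ whose cells come from the relators in $\Rcal$ and from the horoball structure, and verify a linear isoperimetric inequality whose constants depend only on the presentation. By the standard equivalence between linear isoperimetric inequality and Gromov hyperbolicity of a simply-connected complex, this yields hyperbolicity of $X$ with a universal constant $\delta_0$.

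For step (ii), the key observation is that for each horoball $\Hcal(Q)$ the ``sub-horoball'' $S_r(Q) := \{(v,n)\in\Hcal(Q): n\geq r\}$ is geodesically convex in $X$ (by \cref{lem:shape-of-geodesic-horoball}, any geodesic between two points of depth at least $r$ stays at depth at least $r$), and after the depth shift $n\mapsto n-r$ it is isometric to the original combinatorial horoball $\Hcal(\G(P_i, X_i))$. In particular, its intrinsic geometry and its convexity constants in $X$ are independent of $r$. Adding the apex $a_Q$ is exactly the operation of coning a uniformly convex subset of a $\delta_0$--hyperbolic space, which is well-known to preserve hyperbolicity with constants depending only on $\delta_0$ and the convexity constants; see the standard coning arguments in the proof of hyperbolicity of coned-off Cayley graphs.

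The main obstacle is ensuring uniformity as $r$ varies: although $S_r(Q)$ moves ``deeper'' as $r$ grows, and the apex is a wildly different vertex for each $r$, the key point is that $S_r(Q)$ looks intrinsically the same as a combinatorial horoball for every $r$, and that each apex is connected only to vertices in its own horoball so that distinct apex cones do not interact globally. Combining this with the regularity of geodesics in $\K_r$ established in \cref{lem:shape-geodesic-cusped-horoball} (so that geodesic triangles can be decomposed into tree-like configurations among horoball penetrations, apex visits, and Cayley segments, each piece having thinness bounded by $\delta_0$), one obtains a uniform hyperbolicity constant $\delta$ independent of $r$, as claimed.
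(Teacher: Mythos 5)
The paper gives no proof of this statement — it is cited directly as \cite[Lemma 6.45]{dahmaniHyperbolicallyEmbeddedSubgroups2017} — so there is nothing in the paper to compare your sketch against. However, your proposed argument has a fatal flaw in step (i), and the flaw propagates to step (ii).

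You claim that the ``un-apexed'' cusped graph $X = \G(G,\Xcal)\cup\bigcup_{Q\in\Qcal}\Hcal(Q)$ is $\delta_0$--hyperbolic. This is \emph{false} in general when $\Pcal$ is merely hyperbolically embedded. The un-coned Groves--Manning cusped space is hyperbolic if and only if $(G,\Pcal)$ is \emph{relatively hyperbolic} (this is essentially \cite[Theorem 6.36]{dahmaniHyperbolicallyEmbeddedSubgroups2017}), and hyperbolic embeddedness is strictly weaker. For instance, take $G$ to be the mapping class group of a closed surface of genus $\geq 2$ and $\Pcal = \{\langle g\rangle\}$ for a pseudo-Anosov $g$; then $\Pcal\he G$ but $G$ is not hyperbolic relative to $\Pcal$, so the un-coned cusped space is not hyperbolic. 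The linear relative isoperimetric inequality you invoke controls areas of van Kampen diagrams over the \emph{relative} presentation, which corresponds to loops in the coned-off Cayley graph $\hat\G(G,\Pcal,X)\simeq\K_0$, not to loops in the un-coned cusped space; it simply does not give hyperbolicity of $X$. This also undercuts step (ii): you argue that $S_r(Q)$ is geodesically convex ``in $X$'' and then cone it off, but there is no ambient hyperbolic $X$ to perform a perturbation argument in, and in any case coning off convex subsets does not preserve hyperbolicity without additional separation hypotheses.

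The cones are not a perturbation of an already-hyperbolic space — they are essential. The role of the apex at depth $r$ is precisely to cut off the horoballs before the failure of relative hyperbolicity can be seen. The correct route (which is what DGO do) is to verify a linear isoperimetric inequality for $\K_r$ \emph{directly}, with constants independent of $r$: loops in the Cayley graph part are filled using the linear relative Dehn function, loops inside a horoball are filled using the horoball combinatorics, and loops touching the apex are filled cheaply because the apex is connected to an entire depth-$r$ slice. One then checks that the constants in this filling scheme never see $r$, e.g.~by tracking that only a bounded portion of any filled loop is near the apex. If you want to reconstruct the proof rather than cite it, that is the argument to develop; the two-step strategy you propose cannot be repaired as stated.
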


\begin{lemma}
    \label{lem:coned-off-horoball-is-convex}
    Suppose $G$ is a finitely generated group and $\Pcal \he (G,X)$. Suppose further that $X$ generates $G$.
    Let $\d$ be the hyperbolicity constant such that for all $r \in \N \cup \{ 0 \}$,  $\Gbb_r = \K_r(G,X,\cup_{i} X_i, \{P_i\})$ is $\d$-hyperbolic. For any $L>\d$,  any coned-off $L$-horoball is convex in $ \Gbb_r$.
\end{lemma}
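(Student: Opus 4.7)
The plan is to analyze any $\Gbb_r$-geodesic $\gamma$ between $x, y \in \Hcal_r(Q)$ using the geodesic triangle with third vertex the apex $v$ of $\Hcal_r(Q)$. As preparation I would first verify that for each $x \in \Hcal_r(Q)$ at depth $d_x$, the ``straight-up-then-cone'' path of length $r+1-d_x$ is a $\Gbb_r$-geodesic lying in $\Hcal_r(Q)$: every $\Gbb_r$-path to $v$ must terminate with a cone edge from some $w \in \mathcal{D}_r$, and reaching $\mathcal{D}_r$ from $x$ requires at least $r - d_x$ depth-increasing edges, since the apices of other horoballs are only adjacent to their own $\mathcal{D}_r$ and contribute no shortcut. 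The same analysis produces an interior geodesic $[v,y] \subseteq \Hcal_r(Q)$.

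Applying $\d$-slim triangles to $(x, y, v)$ with these preferred interior sides, every vertex $z$ of $\gamma$ satisfies $d_{\Gbb_r}(z, [x,v]\cup[v,y]) \leq \d$. The essential use of $L = r > \d$ is a depth refinement: if $z \notin \Hcal_r(Q)$ and $z'$ on $[x,v]\cup[v,y]$ realises this bound, then the short $z$-to-$z'$ path cannot pass through $v$, because any $\Gbb_r$-path from an outside vertex $z$ to $v$ must first enter $\Hcal(Q)$ through $Q$ and then ascend $r$ steps plus a cone edge, costing at least $r+1 > \d$. Hence the path enters $\Hcal(Q)$ through $Q$ and ascends to the depth of $z'$, forcing the depth of $z'$ to be at most $\d$. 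Since $z'$ lies on a vertical side that increases depth by one per step, it is at most $\d$ edges from $x$ (or from $y$), giving $\min\{d_{\Gbb_r}(x,z), d_{\Gbb_r}(y,z)\} \leq 2\d$.

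To finish, I would look at the innermost excursion $\gamma_{pq}$ of $\gamma$ outside $\Hcal_r(Q)$, with $p, q \in Q$. The bound above applied to its interior vertices shows that any such excursion must have length at most $4\d$ and lie near an endpoint of $\gamma$. I would then produce a strictly shorter interior competitor: the bound on the depth of $z'$ together with the horoball's horizontal edges (linking vertices at base-distance $\leq 2^n$ at depth $n$) yields a path from $p$ to $q$ in $\Hcal_r(Q)$ of length at most $|\gamma_{pq}|$, contradicting the minimality of $\gamma$ if it were to exit.

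The main obstacle I expect is making this concluding exchange step precise, since the slim-triangle argument by itself only establishes $\d$-closeness of $\gamma$ to $\Hcal_r(Q)$, and upgrading from $\d$-closeness to containment requires carefully leveraging the combinatorial rigidity of $\Hcal_r(Q)$ together with $r > \d$; in particular one must verify that the horizontal edges at depth $\leq \d$ are sufficient to connect the ascending columns above $p$ and $q$ within the same budget $|\gamma_{pq}|$, using that $\gamma_{pq}$ realises the $\Gbb_r$-distance between its endpoints.
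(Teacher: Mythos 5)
You have misread what the ``coned-off $L$-horoball'' denotes. It is not the full horoball $\Hcal_r(Q)$ sitting in $\Gbb_r$: as the first sentence of the paper's proof makes explicit (``Let $H_1$ be the coned-off $1$-horoball contained in $H$, and let $H_L$ be the coned-off $L$-horoball contained in $H_1$''), it is the subcomplex $H_L$ of that horoball consisting of the apex, the cone edges, and the vertices at depth at least $L$; the lemma asserts that this depth-$\geq L$ slice is convex, for every $L>\d$. Your proposal instead takes $x,y$ to be arbitrary points of $\Hcal_r(Q)$ (you allow $d_x$ to range down to $0$) and at one point sets $L=r$, thereby attempting to prove that the entire horoball $\Hcal_r(Q)$ is convex in $\Gbb_r$. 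That stronger statement is false: two vertices of the base $Q$ that are far apart in the graph $Q$ but close in $\G(G,\Xcal)$ are joined by a short $\Gbb_r$-geodesic running through $\G(G,\Xcal)$ outside $\Hcal_r(Q)$. This is why your final ``exchange'' step cannot be made precise, as you yourself flag; there is no interior competitor to replace a genuine shortcut through the ambient Cayley graph.

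With the correct reading, the apex-triangle idea you sketch does carry real content: for $x,y\in H_L$ the vertical-then-cone geodesics $[x,v]$ and $[v,y]$ lie entirely at depth $\geq L$, and $\d$-slimness, together with the fact that any path from a vertex at depth $<L-\d$ (or outside $\Hcal_r(Q)$, or in another horoball) to a point of $[x,v]\cup[v,y]$ has length $>\d$, forces every vertex of $\gamma$ to lie in $\Hcal_r(Q)$ at depth $\geq L-\d\geq 1$. However, this only places $\gamma$ in $H_1$, not in $H_L$; to finish one must still invoke the observation that $H_L$ is convex in $H_1$, which the paper records at the outset and which your proposal omits. The paper's own argument also uses a different third vertex for its slim triangle: rather than the apex, it takes a minimal violating pair $p,q\in H_L$ at depth $L$, produces an intermediate point $z$ at depth $\geq L$ above an interior vertex of a base-graph path between the projections $p_0,q_0$, and applies slimness to the triangle $p,q,z$, whose two new sides lie in $H_L$ by minimality, to force $L\leq\d$ and reach a contradiction.
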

\begin{proof}
    Let $H$ be a coned-off horoball and let $\G(P_i, X_i)$ be its base graph. Let $H_1$ be the coned-off $1$-horoball contained in $H$, and let $H_L$ be the coned-off $L$-horoball contained in $H_1$. We observe that $H_L$ is convex in $H_1$. Thus if $H_L$ fails to be convex in $\Gbb_r$, then two points in $H_L$ are connected by a geodesic which contains vertices at depth $0$.  As a result this geodesic has length at least $2L+1$.

    Let $p,q$ be two such points in $H_L$ with minimal distance in the path metric on $H_1$. Notice that $d_{H_1}(p,q) \geq d_{\Gbb_r}(p,q) \geq 2L$. Notice also both $p,q$ have depth $L$. 

    \begin{claim*}
        There exists $z\in H_L$ so that $\max\{ d_{H_1}(p,z), d_{H_1}(z,q)  \}< d_{H_1}(p,q)$.
    \end{claim*}
        Let $p_0, q_0$ be the vertices in $\G(P_i, X_i)$ that are connected by a vertical segment of length $L$ to $p, q$ respectively.
        Let $\g_0$ be the geodesic connecting $p_0,q_0$ and let $\g_H$ be the geodesic in $H_L$ connecting $p,q$. Then $\g_0$, $\g_H$, and the two vertical segments form a loop $c$ in $\Gbb_r$.    
        The proof of \cite[Lemma 6.45]{dahmaniHyperbolicallyEmbeddedSubgroups2017} then implies that there exists a path $c'$ connecting $p_0, q_0$ in $\G(P_i, X_i)$. 

        Consider the concatenation of two vertical segments and $\g_H$. This is a geodesic between $p_0, q_0$. Indeed, consider a geodesic $\g'$ between $p_0, q_0$. If the penetration depth is at most $L-1$, then there exists a path of at most depth $2$ between $p$ and $q$, contradicting the assumption that $\g_H$ has length larger than $2$. So $\g'$ penetrates the horoball to depth at least $L$, but that means $\g'$ has to pass through $p,q$. 
        
        Let $c_0$ be the shortest path in $\G(P_i,X_i)$ that connects $p_0, q_0$, we then have 
        \[
         2^{L-1} d_{H_1}(p,q) \leq  l(c_0) \leq 2^{L+1} d_{H_1}(p,q)
        \]

        So there exists at least one vertex $z_0$ in the interior of $c_0$. Let $z$ be the vertex in $H_L$ that is connected to $z_0$ by a vertical segment of length $L$, then $z$ has the desired property.
    
        There are then geodesics $[p,z]$ and $[q,z]$ that lie entirely in $H_L$ by the choice of $p,q$. Consider the triangle formed by these geodesics and $[p,q]$ and consider the vertex on $[p,q]$ that has depth $0$. It follows that $L<\d$, contradicting the assumption of $L$. 
\end{proof}

\subsection{Dehn Fillings}
\label{sec:Dehn-filling-prepare}

Let $G$ be a finitely generated group. Let $\Pcal = \{ P_1,\dots,P_n \}$ be a set of infinite subgroup of $G$.
Given a collection of subgroups $\mathfrak{N} = \{ N_i \vartriangleleft P_i \mid i=1,\dots,n  \}$, the \emph{filling} of $G$  is the quotient group $G(N_1,\dots, N_n) = G/\langle \langle \cup_{i} N_i \rangle \rangle ^G$. We sometimes denote $G(N_1,\dots, N_n)$ as $\bar G$ when the filling is clear. The elements of $\mathfrak{N}$ are called \emph{filling kernels}.

The following theorem is part of \cite[Theorem 7.19]{dahmaniHyperbolicallyEmbeddedSubgroups2017}.

\begin{theorem}
    \label{thm:DGO-dehn-filling-of-HEG}
    Let $G$ be a group, $X$ a subset of $G$, $\Pcal = \{ P_1,\dots,P_n\}$ a collection of subgroups of $G$. Suppose that $\Pcal \he (G,X)$. Then for any finite subset $Z\subseteq G$ there exists a family of finite subsets $F_i \subseteq H_i \backslash \{1\}$ such that for every collection $\mathfrak{N} = \{ N_i \vartriangleleft P_i \mid i=1,\dots,n  \}$ satisfying $N_i \cap F_i = \emptyset$ the following holds.

    \begin{enumerate}
        \item The natural map from $P_i / N_i$ to $\bar G$ is injective for every $i =1,\dots,n$.
        \item $\{P_i / N_i \}_{i=1,\dots,n} \he (\bar G,\bar X)$, where $\bar X$ is the image of $X$ in $\bar G$.
        \item The natural epimorphism $\epsilon: G\to \bar G$ is injective on $Z$.
    \end{enumerate}
\end{theorem}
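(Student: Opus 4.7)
The plan is to realize the fillings geometrically via the action of $G$ on a coned-off cusped Cayley graph $\K_r = \K_r(G,X,\cup_i X_i, \Pcal)$ at a large cone-off depth $r$, and then apply the very rotating family machinery of Dahmani--Guirardel--Osin. The starting observation is that each filling kernel $N_i \vartriangleleft P_i$ acts on $\K_r$ by rotations fixing the apex $a_i$ of the horoball attached to $\G(P_i,X_i)$, so the collection $\{gN_i g^{-1}\}_{g\in G, i}$ generates a rotating subgroup of $G$. The heart of the proof is to select the finite subsets $F_i$ large enough that this family becomes \emph{very rotating} in the sense of DGO: every nontrivial element of any conjugate $gN_ig^{-1}$ moves points a prescribed large amount near its fixed apex.

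First I would fix, via \cref{lem:uniform-hyperbolicity-for-coned-off-cusped-space-Kr}, a uniform hyperbolicity constant $\d$ for $\K_r$, and choose $r$ large enough that the coned-off horoballs are convex (using \cref{lem:coned-off-horoball-is-convex}) and pairwise far apart at their apex level. Using the fineness of the angle metric at each apex $a_i$ (which comes from $\Pcal \he (G,X)$ and the local-finiteness of $(P_i,\hat d_{P_i})$), for any prescribed angle bound $A$ there are only finitely many elements $p\in P_i$ with $\angle_{a_i}(a_i', p.a_i') \le A$ for neighbors $a_i'$ of $a_i$. I would take $F_i$ to be the union of this set with the (finite) set of elements of $P_i$ that arise as $p = z_1 z_2^{-1}$ for distinct $z_1,z_2 \in Z$ of bounded word-length, plus a bounded-length control set needed to survive the Greendlinger-type arguments below. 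With $N_i \cap F_i = \emptyset$, any nontrivial $n\in N_i$ satisfies $\angle_{a_i}(a_i', n.a_i') > A$, establishing the very-rotating property.

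Next I would invoke the small cancellation / very rotating family theorem: the quotient $\bar\K_r = \K_r / K$, where $K = \langle\langle \cup_i N_i \rangle\rangle^G$, is again hyperbolic with controlled constant, and the natural action of $\bar G = G/K$ on $\bar\K_r$ preserves the relevant geometric features. From the standard consequences of the rotating family setup one extracts:
\begin{enumerate}
    \item[(1)] $P_i \cap K = N_i$, so $P_i/N_i \hookrightarrow \bar G$;
    \item[(2)] the images of the apices in $\bar\K_r$ remain fine cone vertices, which (via \cref{def:heg_conedoff} applied in $\bar G$) yields $\{P_i/N_i\} \he (\bar G, \bar X)$ after checking that $\bar X \sqcup \bigsqcup_i \bar P_i$ still gives a hyperbolic, cone-vertex-fine Cayley graph;
    \item[(3)] no element of $Z$ lies in $K$: a nontrivial element of $K$ corresponds to a loop in $\K_r$ that must make nontrivial use of a rotation, and the inclusion $z_1 z_2^{-1} \notin F_i$ (combined with the Greendlinger lemma for rotating families) forces such a loop to have word-length strictly larger than that of the bounded-length representatives of $Z$.
\end{enumerate}

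The main obstacle is, of course, step (2) above: showing that the \emph{quotient} coned-off Cayley graph is hyperbolic and fine at cone vertices, and that the projected $\bar X$ still works as a relative generating set with linear relative isoperimetric function. This requires the full strength of the very rotating family small cancellation theorem --- in particular a Greendlinger lemma which bounds how a reduced word in the kernel must contain a large sub-word of some conjugate of an $N_i$-rotation. Once that combinatorial input is in place, the extraction of the three conclusions and the choice of $F_i$ become bookkeeping: each constant that enters the rotating family hypothesis determines a finite ``forbidden annulus'' in $P_i$, and their union is the desired $F_i$.
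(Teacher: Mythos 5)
The paper does not prove this theorem: it is reproduced directly from \cite[Theorem 7.19]{dahmaniHyperbolicallyEmbeddedSubgroups2017} (indeed ``$H_i$'' is a typo carried over for ``$P_i$''), so there is no in-paper proof to compare against. Your outline is in the spirit of the DGO argument --- coned-off cusped space, very rotating family generated by conjugates of the $N_i$, Greendlinger lemma, finite forbidden sets $F_i$ --- so the skeleton is right, but as a proof it has genuine gaps.

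The central gap is that the very rotating small cancellation theorem is the content, not a black box. Conclusion (2) requires proving that the quotient coned-off space remains uniformly hyperbolic, that the projected presentation over $\bar X$ and $\{\bar P_i\}$ is still strongly bounded with a linear relative isoperimetric function, and that fineness at cone vertices survives the quotient; this is DGO's windmill and cone-off machinery and is not ``bookkeeping.'' Your claim that ``each constant determines a finite forbidden annulus and their union is $F_i$'' becomes accurate only after that apparatus is in place. There is also a smaller imprecision in how you set up the rotating condition: DGO's very rotating hypothesis is an injectivity-radius bound in the relative metric $\hat d_i$ on $P_i$, and it is local finiteness of $(P_i,\hat d_i)$ that makes the violating set finite; your condition $\angle_{a_i}(a_i', n.a_i')>A$ at the apex of $\K_r$ is related but not the same statement, since horoball depth rescales distances exponentially, and the equivalence would need an argument. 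Finally, for (3), excluding $z_1 z_2^{-1}$ from $N_i$ only handles the case $z_1 z_2^{-1}\in P_i$; the case $z_1 z_2^{-1}\in K\setminus \bigcup_i P_i$ requires the Greendlinger penetration estimate to force a long peripheral excursion, which is again precisely what you are invoking rather than proving. As written, the proposal is an accurate annotated citation of DGO rather than a self-contained proof.
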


Let $H$ be a group and $\Dcal = \{ D_1,\dots, D_m \}$ a finite collection of hyperbolically embedded subgroups of $H$. Let $\phi: H\rightarrow G$ be a homomorphism. We say that the map $\phi$ \emph{respects the peripheral structure on $H$} if every $\phi(D_i) \in \Dcal$ conjugates in $G$ into some $P_j \in \Pcal$. In this case, any filling of $G$ induces a filling of $H$ as follows.

\begin{definition}
    \label{def:InducedFilling}
    For each $D_i$, there exists $c_i \in G$ and $P_{j_i}\in \Pcal$ such that $\phi (D_i)\subseteq c_i P_{j_i} c_i^{-1} $. The induced filling kernels $K_i\vartriangleleft D_i$ are given by
    \[
        K_i = \phi^{-1} (c_i N_{j_i} c_i^{-1} ) \cap D_i.
    \]
    The induced filling is $\bar H = H(K_1,\dots, K_m)$. The map $\phi$ induces a homomorphism $\Bar{\phi} : H(K_1,\dots,K_m) \rightarrow G(N_1,\dots,N_n) $.
\end{definition}

\begin{definition}
\label{def:hfilling}
Suppose $H<G$ and $\Dcal = \{D_1,\dots,D_m\}\he H$. Suppose also the inclusion of $H$ into $G$ respects the peripheral structure.

A filling $G\to \bar G$ is an \emph{$H$--filling} if for all $g\in G$, $| H\cap P_i^g| = \infty$ implies $N_i^g\subseteq D_j^s \subseteq H$ for some $s\in H$ and $D_j\in \Dcal$. 
\end{definition}

A property $\mathsf{P}$ holds \emph{for all sufficiently long fillings} of $\gp$ if there is a finite set $F \subseteq G$ so that $\mathsf{P}$ holds whenever $\Ncal \cap F = \emptyset$.  
Let $\mathsf{A}$ be a property of fillings. We say \emph{$\mathsf{P}$ holds for all sufficiently long $\mathsf{A}$--fillings} if for all sufficiently long fillings, $\mathsf{P}$ holds only when $\mathsf{A}$ holds.

Recall \cref{lem:uniform-hyperbolicity-for-coned-off-cusped-space-Kr}. A combination of \cite[Corollary 6.36]{dahmaniHyperbolicallyEmbeddedSubgroups2017} and \cite[Proposition 5.28]{dahmaniHyperbolicallyEmbeddedSubgroups2017} gives the following property.

\begin{proposition}
    \label{prop:quotient-space-hyperbolic}
    There exists $r_\d>0$ and $\d>0$ such that for all $r>r_\d$, and for all sufficiently long fillings with filling kernel $N$, the quotient space $\bar{\K}_r = \K_r/N$ is $\d$-hyperbolic.
\end{proposition}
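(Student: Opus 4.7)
The plan is to exhibit the filling kernel $N = \langle\langle \cup_i N_i \rangle\rangle^G$ as inducing a very rotating family on the uniformly hyperbolic space $\K_r$, and then to invoke the general quotient theorem \cite[Proposition 5.28]{dahmaniHyperbolicallyEmbeddedSubgroups2017}.

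First, by \cref{lem:uniform-hyperbolicity-for-coned-off-cusped-space-Kr} there is a single constant $\d_0$ such that every $\K_r$ is $\d_0$--hyperbolic. The candidate rotating family consists of the apex--subgroup pairs arising from the coned-off horoballs: for each coset $gP_i$, the apex of $\Hcal_r(g\G(P_i,X_i))$ is fixed by $gP_ig^{-1}$, and the subgroup $gN_ig^{-1}\leq N$ sits inside this stabilizer. Since $G$ permutes these apex--subgroup pairs compatibly with conjugation, they assemble into a $G$--invariant collection of rotation data for $N$.

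The crux is \cite[Corollary 6.36]{dahmaniHyperbolicallyEmbeddedSubgroups2017}, which I would apply to produce a threshold $r_\d$ and, for each $r > r_\d$, a finite exceptional set $F_r \subseteq G$ such that whenever the filling kernels avoid $F_r$, the above collection is a $\rho$--very rotating family on $\K_r$ for a fixed $\rho$ that is large compared to $\d_0$. Intuitively, taking $r$ large ensures pairwise separation of the apices (the cone depth adds roughly $r$ to any path between distinct horoballs), while taking the filling sufficiently long kills all short elements of each $N_i$ so that non-trivial conjugates of filling-kernel elements rotate points at long distance from their fixed apex, as required by the definition of a very rotating family. Once this is in place, \cite[Proposition 5.28]{dahmaniHyperbolicallyEmbeddedSubgroups2017} directly yields that $\bar\K_r = \K_r/N$ is $\d$--hyperbolic for a constant $\d$ that depends only on $\d_0$ and $\rho$, and hence is uniform in $r$ and in the filling.

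The main subtlety is bookkeeping the dependencies: the exceptional set $F_r$ is allowed to grow with $r$, so the notion of "sufficiently long" itself depends on $r$, but the uniformity of $\d_0$ from \cref{lem:uniform-hyperbolicity-for-coned-off-cusped-space-Kr} together with the fact that the rotation parameter $\rho$ and the threshold $r_\d$ provided by \cite[Corollary 6.36]{dahmaniHyperbolicallyEmbeddedSubgroups2017} can be chosen independently of $r$ ensures that the output hyperbolicity constant $\d$ is likewise independent of $r$, as the statement requires.
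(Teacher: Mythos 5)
Your proposal is correct and takes essentially the same route as the paper: the paper's proof is exactly a citation of \cite[Corollary 6.36]{dahmaniHyperbolicallyEmbeddedSubgroups2017} together with \cite[Proposition 5.28]{dahmaniHyperbolicallyEmbeddedSubgroups2017}, applied using the uniform hyperbolicity constant from \cref{lem:uniform-hyperbolicity-for-coned-off-cusped-space-Kr}. You have merely unpacked the rotating-family content behind those two references, which is a fair elaboration of what the paper leaves implicit.
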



\section{Quasiconvexity}
\label{sec:qc}

We extend the concept of relatively quasiconvexity to the setting of groups with hyperbolically embedded subgroups. In the context of relatively hyperbolic group, there are many equivalent definitions of relatively quasiconvex subgroups, see for example \cite{hruskaRelativeHyperbolicityRelative2010}. \cite[Definition 1.2]{martinez-pedrozaRelativeQuasiconvexityUsing2011} gives a characterization of relatively quasiconvex subgroup using fine graphs.
The following definition generalizes \cite[Definition 1.2]{martinez-pedrozaRelativeQuasiconvexityUsing2011}.

\begin{definition}[$\gp$--quasiconvex]
    \label{def:qc_finegraph}
    Let $G$ be a finitely generated group, $\Pcal$ a finite family of infinite subgroups which is hyperbolically embedded into $G$. Let $H$ be a subgroup of $G$. 
    
    We say that $H$ is \emph{$\gp$--quasiconvex} if there exists a $\gp$--graph $K$, and a nonempty connected,  $H$-invariant and quasi-isometrically embedded subgraph $L$ of $K$ so that $L$ has finitely many $H$-orbits of vertices.

    A subgroup $H$ is \emph{full} $\gp$--quasiconvex if $H$ is $\gp$--quasiconvex, and for every $P\in \Pcal$ that intersects $H$ infinitely, $H\cap P$ is a finite index subgroup of $P$.
\end{definition}

\begin{assumption}
    \label{assume:fg-heg}
    We assume $G$ is a finitely generated group, $\Pcal = \{ P_i \}_{1\leq i \leq n} $ a finite collection of subgroups of $G$, and  $X$ a subset of $G$ such that $\Pcal\he (G,X)$. We assume further that $X$ generates $G$. Assume also $H$ is a finitely generated subgroup of $G$ and is $\gp$--quasiconvex.
\end{assumption}

\begin{theorem}
    \label{thm:subgraph-is-acy}
    Make \cref{assume:fg-heg}. There is a finite collection $\Dcal$ of parabolic subgroups of $H$ such that $\Dcal\he H$. Moreover, every $D\in \Dcal$ can be conjugated into some $P\in \Pcal$.
\end{theorem}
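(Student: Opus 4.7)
The plan is to show that the subgraph $L$ provided by $\gp$--quasiconvexity is itself a $\hd$--graph for an appropriate choice of $\Dcal$, and then invoke \cref{def:heg_finegraph} to conclude $\Dcal \he H$.

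Let $K$ be the ambient $\gp$--graph and $L \subseteq K$ the connected, $H$--invariant, quasi-isometrically embedded subgraph with finitely many $H$--orbits of vertices. I would choose representatives $v_1, \dots, v_k$ of the $H$--orbits of vertices in $L$ whose $H$--stabilizer is infinite, and set $D_j := \stabh{v_j}$, $\Dcal := \{D_1, \dots, D_k\}$. Each $\stabh{v_j} = H \cap \stabg{v_j}$ is infinite by construction, so $\stabg{v_j}$ is infinite, and then \cref{def:GPgraph}(3) applied to $K$ produces $g_j \in G$ and $P_{i_j} \in \Pcal$ with $\stabg{v_j} = g_j P_{i_j} g_j^{-1}$. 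In particular $g_j^{-1} D_j g_j \subseteq P_{i_j}$, giving the ``conjugates into $\Pcal$'' half of the conclusion.

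The bulk of the proof is then to verify the five axioms of \cref{def:GPgraph} for $L$ with the $H$--action. Connectedness and the finiteness of $H$--orbits of vertices are given. Hyperbolicity of $L$ follows because $L$ is a geodesic metric space (a connected subgraph of a simplicial graph) that is quasi-isometrically embedded in the hyperbolic graph $K$. For any vertex $v \in V(L)$, the $H$--stabilizer $\stabh{v} = H \cap \stabg{v}$ is either finite (when $\stabg{v}$ is finite) or, if infinite, equals $h D_j h^{-1}$ for the unique $j$ and $h \in H$ with $v = h \cdot v_j$; by construction every $D_j$ also arises as a vertex stabilizer. Edge stabilizers of $L$ under $H$ are contained in the finite $G$--stabilizers of the corresponding edges of $K$, hence finite.

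The main obstacle is the fineness at each $v \in V_\infty(L)$, but this also transfers cleanly and is in fact where the subgraph structure pays off. Any $v \in V_\infty(L)$ has $\stabg{v}$ infinite (since $\stabh{v} \leq \stabg{v}$), so $v \in V_\infty(K)$ and $K$ is fine at $v$. For $x, y \in T_v L \subseteq T_v K$, every path in $L - \{v\}$ is a path in $K - \{v\}$, so $\angle_v^L(x,y) \geq \angle_v^K(x,y)$. Therefore every ball in $(T_v L, \angle_v^L)$ is contained in a ball in $(T_v K, \angle_v^K)$, which is finite, so $L$ is fine at $v$. With all five axioms verified, $L$ is a $\hd$--graph on the finitely generated group $H$ for the finite collection of infinite subgroups $\Dcal$, and \cref{def:heg_finegraph} yields $\Dcal \he H$.
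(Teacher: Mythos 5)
Your proposal is correct and follows essentially the same route as the paper: take the quasi-isometrically embedded $H$--invariant subgraph $L \subseteq K$, let $\Dcal$ be the (finitely many) infinite $H$--stabilizers of vertices of $L$, verify that $L$ satisfies the axioms of a $\hd$--graph, and conclude via \cref{def:heg_finegraph}. Your write-up is a bit more explicit than the paper's on the fineness step — you spell out the comparison $\angle_v^L \geq \angle_v^K$ and the resulting inclusion of balls, where the paper merely observes $V_\infty(L) \subseteq V_\infty(K)$ — but this is the intended justification, not a different argument.
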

\begin{proof}
    Since $H$ is $\gp$--quasiconvex, there is a $\gp$--graph $K$ and a nontrivial connected and quasi-isometrically embedded subgraph $L\subseteq K$ which is $H$-invariant and has finitely many $H$-orbits of vertices. Being a quasi-isometrically embedded subspace of a hyperbolic space, $L$ is also hyperbolic. Since  $V_\infty(L) \subseteq V_\infty(K)$, $L$ is fine at $V_\infty(L) $. 
    
    Since $G$-stabilizers of edges of $K$ are finite, $H$-stabilizers of edges of $L$ are also finite. Let $\Dcal$ be the collection of infinite $H$-stabilizers of vertices of $L$. Since $L$ has finitely many $H$-orbits of vertices, $\Dcal$ is a finite collection. 

    The above analysis shows that $L$ is a $\hd$--graph as in \cref{def:GPgraph}. Hence by \cref{def:heg_finegraph} $\Dcal\he H$. In particular, since $L$ is embedded into $K$, every infinite valence vertex in $L$ is an infinite valence vertex in $K$. It follows that every $D\in \Dcal$ can be conjugated into some $P\in \Pcal$.
\end{proof}

\subsection{Quasiconvexity with Coned-off Cayley graph}

It is convenient to work in the setting of the coned-off Cayley graph. In this section we prove a characterization of $\gp$-quasiconvexity using the coned-off Cayley graph.

\begin{definition}
    \label{def:extension-of-map-to-coned-off}
    
Suppose $\Dcal \he (H,Y)$ and $\Pcal \he (G,X)$. Let $\phi :H \to G$ be a homomorphism. If every $\phi(D) \in \Dcal$  is conjugate in $G$ into some $P\in \Pcal$, we say that the map $\phi$ \emph{respects the peripheral structure on $H$}.

In that case, $\phi$ extends to an $H$-equivariant map $\check{\phi} : \GH \to \GG$ given by 
\begin{itemize}
    \item For each $h\in H$,   $ h \mapsto \phi(h)$.
    \item For each pair $h,g\in H$, the edge $\{h,g\}$  is sent to $\{ \phi(h),\phi(g) \}$.
    
    \item For each $s\in H$ and $D\in \Dcal$, $D$ is contained in $P^{c_D}$ for some $c_D\in G$ and $P\in \Pcal$. Then $sD \mapsto \phi(s)c_D P$.
    \item For each $c_D$, let $\nu_D$ be a geodesic connecting $1$ and $c_D$ in $\GG$.  Any cone edge $\{h, hD\}$ is sent to the concatenation of $\phi(h).\nu_D$ and $\{\phi(h)c_D, \phi(h)c_DP\}$.  
\end{itemize}

\end{definition}

By \cref{thm:subgraph-is-acy}, if $H$ is $\gp$--quasiconvex, then the inclusion $\iota: H \to G$ respects the peripheral structure on $H$. By construction, $\check \iota$ is injective on vertices. Since $\Dcal$ is a finite collection, there exists $\l=\l(\phi)>0$ such that $l(\nu_D) <\l$. As a result $\check\iota $ is $\l$-Lipschitz.
We prove below that $\check \iota$ is a quasi-isometric embedding.

We first promote $L$ and $K$ to thick graphs (see \cref{def:thick_graph}).

\begin{lemma}
\label{lem:qc-thick-graph}
    Make \cref{assume:fg-heg}. There exists a thick $\gp$--graph $K$ and a thick $\hd$--graph $L$ that satisfies all conditions in \cref{def:qc_finegraph}. 

    Moreover, one can choose $K$ and $L$ so that the vertex in $L$ with trivial $H$--stabilizer is the vertex $u_0\in K$ with trivial $G$--stabilizer (See \cref{def:thick_graph} (1)).
\end{lemma}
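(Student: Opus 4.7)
The plan is to apply \cref{prop:exist_thick_graph} to thicken $K_0$ first, thereby adjoining to $K_0$ a new vertex $u_0$ of trivial $G$-stabilizer; call the resulting thick $\gp$-graph $K_1$. Since $H \le G$, this same $u_0$ automatically has trivial $H$-stabilizer and is the candidate root vertex of the desired $L$. What remains is to adjoin to $L_0$ the orbit $H.u_0$ together with a carefully chosen finite family of $H$-orbits of edges so that the axioms of \cref{def:thick_graph} for an $\hd$-graph are satisfied, and then to enlarge $K_1$ by the $G$-orbits of the same edges so that $L$ lands as a subgraph of the resulting $K$.

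Concretely, \cref{thm:subgraph-is-acy} together with \cref{assume:fg-heg} imply that $L_0$ is already an $\hd$-graph and that $H$ is finitely generated relative to $\Dcal$. Fix a finite relative generating set $S_H$ for $\hd$, representatives $\{w_1,\dots,w_l\}$ of the $H$-orbits of vertices of $L_0$ with finite stabilizer, and representatives $\{v_1,\dots,v_m\}$ of the $H$-orbits of vertices of $L_0$ whose stabilizer equals some $D_k \in \Dcal$. Define $L$ to be $L_0 \cup H.u_0$ together with the $H$-orbits of the edges $\{u_0, s.u_0\}$ (for $s \in S_H$), $\{u_0, w_j\}$, and $\{u_0, v_k\}$. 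Because $u_0$ has trivial $H$-stabilizer, each new edge has trivial (hence finite) stabilizer, and $\{u_0, w_1, \dots, w_l\}$ is a complete set of $H$-orbit representatives of finite-stabilizer vertices of $L$. The three clauses of \cref{def:thick_graph} are then witnessed directly, while the clauses of \cref{def:GPgraph} (connectedness, hyperbolicity, finiteness of edge stabilizers and of vertex-orbit count, fineness at infinite-stabilizer vertices) are verified by reproducing the argument in the proof of \cref{prop:exist_thick_graph} with the root $u_0$ prescribed in advance.

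To ensure $L$ sits inside the ambient graph, define $K$ by adjoining to $K_1$ the $G$-orbits of the edges used in the construction of $L$. By the generalization of \cref{lem:coned-off-cayley-graph-add-edge-in-P} (namely Corollary~4.3 of \cite{martinez-pedrozaNoteHyperbolicallyEmbedded2022}), adjoining finitely many $G$-orbits of edges to the $\gp$-graph $K_1$ yields a $G$-equivariantly quasi-isometric $\gp$-graph, and thickness is preserved because $K_1 \subseteq K$ already exhibits all required witnessing edges and representatives. By construction $L \subseteq K$ and both contain the vertex $u_0$.

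Finally, $L \hookrightarrow K$ is a quasi-isometric embedding by a short composition argument: both $K_0 \hookrightarrow K_1$ and $K_1 \hookrightarrow K$ are quasi-isometries, so $K_0 \hookrightarrow K$ is as well. For $x,y \in L_0$, combining the hypothesis that $L_0 \hookrightarrow K_0$ is a quasi-isometric embedding with $d_L \leq d_{L_0}$ and the quasi-isometry $K_0 \hookrightarrow K$ yields a bound of the form $d_L(x,y) \leq C\,d_K(x,y) + D$; this extends to all $x,y \in L$ because every vertex of $L$ lies within $L$-distance $1$ of $L_0$ via the added edges $\{u_0, w_j\}$ or $\{u_0, v_k\}$. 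I expect the principal technical obstacle to be the verification of fineness of $L$ at each $v_k$, since adjoining the $H$-orbit of $\{u_0, v_k\}$ introduces a new neighbor $d.u_0$ of $v_k$ for every $d \in D_k$; this step should be resolved, as in the proof of \cref{prop:exist_thick_graph}, by exploiting the finiteness of $\Dcal$ and the controlled relationship between the angle metric at $v_k$ in $L$ and that in $L_0$.
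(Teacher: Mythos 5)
Your proposal is correct and relies on the same key machinery as the paper (Proposition~\ref{prop:exist_thick_graph}, i.e.\ \cite[Proposition 5.2]{martinez-pedrozaNoteHyperbolicallyEmbedded2022}, together with \cite[Corollary 4.3]{martinez-pedrozaNoteHyperbolicallyEmbedded2022}), but you arrange the steps in a genuinely different order. The paper invokes \cite[Remark 5.3]{martinez-pedrozaNoteHyperbolicallyEmbedded2022} to assume a vertex $a_0$ of trivial $G$-stabilizer already lies inside $L_0\subseteq K_0$; it then thickens $L_0$ first, mirrors the added $H$-orbits of edges by $G$-orbits to obtain $K_1\supseteq L$, and finally thickens $K_1$. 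You instead thicken $K_0$ first to manufacture a fresh root $u_0$ with $G_{u_0}=\{1\}$, then adjoin $H.u_0$ and the relevant $H$-orbits of edges to $L_0$, and finally add the corresponding $G$-orbits of those edges to $K_1$. The gain from your ordering is that you never need Remark~5.3 to place the root inside $L_0$: the root is produced explicitly and is automatically visible to both graphs. The small cost is an extra step: you must argue that the edges you add to $L_0$ to connect $u_0$ to the $H$-orbit representatives are matched by $G$-orbits in the enlarged $K$, and that thickness of $K_1$ is preserved under this enlargement (which \cite[Corollary 4.3]{martinez-pedrozaNoteHyperbolicallyEmbedded2022} handles, as you note).

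Two small points worth tightening. First, when invoking \cref{prop:exist_thick_graph} you should insist that a \emph{new} vertex $u_0$ be adjoined even if $K_0$ already has a trivial-stabilizer vertex, since an existing one might not lie in $L_0$ and would defeat the purpose. Second, the ``principal technical obstacle'' you flag at the end — fineness of $L$ at the $v_k$ after adjoining the $H$-orbit of $\{u_0,v_k\}$ — is not a new obstacle to be resolved by hand: it is exactly what the MP--R Proposition~5.2/Corollary~4.3 machinery grants for free once you note you are adding a single new $H$-orbit of vertices with trivial stabilizer and finitely many $H$-orbits of edges. Citing that statement directly (as the paper does implicitly via \cref{prop:exist_thick_graph}) closes the argument more cleanly than re-deriving it.
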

\begin{proof}
    Since $H$ is $\gp$--quasiconvex, there exists $L_0\subseteq K_0$ that realizes the $\gp$--quasiconvexity of $H$. By \cite[Remark 5.3]{martinez-pedrozaNoteHyperbolicallyEmbedded2022}, we can assume without loss of generality that there exists $a_0 \in L_0 \subseteq K_0 $ such that $G_{a_0} = \{1\}$. Choose $a_0$ as $u_0$. 

    Assume now $L_0\subseteq K_0$ and that $a_0=u_0$.  As in the proof of \cite[Proposition 5.2]{martinez-pedrozaNoteHyperbolicallyEmbedded2022}, by attaching a finite sequence of $H$-orbits of edges to $L_0$ if necessary, we obtain a  thick $\hd$--graph $L$. For each $H$-orbit of $L_0$ attached, we attach a $G$-orbit of edges to $K_0$, and thus obtain a $\gp$--graph $K_1$ that contains $L$ as a subgraph. Now by attaching a finite sequence of $G$-orbits of edges to $K_1$ if necessary, we obtain a thick $\gp$--graph $K$ that contains $L$ as a subgraph. 

    Observe that $L\subseteq K$ is nonempty connected, $H$--invariant and has finitely many $H$--orbits of vertices. By \cite[Corollary 4.3]{martinez-pedrozaNoteHyperbolicallyEmbedded2022}, $L$ and $K$ are quasi-isometric to $L_0$ and $K_0$ respectively. It follows that $L$ is quasi-isometrically embedded in $K$.  Hence $L$ and $K$ realize the $\gp$--quasiconvexity of $H$.
\end{proof}

\cref{prop:qi_to_conedoff} then gives a quasi-isometric embedding between coned-off Cayley graphs, which is exactly the extension of inclusion.

\begin{lemma}
\label{lem:qi-embed-injective}
    Make \cref{assume:fg-heg}. There exists $Y\subseteq H$ and $\Dcal$ a collection of infinite subgroups of $H$ such that $\GH$ is an $\hd$--graph. 
 
    Moreover, $\check \iota : \GH \to \GG$ is a quasi-isometric embedding.
\end{lemma}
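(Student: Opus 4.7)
The plan is to realize $\check\iota$ as a composition of maps whose quasi-isometric properties follow from the thick graph machinery developed in the preliminaries. First, I invoke \cref{lem:qc-thick-graph} to obtain a thick $\gp$-graph $K$ and a thick $\hd$-graph $L \subseteq K$ (with $\Dcal$ from \cref{thm:subgraph-is-acy}) realizing the $\gp$-quasiconvexity of $H$, with the shared trivial-stabilizer vertex $u_0$. Applying \cref{prop:qi_to_conedoff} to the thick $\hd$-graph $L$ produces a set $Y \subseteq H$ and an $H$-equivariant quasi-isometry $q_L : \GH \to L$ sending $h \mapsto h.u_0$ and $hD_k \mapsto h.v_k$; the proposition also verifies that $\GH$ is fine at cone vertices, so it is an $\hd$-graph. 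This establishes the first assertion.

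For the quasi-isometric embedding, I apply \cref{prop:qi_to_conedoff} to the thick $\gp$-graph $K$ as well, obtaining a set $X_K \subseteq G$ and a $G$-equivariant quasi-isometry $q_K : \hat\G(G, \Pcal, X_K) \to K$ with quasi-inverse $i_K$ from \cref{cons:inverse_qi_from_conedoff}. The composition
\[
    \GH \xrightarrow{q_L} L \hookrightarrow K \xrightarrow{i_K} \hat\G(G, \Pcal, X_K)
\]
is a quasi-isometric embedding, since the inclusion $L \hookrightarrow K$ is a quasi-isometric embedding (by the quasiconvexity condition) and $q_L, i_K$ are quasi-isometries. A direct check on vertices shows this composition coincides with the map $\check\iota$ from \cref{def:extension-of-map-to-coned-off}: elements $h \in H$ go to $h \in G$, while a coset $hD$ with $D \subseteq c_D P c_D^{-1}$ goes to the coset $h c_D P$, namely the unique coset of $P$ whose left-stabilizer equals the $G$-stabilizer of $h.v_D^L$ in $K$. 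Hence both maps agree on vertices, and, being Lipschitz on edges, they are at bounded distance, so $\check\iota$ is a quasi-isometric embedding into $\hat\G(G, \Pcal, X_K)$.

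The main obstacle is reconciling $\hat\G(G, \Pcal, X_K)$ with the coned-off Cayley graph $\GG = \hat\G(G, \Pcal, X)$ from \cref{assume:fg-heg}. My strategy is to carry out the thickening in \cref{lem:qc-thick-graph} compatibly with $X$: one may attach the required finitely many $G$-orbits of edges using generators from $X$ (which is possible because $X$ generates $G$), so that by the construction in \cref{prop:qi_to_conedoff} the set $X_K$ consists of $X$ together with the finitely many coset representatives $T_k$ for edges into infinite-stabilizer vertices and representatives for the finitely many orbits of edges between finite-stabilizer vertices. After possibly enlarging $X$ by this finite set---using \cite[Corollary 4.27]{dahmaniHyperbolicallyEmbeddedSubgroups2017} to preserve hyperbolic embeddedness and \cref{lem:coned-off-cayley-graph-add-edge-in-P} to preserve the quasi-isometry type---the graphs $\GG$ and $\hat\G(G, \Pcal, X_K)$ become quasi-isometric via the identity on $G$, and composing with the previous quasi-isometric embedding yields the desired $\check\iota : \GH \to \GG$.
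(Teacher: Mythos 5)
Your steps 1--4 reproduce the paper's argument exactly: thicken $L\subseteq K$ via \cref{lem:qc-thick-graph}, apply \cref{prop:qi_to_conedoff} to each, compose the resulting quasi-isometries with the inclusion $L\hookrightarrow K$, and verify on vertices that this agrees with $\check\iota$. That much is correct.

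The extra step in your last paragraph, however, contains a genuine gap. You want to arrange the thickening of $K$ so that $X\subseteq X_K$, claiming the thick graph can be built ``compatibly with $X$.'' But \cref{lem:qc-thick-graph} (following \cite[Proposition 5.2]{martinez-pedrozaNoteHyperbolicallyEmbedded2022}) only adds \emph{finitely many} $G$--orbits of edges. For $X\subseteq X_K$ you would need an edge of the form $\{u_i,x.u_j\}$ in $K$ for every $x\in X$, and since $u_0$ has trivial stabilizer this means infinitely many $G$--orbits of edges whenever $X$ is infinite --- and \cref{assume:fg-heg} does \emph{not} force $X$ to be finite. So the claim ``$X_K$ consists of $X$ together with the finitely many $T_k$ and edge representatives'' is unjustified in general, and the ``identity on $G$ is a quasi-isometry'' step has nothing to rest on.

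More to the point, the step is unnecessary: the paper's own proof discards the $X$ from \cref{assume:fg-heg} and replaces it with the $X$ output by \cref{prop:qi_to_conedoff}. The lemma's statement should be read (as the proof makes clear, and as \cref{prop:conedoff-embed-injective-subset-generator} states explicitly) as \emph{also} quantifying ``there exists $X\subseteq G$.'' The reconciliation you were attempting is done in \cref{prop:conedoff-embed-injective-subset-generator}, but even there the conclusion is the weaker $Y\subseteq X$ with a \emph{freshly chosen} $X$, not that the original $X$ from the hypotheses is retained. So you should drop your last paragraph, take $X=X_K$, and observe that this is all the statement requires.
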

\begin{proof}

    By \cref{def:qc_finegraph}, there exists a $\gp$--graph $K$ and a nontrivial connected and quasi-isometrically embedded subgraph $L\subseteq K$ which is $H$-invariant and has finitely many $H$-orbits of vertices.
    By \cref{lem:qc-thick-graph}, we may assume that $L$ is a thick $\hd$--graph and $K$ is a thick $\gp$--graph.
    By \cref{thm:subgraph-is-acy}, there exists a collection $\Dcal$ of subgroups of $H$ such that $L$ is an $\hd$--graph.
    
    By \cref{prop:qi_to_conedoff}, there exists $X\subseteq G$ and $Y\subseteq H$ such that $L,K$ are quasi-isometric to $\GH$, $\GG$ respectively. Then the composition of the quasi-isometry $\GH\to L$ from \cref{prop:qi_to_conedoff}, the quasi-isometric embedding $L\to K$, and the quasi-isometry $K\to  \hat\G(G,\Pcal,X)$ from \cref{cons:inverse_qi_from_conedoff} 
    is a quasi-isometric embedding $\hat{\G}(H,\Dcal, Y)\to \hat\G(G,\Pcal,X)$. We denote this quasi-isometric embedding as $\phi$.
        
    \begin{center}
        \begin{tikzcd}
    	\GH & \GG \\
    	L & K
    	\arrow["{\phi}", hook, from=1-1, to=1-2]
    	\arrow[tail reversed, from=1-1, to=2-1]
    	\arrow[tail reversed, from=1-2, to=2-2]
    	\arrow["{\subseteq}", hook, from=2-1, to=2-2]
    \end{tikzcd}
    \end{center}

    It remains to show that $\phi$ coincides with $\check\iota$ at all vertices.

    Let $\{u_0, u_1,\dots, u_l \}$, $\{v_1,\dots,v_n\}$  (resp. $\{a_0, a_1,\dots,a_k \}$, $\{b_1,\dots,b_m \}$) be vertices in $K$ (resp. $L$) that satisfy the properties in \cref{def:thick_graph}. By \cref{lem:qc-thick-graph}, we can assume that $a_0$ is $u_0$. By the construction of the quasi-isometry in \cref{prop:qi_to_conedoff}, any element $h\in H$ is mapped to $h.a_0$, then to $h.u_0$, then to $h \in G$ as a vertex of $\GG$. So $\phi(h) = h$.

    As for the apices, let $sD_i$ be an apex in $\GH$. It is sent to $s.b_i \in L$.  
    Since $L$ is a subgraph of $K$, the image of $s.b_i$ in $K$ is a translation of $v_j$ for some $j$. There exists $c_{D_i} \in G$ such that the image of $s.b_i$ is $sc_{D_i}.v_j$.  
    Since the stabilizer of $v_j$ is $P_j$, the image of $sc_{D_i}.v_j$ is $sc_{D_i}P_j$. Therefore $\phi(sD_i) = sc_{D_i}P_j$.
\end{proof}

    Notice that in the setting of \cref{lem:qi-embed-injective}, $Y$ is not necessarily a subset of $X$.  \cref{prop:conedoff-embed-injective-subset-generator} shows that we can choose $X,Y$ such that $Y\subseteq X$. As a result, there is a natural $H$--equivariant inclusion of $\G(H,\Dcal, Y)$ as a subgraph of $ \G(G,\Pcal , X)$. We remark that this does not mean that $\GH$ is a subgraph of $\GG$.

\begin{proposition}
    \label{prop:conedoff-embed-injective-subset-generator}
    Make \cref{assume:fg-heg}. Then there exists $X\subseteq G$ and $Y\subseteq H\cap X$ so that:
    \begin{enumerate}
        \item $\Pcal\he (G,X)$,
        \item $\Dcal \he (H,Y)$, and
        \item the extension of inclusion $\check \iota: \GH \to \GG$ is a quasi-isometric embedding.
    \end{enumerate}
\end{proposition}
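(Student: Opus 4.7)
The idea is to start from the output of Lemma~\ref{lem:qi-embed-injective}, which already provides subsets $X_0 \subseteq G$ and $Y_0 \subseteq H$ satisfying the quasi-isometric embedding conclusion, and then simply enlarge $X_0$ to absorb $Y_0$. The key point is that hyperbolic embeddedness is robust under adjoining finitely many generators, and adjoining finitely many $G$-orbits of edges to a coned-off Cayley graph only changes it up to quasi-isometry (Lemma~\ref{lem:coned-off-cayley-graph-add-edge-in-P}).

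First I would apply Lemma~\ref{lem:qi-embed-injective} to obtain $X_0 \subseteq G$, $Y_0 \subseteq H$, and a finite collection $\Dcal$ of infinite subgroups of $H$ such that $\Pcal \he (G,X_0)$, $\Dcal \he (H,Y_0)$, and the extension of the inclusion $\check{\iota}_0 \colon \hat\G(H,\Dcal,Y_0) \to \hat\G(G,\Pcal,X_0)$ is a quasi-isometric embedding. Since $H$ is finitely generated and $\Dcal$ is a finite collection hyperbolically embedded into $H$, there exists a finite relative generating set of $H$ with respect to $\Dcal$; using \cite[Corollary 4.27]{dahmaniHyperbolicallyEmbeddedSubgroups2017} (as in Remark~\ref{rmk:base-graph-of-Kr-connect}) I may enlarge $Y_0$ by a finite set to contain such a generating set, then pass to the resulting finite subset and still have $\Dcal \he (H,Y_0)$. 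Thus I may assume $Y_0$ is finite.

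Next I would set $Y = Y_0$ and $X = X_0 \cup Y_0$. Property (2) is then immediate, and $Y \subseteq H \cap X$ holds by construction. Property (1) follows because adding the finite subset $Y_0 \subseteq G$ to $X_0$ preserves hyperbolic embeddedness, again by \cite[Corollary 4.27]{dahmaniHyperbolicallyEmbeddedSubgroups2017}. For property (3), I would note that the natural inclusion $\hat\G(G,\Pcal,X_0) \hookrightarrow \hat\G(G,\Pcal,X)$ adjoins only finitely many $G$-orbits of edges (one orbit per element of $Y_0 \setminus X_0$), and is thus a quasi-isometry by Lemma~\ref{lem:coned-off-cayley-graph-add-edge-in-P}. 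Similarly, $\hat\G(H,\Dcal,Y) = \hat\G(H,\Dcal,Y_0)$ in this setting. Composing the quasi-isometric embedding $\check{\iota}_0$ with this quasi-isometry gives a quasi-isometric embedding $\hat\G(H,\Dcal,Y) \to \hat\G(G,\Pcal,X)$ whose vertex map coincides with that of $\check{\iota}$ with respect to the new $X,Y$ (both send $h \mapsto h$ and $sD \mapsto s c_D P$).

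The only (minor) subtlety, and the step I expect to need the most care, is that $\check{\iota}$ as specified in Definition~\ref{def:extension-of-map-to-coned-off} depends on the choices of geodesics $\nu_D$ inside $\hat\G(G,\Pcal,X)$. However, since $\Dcal$ is finite these geodesics have uniformly bounded length, so any two extensions of the common vertex map are Lipschitz with a uniform constant on edges and are at finite Hausdorff distance from one another. Hence the composition above being a quasi-isometric embedding implies the same for $\check{\iota}$, completing the verification of (3).
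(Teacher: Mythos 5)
There is a genuine gap at the step ``thus I may assume $Y_0$ is finite.'' The set $Y_0$ produced by \cref{lem:qi-embed-injective} comes from \cref{prop:qi_to_conedoff} applied to the thick $\hd$--graph $L$, and there it has the form $\{h \in H \mid d_L(a_i, h.a_j)=1 \text{ for some } i,j\} \cup T$, which need not be finite: a $\hd$--graph is only required to be fine at its infinite-stabilizer vertices, so the finite-stabilizer vertices $a_i$ can have infinite degree. Moreover, your shrinking maneuver is not supported by \cite[Corollary 4.27]{dahmaniHyperbolicallyEmbeddedSubgroups2017}: that result lets you \emph{enlarge} a relative generating set by a finite set while preserving hyperbolic embeddedness, but passing from $Y_0 \cup S$ back down to a finite subset $S$ would require proving $\Dcal \he (H,S)$ afresh. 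Even granting that, the quasi-isometric embedding of \cref{lem:qi-embed-injective} is for the specific graph $\hat\G(H,\Dcal,Y_0)$, and the quasi-isometry type of a coned-off Cayley graph is not in general invariant under swapping the relative generating set (even between two sets that both witness hyperbolic embeddedness), so the QIE would not transfer. Without a finite $Y_0$, \cref{lem:coned-off-cayley-graph-add-edge-in-P} does not apply to pass from $X_0$ to $X_0 \cup Y_0$, which is the crux of your argument for conclusions (1) and (3).

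The paper's proof deliberately avoids any finiteness hypothesis on $Y$. After setting $X = Z \cup Y$, it introduces a \emph{finite} set $A$ of elements $g_i, h_s$ that identify the $L$-orbit representatives $a_i, b_s$ with $G$-translates of the $K$-orbit representatives $u_{i'}, v_{s'}$ under the inclusion $L \subseteq K$, and then shows that every $y \in X\setminus Z$ factors as $y = a z a'$ with $a, a' \in A$ and $z \in Z$. This gives a bounded-length detour in $\GGx{Z}$ for every new edge of $\GG$, which is what drives the quasi-isometry $\GG \simeq \GGx{Z}$, the fineness of $\GG$ at cone vertices, and the transfer of the QIE. To make your argument work you would need to replace the finiteness assumption with an argument of this kind.
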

\begin{proof}
    As shown in the proof of \cref{lem:qi-embed-injective}, there exists a thick $\gp$--graph $K$ and a thick $\hd$--graph $L$ so that $L$ is a quasi-isometrically embedded subgraph of $K$.
    
    Let $\{u_0, u_1,\dots, u_l \}$, $\{v_1,\dots,v_n\}$  (resp. $\{a_0, a_1,\dots,a_k \}$, $\{b_1,\dots,b_m \}$) be as in the proof of \cref{lem:qi-embed-injective}. Let $Y$ (resp. $Z$) be the generating set for $H$ (resp. $G$) produced by \cref{prop:qi_to_conedoff}.   
    Then $\GH$ is an $\hd$--graph while $\GGx{Z}$ is a $\gp$--graph. By \cref{lem:qi-embed-injective}, $\GH$ is quasi-isometrically embedded in $\GGx{Z}$. 
    
    Let $X = Z\cup Y$.     Let $q_Z:\GGx{Z} \to K$ be the quasi-isometry constructed in \cref{prop:qi_to_conedoff}.
    Consider now $q_X:\GG\to K$ given by $g\mapsto g.u_0$ and $gP_i \mapsto g.v_i$.

    \begin{center}
    \begin{tikzcd}
	\GH & \GGx{Z} &\GG \\
	L & K &
	\arrow[hook, from=1-1, to=1-2]
    \arrow["{\subseteq}", from=1-2, to=1-3]
	\arrow[tail reversed, from=1-1, to=2-1]
	\arrow["{q_Z}", tail reversed, from=1-2, to=2-2]
    \arrow["{q_X}", tail reversed, from=1-3, to=2-2]
	\arrow["{\subseteq}", hook, from=2-1, to=2-2]
    \end{tikzcd}
    \end{center}

    \begin{claim*}
        The map $q_X$ is a quasi-isometry.
    \end{claim*}
    For simplicity, we will write $d_Z$ for distance in $\GGx{Z}$ and $d_X$ for distance in $\GG$. As proved in the proof of \cref{prop:qi_to_conedoff}, for any $a,b\in V(\GGx{Z})$, \[
    d_K(q_Z(a), q_Z(b) ) \leq 3 d_Z(a,b), \quad d_Z(a,b) \leq 2 d_K(q_Z(a), q_Z(b) ).
    \]

    Since $Z\subseteq X$, $\GGx{Z}\subseteq \GG$. Therefore, for any $a,b\in V(\GGx{Z})$, $d_X(a,b)\leq d_Z(a,b)$.  Notice that $\GG$ and $\GGx{Z}$ have same vertices. Moreover, for any vertex $a\in V(\GG) = V(\GGx{Z})$, $q_Z(a) = q_X(a)$. We immediately have
    \[d_X(a,b) \leq d_Z(a,b) \leq 2d_K(q_Z(a),q_Z(b) ) = 2d_K(q_X(a),q_X(b) )  \]  for any two vertices $a,b$ in $V(\GG)$.

    On the other hand, since $L\subseteq K$, each $a_i\in L$ can be identified as $g_i.a_i = u_{i'}$ for some $u_{i'}\in K$ and some $g_i\in G$. Similarly, each $b_s\in L$ can be identified as $h_s.b_s = v_{s'}$ for some $v_{s'}\in K$ and $h_s\in G$. Notice that the choice of $g_i$ (resp. $h_s$) is not unique. We choose one representative for each $a_i$ and each $b_s$. Let $A$ be the collection of those representatives. In particular, $A$ is a finite set. 
    
    Suppose $\{f,g\}$ is an edge in $\GG \backslash \GGx{Z}$. Then it must be labeled by some $y\in X\backslash Z\subseteq Y$. By the construction of $Y$, either $d_L(a_i, y.a_j) =1 $ for some $0\leq i,j\leq k$, or $d_L(a_i, y.b_s) =1$ for some $0\leq i\leq k$, $1\leq s\leq m$. In the first case, since $L$ is a subgraph of $K$, we have  $d_K(g_i.u_{i'},yg_j.u_{j'})=1$. It follows that  $g_i^{-1}yg_j \in Z$.
    Analogously, in the second case we have  $d_K(g_i.u_{i'},yh_s.v_{s'})=1$  for some $g_i, h_s\in A$. It follows that $g_i^{-1}y h_s \in Z$. 
    Either way, there exists $a,a'\in A$ and $z\in Z$ such that $y=aza'$.

    Since $\GGx{Z}$ is connected,  for each $a\in A$ there exists a path of finite length in $\GGx{Z}$ that represents $a$. Let $l_A$ be the maximal length of all such paths.   
    Thus there exists a path of length $2l_A+1$ connecting $f,g$ in $\GGx{Z}$.  Therefore for any $a,b\in V(\GG)$, \[
    d_Z(a,b)\leq (2l_A+1)d_X(a,b),
    \] which implies
    
    \[
    d_K(q_X(a), q_X(b)) = d_K(q_Z(a),q_Z(b)  \leq 3 d_Z(a,b) \leq 3(2l_A+1)d_X(a,b).
    \]

    As a result, $\GG$ is quasi-isometric to $\GGx{Z}$. 
    
\begin{claim*}
    The graph $\GG$ is fine at cone vertices.
\end{claim*}
    Let $Z'=Z\cup A$. Since $A$ is finite, by \cite[Corollary 4.27]{dahmaniHyperbolicallyEmbeddedSubgroups2017} $\Pcal \he (G,Z)$ if and only if $\Pcal\he (G,Z')$. In particular, $\GGx{Z'}$ is fine at cone vertices.

    Every edge in $\GG$ of the form $\{g,gx\}$ with $g\in G$ and $x\in Z$ corresponds to an edge $\{g,gx\}$ in $\GGx{Z'}$. Every cone edge in $\GG$ corresponds to a cone edge in $\GGx{Z'}$.

    Moreover, since for any $y\in X\backslash Z$, there exists $a,a'\in A$ and $z\in Z$ such that $y=aza'$, every edge in $\GG$ of the form $\{g,gy\}$ with $g\in G$ and $y\in X\backslash Z$ corresponds to a path of length at most three in $\GGx{Z'}$ between $g$ and $gy$. In particular, this path does not contain cone vertices. 

    In this way, every path $\a\in \GG$ from $e$ to $g$ in $\GG$ corresponds to a path $\a'$ from $e$ to $g$ in $\GGx{Z'}$. Moreover, $\a$ pass through a cone vertex $fP_i \in \GG$ if and only if $\a'$ pass through a cone vertex $fP_i \in \GGx{Z'}$. We also have $| \a'| \leq 3 | \a |$.

    Suppose $\GG$ is not fine at some cone vertex $a$, then there exists a positive number $n$ such that there exists infinitely many paths of length at most $n$ connecting two vertices in $T_a {\GG}$ (Recall that in \cref{def:angle} we define this notation as the set of vertices adjacent to $a$). But this implies there exists infinitely many paths of length at most  $3 n$  connecting vertices in  $T_a{ \GGx{Z}}$, which contradicts the assumption that $\GGx{Z}$ is fine at cone vertices.
    Hence $\GG$ is fine at cone vertices.

    Therefore, $\Pcal\he (G,X)$. In particular, $\GG$ is a $\gp$--graph.

\begin{claim*}
    The map  $\GH \hookrightarrow \GG$ is a quasi-isometric embedding and coincides with $\check\iota$ at all vertices.
\end{claim*}
    Since $\GG$ is quasi-isometric to $\GGx{Z}$, $\GH$ is quasi-isometrically embedded in $\GG$. By \cref{lem:qi-embed-injective}, $\GH\hookrightarrow \GGx{Z}$ maps $h\in H$ to $h\in G$ and $sD_i$ to $sc_{D_i}P_j$. Since $\GG$ and $\GGx{Z}$ has the same vertex set, the composition map coincides with the extension of inclusion.
\end{proof}

We now give a characterization of $\gp$--quasiconvexity using coned-off Cayley graphs.
\begin{proposition}
\label{prop:equiv-gp-qc-by-coned-off-cayley-graph}
    Let $G$ be a finitely generated group, $\Pcal$ a finite collection of subgroups of $G$, and $X$ a subset of $G$ such that $G=\langle X \rangle$ and $\Pcal \he (G,X)$. 
    Let $H$ be a finitely generated subgroup of $G$. Then $H$ is $\gp$--quasiconvex if and only if there exists a finite collection $\Dcal$ of subgroups of $H$  and a subset $Y \subseteq H\cap X$ such that 
    \begin{enumerate}
        \item  $\Dcal \he (H,Y)$,
        \item  the inclusion $\iota:H\to G$ respects the peripheral structure on $H$, and
        \item the extension of inclusion $\check \iota: \GH \to \GG$ is a quasi-isometric embedding.
    \end{enumerate}
\end{proposition}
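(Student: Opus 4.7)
The forward direction ($\Rightarrow$) is essentially the content of \cref{prop:conedoff-embed-injective-subset-generator}. That result produces some $X_0$ and $Y_0 \subseteq H \cap X_0$ satisfying the three desired properties. To reconcile with the given $X$, I would enlarge: $X' := X \cup X_0$ still generates $G$ and still satisfies $\Pcal \he (G, X')$ by \cite[Corollary 4.27]{dahmaniHyperbolicallyEmbeddedSubgroups2017}, and by \cref{lem:coned-off-cayley-graph-add-edge-in-P} this replacement alters the coned-off Cayley graph only up to quasi-isometry. Then $Y_0 \subseteq H \cap X'$ serves as the desired $Y$.

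For the backward direction ($\Leftarrow$), the natural choice of ambient $\gp$--graph is $K := \GG$ itself. Under the hypotheses $\Pcal \he (G,X)$ and $\langle X \rangle = G$, the conditions of \cref{def:GPgraph} can be verified directly: $K$ is connected and hyperbolic, has finitely many $G$--orbits of vertices (with either trivial stabilizer or stabilizer conjugate to some $P \in \Pcal$), has finite edge stabilizers, and is fine at cone vertices by \cref{def:heg_conedoff}. For the subgraph $L$, I would take the image of the extension of inclusion $\check{\iota}$ from \cref{def:extension-of-map-to-coned-off}, together with all interior vertices and edges of the translated geodesics $h \cdot \nu_D$ used to extend the cone edges. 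Concretely, $V(L) = H \cup \{ h c_D P : h \in H,\ D \in \Dcal \} \cup \{\text{interior vertices of } h \cdot \nu_D \}$, with edges exactly those traversed by $\check{\iota}$. By construction $L$ is $H$--invariant and connected (inherited from the connectedness of $\GH$), and finitely many $H$--orbits of vertices follow from the finiteness of $\Dcal$, the bounded length of each fixed $\nu_D$, and the assumption that $Y$ is finite (enlarging if necessary via \cite[Corollary 4.27]{dahmaniHyperbolicallyEmbeddedSubgroups2017}).

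The core step is verifying that $L \hookrightarrow K$ is a quasi-isometric embedding. The inequality $d_K \leq d_L$ is automatic. For the reverse, set $\l := \max_D (l(\nu_D) + 1)$, so that $\check{\iota}$ is $\l$--Lipschitz at the level of edges: each edge of $\GH$ maps to a path of length at most $\l$ in $L$. Then for $x, y \in \check{\iota}(V(\GH)) \subseteq V(L)$ with preimages $a, b \in V(\GH)$, any $\GH$--geodesic between $a$ and $b$ produces an $L$--path of length at most $\l \cdot d_{\GH}(a,b)$; the hypothesis that $\check{\iota}$ is a quasi-isometric embedding then bounds $d_{\GH}(a,b)$ linearly in $d_K(x,y)$. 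Intermediate vertices on some $h \cdot \nu_D$ lie within $\l$ of an image vertex, so the bound extends to all of $V(L)$. The main obstacle is exactly this last step: ensuring that distances in the intrinsic metric of $L$ do not inflate beyond a linear factor of the ambient $K$--distance, which rests on combining the Lipschitz-on-edges property of $\check{\iota}$ with the quasi-isometric embedding hypothesis between the coned-off Cayley graphs.
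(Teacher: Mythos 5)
Your approach coincides with the paper's: the forward direction cites \cref{prop:conedoff-embed-injective-subset-generator}, and the backward direction takes $K = \GG$ and $L = \check\iota(\GH)$, then checks the conditions of \cref{def:qc_finegraph}. You supply a useful detail the paper glosses over, namely that the intrinsic path metric on $L$ is quasi-isometric to the restriction of the ambient metric of $K = \GG$: your argument, combining the edge-Lipschitz bound for $\check\iota$ (each edge of $\GH$ maps to a path of length $\leq \l$ in $L$, using $Y \subseteq X$ so that $Y$-edges go to genuine edges, and using finiteness of $\Dcal$ for the segments $\nu_D$) with the quasi-isometric embedding hypothesis on $\check\iota : \GH \to \GG$, is correct, and intermediate vertices on the $h.\nu_D$ are indeed handled by a bounded additive correction. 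On the forward direction, you correctly observe that \cref{prop:conedoff-embed-injective-subset-generator} produces its own set $X_0$ rather than the given $X$; but be aware that your fix of enlarging to $X' = X \cup X_0$ yields $Y_0 \subseteq H \cap X'$, which is not literally the required $Y \subseteq H \cap X$ for the original $X$. The paper's own proof shares this gap, simply citing \cref{prop:conedoff-embed-injective-subset-generator} without relating its $X_0$ to the $X$ fixed in the hypothesis, so this reflects an imprecision in how the proposition fixes $X$ at the outset rather than a defect unique to your argument.
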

\begin{proof}
    \cref{prop:conedoff-embed-injective-subset-generator} proves one direction. Consider now the other direction and assume that there exists $\Dcal$ and $Y$ with the properties described above. By \cref{prop:qi_to_conedoff}, $\GG$ is a $\gp$--graph. 
    Let $L = \check \iota(\GH)$. 
    Since $\GH$ is nonempty and connected, so is $L$. 
    Recall that $\check \iota$ is an $H$-equivariant map. Since $\GH$ is $H$-invariant, so is $L$. Notice also that $\GH$ has finitely many $H$-orbits of vertices, so does $L$.
    It follows that $L$ is a nonempty, connected, $H$-invariant quasi-isometrically embedded subgraph of $\GG$ that has finitely many $H$-orbits of vertices. Thus by \cref{def:qc_finegraph}, $H$ is $\gp$--quasiconvex.
\end{proof}

\section{Angles}
\label{sec:prepare-uniform-QC}
    
    This section records several technical lemmas used in the proof of \cref{thm:uniform_qc}.  We construct the spaces we are working on in \cref{sec:basic-constructions}. In \cref{subsec:big-angle}, we prove some properties about the angle metric in coned-off Cayley graphs. In \cref{subsec:interplay}, we build maps between the coned-off Cayley graphs and the coned-off cusped Cayley graph, and inspect the behaviors of such maps.

\subsection{Basic Constructions}
\label{sec:basic-constructions}
       
\begin{construction}
    \label{cons:generating-set-in-actual-use}
    Let $A$ be a finitely generated group, $\Ocal$ a finite collection of hyperbolically embedded subgroups in $A$. Let $B$ be an $(A,\Ocal)$--quasiconvex subgroup.  
       
    By \cref{prop:conedoff-embed-injective-subset-generator} there exists a finite collection $\Ecal$ of subgroups of $B$, a set $X_0\subseteq A$ and a set $Y\subseteq B\cap X_0$ so that $\Bcal_0 = \hat \G(B,\Ecal,Y)$ is a $(B,\Ecal)$--graph and $\Acal_0 = \hat \G(A,\Ocal, X_0)$  is an $(A,\Ocal)$--graph.    
    Since both $A$ and $B$ are finitely generated, we may assume $X_0$ and $Y$ are generating sets of $A$ and $B$ respectively. Moreover, there exists a quasi-isometric embedding $\phi_0:\Bcal_0\to \Acal_0$ that is injective on vertices.
       
    Recall the construction of coned-off cusped Cayley graph (see \cref{def:kr}). Let $ \langle Y, \Ecal | \Scal, \Rcal  \rangle $ be a strongly bounded relative presentation of $B$.
    For $E_i\in \Ecal$, let $Y_i$ be the set of all elements in $E_i$ that appear in $\Rcal$ and let $\mb{B}_r = \K_r(B, Y, \cup_i Y_i, \Ocal)$ be the associated coned-off cusped Cayley graph. We remark that for each $i$, $Y_i$ is finite.
    Let $\Ycal= \cup_i Y_i \cup Y_0$, and let $\Bcal = \hat \G(B, \Ecal,\Ycal)$. Let $\G_B = \G(B,\Ycal)$.
    
    Since $\Ecal$ is a finite set, $\cup_i Y_i$ is finite. Let $ X =  \cup_i Y_i \cup X_0$. Then $\hat \G(A, \Ocal, X)$ is a thick $(A, \Ocal)$--graph and is quasi-isometric to $\hat \G(A, \Ocal, X_0)$. We can therefore find a strongly bounded relative presentation of $A$ with respect to $ X $ and construct  $\Xcal= \cup_j X_j \cup X$ similarly. We denote $\G(A,\Xcal)$ by $\G_A$ and construct $\Acal = \hat \G (A, \Ocal,\Xcal)$ and $\A_r = \K_r(A,X,\cup_j X_j ,\Ocal)$ with base graph $\G_A$.
\end{construction}
    
    By \cref{lem:coned-off-cayley-graph-add-edge-in-P}, $\Bcal$ is a thick $(B,\Ecal)$-graph. Furthermore, $\Bcal$ is quasi-isometric to $\Bcal_0$. In fact, there exists a quasi-isometry between $\Bcal$ and $\Bcal_0$ that restricts to the identity on vertices.
    
    By \cref{rmk:base-graph-of-Kr-connect} we may assume that $\G_A$ and $\G_B$ are connected.      Observe that $\Ycal \subseteq \Xcal$. As a result, $\G_B\subseteq \G_A$.

    \begin{construction}
        \label{cons:qc-induce-map-coned-off-graph}
        We construct a map $\phi: \Bcal \to \Acal$ in the following way. For each $E_i\subseteq \Ecal$, there exists $c_{E_i}\in A$ and $O_j\in \Ocal$ such that $E_i\subseteq P_j^{c_{E_i}}$. For each $c_{E_i}$, let $\nu_i$ be a geodesic connecting $1$ and $c_{E_i}$ in $\G_A$. 
        
        \begin{itemize}
            \item Any vertex or edge in $\G_B \subseteq \Bcal$ is sent to the corresponding vertex or edge in $\Acal$. 
            \item Any apex $sE_i$ is sent to the apex $sc_{E_i}O_j$.
            \item Any cone edge $\{h, sE_i\}$ is sent to the concatenation of $h.\nu_i$ and the cone edge $\{hc_{E_i}, sc_{E_i} O_j\}$. We don't specify the parametrization of the path here.
        \end{itemize}
        Notice that $\phi$ coincides with the extension of inclusion of $B\subseteq A$ at all vertices (See \cref{def:extension-of-map-to-coned-off}).
    \end{construction}
    
    \begin{lemma}
    \label{lem:qc-induce-map-is-qi}
        There exists $k_\phi,l_\phi>0$ such that for any parametrization of cone edges, the map $\phi:\Bcal \to \Acal$ is a $(k_\phi,l_\phi)$-quasi-isometric embedding that is injective on vertices and maps any path in $\Bcal$ to a path in $\Acal$. Furthermore, the set of apices that $\phi(c)$ passes through consists of images of apices in $c$. 
        
        In particular, there exists $\l_\phi >0$ such that for any parametrization of $\phi$, $l(\phi(c)) \leq \l_\phi l(c) + \l_\phi$ for every path $c$ in $\Bcal$. 
    \end{lemma}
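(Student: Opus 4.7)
The plan is to deduce the claimed properties of $\phi$ from those of $\phi_0:\Bcal_0\to \Acal_0$ established in \cref{prop:conedoff-embed-injective-subset-generator}, by exploiting the fact that $\phi$ and $\phi_0$ agree on the shared vertex set. The key observation is that, by \cref{lem:coned-off-cayley-graph-add-edge-in-P}, the inclusions $\Bcal_0\hookrightarrow \Bcal$ and $\Acal_0\hookrightarrow \Acal$ are equivariant quasi-isometries restricting to the identity on vertices, and by construction $\phi$ coincides on vertices with $\phi_0$ (both implement the extension of the inclusion $B\hookrightarrow A$ respecting the peripheral structure, using the same coset representatives $c_{E_i}$). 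Hence on the vertex set $\phi$ factors as a composition of QI-equivalences with a QI-embedding, so is itself a QI-embedding; injectivity on vertices is inherited from that of $\phi_0$.

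Next I would control the images of edges in order to promote this to a QI-embedding on the whole graph and to extract the Lipschitz constant. Edges of $\Bcal$ lying in $\G_B$ are labelled by elements of $\Ycal\subseteq \Xcal$ and map to single edges of $\G_A\subseteq \Acal$. A cone edge $\{h,hE_i\}$ maps to the concatenation of the path $h.\nu_i$ with a single cone edge, contributing total length at most $1+l(\nu_i)$. Since $\Ecal$ is finite, $\l_\phi:=1+\max_i l(\nu_i)$ is a finite constant independent of the edge; this simultaneously provides $l(\phi(c))\leq \l_\phi\, l(c)$ for every path $c\subseteq \Bcal$, the fact that $\phi$ sends paths to paths (since each edge is already sent to a path), and the upper Lipschitz bound in the QI-embedding inequality. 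Together with the vertex-level QI-embedding from the previous step, this produces the desired constants $k_\phi, l_\phi$.

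For the apex-tracking claim, I would observe that only cone edges of $\Bcal$ can produce apices in $\Acal$ under $\phi$: an edge of $\G_B$ maps to an edge of $\G_A$, and $\G_A$ contains no apices by definition. For a cone edge $\{h,hE_i\}$, the initial segment $h.\nu_i$ lies entirely in $\G_A$ and so passes through no apex, while the final cone edge contributes exactly the apex $hc_{E_i}O_j=\phi(hE_i)$. Therefore the apices traversed by $\phi(c)$ are precisely the images of the apices traversed by $c$. The whole argument is essentially formal once the identification $\phi|_{\text{vertices}}=\phi_0|_{\text{vertices}}$ is made; the main (minor) obstacle is the bookkeeping needed to ensure that the coset representatives $c_{E_i}$ and the connecting geodesics $\nu_i$ used to define $\phi$ are chosen consistently with the construction underlying $\phi_0$, which can be arranged at the outset since both ultimately come from the QI-embedding $L\hookrightarrow K$ of \cref{prop:conedoff-embed-injective-subset-generator}.
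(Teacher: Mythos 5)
Your proposal follows essentially the same route as the paper: reduce the quasi-isometric embedding and injectivity claims to $\phi_0$ via the vertex-preserving quasi-isometries $\Bcal_0\hookrightarrow\Bcal$ and $\Acal_0\hookrightarrow\Acal$, then bound edge images using the finiteness of $\Ecal$ and inspect cone edges to track apices. The only small slip is the claim $l(\phi(c))\leq \l_\phi\,l(c)$ for \emph{every} path $c$: since a path may begin or end with a partial cone edge and the parametrization of cone edges is arbitrary, a subsegment of tiny length may still map to a path of length close to $\l_\phi$, so that ratio is unbounded. This is precisely why the lemma is stated with the additive constant $+\l_\phi$, and why the paper argues that \emph{any} subsegment $e$ of a cone edge satisfies $l(\phi(e))\leq\l_\phi$ rather than a multiplicative Lipschitz bound on partial edges. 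Correcting your inequality to $l(\phi(c))\leq \l_\phi\,l(c)+\l_\phi$ and justifying it this way would make the argument match the paper exactly.
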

    \begin{proof}
        By construction, there exists a quasi-isometry $f:\Acal_0 \to \Acal$ and a quasi-isometry $g:\Bcal_0 \to \Bcal$. Both $f$ and $g$ restrict to identity on vertices. Observe also that $\phi$ coincides with $\phi_0$ on vertices. Recall that by \cref{prop:conedoff-embed-injective-subset-generator} $\phi_0$ is injective on vertices. It follows that $\phi$ is a quasi-isometric embedding that is injective on vertices.

        By construction $\phi$ maps any edge in $\Bcal$ to an edge path. Thus for any path $c$, $\phi(c)$ is a path. It is also clear from the construction that $\phi(c)$ passes through an apex if and only if $c$ passes through one.
    
        Since $\Ecal$ is a finite set, there exists $\l_\phi$ such that $l(\nu_i)\leq \l_\phi-1$ for all $i$. Notice that for any subsegment $e$ of a cone edge, we have $\phi(e) \leq \l_\phi$. It follows that $l(\phi(c)) \leq \l_\phi l(c) + \l_\phi$. 
        
  \end{proof}

    \begin{construction}
        \label{construction:qc-extend-map-coned-off-cusped-space}
        For all $r>0$, we construct a map $\phi_r:\mb{B}_r \to \mb{A}_r$ in a manner similar to \cref{cons:qc-induce-map-coned-off-graph}. Since both $\mb{A}_r$ and $\Acal$ contains a copy of $\G_A$, we may use the same $c_{E_i}$ and $\nu_{E_i}$ for each $i$.
        \begin{itemize}
            \item Any vertex or edge in $\G_B \subseteq \mb{B}_r$ is sent to the corresponding vertex or edge in $\mb{A}_r$. 
            \item Any apex $sE_i$ is sent to the apex $sc_{E_i}O_j$.
            \item Any vertex in a horoball is determined by a quadruple $(h,s,E_i, d)$, where $h\in sE_i$, and $d \in \N$. We define $\phi_r(h,s,E_i,d) = (hc_{E_i}, sc_{E_i}, O_j, d)$.
            \item Any edge at depth $\geq 1$ in the coned-off horoball over $sE_i$ is sent to the corresponding edge in the coned-off horoball over $sc_{E_i}O_j$.
            \item Any vertical edge $\{ h, (h,s,E_i, 1) \}$ is sent to the concatenation of $\nu_{E_i}$ and $\{ hc_{E_i}, (hc_{E_i}, sc_{E_i}, O_j, 1) \}$. Again, we don't specify the parametrization for now.
        \end{itemize}
    \end{construction}

    A similar argument of \cref{lem:qc-induce-map-is-qi} gives the following property.
    \begin{lemma}
        \label{lem:qc-induce-map-cusp-is-Lipschitz}
        Let $\l_\phi$ be the constant from \cref{lem:qc-induce-map-is-qi}. For all $r\in \N \cup \{0\}$, and for any parametrization of vertical edges,  $\phi_r$ is an $H$-equivariant quasi-isometry such that $l(\phi(c)) \leq \l_\phi l(c) + \l_\phi$ for any path $c$ in $\mathbb{B}_r$.
    \end{lemma}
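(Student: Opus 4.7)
The plan is to mimic the proof of \cref{lem:qc-induce-map-is-qi}, transplanting the argument from the coned-off Cayley graphs to the coned-off cusped Cayley graphs. The three things to check are $H$-equivariance, the upper Lipschitz bound (length inequality), and the quasi-isometric embedding property; of these, the first two are routine bookkeeping from \cref{construction:qc-extend-map-coned-off-cusped-space}, and the third is the main point.

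First I would verify $H$-equivariance and injectivity on vertices. Equivariance is immediate from the definition of $\phi_r$, since translating a quadruple $(h,s,E_i,d)$ on the left commutes with the postcomposition by right-multiplication by $c_{E_i}$. Injectivity on vertices splits into two cases: on $\G_B$, $\phi_r$ agrees with $\phi$ from \cref{cons:qc-induce-map-coned-off-graph}, which is injective on vertices by \cref{lem:qc-induce-map-is-qi}; on horoball vertices, the quadruple $(h,s,E_i,d)$ determines $(hc_{E_i}, sc_{E_i}, O_j, d)$, and distinct cosets $sE_i$ map to distinct cosets $sc_{E_i}O_j$ because $\phi$ is injective on apices.

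Next I would establish the length inequality $l(\phi_r(c)) \leq \l_\phi\, l(c) + \l_\phi$ by a case analysis on edge type. Edges inside $\G_B$ are handled by \cref{lem:qc-induce-map-is-qi}. Horizontal edges and cone edges at depth $\geq 1$ inside a coned-off horoball of $\mb{B}_r$ map to single edges in the corresponding horoball of $\mb{A}_r$, contributing length $1$. The only remaining edges are the vertical edges from the base $\G_B$ to depth $1$, each of which is sent to the concatenation of $\nu_{E_i}$ and a single depth-$1$ vertical edge; since $l(\nu_{E_i}) \leq \l_\phi - 1$ by the construction of $\l_\phi$ in the proof of \cref{lem:qc-induce-map-is-qi}, this contributes length at most $\l_\phi$. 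Summing yields the stated bound, with the additive constant $\l_\phi$ absorbing the single $\nu_{E_i}$-detour at the endpoints if $c$ begins or ends on the interior of a vertical edge.

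Finally I would prove the quasi-isometric embedding property. The idea is to decompose a geodesic $\gamma$ in $\mb{B}_r$ connecting $u, v$ into regular horoball excursions (in the sense of \cref{def:regular-geodesic} and \cref{lem:shape-geodesic-cusped-horoball}) alternating with subsegments in $\G_B$, which is possible up to bounded error thanks to the uniform hyperbolicity of \cref{lem:uniform-hyperbolicity-for-coned-off-cusped-space-Kr} and the convexity of coned-off horoballs from \cref{lem:coned-off-horoball-is-convex}. The image $\phi_r(\gamma)$ then consists of the images of these horoball excursions, which are again regular excursions in the corresponding horoballs of $\mb{A}_r$ (length-preserving up to the uniformly bounded $\nu_{E_i}$-detour at each entry and exit), interleaved with the images under $\phi$ of the base-graph pieces. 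Applying the lower Lipschitz estimate for $\phi$ from \cref{lem:qc-induce-map-is-qi} to the base pieces, and the isometric identification on horoball pieces, yields constants $K, L$ independent of $r$ such that $d_{\mb{A}_r}(\phi_r(u), \phi_r(v)) \geq K^{-1} d_{\mb{B}_r}(u,v) - L$.

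The main obstacle will be making the constants in Step 3 independent of $r$. This is exactly where uniform hyperbolicity (\cref{lem:uniform-hyperbolicity-for-coned-off-cusped-space-Kr}) and uniform convexity of horoballs (\cref{lem:coned-off-horoball-is-convex}) are essential: they let us control each horoball excursion by constants depending only on the hyperbolicity constant $\delta$, not on the cone depth $r$, so that the final estimate on $d_{\mb{A}_r}(\phi_r(u),\phi_r(v))$ is uniform in $r$.
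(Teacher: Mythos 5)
Your Steps~1 and~2 (equivariance, injectivity, and the Lipschitz upper bound with constant $\l_\phi$) are correct, and they match the paper's intent: the paper offers no separate proof beyond ``a similar argument'' to \cref{lem:qc-induce-map-is-qi}, and the edge-by-edge bookkeeping you give is exactly that argument.

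Step~3, however, has a genuine gap. The issue is your claim that the lower quasi-isometry bound follows by ``applying the lower Lipschitz estimate for $\phi$ from \cref{lem:qc-induce-map-is-qi} to the base pieces'' and summing with the horoball excursions. The lower Lipschitz estimate for $\phi$ is an estimate for the map $\Bcal\to\Acal$ between \emph{coned-off} Cayley graphs, i.e.\ in the metrics $d_\Bcal$, $d_\Acal$. The base pieces of a geodesic in $\mb{B}_r$ live in $\G_B$ and map into $\G_A$, and distances in these ordinary Cayley graphs are a priori much larger than (and not comparable to) the coned-off distances; $\G_B\hookrightarrow\G_A$ is generally \emph{not} a quasi-isometric embedding, even when $\Bcal\hookrightarrow\Acal$ is. So the piecewise estimate you propose does not apply. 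Moreover, even with correct piecewise lower bounds, the total estimate $d_{\mb{A}_r}(\phi_r(u),\phi_r(v))\gtrsim d_{\mb{B}_r}(u,v)$ requires knowing that a geodesic in $\mb{A}_r$ tracks $\phi_r(\gamma)$, which is tantamount to knowing that $\phi_r(\gamma)$ is a quasi-geodesic — but that is exactly what the lower bound asserts, so the argument as sketched is circular. Finally, you assert constants $K,L$ \emph{independent} of $r$; the lemma does not claim this and the paper does not prove it for the quasi-isometric embedding constants of $\phi_r$ — only the Lipschitz upper bound $\l_\phi$ is $r$-independent, while the lower bound constants genuinely depend on $r$ (uniformity in $r$ is the much harder content of \cref{thm:uniform_qc}, which only concerns quasiconvexity of the image, not the embedding constants).

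The paper actually establishes the lower bound for $\phi_r$ in \cref{lem:qc-extend-cusped-map-is-QI}, via a different route: it shows (using \cref{cons:reparametrization-of-edge}) that $\phi_r$ agrees on vertices of depth $<r$ with $\Phi_r = \iota^r_{r-1}\circ\hat\phi\circ\pi^r_{r-1}$, a composition of the known quasi-isometry $\pi^r_{r-1}:\mb{B}_r\to\Bcal$, the quasi-isometric embedding $\hat\phi:\Bcal\to\Acal$, and the quasi-isometry $\iota^r_{r-1}:\Acal\to\mb{A}_r$, then handles the depth $\geq r$ region separately. This sidesteps the need to show directly that $\phi_r(\gamma)$ is a quasi-geodesic. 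To repair your proof you should either adopt this composition strategy or, if you want to keep the decomposition picture, you would need to argue in the cusped metrics throughout (not the base Cayley graphs) and prove a local-to-global statement for the decomposed path — at which point you are essentially re-deriving the $\Phi_r$ comparison.
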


In the rest of \cref{sec:prepare-uniform-QC}, we fix the notation $A, B, \Ocal, \Ecal, \Acal, \Bcal, \mathbb{A}_r, \mb{B}_r, \G_A, \G_B$ as constructed in \cref{cons:generating-set-in-actual-use} and  $\phi:\Acal \to \Bcal$ from \cref{cons:qc-induce-map-coned-off-graph}. We fix also $\delta >0$ to be a positive integer larger than the hyperbolicity constant from  \cref{lem:uniform-hyperbolicity-for-coned-off-cusped-space-Kr} such that for every $r\in \N \cup \{0\}$, $\A_r$, $\mb{B}_r$, $\Acal$, and $\Bcal$ are $\delta$-hyperbolic spaces.

Recall the definition of $T_v\G$ (see \cref{def:angle}) and the angle metric (see \cref{def:angle_of_paths}). Suppose $b$ is an apex in $\Bcal$ and $u,v \in T_b\Bcal$. By a slight abuse of notation, we denote $\angle_{\phi(b)}(\phi(\{ u,b\}),\phi(\{ b,v \}))$ as $\angle_{\phi(b)}(\phi(u),\phi(v))$.

\subsection{The Big Angle Property of Coned-off Space}
\label{subsec:big-angle}
We first examine some geometric properties of the coned-off Cayley graph.

\begin{lemma}
    \label{lem:AngleUsingQuasiGeod}
    For all $k,l>0$, there exists  $\lambda = \lambda(\delta, k, l)$ satisfying the following: 
    
    Suppose that  $a \in \Acal$ is an apex, $c_1, c_2$ a pair of $(k,l)$-quasi-geodesics  that share an endpoint $a$ in $\Acal$ and that do not go through $a$ elsewhere, and  $\g_1, \g_2$ a pair of geodesics  that share the same endpoints with $c_1,c_2$ respectively. Then the following inequalities hold: 
    \[
    \angle_a(c_1,c_2) - \lambda \leq \angle_a(\g_1,\g_2) \leq \angle_a(c_1,c_2) + \lambda  . 
    \]

    In particular, $ \angle_a(c_1,c_2) = \infty$ if and only if  $\angle_a(\g_1,\g_2) = \infty$.
\end{lemma}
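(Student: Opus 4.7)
The plan is to use stability of quasi-geodesics (the Morse lemma) in the $\delta$-hyperbolic graph $\Acal$ to compare each $c_i$ with $\gamma_i$, and observe that the comparison can be realized by short paths that avoid $a$, which directly bounds the angle at $a$.

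Parametrize $c_i:[0,T_i]\to\Acal$ and $\gamma_i:[0,S_i]\to\Acal$ with $c_i(0)=\gamma_i(0)=a$, and let $p_i$ denote their common other endpoint. Write $x_i = c_i(1)$ and $y_i=\gamma_i(1)$; by \cref{def:angle_of_paths} one has $\angle_a(c_1,c_2) = \angle_a(x_1,x_2)$ and $\angle_a(\gamma_1,\gamma_2) = \angle_a(y_1,y_2)$. Let $M = M(\delta,k,l)$ be the Morse constant for $\Acal$, so any two $(k,l)$-quasi-geodesics in $\Acal$ with common endpoints lie within Hausdorff distance $M$ of each other. Choose $L = L(\delta,k,l)$ large enough that every $(k,l)$-quasi-geodesic $c$ starting at $a$ of length at least $L$ satisfies $d_\Acal(a,c(L)) \geq 3M + 10$; the lower quasi-geodesic bound makes $L = k(3M+10+l)$ suffice.

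To bound $\angle_a(x_i,y_i)$, split into two cases. If $l(c_i)\leq L$, then also $l(\gamma_i)\leq kL+l$, and concatenate $c_i|_{[1,T_i]}$ (from $x_i$ to $p_i$) with the reverse of $\gamma_i$ from $p_i$ to $y_i$; neither subpath passes through $a$, the first by hypothesis and the second because $\gamma_i$ is a geodesic starting at $a$. If instead $l(c_i) > L$, the Morse lemma provides $z_i\in\gamma_i$ with $d_\Acal(c_i(L), z_i) \leq M$. Since $d_\Acal(a,c_i(L)) \geq 3M+10$, any geodesic segment $[c_i(L),z_i]$ has length at most $M$ and therefore cannot contain $a$ (else $d_\Acal(a,c_i(L)) \leq M$, contradicting the choice of $L$). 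Moreover $d_\Acal(a,z_i)\geq 2M+10$, and the subsegment of $\gamma_i$ from $z_i$ to $y_i$ has length at most $L+M$ and avoids $a$ (it is part of a geodesic starting at $a$). Concatenating $c_i|_{[1,L]}$, the geodesic $[c_i(L),z_i]$, and the subpath of $\gamma_i$ from $z_i$ to $y_i$ yields a path from $x_i$ to $y_i$ in $\Acal\setminus\{a\}$ of length at most some $\lambda'=\lambda'(\delta,k,l)$. Since $\angle_a$ coincides with the combinatorial distance in $\Acal\setminus\{a\}$, it satisfies the triangle inequality, and
\[
|\angle_a(c_1,c_2) - \angle_a(\gamma_1,\gamma_2)| \leq \angle_a(x_1,y_1) + \angle_a(x_2,y_2) \leq 2\lambda' =: \lambda.
\]
The "in particular" clause is then immediate, since adding a finite quantity to either side cannot convert a finite value into $\infty$ or conversely.

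The main technical point is guaranteeing that the Morse-lemma detour from $c_i(L)$ to $z_i$ avoids $a$. This is why $L$ is chosen not merely so that $c_i(L)$ is far along $c_i$, but so that $c_i(L)$ is far from $a$ in the graph metric of $\Acal$; the hypothesis that $c_i$ does not revisit $a$ is precisely what makes the initial walk from $x_i$ to $c_i(L)$ along $c_i$ remain in $\Acal\setminus\{a\}$.
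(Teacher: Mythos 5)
Your proof is correct and uses essentially the same core idea as the paper: apply the Morse lemma in the $\delta$-hyperbolic graph $\Acal$ and carefully arrange that the short detour between a quasi-geodesic and its companion geodesic misses the apex $a$, by taking the comparison point far enough from $a$ in the graph metric. The organization differs mildly—you invoke the triangle inequality for $\angle_a$ to reduce to bounding $\angle_a(x_i,y_i)$ for each $i$ separately, whereas the paper performs a single concatenation from $v_1$ to $v_2$, and you anchor at a point far along $c_i$ rather than far along $\gamma_i$—but these are bookkeeping variants of the same argument, and your constants and the avoidance-of-$a$ claims all check out.
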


\begin{proof}
     Let $i\in \{1,2\}$. By \cite[Theorem III.H.1.7]{bridsonMetricSpacesNonPositive1999}, there exists $\lambda_0 = \lambda_0(\delta, k, l)$ such that the Hausdorff distance between $c_i$ and $\g_i$ is bounded above by $\lambda_0$ for $i=1,2$.  Let $u_i$ be the vertex adjacent to $a$ on $\g_i$, and $v_i$ be the vertex adjacent to $a$ on $c_i$ respectively.  

     Suppose first $\angle_a(\g_1,\g_2)<\infty$. 
     Let $c'_i$ (resp. $\g'_i$) be the longest subsegment of $c_i$ (resp. $\g_i$) between $v_i$ (resp. $u_i$) and $a$ for each $i$.
     Let $c$ be a path whose length realizes $\angle_a(\g_1,\g_2)$. 
     Then $c'_1\cup \g'_1\cup c\cup \g'_2 \cup c'_2$ is a path connecting $v_1,v_2$ that does not pass through $a$. It follows that $\angle_a(c_1,c_2 ) < \infty$. 
     Similarly, if $\angle_a(\g_1,\g_2)<\infty$, then $\angle_a(c_1,c_2) < \infty$. 
     Therefore  $\angle_a(c_1,c_2) = \infty$ if and only if  $\angle_a(\g_1,\g_2) = \infty$.

     Suppose now that $ \angle_a(c_1,c_2) < \infty$ and  $ \angle_a(\g_1,\g_2) < \infty$.

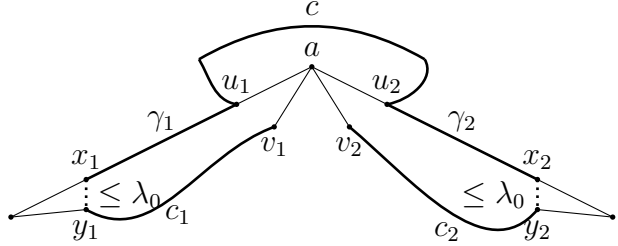
\begin{figure}[h!]
\centering
    \begin{tikzpicture}
        \fill[] (0,0) circle (1pt)  (4,2) circle (1pt) node[above]{$a$} (8,0) circle (1pt);
        \fill[] (1,0.5)  circle (1pt) node[above]{$x_1$} (3,1.5)  circle (1pt) node[above]{$u_1$} (5,1.5)  circle (1pt) node[above]{$u_2$} (7,0.5) circle (1pt) node[above]{$x_2$};
        \fill[] (1,0.1) circle (1pt)  node[below]{$y_1$} (3.5,1.2) circle (1pt)  node[below]{$v_1$} (4.5,1.2) circle (1pt)  node[below]{$v_2$} (7,0.1) circle (1pt)  node[below]{$y_2$};
        \draw (0,0) -- (4,2) node[midway, above] {$\g_1$};
        \draw[line width=1pt] (1,0.5) -- (3, 1.5);
        \draw (4,2) -- (8,0) node[midway, above] {$\g_2$};
        \draw[line width=1pt] (5,1.5) --(7,0.5);
        \draw[dotted, line width=1pt] (1,0.5) -- (1,0.1) node[midway, right] {$\leq \l_0$};
        \draw[dotted,line width=1pt] (7,0.5) -- (7,0.1) node[midway, left] {$\leq \l_0$} ; 
        \draw (0,0) -- (1,0.1);
        \draw[line width=1pt] (1,0.1) to[out=-30, in=-160] node[midway, below] {$c_1$} (3.5,1.2);
        \draw (3.5,1.2) -- (4,2);
        \draw (4,2) -- (4.5,1.2);
        \draw[line width=1pt] (4.5,1.2) to[out=-40, in=230] node[midway, below] {$c_2$} (7,0.1);
        \draw (7,0.1) -- (8,0);
        \draw[line width=1pt] (3,1.5) to[out=170, in=300] (2.5,2.1);
        \draw[line width=1pt] (2.5,2.1) to[out=30, in=150]  node[midway, above] {$c$} (5.5,2.1);
        \draw[line width=1pt]  (5.5,2.1) to[out = -60, in=10] (5,1.5);   
    \end{tikzpicture}
\caption{A path  between $v_1$ and $v_2$ that does not pass through $a$}
\label{fig:image-of-quasigeodesic-angle}
\end{figure}
     
     If $l(\g_i)>\lambda_0$, then there exists a point $x_i$ on $\g_i$ that is $\lambda_0+1$ away from $a$, and $y_i$ on $c_i$ such that $d(x_i,y_i) \leq \lambda_0$. Notice that $[x_i, y_i]$ does not contain $a$ because $\g_i$ is geodesic. In particular, $y_i$ cannot be $a$. 
     
     Let $c$ be a path whose length realizes $\angle_a(\g_1,\g_2)$, and consider the path: 
     \[
     c' = [v_1,y_1]\cup [y_1,x_1]\cup[x_1, u_1]\cup c \cup [u_2,x_2] \cup [x_2,y_2]\cup [y_2,v_2].     \]
     Then $c'$ is a path connecting $v_1,v_2$ that doesn't contain $a$. We then have:
     \[
     \angle_a(c_1,c_2) \leq l(c') \leq d(v_1,y_1) +4\lambda_0 + d(v_2,y_2) + l(c).
     \]

     Notice also that a geodesic $[a,y_i]$ has length at most $2\lambda_0 +1$. Since $c_i$ is a $(k,l)$-quasi-geodesic, so is a segment of $c_i$. We then have $d(v_i,y_i)\leq k(2\lambda_0 +1) + l -1 $.    Let $\lambda = 4\lambda_0 +  2(k(2\lambda_0 +1) + l -1)$, it follows that:
     \[
     \angle_a(c_1,c_2) \leq \angle_a(\g_1,\g_2) + \lambda.
     \]
     If $l(\g_i)\leq \lambda_0$, then let $x_i$ be the endpoint other than $a$, and let $y_i = x_i$, and we have the same conclusion.
     Similarly, considering a path whose length defined $\angle_a(c_1,c_2)$ and concatenating with the same segments as above (in a slightly different order), we have:
     \[
     \angle_a(\g_1,\g_2) \leq \angle_a(c_1,c_2) + \lambda.
     \]
\end{proof}
 
\begin{lemma}
    \label{lem:GeodpassBigAngle}
    There exists $D=D(\delta,k,l) >16 \d$ so that for every apex $a\in \Acal$ and every $(k,l)$-quasi-geodesic $c_1$ (resp. $c_2$) starting at $a$ and ending at $x$ (resp. $y$) for which $\angle_a(c_1,c_2)> D$,  any geodesic $[x,y]$ must pass through $a$.
\end{lemma}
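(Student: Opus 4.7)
The plan is to prove the contrapositive: supposing some geodesic $[x,y]$ does not pass through $a$, I would bound $\angle_a(c_1,c_2)$ by a constant depending only on $\delta,k,l$. By \cref{lem:AngleUsingQuasiGeod} this reduces (at the cost of an additive $\lambda = \lambda(\delta,k,l)$) to bounding $\angle_a(\g_1,\g_2)$, where $\g_i$ is a geodesic from $a$ to the endpoint of $c_i$. The setup becomes the geodesic triangle with sides $\g_1,\g_2,[x,y]$ in the $\delta$-hyperbolic graph $\Acal$, and the task is to exhibit a short path from $u_1$ to $u_2$ in $\Acal\setminus\{a\}$, where $u_i$ is the vertex on $\g_i$ adjacent to $a$.

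The core construction uses $\delta$-thinness of the triangle. I would walk from $u_1$ along $\g_1$ away from $a$ to a point $u_1^{(r)}$ at distance $r$ from $a$, where $r$ is a small multiple of $\delta$ (for instance $r = 2\delta+1$), and symmetrically pick $u_2^{(r)}$ on $\g_2$. By $\delta$-thinness, $u_1^{(r)}$ is within distance at most $\delta$ of some $p_1 \in \g_2 \cup [x,y]$, and similarly one obtains $p_2$ near $u_2^{(r)}$. The key observation is that once $r$ is large enough, any geodesic $[u_1^{(r)},p_1]$ of length $\leq \delta$ cannot pass through $a$, since a detour through $a$ costs at least $r + d(a,p_1) \geq r + 1 > \delta$. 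When $p_1,p_2$ land on $\g_2,\g_1$ respectively, fellow-traveling of $\g_1$ and $\g_2$ near $a$ provides a bridge of length $O(\delta)$; when they land on $[x,y]$, I would walk between them along the subsegment of $[x,y]$, which is $a$-avoiding by the standing hypothesis. Summing the pieces yields $\angle_a(\g_1,\g_2) \leq 16\delta + O(1)$, so setting $D = 16\delta + O(1) + \lambda$ finishes the proof.

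The hard part will be the degenerate regime in which the Gromov product $\alpha = (x\mid y)_a$ is small: the insize points of the triangle then lie near $a$, and a direct tripod-crossing between $\g_1$ and $\g_2$ at depth $r$ could still pass through $a$ because the tripod distortion can exceed $2r$. The remedy is to push the crossing past the insize region, so that the $\delta$-thin partner of $u_1^{(r)}$ is forced onto $[x,y]$ rather than onto $\g_2$; then routing from $p_1$ to $p_2$ along $[x,y]$ is automatically $a$-avoiding. A secondary degenerate case is when $x$ or $y$ itself is within $O(\delta)$ of $a$, so that $u_1^{(r)}$ or $u_2^{(r)}$ may fail to exist; here the $(k,l)$-quasi-geodesic condition forces $l(c_i) = O(k\delta + l)$, and the naive path from the first vertex of $c_1$ to $x$ to $y$ to the first vertex of $c_2$ (with middle portion along $[x,y]$) already has length $O(k\delta + l)$. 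Handling these degenerate cases uniformly, and tracking the constants so that $D$ is a function of $\delta,k,l$ alone, is where the main technical work concentrates.
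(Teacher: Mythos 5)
Your proposal is correct but takes a genuinely different route from the paper. Both arguments begin with \cref{lem:AngleUsingQuasiGeod} to pass from $(k,l)$-quasi-geodesics to geodesics $\g_1,\g_2$. From there the paper extracts a ``critical pair'' $(p,q)\in \g_1\times\g_2$, namely the last pair for which $[p,q]$ still passes through $a$, so that stepping one vertex along $\g_1$ to $p'$ yields an $a$-avoiding geodesic; it then analyzes the degenerate triangle $p'pq$ and the tripod of $p'aq$, the critical-pair extraction serving to normalize the Gromov product $(p'\mid q)_a$ to essentially $0$ before the tripod is used. You instead run the thin-triangle argument directly on the full triangle with vertices $a,x,y$: truncate each leg at radius $r\approx 2\delta$, use $\delta$-thinness to jump to the opposite side, and route the middle portion either along $\g_1$ and $\g_2$ (when $(x\mid y)_a\ge r$) or along $[x,y]$ (when $(x\mid y)_a<r$), each of which is manifestly $a$-avoiding. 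This is more elementary, bypassing the critical-pair search, at the cost of a case split on the branch-point location and on whether the legs are long enough for the truncation to exist, which you correctly flag as the delicate points. The constant $D\approx 16\delta+\lambda$ you sketch is of the same order as the paper's $D=16\lambda$. One small remark: after the reduction to geodesics the $(k,l)$ constants should not reappear in the short-leg case (a geodesic leg of length $<r$ already has length $O(\delta)$); this does not affect correctness.
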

\begin{proof}
    Let $u$ (resp. $v$) be the point on $c_1$ (resp. $c_2$) connected to $a$ by a cone edge. 
    
    First notice that,  by \cref{lem:AngleUsingQuasiGeod},  the angle of any geodesic $[x,a]$ and $[a,y]$ is bounded below by $\angle_a(u,v)-  \lambda$, where $\lambda = \lambda (\delta, k,l)$. We can thus assume $c_1$ and $c_2$ are geodesics. Without loss of generality, we can assume $\lambda >  \delta \geq 1$ are positive integers. Let $D= 16 \lambda$ and suppose $\angle_a (u,v) > D$. Since $D>2$, any geodesic $[u,v]$ must pass through $a$.

    Suppose there exists a geodesic $[x,y]$ that does not contain $a$. Then by relabeling $x, y$  if needed, there exists a pair of vertices $p \neq x \in c_1, q\in c_2$ satisfying the following properties:
    \begin{enumerate}
        \item Any geodesic connecting $p,q$ passes through $a$. 
        \item Let $p'\in [p,x]$ be the vertex next to $p$ on $c_1$.  There exists a geodesic $\g$ connecting $p'$ and $q$ that does not pass through $a$. 
    \end{enumerate}
    
    Consider the triangle $p'pq$. Since $d(p,p')=1$, $a$ lies in the $2\delta$ neighborhood of $p'q$. Notice that the length of $p'q$ is either $d(p,q)+1$ or $d(p,q)$. It cannot be shorter than $d(p,q)$, otherwise $[pp']\cup[p'q]$ is a geodesic avoiding $a$, contradicting the assumption. So the perimeter of the triangle $p'pq$ is either $2d(p,q)+1$ or $2d(p,q)+2$. Now, consider the path $c = [u,p']\cup[p',q]\cup[q,v]$. It does not contain $a$ because $c_1, c_2$ are geodesics. By assumption $l(c)>16\lambda >16\delta$. On the other hand, $c$ has two edges fewer than the triangle $p'pq$, hence  $l(c) \leq 2d(p,q)$. It follows that $d(p,q)>8\delta$ and one of $[a,p'], [a,q]$ must be longer than $4\delta$.

    Consider now the triangle $p'aq$ and calculate its Gromov product:
    \begin{align*}
        (p'\mid q)_a = & \frac 1 2\left[ d(a,p') +  d(a,q) - d(p',q) \right] \\
        =& \frac{1}{2}\left[ 1 + d(p,q) - d(p',q) \right] \\
        = &  \frac{1}{2} \text{ or } 0.
    \end{align*}
   
    Either way, the image of $u$ (resp. $v$) on the tripod lands on $[O,v_{p'}]$ (resp. $[O,v_q]$), where $O$ is the center point of the tripod. (Notice that $O$ is the image of $a$ in the case $(p'\mid q)_a = 0$.)

    Suppose $d(a,p'), d(a,q) \geq 2\delta$, then we can find a point $x$ (resp. $y$) on $[a,p']$ (resp. $[a,q]$) such that $d(a,x) = 2\delta$ (resp. $d(a,y)= 2\delta$). Let $x'$ (resp. $y'$) be the other preimage of the image of $x$ (resp. $y$) on the tripod. Since $d(x,x')$ (resp. $d(y,y')$) is bounded by $\delta$, we have $[x,x']$ and $[y,y']$ won't contain $a$. It follows that $c= [u,x]\cup [x,x'] \cup [x',y'] \cup [y'y] \cup [yv]$ is a path connecting $u,v$ and avoiding $a$.  Consider $[x', y']$ which is a subsegment of $[p',q]$. It is a geodesic, hence its length is bounded above by the length of path $[x',x]\cup [x,a]\cup [a,y] \cup [y,y']\leq 6\delta $. It follows that  $l(c)\leq 12\delta -2 $, which implies $\angle_a(u,v) \leq 12\delta -2 <D$, a contradiction.

    In the case where $[a,p']$ or $[a,q]$ is smaller than $2\delta$,  we take $x=p'$ or $y=q$. The rest of the argument still works. 

    It follows that $[p'q]$ must contain $a$, but this contradicts the choice of $p'$. Hence when $\angle_a(c_1,c_2)>D$ any geodesic connecting$[x,y]$ must pass through $a$.
\end{proof}
\begin{remark}
    \label{rmk:GeodpassBigAngle-can-be-arb-large}
    We remark that the results of \cref{lem:GeodpassBigAngle} remain true if we  replace $D$ by a larger number.
\end{remark}

Recall that we fix $B$ to be an $(A,\Ocal)$--quasiconvex subgroup and $\phi: \Bcal \to \Acal$ to be the quasi-isometric embedding from \cref{cons:qc-induce-map-coned-off-graph}.

\begin{lemma}
    \label{lem:AngleUpperBoundForHatSpace}
    There exists $M_A>0$ such that for any apex $b=sE_i$ in $\Bcal$ and any vertices $u,v \in   \Bcal$ adjacent to $b$,

     \[\angle_{\phi(b)}(\phi(u),\phi(v)) \leq M_A \angle_b(u,v).\]
\end{lemma}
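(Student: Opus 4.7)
The plan is to convert a path realizing $\angle_b(u,v)$ in $\Bcal$ into a path in $\Acal$ that avoids $\phi(b)$ and connects the two vertices adjacent to $\phi(b)$ which determine $\angle_{\phi(b)}(\phi(u),\phi(v))$. I may assume $1 \leq \angle_b(u,v) < \infty$: if $\angle_b(u,v) = 0$ then $u = v$ and both sides vanish, and if $\angle_b(u,v) = \infty$ the inequality is vacuous. So fix a path $\gamma$ in $\Bcal \setminus \{b\}$ from $u$ to $v$ with $l(\gamma) = \angle_b(u,v)$.

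Write $b = sE_i$. Inspecting \cref{cons:qc-induce-map-coned-off-graph}, the image $\phi(\{u,b\})$ is the translate $u.\nu_i$ (from $u$ to $uc_{E_i}$) followed by the cone edge $\{uc_{E_i}, \phi(b)\}$, and similarly $\phi(\{v,b\})$ ends with the cone edge $\{vc_{E_i}, \phi(b)\}$. Hence, by \cref{def:angle_of_paths}, $\angle_{\phi(b)}(\phi(u),\phi(v)) = \angle_{\phi(b)}(uc_{E_i}, vc_{E_i})$, and to bound this it suffices to exhibit a path in $\Acal \setminus \{\phi(b)\}$ from $uc_{E_i}$ to $vc_{E_i}$.

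I will take the concatenation $\alpha$ of the reverse of $u.\nu_i$ (from $uc_{E_i}$ back to $u$), the path $\phi(\gamma)$ (from $u$ to $v$), and $v.\nu_i$ (from $v$ to $vc_{E_i}$). The two $\nu_i$-translates lie in the base graph $\G_A \subseteq \Acal$, which contains no apices, so they avoid $\phi(b)$. For $\phi(\gamma)$, \cref{lem:qc-induce-map-is-qi} asserts that the apices on $\phi(\gamma)$ are precisely the $\phi$-images of apices on $\gamma$; since $b \notin \gamma$ and $\phi$ is injective on vertices, $\phi(b) \notin \phi(\gamma)$. The same lemma together with $l(\nu_i) \leq \l_\phi - 1$ bounds $l(\alpha) \leq 2\l_\phi + \l_\phi \angle_b(u,v) + \l_\phi$. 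Since $\angle_b(u,v) \geq 1$, this is linear in $\angle_b(u,v)$, so any $M_A \geq 4\l_\phi$ yields the claim.

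The only point requiring care is that the three pieces of $\alpha$ actually glue into an honest path in $\Acal$. This is automatic because the same $\nu_i$ appears in $\phi(\{u,b\})$ and $\phi(\{v,b\})$ (it depends only on $E_i$), so the endpoints at $u$ and $v$ match up and $\alpha$ traverses genuine edges. There is no substantive obstacle here; the bookkeeping in the construction of $\phi$ on cone edges is what makes the argument go through.
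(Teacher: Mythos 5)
Your proposal is correct and follows essentially the same argument as the paper's proof: push the path realizing $\angle_b(u,v)$ forward by $\phi$, sandwich it between the two translates of $\nu_i$ to get a path from $uc_{E_i}$ to $vc_{E_i}$ avoiding $\phi(b)$, and bound its length linearly using \cref{lem:qc-induce-map-is-qi}. Your handling of the degenerate $u=v$ case, the explicit invocation of injectivity on vertices, and the gluing remark are minor clarifications rather than a different route; the paper's own proof fixes the same $M_A = 4\lambda_\phi$.
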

\begin{proof} 
    The inequality holds trivially when $\angle_b(u,v) =\infty$. Suppose now $\angle_b(u,v)$ is finite.  Let $c$ be a path connecting $u,v$ that realizes $\angle_b(u,v)$. Then by the construction of $\phi$, $\phi (c)$ is a path connecting $\phi(u), \phi(v)$ that does not contain $ \phi (b)$. 
    By \cref{lem:qc-induce-map-is-qi}, there exists $\l_\phi >0$ such that $l(\phi(c)) \leq \l_\phi l(c)+\l_\phi$.
    
    By the construction, $\phi(\{u,b\})$ is the concatenation of $u.\nu_{E_i}$ and $\{uc_{E_i}, \phi(b) \}$. Similarly, $\phi(\{b,v\})$ is the concatenation of $v.\nu_{E_i}$ and $\{vc_{E_i},\phi(b)\}$.
    Recall that $\nu_{E_i}$ is a path in $\G_A$ of length at most $\l_\phi$. 
    It follows that the concatenation of $u.\nu_{E_i}$, $\phi(c)$, and $v.\nu_{E_i}$ is a path connecting $uc_{E_i}$ and $vc_{E_i}$ that does not go through $\phi(b)$ and has length at most $\l_\phi l(c) + 3\l_\phi$. 

    Let $M_A = 4\l_\phi$. Since $\angle_b(u,v)$ is an edge path and has length at least $1$, we have:
    \[ 
        \angle_{\phi(b)}(\phi(u),\phi(v)) \leq \l_\phi (\angle_b(u,v)) + 3\l_\phi \leq M_A 
        \angle_b(u,v).
    \]    
\end{proof}

\begin{corollary}
\label{cor:hat-space-path-angle-bound}
    Let $c$ be a path in $\Bcal$. Suppose for some $\a\geq 1$, $c$ does not pass through any apex with angle larger than or equal to $\a$. Then $\phi(c)$ is a path in $\Acal$ that does not pass through any apex with angle larger than or equal to $M_A\a$.
\end{corollary}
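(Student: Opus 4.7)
The plan is to read this statement as a direct packaging of the pointwise angle bound in \cref{lem:AngleUpperBoundForHatSpace} together with the control on which apices appear in the image provided by \cref{lem:qc-induce-map-is-qi}. There is essentially no new content; the only thing to verify carefully is that the angle of $\phi(c)$ at an apex $\phi(b)$ agrees, under the notational convention set just before \cref{lem:AngleUpperBoundForHatSpace}, with $\angle_{\phi(b)}(\phi(u),\phi(v))$, where $u,v$ are the vertices of $c$ adjacent to $b$.

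Concretely, I would argue as follows. Suppose $\phi(c)$ passes through some apex $b'$ of $\Acal$. By the "furthermore" clause of \cref{lem:qc-induce-map-is-qi}, the apices traversed by $\phi(c)$ are exactly images of apices traversed by $c$, so $b'=\phi(b)$ for some apex $b$ on $c$. Let $u,v$ be the vertices of $c$ immediately preceding and following $b$ along $c$; by \cref{def:angle_of_paths} the angle of $c$ at $b$ equals $\angle_b(u,v)$, and by hypothesis this quantity is strictly less than $\alpha$.

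Next, I would unpack the image under $\phi$ of the two cone edges $\{u,b\}$ and $\{b,v\}$ using \cref{cons:qc-induce-map-coned-off-graph}: each is sent to the concatenation of a path in $\G_A\subseteq\Acal$ (a translate of $\nu_i$) with a single cone edge incident to $\phi(b)$, so the only edges of $\phi(c)$ touching $\phi(b)$ are these two cone edges. Hence the first vertices of $\phi(c)$ on either side of $\phi(b)$ which are adjacent to $\phi(b)$ are $uc_{E_i}$ and $vc_{E_i}$, and the angle of $\phi(c)$ at $\phi(b)$ coincides with $\angle_{\phi(b)}(\phi(u),\phi(v))$ in the abbreviated notation fixed just before \cref{lem:AngleUpperBoundForHatSpace}.

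Applying \cref{lem:AngleUpperBoundForHatSpace} together with the hypothesis then gives
\[
\angle_{\phi(b)}\bigl(\phi(u),\phi(v)\bigr)\ \leq\ M_A\,\angle_b(u,v)\ <\ M_A\alpha,
\]
so no apex of $\phi(c)$ carries angle $\geq M_A\alpha$, as required. The main (very mild) subtlety is the bookkeeping step that identifies the path-angle of $\phi(c)$ at $\phi(b)$ with $\angle_{\phi(b)}(\phi(u),\phi(v))$; once the explicit form of $\phi$ on cone edges is in hand, this is immediate and no further geometric input is needed.
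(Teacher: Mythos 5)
Your proof is correct and follows essentially the same route as the paper's: cite \cref{lem:qc-induce-map-is-qi} to identify the apices visited by $\phi(c)$ as images of apices of $c$, then apply the pointwise bound of \cref{lem:AngleUpperBoundForHatSpace}. The extra bookkeeping step you include — unpacking $\phi$ on cone edges to match the path-angle of $\phi(c)$ at $\phi(b)$ with $\angle_{\phi(b)}(\phi(u),\phi(v))$ — is correct and makes explicit a detail the paper's two-line proof leaves implicit.
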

\begin{proof}
    By \cref{lem:qc-induce-map-is-qi}, if $\phi(c)$ passes through an apex, then that apex is an image of an apex in $c$. 
    By \cref{lem:AngleUpperBoundForHatSpace} if $c$ passes through an apex $b$ with angle smaller than $\a$, then $\phi(c)$ passes through $\phi(b)$ with angle smaller than $M_A \a$ by \cref{lem:AngleUpperBoundForHatSpace}. 
\end{proof}

We observe that $\angle_a(u,v)$ also controls $\angle_{\phi(a)}(\phi(u),\phi(v))$ from below, which may be of independent interest. 

\begin{lemma}
\label{lem:AngleLowerBoundForHatSpace}
    Fix an apex $b\in \Bcal$. For each $M>0$ there exists $\bar{M}=\bar{M}(M)>0$ such that for any $u,v \in \Bcal$ adjacent to $b$, if $\angle_b(u,v)>\bar{M}$, then $\angle_{\phi(b)}(\phi (u), \phi(v)) > M$. 
\end{lemma}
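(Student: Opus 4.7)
The plan is to prove the contrapositive by a compactness argument that exploits the fineness of $\Acal$ at $\phi(b)$, the injectivity of $\phi$ on vertices, and the transitive action of the stabilizer of $b$ on $T_b\Bcal$. Write $b = sE_i$, so $T_b\Bcal = sE_i$, and $\phi(b) = sc_{E_i}O_j$ for some $O_j\in\Ocal$ with $E_i\subseteq O_j^{c_{E_i}}$. By the explicit form of $\phi$ on a cone edge $\{u,b\}$ given in \cref{cons:qc-induce-map-coned-off-graph}, the vertex of $\phi(\{u,b\})$ adjacent to $\phi(b)$ is $uc_{E_i}$, and the map $\phi^{*}: u\mapsto uc_{E_i}$ from $T_b\Bcal$ to $T_{\phi(b)}\Acal$ is injective since $\phi$ is injective on vertices by \cref{lem:qc-induce-map-is-qi}. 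In particular $\angle_{\phi(b)}(\phi(u),\phi(v)) = \angle_{\phi(b)}(\phi^{*}(u),\phi^{*}(v))$.

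Suppose for contradiction that the lemma fails: for some $M>0$ there exist sequences $u_n,v_n\in T_b\Bcal$ with $\angle_b(u_n,v_n)\to\infty$ yet $\angle_{\phi(b)}(\phi^{*}(u_n),\phi^{*}(v_n))\leq M$ for all $n$. Consider $\Sigma := sE_is^{-1}\leq B$. This subgroup fixes $b$ and acts transitively on $T_b\Bcal$ by left multiplication. Viewed inside $A$, it also fixes $\phi(b)$: for every $x\in E_i$ the inclusion $c_{E_i}^{-1}E_ic_{E_i}\subseteq O_j$ yields $sxs^{-1}\cdot sc_{E_i}O_j = sxc_{E_i}O_j = sc_{E_i}O_j$. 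Since $\phi$ is $B$-equivariant, $\phi^{*}$ is $\Sigma$-equivariant, and both $\angle_b$ and $\angle_{\phi(b)}$ are $\Sigma$-invariant because $\Sigma$ acts by graph automorphisms. Translating $(u_n,v_n)$ by elements of $\Sigma$, we may assume $u_n = s$ for every $n$, so $\phi^{*}(u_n) = sc_{E_i}$ is a fixed vertex.

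Then $\phi^{*}(v_n)$ ranges within $\{w\in T_{\phi(b)}\Acal : \angle_{\phi(b)}(sc_{E_i},w)\leq M\}$, which is finite because $\Acal$ is fine at the cone vertex $\phi(b)$. Injectivity of $\phi^{*}$ then forces the sequence $v_n$ into a finite subset of $T_b\Bcal$, so after passing to a subsequence $v_n = v$ is constant. To conclude we need $\angle_b(s,v)<\infty$: since $Y$ generates $B$, the Cayley graph $\G(B,\Ycal)$ is connected and contains no apex, so it sits inside $\Bcal-\{b\}$ and joins $s,v\in B$ by a finite path. Therefore $\angle_b(u_n,v_n) = \angle_b(s,v)$ is constant, contradicting $\angle_b(u_n,v_n)\to\infty$, and the desired $\bar{M}=\bar{M}(M)$ exists.

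The main technical point is the $\Sigma$-equivariance reduction, namely verifying that $\Sigma$ really does stabilize $\phi(b)$ in $\Acal$ and that the angle metric on $T_{\phi(b)}\Acal$ is $\Sigma$-invariant; once this is arranged the proof reduces to a pigeonhole step supplied by fineness of $\Acal$ at $\phi(b)$, together with the elementary observation that $\Bcal-\{b\}$ remains connected through the Cayley graph on $B$.
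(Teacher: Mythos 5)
Your proof is correct and takes essentially the same approach as the paper's: both argue by contradiction using fineness of $\Acal$ at the cone vertex $\phi(b)$ together with injectivity of $\phi$ on vertices (\cref{lem:qc-induce-map-is-qi}). Your explicit $\Sigma$-equivariance reduction fixing $u_n = s$, and the observation that $\angle_b$ is finite on $T_b\Bcal$ via the connectedness of $\G(B,\Ycal) \subseteq \Bcal \setminus \{b\}$, carefully spell out the ``finitely many orbits of pairs'' step that the paper's terse three-sentence proof leaves implicit.
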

\begin{proof} 
    Indeed, if not, then there exists $M>0$ such that, for each $\bar M>0$, there exists a pair of points $u,v\in T_b{\Bcal}$ with $\angle_b(u,v)>\bar M$ and  $\angle_{\phi(b)}(\phi (u), \phi(v))<M$. However, by fineness at cone vertices of $\Acal$, there are only finitely many orbits of pairs of vertices in $T_{\phi(b)}\Acal$ with an angle smaller than $M$. Therefore $\phi$ cannot be injective on vertices, contradicting \cref{lem:qc-induce-map-is-qi}.
\end{proof}

\subsection{Interplay between Different Graphs}
\label{subsec:interplay}

In this subsection we build maps between $\Acal$ and $\A_r$. We define a notion of angle on $\A_r$ in analogue to \cref{def:angle}. We then  investigate how those maps restrict angle metrics on both space. 

\subsubsection{Back and Forth}
We now build a map between the coned-off Cayley graph and the coned-off cusped Cayley graph of the same group with respect to the same generators. 

Recall that we can describe a vertex in a horoball built on $h\G_{O_i}$ as $(g, h, O_i, d)$, where $g$ is the corresponding group element in $A$ and $d$ is the depth. When the horoball is fixed, we can omit $h, O_i$ and refer to a vertex using only $(g, d)$.

\begin{convention}
    An edge in $\A_r$ or $\Acal$ contains both of its end points.  
    Given a partial edge, its closure is the edge that contains it. An \emph{edge path} in $\A_r$ or $\Acal$ is a path that consists of edges. 
    A path is the concatenation of one (possibly trivial) partial edge, one edge path and another (possibly trivial) partial edge.
    We allow the existence of backtrack in a path.
    The \emph{closure} of a path $c$ is the shortest edge path that contains $c$. We can obtain the closure of a path by replacing any partial edge with its closure.
    
    Recall that we assume geodesics in a coned-off cusped Cayley graph to be regular (as in \cref{lem:shape-geodesic-cusped-horoball}).
\end{convention}

\begin{definition}
    \label{def:shallow-path}
    We say $c$ is a \emph{$d$-shallow} path if $c$ penetrates any horoball to depth at most $d$. When $d$ is clear, we may drop $d$ and simply call $c$ shallow.
\end{definition}

In \cref{construction:pi_Kr_to_hatK}, we construct a map from $\A_r$ to $\Acal$ for every $r\in\N$. In \cref{cor:pi_shallow_geod_is_good}, we prove  that this map sends shallow geodesics in $\A_r$ to quasi-geodesics in $\Acal$ with constants independent of $r$.


\begin{construction}
    \label{construction:pi_Kr_to_hatK}
    For any $r\in \N $, and for any integer $q \in [1, r-1]$, we construct a map $\pi_{q}^r: \A_r  \to  \Acal$ in the following way: 
    \begin{itemize}
        
        \item Any vertex or edge in $\G_A\subseteq \A_r$ is sent to the corresponding vertex or edge in $\Acal$.
        \item The apex of a coned-off horoball $\Hcal_r(hO_i)$, where $h\in A$ and $O_i \in \Ocal$, is sent to the apex $hO_i$ in $\Acal$.
        \item Any vertex or edge at depth greater than or equal to $ q$ in $\Hcal_r(hO_i)$ is sent to the apex of the corresponding horoball.
        \item Any vertical path starting at some $g\in A$ and ending at depth $q$ in $\Hcal_r(hO_i)$ is sent to the cone edge of $hO_i$ based at $g$. 
        For each $1\leq d < q$, the vertex $(g,d)$ is sent to the point on the cone edge with distance $d/q$ from $g\in \Acal$. 
        \item  The interior of any horizontal edge with depth $0< d< q$  is mapped to the image of its starting point.
    \end{itemize}
\end{construction}

    Notice that under this map, the image of a continuous path $ c \subseteq \Acal$ might not be a continuous path. This is inconvenient when we compare lengths of images. To fix this problem we introduce the following definition.

\begin{definition}
    \label{def:push-forward_under_pi}
    Let $c$ be a continuous path in $\A_r$ that starts at $x$ and ends at $y$.  Let $u$ (resp. $v$) be the first (resp. last) vertex on $c$ that has depth $0$. In the case where $c$ contains no vertex at depth $0$, that is, $c$ is contained in a single horoball, let $u= v=y$. 
    
    Let $c_x$ be the subsegment of $c$ that starts at $x$ and ends at $u$, then $c_x$ is contained in a single horoball, say $\Hcal_r(h_xO_x)$. Similarly, let $c_y$ be the subsegment of $c$ that starts at $v$ and ends at $y$. Then $c_y$ is also contained in a single horoball, say $\Hcal_r(h_yO_y)$. Let $c_0$ be the subsegment of $c$ that starts at $u$ and ends at $v$. Notice that one or two of $c_x, c_0, c_y$ could be trivial. In particular, when $c$ is contained in a single horoball, $c_x = c$ and $c_0,c_y$ are both trivial.

    We construct $\hat c_x$ as follows:
    \begin{itemize}
        \item If $x=u$, then $c_x$ is just the vertex $x$. Let $\hat c_x$ be $\pi_q^r(x)$.
        \item If  $x\neq u$, and $\pi_q^r(u)$ lies in the same cone edge as $\pi_q^r(x)$, then $\hat c_x$ is the (partial) cone edge $\{\pi_q^r(x),\pi_q^r(u)\}$. 
        \item If $\pi_q^r(x)$ and $\pi_q^r(u)$ lie in different cone edges, then $\hat c_x$ consists of the (partial) cone edges $\{ \pi_q^r(x),a_x \}$ and $\{ a_x, \pi_q^r(u) \}$, where $a_x$ is the apex of the horoball.        
    \end{itemize}

    We construct $\hat c_y$ in the same way.  We then build $\hat c_0$ based on $\pi_q^r(c_0)$ as follows:
    \begin{itemize}
        \item  Any partial cone edge in $\pi_q^r(c_0)$ that connects to $\G_A$ is replaced by its closure.
        \item Any partial cone edge in $\pi_q^r(c_0)$ that does not connect to $\G_A$ is dropped.
        \item Any isolated vertex is dropped.
    \end{itemize}

    We call the concatenation of $\hat c_x$, $\hat c_0$ and $\hat c_y$ as the  \emph{push-forward} of $c$ under $\pi_q^r$, and denote it as $\hat c$.
\end{definition}

\begin{lemma}
    \label{lem:push-forward-continuous}
    For any continuous path $c\subseteq \A_r$ with endpoints $x,y$, its push-forward $\hat c$ under $\pi_q^r$ is a continuous path connecting $\pi_q^r(x)$ and $\pi_q^r(y)$. Moreover,  \[l(\hat c)\leq 2 l(c).\]

    When $c$ does not contain partial horizontal edges, we have \[l(\hat c)\leq  l(c).\]
\end{lemma}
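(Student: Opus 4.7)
The plan is to verify continuity of $\hat c$ first, then establish the length bound by analyzing the three subpaths $\hat c_x$, $\hat c_0$, and $\hat c_y$ separately.

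For continuity, I would observe that $\hat c_x$ is continuous by construction: it is either the single point $\pi_q^r(x)$ (when $x=u$), a single partial cone edge, or the concatenation of two partial cone edges meeting at the apex $a_x$; similarly for $\hat c_y$. To verify continuity of $\hat c_0$, one decomposes $c_0$ into an alternating sequence of depth-$0$ segments in $\G_A$ and excursions into horoballs, each of which enters $\G_A$ at some vertex $g_1$ and exits at some $g_2$ in the same horoball. Under $\pi_q^r$, the depth-$0$ segments map to themselves; the entering and exiting vertical edges produce partial cone edges based at $g_1$ and $g_2$ respectively; and the interior of each excursion produces only partial cone edges (or isolated points) that do not touch $\G_A$. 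The modification step replaces the two $\G_A$-adjacent partial cone edges by their full closures $\{g_1,a\}$ and $\{a,g_2\}$ (where $a$ is the apex of that horoball) and discards the interior debris. The two closures meet at $a$, so each excursion's image is continuous, and $\hat c_0$ itself is continuous at every depth-$0$ vertex of $c_0$. Finally, $\hat c_x$, $\hat c_0$, and $\hat c_y$ glue continuously at $\pi_q^r(u)$ and $\pi_q^r(v)$ respectively.

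For the length bound $l(\hat c) \leq 2\, l(c)$, the key per-excursion estimate is that each excursion of $c_0$ into a horoball uses at least two edges of $c_0$ (one to enter and one to exit) while contributing at most two full cone edges to $\hat c_0$; depth-$0$ segments in $\G_A$ contribute equally on both sides. Hence $l(\hat c_0) \leq l(c_0)$. For $\hat c_x$, the construction forces $l(\hat c_x) \leq 2$, and whenever $c_x$ is non-trivial, descending from $x$ down to the first depth-$0$ vertex $u$ requires $l(c_x) \geq 1$, giving $l(\hat c_x) \leq 2 \leq 2\, l(c_x)$; similarly for $\hat c_y$. Summing yields $l(\hat c) \leq 2\, l(c_x) + l(c_0) + 2\, l(c_y) \leq 2\, l(c)$.

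For the sharper bound under the no-partial-horizontal-edge hypothesis, I would strengthen the end estimates to $l(\hat c_x) \leq l(c_x)$ and $l(\hat c_y) \leq l(c_y)$. Without a partial horizontal edge at $x$, either $\pi_q^r(x)$ and $\pi_q^r(u)$ lie on the same cone edge, in which case $\hat c_x$ is a single partial cone edge of length at most $\mathrm{depth}(x)/q \leq l(c_x)$; or they lie on different cone edges, in which case $c_x$ must include a horizontal transit at positive depth, forcing $l(c_x) \geq 2 \geq l(\hat c_x)$. Combining these with $l(\hat c_0) \leq l(c_0)$ gives $l(\hat c) \leq l(c)$. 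The principal obstacle is the careful bookkeeping across the positional cases for $x$ and $y$ --- vertex, interior of a vertical edge, or interior of a horizontal edge --- and checking in each case that the partial cone edges produced by $\pi_q^r$ at the endpoints glue continuously to the rest of $\hat c$ while satisfying the length inequality; the horizontal partial edge case is precisely what prevents the constant from being improved past $2$ in the general statement.
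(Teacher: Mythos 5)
Your proof is correct and follows essentially the same route as the paper's: decompose $\hat c$ into $\hat c_x, \hat c_0, \hat c_y$, verify continuity of $\hat c_0$ by analyzing the alternation of depth-$0$ segments with horoball excursions, and establish the length bound with the per-excursion accounting that two vertical edges in, two full cone edges out. One small imprecision: your claim that a nontrivial $c_x$ has $l(c_x)\geq 1$ can fail when $x$ lies in the interior of a vertical edge at depth less than $1$; but in that case $\pi_q^r(x)$ and $\pi_q^r(u)$ necessarily lie on the same cone edge, so $\hat c_x$ is a single partial cone edge of length $l(c_x)/q\leq l(c_x)\leq 2\,l(c_x)$, and the stated inequality survives.
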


\begin{proof}
    Let $c_x,c_0,c_y$ be as in \cref{def:push-forward_under_pi}. Since $u,v$ are contained in $\G_A$, $\hat c_0$ contains $\pi_q^r(u)$ and $\pi_q^r(v)$. It suffices to show  $\hat c_0$ is a continuous path.

    A maximal subsegment of $c_0$ contained in $\G_A$ is called a \emph{$\G_A$ component} of $c_0$. Similarly, a maximal subsegment of $c_0$ that contains in a single horoball and contains no edges in $\G_A$ is called a \emph{horoball component} of $c_0$. Then $c_0$ can be divided into $\G_A$ components and horoball components, and $\hat c_0$ is the union of push-forwards of components.
    
    Images of $\G_A$ components of $c_0$ are continuous edge paths. We do not change anything in $\G_A$ in the construction of $\hat c_0$, so $\hat c_0$ contains all the images of $\G_A$ components of $c_0$.

    If there is more than one $\G_A$ component in $c_0$, then two consecutive  $\G_A$ components are joined by one horoball component, say $\t$. Any horoball component starts at a vertical edge and ends at a vertical edge. Therefore, $\pi_q^r(\t)$ contains at least two (partial) cone edges that are connected to $\G_A$. When constructing $\hat c_0$, we replace these two cone edges by their closure, and delete everything else in $\pi_q^r(\t)$. 
    Therefore $\hat c_0$ is a continuous path that starts at $\pi_q^r(u)$ and ends at $\pi_q^r(v)$.

    Consider now the length of $\hat c$. In $c_0$, each horoball component has length at least $3$, whereas the corresponding component in $\hat c_0$ has length $2$. So $l(\hat c_0)\leq l(c_0)$
    
    If $\hat c_x$ consists of one (partial) cone edge, then $l(\hat  c_x) \leq l(c_x)/q \leq l(c_x)$. If $\hat c_x$ consists of two (partial) cone edges, then $c_x$  contains at least one (partial) horizontal edge and one vertical edge. Thus $l(\hat c_x)\leq 2 l(c_x)$. If there's no partial horizontal edge in $c$, then $l(\hat c_x) \leq l(c_x)$. The same analysis applies to $\hat c_y$. 
\end{proof}


Similarly, we construct a map $\iota_q^r:\Acal \to \A_r$. 

\begin{construction}
    \label{construction:iota_hatK_to_Kr}
    For any $r\in \N $, and for any integer $q \in [1, r-1]$, we construct a map $\iota_q^r : \Acal \to \A_r$ in the following way:
\begin{itemize}
    \item Any vertex or edge in $\G_A \subseteq \Acal$ is sent to the corresponding vertex or edge in  $\G_A\subseteq  \A_r$, 
    \item The interior of any cone edge of $hO_i$ based at $g$ is sent to the interior of the vertical geodesic connecting $(g,h,O_i,0)$ and  $(g,h,O_i, q)$ proportionally. That is, a point with distance $d$ from $g$ is sent to the point $(g,h,O_i,dq)$. And
    \item The apex $hO_i$ is sent to the apex of $\Hcal_r(hO_i)$.
\end{itemize}
    
\end{construction}

\begin{definition}
    \label{def:pullback_under_pi}
    Let $c = [x,y]$ be a path in $\Acal$. Observe that the interior of each cone edge is mapped to the interior of a vertical path. By taking the limit, we can recover the endpoints of such vertical paths.

    Given $\iota_q^r(c)$, we construct the \emph{pullback} $\widetilde c$ in $\A_r$ of $c$ as follows: 
    \begin{itemize}
        \item For any (partial) cone edge that contains an apex in $c$, recover the endpoint at depth $q$ of its image as described above.
        \item Drop any isolated apices in $\iota_q^r( c)$.
        \item For each pair of consecutive components, join them with a geodesic in the corresponding coned horoball in $\A_r$.
    \end{itemize}

    Clearly the pullback is a continuous path in $\A_r$.
\end{definition}

\begin{lemma}
\label{lem:length-path-pullback}
    For any $r\in \N $, for any integer $q \in [1, r-1]$, and for any edge path $c\subseteq \Acal$, the following inequality holds:   
    \[l(c)  \leq l(\tilde c) \leq (r+1) l(c) .\] 
\end{lemma}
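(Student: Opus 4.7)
The plan is to decompose $c$ into its constituent edges and track the length contribution of each edge and each joining geodesic to $\tilde c$ separately. Write $c = e_1 \cdots e_n$ with $n = l(c)$, and let $n_1$ (resp.\ $n_2$) be the number of $\G_A$--edges (resp.\ cone edges) in $c$, so $n_1 + n_2 = n$. Under $\iota_q^r$, each $\G_A$--edge maps to a single edge of $\A_r$ of length $1$, and each cone edge maps to the interior of a vertical geodesic that, once the depth-$q$ endpoint is recovered in the pullback, contributes a path of length $q$ to $\tilde c$. Since joining geodesics contribute nonnegative length and $q \geq 1$, the lower bound is immediate:
\[
l(\tilde c) \geq n_1 + q n_2 \geq n_1 + n_2 = l(c).
\]

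For the upper bound, I would first identify where joining geodesics must be inserted in $\tilde c$. Two consecutive edges $e_i, e_{i+1}$ of $c$ produce connected images in $\iota_q^r(c)$ whenever their shared vertex lies in $A$, so no joining is required at such vertices. The only disconnections occur at apex vertices of $c$, which are dropped. For each \emph{interior} apex vertex of $c$, the joining geodesic connects two depth-$q$ vertices inside a single coned horoball of $\A_r$; running up vertically by $r - q$ on each side and then using two cone edges through the apex of $\A_r$ bounds this geodesic's length by $2(r - q + 1)$. For each apex vertex at an \emph{endpoint} of $c$, the joining runs from a depth-$q$ vertex to the apex of $\A_r$ and has length at most $r - q + 1$.

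To complete the calculation, let $m$ and $k$ denote the numbers of interior and endpoint apex vertices of $c$ respectively. Since apices in $\Acal$ are incident only to cone edges, each interior apex vertex of $c$ is flanked by exactly two cone edges of $c$ and each apex endpoint by exactly one, giving the identity $n_2 = 2m + k$. The total joining contribution is therefore bounded by $2m(r - q + 1) + k(r - q + 1) = n_2(r - q + 1)$, and adding this to the edge contributions yields
\[
l(\tilde c) \leq n_1 + q n_2 + n_2 (r - q + 1) = n_1 + n_2 (r + 1) \leq (r + 1)\,l(c).
\]
I expect the main obstacle to be the connectivity analysis justifying the identity $n_2 = 2m + k$ and the claim that components of $\iota_q^r(c)$ are separated precisely by the dropped apices, so that the number of joinings is exactly the number of apex vertices of $c$. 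Once this bookkeeping is in place, the coarse geometric bound on geodesics inside the coned horoball via its apex is routine.
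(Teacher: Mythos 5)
Your proposal is correct and takes essentially the same approach as the paper's proof: decompose $c$ into $\G_A$--edges (kept the same) and cone edges (each becoming a vertical segment of length $q$), and then bound the joining geodesics added at dropped apices via the path through the apex of $\A_r$. The paper argues directly that each \emph{pair} of consecutive cone edges yields a path of length at most $2(r+1)$ and each boundary cone edge a path of length $q$, while you separate the $qn_2$ cone-edge contribution from the $n_2(r-q+1)$ joining contribution and combine them via $n_2 = 2m+k$; the arithmetic is the same. One minor inaccuracy: for an apex vertex at an \emph{endpoint} of $c$, the construction of $\tilde c$ simply drops the isolated apex image and adds no joining geodesic, so the true contribution from an endpoint apex is $0$, not ``at most $r-q+1$'' as you wrote; since you use this only as an upper bound, the final inequality is unaffected, but the description of the construction is slightly off.
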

\begin{proof}
    Consider the construction of $\tilde c$. Each edge in $c$ is either kept the same,  or mapped to a path of length $q$ where $q\geq 1$. Then we add additional geodesics to $\iota_q^r(c)$ and dropped only vertices. Therefore $l(c) \leq l(\tilde c)$.

    On the other hand, each attached geodesic penetrates the horoball to depth at most $r+1$. Therefore each pair of consecutive cone edges in $c$ corresponds to a path of length at most $2(r+1)$. If $c$ is started or ended with a cone edge, that cone edge corresponds to a path of length $q$. Therefore the second inequality follows.
\end{proof}

\begin{lemma}
    \label{lem:pi_is_r+1_qi}
    For any $r\in \N $, and for any integer $q \in [1, r-1]$, the map $\pi_q^r: \A_r \to \Acal$ given by \cref{construction:pi_Kr_to_hatK} is a $(2(r+1),0)$--quasi-isometry.
    
\end{lemma}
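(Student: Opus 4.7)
The plan is to verify the two defining inequalities of a quasi-isometry together with density of the image, using the push-forward and pullback machinery from \cref{def:push-forward_under_pi,def:pullback_under_pi}. The multiplicative constant $2(r+1)$ arises as a common upper bound on the two path-length factors: $2$ from \cref{lem:push-forward-continuous} and $r+1$ from \cref{lem:length-path-pullback}.

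For the upper bound, I would take $x,y\in\A_r$ with a geodesic $c$ between them and form its push-forward $\hat c$ under $\pi_q^r$. By \cref{lem:push-forward-continuous}, $\hat c$ is a continuous path from $\pi_q^r(x)$ to $\pi_q^r(y)$ in $\Acal$ with $l(\hat c)\leq 2\,l(c)$, so
\[
 d_\Acal\bigl(\pi_q^r(x),\pi_q^r(y)\bigr) \leq 2\,d_{\A_r}(x,y) \leq 2(r+1)\,d_{\A_r}(x,y).
\]
Separately, for quasi-density I would observe that $\pi_q^r$ is in fact surjective: every vertex of $\G_A\subseteq\Acal$ is fixed, every apex of $\Acal$ is the image of the apex of the corresponding coned-off horoball in $\A_r$, and every interior point of a cone edge is covered by the proportional image of the vertical path of length $q$ below its base.

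For the reverse inequality, I would take a geodesic $c'$ from $\pi_q^r(x)$ to $\pi_q^r(y)$ in $\Acal$ and form its pullback $\widetilde{c'}$ under $\iota_q^r$; by \cref{lem:length-path-pullback}, $\widetilde{c'}$ has length at most $(r+1)\,l(c')$ and connects $\iota_q^r(\pi_q^r(x))$ to $\iota_q^r(\pi_q^r(y))$. To recover a path from $x$ to $y$, I would bridge $x$ to $\iota_q^r(\pi_q^r(x))$ (and analogously at $y$): a direct case analysis of \cref{construction:pi_Kr_to_hatK,construction:iota_hatK_to_Kr} shows that $\iota_q^r\circ\pi_q^r$ fixes any vertex of $\G_A$, any apex, and any horoball vertex $(g,d)$ with $d<q$; the only nontrivial case is $x=(g,d)$ with $d\geq q$, where $\iota_q^r(\pi_q^r(x))$ is the apex of the coned-off horoball containing $x$, at distance at most $r-d+1\leq r+1$ from $x$ via one vertical segment of length $r-d$ and one cone edge.

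The main obstacle is the careful bookkeeping of constants in this second bound: the bridging step contributes an additive $2(r+1)$ alongside the multiplicative $(r+1)\,d_\Acal(\pi_q^r(x),\pi_q^r(y))$. I would resolve this by treating separately the degenerate case $\pi_q^r(x)=\pi_q^r(y)$, in which $x$ and $y$ lie in a single coned-off horoball and the direct route through its apex gives $d_{\A_r}(x,y)\leq 2(r+1)$ without invoking $\widetilde{c'}$, and the case $d_\Acal(\pi_q^r(x),\pi_q^r(y))\geq 1$, in which the additive bridging term is absorbed into the stated multiplicative factor $2(r+1)$.
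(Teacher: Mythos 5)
Your proposal follows the same structural route as the paper's own proof: push-forward (\cref{lem:push-forward-continuous}) for the upper Lipschitz bound, pullback (\cref{lem:length-path-pullback}) for the lower bound, and near-surjectivity for quasi-density. Where you differ is in being \emph{more} careful than the paper about the endpoints of the pullback: the paper simply writes $d_{\A_r}(x,y)\leq l(\tilde\eta)$, which tacitly requires $\tilde\eta$ to join $x$ to $y$, i.e.\ both to be at depth $<q$ (\cref{lem:pullback-shallow-ends}); you correctly observe that when $x$ or $y$ is deep one must bridge a gap, which the paper never addresses.

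However, your bookkeeping for the bridge does not close as claimed, and the calculation in fact reveals a (shared) flaw in the stated constants. First, $\iota_q^r\pi_q^r(x)$ is the apex only as a point of $\A_r$; by \cref{def:pullback_under_pi} the pullback drops isolated apices, so the actual endpoint of $\tilde\eta$ is a depth-$q$ vertex that may be roughly $2r$ (not $r+1$) away from a deep $x$. Feeding that into $d_{\A_r}(x,y)\leq(\text{bridge})+(r+1)\,d_\Acal(\pi_q^r x,\pi_q^r y)$ and absorbing the additive term when $d_\Acal\geq 1$ yields a multiplicative constant on the order of $3r$–$5r$, not $2(r+1)$. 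Second, the degenerate case you isolate is genuinely problematic: $\pi_q^r$ collapses every vertex at depth $\geq q$ of a given horoball to a single apex, so there exist $x\neq y$ with $\pi_q^r(x)=\pi_q^r(y)$ and $d_{\A_r}(x,y)>0$; hence $\pi_q^r$ cannot be a $(K,0)$-quasi-isometry for \emph{any} $K$, and a nonzero additive constant is unavoidable. Neither issue affects how the lemma is used later (one only needs constants controlled by $r$), but both the lemma's stated $(2(r+1),0)$ and the paper's own proof share the gap you ran into.
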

\begin{proof}
    Let $\g = [x,y]$ be a geodesic in $\A_r$. By \cref{lem:push-forward-continuous} we have:
    \[
    d_{\Acal}( \pi_q^r(x), \pi_q^r(y) ) \leq l(\hat \g) \leq l(\g) = d_{\A_r}(x,y) .
    \]

    On the other hand, let $\eta = [\pi_q^r(x), \pi_q^r(y)]$ be a geodesic in $\Acal$ and let $\tilde \eta$ be its pullback in $\A_r$.  By \cref{lem:length-path-pullback} we have:
    \[
     d_{\A_r}(x,y) \leq l(\tilde \eta) \leq  2(r+1) d_{\Acal}( \pi_q^r(x), \pi_q^r(y) ) .
    \]    

    Furthermore, for every $x\in \Acal$, if $x\in\G_A$, then $x\in \pi_q^r(\A_r)$. If $x\notin \G_A$, then $x$ is an apex and therefore is the image of an apex in $\A_r$. Therefore $\Acal \subseteq N_1(\pi_q^r(\A_r))$ and the conclusion follows.
\end{proof}

\begin{lemma}
    \label{lem:iota_is_r+1_qi}
    For any $r\in \N $, and for any integer $q \in [1, r-1]$,  the map $\iota_q^r: \Acal \to \A_r$ given by \cref{construction:iota_hatK_to_Kr} is an $(r+1,0)$--quasi-isometry.
\end{lemma}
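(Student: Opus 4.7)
The plan is to establish the two inequalities of an $(r+1,0)$-quasi-isometric embedding separately and then verify that the image is cobounded. For the upper bound, I would show that $\iota_q^r$ is $(r+1)$-Lipschitz on vertices by examining its action on each edge of $\Acal$. Edges lying in $\G_A$ are preserved, while a cone edge from a group element $g$ to an apex $hO_i$ has endpoints mapping to $g$ and to the apex of $\Hcal_r(hO_i)$, which can be joined in $\A_r$ by a length-$(r+1)$ path (the vertical segment from $g$ up to depth $r$ followed by one cone edge). Applying the triangle inequality along a geodesic in $\Acal$ then gives $d_{\A_r}(\iota_q^r(x), \iota_q^r(y)) \leq (r+1)\, d_\Acal(x,y)$.

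For the lower bound, the strategy is to reverse direction using $\pi_q^r$. A direct check on each type of vertex shows that $\pi_q^r \circ \iota_q^r$ is the identity on $V(\Acal)$: elements of $\G_A$ are fixed by both maps, while each apex $hO_i$ is sent by $\iota_q^r$ to the apex of $\Hcal_r(hO_i)$, which $\pi_q^r$ then returns to $hO_i$. Given a geodesic $\g$ from $\iota_q^r(x)$ to $\iota_q^r(y)$ in $\A_r$, its push-forward $\hat\g$ (constructed via \cref{def:push-forward_under_pi}) is therefore a continuous path in $\Acal$ connecting $x$ and $y$. Invoking \cref{lem:push-forward-continuous} yields
\[
    d_\Acal(x, y) \;\leq\; l(\hat\g) \;\leq\; 2\, l(\g) \;=\; 2\, d_{\A_r}(\iota_q^r(x), \iota_q^r(y)),
\]
and since $r \geq 2$ we have $2 \leq r+1$, giving the matching constant.

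Finally, I would verify that the image $\iota_q^r(\Acal)$ is cobounded in $\A_r$. By construction the image contains all of $\G_A$, every apex of $\A_r$, and (via the images of interiors of cone edges) every vertex at depths $1, \dots, q-1$ along a vertical segment of any horoball. Each remaining vertex at depth $d \in [q, r]$ lies within distance $\min(d - q + 1,\, r - d + 1) \leq (r - q)/2 + 1$ of the image, routed either to the depth-$(q-1)$ vertex on the same vertical or to the apex of the containing coned horoball. The mildly delicate step is ensuring that the push-forward $\hat\g$ used in the lower bound really starts and ends at $x$ and $y$ themselves rather than at nearby vertices; this is exactly the content of the identity $\pi_q^r \circ \iota_q^r = \mathrm{id}_{V(\Acal)}$, which is the crux of the argument.
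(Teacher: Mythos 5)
Your proof is correct and follows essentially the same route as the paper: push-forward for the lower bound via $\pi_q^r \circ \iota_q^r = \mathrm{id}_\Acal$ (the paper's \cref{lem:composition_pi_iota}, which you correctly identify as the crux), and a Lipschitz/length bound for the upper bound. The only cosmetic difference is that for the upper bound you bound $d_{\A_r}$ directly edge-by-edge and apply the triangle inequality, whereas the paper routes through the pullback path and its length estimate (\cref{lem:length-path-pullback}); and your coboundedness bound of roughly $(r-q)/2 + 1$ is slightly sharper than the paper's $r+2$, though neither affects the stated constants.
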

\begin{proof}
    Let $\g = [x,y]$ be a geodesic in $\Acal$. Let $\tilde\g$ be its pullback in $\A_r$. By \cref{lem:length-path-pullback}, we have
    \[
    d_{\A_r}( \iota_q^r(x), \iota_q^r(y) ) \leq l(\tilde \g) \leq  (r+1) d_{\Acal}(x,y) .
    \]

    On the other hand, let $\eta = [\iota_q^r(x), \iota_q^r(y)]$ be a geodesic in $\A_r$. Notice that $\eta$ contains no partial horizontal edge. Let $\hat \eta$ be its push-forward, then by \cref{lem:push-forward-continuous} we have:
    \[
     d_{\Acal}(x,y) \leq l (\hat \eta) \leq  d_{\A_r}( \iota_q^r(x), \iota_q^r(y) ) .
    \]
    
    Furthermore, let $x\in \A_r$ be a vertex. If $x\in \G_A$, then $x\in \iota_q^r(\Acal)$. If $x$ lies in a horoball, then it's at most $r+2$ away from $\G_A$. Therefore $\A_r \subseteq N_{r+2}( \iota_q^r(\Acal))$. Since $r+2 < 2(r+1)$, the conclusion follows.
\end{proof}

\begin{lemma}
\label{lem:composition_pi_iota}
    The composition $\iota_q^r \circ \pi_q^r$ coincides with the identity map of $\A_r$ on vertices at depth less than $q$.

    On the other hand, $\pi_q^r \circ \iota_q^r$ coincides with the identity map on $\Acal$.
\end{lemma}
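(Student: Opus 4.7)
The plan is to verify both equalities by unwinding \cref{construction:pi_Kr_to_hatK} and \cref{construction:iota_hatK_to_Kr} case by case. The proof is essentially bookkeeping, so I would just enumerate the types of points in each space and track where they go.

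For the first claim, I split the vertices of $\A_r$ at depth less than $q$ into two types. A depth-zero vertex lies in $\G_A$ and is fixed by $\pi_q^r$ (by its construction on $\G_A$) and then by $\iota_q^r$ (for the same reason). A vertex $(g,h,O_i,d)$ with $1 \leq d < q$ is sent by $\pi_q^r$ to the point on the cone edge of $hO_i$ based at $g$ at distance $d/q$ from $g$; by \cref{construction:iota_hatK_to_Kr}, the point on that cone edge at distance $d/q$ from $g$ is sent to $(g,h,O_i,(d/q)\cdot q) = (g,h,O_i,d)$. Note that a vertex at depth $\geq q$ is collapsed to the apex by $\pi_q^r$ and therefore cannot be recovered, which is exactly why the depth restriction is needed.

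For the second claim, I check each type of point of $\Acal$. A vertex of $\G_A\subseteq \Acal$ is fixed by both maps directly. An apex $hO_i$ is sent by $\iota_q^r$ to the apex of $\Hcal_r(hO_i)$, which sits at depth $r+1 > q$, and $\pi_q^r$ sends every point at depth $\geq q$ in that horoball back to $hO_i$. Finally, a point at distance $t$ from $g$ on the cone edge of $hO_i$ based at $g$, with $0 < t < 1$, is sent by $\iota_q^r$ to the point $(g,h,O_i,tq)$ on the vertical geodesic of length $q$; interpreting $\pi_q^r$ by the natural linear extension along vertical edges of depth less than $q$, this point is returned to the one at distance $(tq)/q = t$ from $g$ on the original cone edge.

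The only real subtlety, rather than an obstacle, is that \cref{construction:pi_Kr_to_hatK} explicitly specifies $\pi_q^r$ only on vertices and on interiors of horizontal edges; for the second identity to hold on interiors of cone edges one must fix the convention that $\pi_q^r$ is extended by the obvious linear parameterization on interiors of vertical edges at depth less than $q$ (mirroring the proportional rule used to define $\iota_q^r$). With this convention in place, both statements reduce to the direct computations above.
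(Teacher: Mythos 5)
Your proof is correct and takes essentially the same approach as the paper, which simply observes that both identities follow directly from the two constructions; you spell out the case analysis that the paper leaves implicit. Your observation about the convention on interiors of vertical edges is a fair reading of \cref{construction:pi_Kr_to_hatK}: the phrase ``any vertical path \dots is sent to the cone edge'' together with the $d\mapsto d/q$ rule at vertices is most naturally interpreted as the linear parametrization you describe, under which the second identity holds pointwise on all of $\Acal$ rather than only at vertices.
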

\begin{proof}
    Both statements follow from the constructions.
\end{proof}

Let $x,y\in \A_r$ and let $c\in \Acal$ be a path connecting $\pi_q^r (x), \pi_q^r (y)$. In general, the endpoints of $\tilde c \in \A_r$ doesn't have to be $x,y$. We identify below a special case where the endpoints of $\tilde c$ are $x,y$.

\begin{lemma}
    \label{lem:pullback-shallow-ends}
    Let $x,y$ be two points at depth  less than $q$ in $\A_r$. Let $c\in \Acal$ be a path connecting $\pi_q^r (x), \pi_q^r (y)$. Then $\tilde c$ is a continuous path connecting $x,y$.
\end{lemma}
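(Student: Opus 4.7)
The plan is straightforward and rests on analyzing the endpoints of the pullback. The key observation is that the hypothesis on depth ensures neither endpoint of $c$ is an apex of $\Acal$: when $x$ has depth $0$ in $\A_r$, its image $\pi_q^r(x) = x$ lies in $\G_A$; when $0 < \operatorname{depth}(x) < q$, the definition of $\pi_q^r$ places $\pi_q^r(x)$ in the interior of a cone edge of $\Acal$, at distance $\operatorname{depth}(x)/q$ from the base. The same dichotomy holds for $y$.

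Next, I would walk through the three operations used to build $\tilde c$ from $\iota_q^r(c)$ in \cref{def:pullback_under_pi}, and observe that each operates only at apices: (a) recovers the depth-$q$ endpoint of the image of a partial cone edge precisely when that partial edge has an apex as one of its endpoints; (b) deletes isolated apex images; and (c) inserts geodesics in coned horoballs between consecutive components that remain after (a) and (b). Since $\pi_q^r(x)$ and $\pi_q^r(y)$ are not apices, none of these three operations modify the starting or terminal point of $\iota_q^r(c)$. Therefore $\tilde c$ begins at $\iota_q^r(\pi_q^r(x))$ and ends at $\iota_q^r(\pi_q^r(y))$.

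Finally, I would invoke \cref{lem:composition_pi_iota}, which states that $\iota_q^r\circ \pi_q^r$ is the identity on vertices at depth less than $q$; this yields $\iota_q^r(\pi_q^r(x)) = x$ and $\iota_q^r(\pi_q^r(y)) = y$. Continuity of $\tilde c$ is already noted at the end of \cref{def:pullback_under_pi}, so this finishes the argument.

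The one point requiring care, though minor, is the case analysis for the first and last (partial) cone edges of $c$. Depending on whether such a terminal segment runs from $\pi_q^r(x)$ toward the base vertex in $\G_A$ or toward an apex of $\Acal$, operation (a) may or may not be applied to produce a depth-$q$ endpoint; if the segment reaches the apex and continues, operation (b) drops the apex image and operation (c) inserts a horoball geodesic on the far side. In every configuration, the side of the segment containing $\pi_q^r(x)$ (respectively $\pi_q^r(y)$) is untouched because that point is not an apex, so the pullback really does start at $x$ and end at $y$.
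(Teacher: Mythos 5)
Your proof is correct and follows the same two-step argument as the paper's: observe that the pullback construction only alters the path at apices (so the non-apex endpoints $\pi_q^r(x)$, $\pi_q^r(y)$ survive intact), then apply \cref{lem:composition_pi_iota} to identify $\iota_q^r\circ\pi_q^r(x)=x$ and $\iota_q^r\circ\pi_q^r(y)=y$. You spell out the case analysis on the terminal (partial) cone edges in more detail than the paper does, but the underlying reasoning is identical.
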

\begin{proof}
     Since the pullback drops only images of apices,  the endpoints of $\eta$ are $\iota_q^r\circ \pi_q^r(x), \iota_q^r \circ \pi_q^r(y)$.      By \cref{lem:composition_pi_iota}, $\iota_q^r\circ \pi_q^r(x ) = x$, $\iota_q^r\circ \pi_q^r(y ) = y$.
\end{proof}

\subsubsection{Angle in Coned-off Cusped Cayley Graph}
We define the notation of angle in $\A_r$ in a way similar to the angle in $\Acal$.

\begin{definition}[Angle in Coned-off Cusped Space]
\label{def:angle_in_Kr}

Let $\Hcal_r(hO_i)$ be the coned-off horoball built on $h\G_{O_i}\subseteq \G_A$. Let $a$ be the apex and ${ u}, { v}\in h\G_{O_i}$, we define $\angle^{\Hcal}_{a}( u, v)$ to be the length of the shortest path connecting $u,v$ that penetrates $\Hcal_r(hO_i)$ trivially. If no such path exists then we say $\angle_a^{\Hcal}(u,v) = \infty$. 

Let $x,y\in A$. Let $c_1 = [a,x]$,  $c_2=[a,y]$ be paths in $\A_r$. Let $ u \in c_1\cap \Hcal_r(hO_i)$ be the first such vertex on $c_1$ after $x$ and let  $v \in c_2 \cap \Hcal_r(hO_i)$ be the last such vertex on $c_2$ before $y$. We define $\angle^{\Hcal}_a(c_1,c_2) := \angle^{\Hcal}_a(u,v)$. 
Notice that if $[u,v]$ penetrates the horoball to depth $d\leq r$, then $d_{h\G_{O_i}}(u,v)\leq 2^{d+1}$.
    
\end{definition}

\begin{lemma}
    \label{lem:Shallow_to_SmallAngle}
        For all $r\in \N \cup \{0\}$, $\A_r$ satisfies the following property. Let $a$ be the apex of $\Hcal_r(hO_i)$ and let $\g $ be a geodesic connecting $u,v \in hO_i$.  For all integer $q\in [1,r-1]$, if $\g$ penetrates $\Hcal_r(hO_i)$ to depth at most $q$, then $\angle_{\pi_q^r (a)} (\pi_q^r(u),\pi_q^r( v)) \leq 2^{q+1}$.
\end{lemma}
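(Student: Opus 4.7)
The plan is to produce a short path in $\Acal$ connecting $\pi_q^r(u)$ to $\pi_q^r(v)$ that avoids the apex $\pi_q^r(a) = hO_i$, and then to read off the angle bound from its length. The key observation is that since $q \leq r-1$, the apex $a$ of $\Hcal_r(hO_i)$ in $\A_r$---which is adjacent only to vertices of depth $\geq r$---cannot lie on $\g$. Hence any maximal subsegment of $\g$ inside $\Hcal_r(hO_i)$ uses only vertical and horizontal edges, staying in the (non-cusped) combinatorial horoball.

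Next I would reduce to the case where $\g$ makes a single visit to $\Hcal_r(hO_i)$ between $u$ and $v$, either directly via the geodicity of $\g$ (so that the visit is itself a geodesic in the combinatorial horoball) or by replacing that portion of $\g$ with a horoball-geodesic from $u$ to $v$ of no greater length. Applying \cref{lem:shape-of-geodesic-horoball} to this visit: it consists of at most two vertical segments (each of length $\leq q$, because $\g$ penetrates to depth at most $q$) together with a horizontal segment at depth $\leq q$ of bounded length. Since a horizontal edge at depth $d$ corresponds to a base-graph jump of size at most $2^d$, this yields the estimate $d_{h\G_{O_i}}(u, v) \leq 2^{q+1}$, which is precisely the inequality already highlighted in \cref{def:angle_in_Kr}.

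Finally, pick a geodesic $\s$ in $h\G_{O_i}$ from $u$ to $v$ realizing this base-graph distance. Because $h\G_{O_i} \subseteq \G_A \subseteq \Acal$, the path $\s$ lives inside $\Acal$, connects $\pi_q^r(u) = u$ to $\pi_q^r(v) = v$, and never passes through the apex $\pi_q^r(a) = hO_i$. By the definition of angle in $\Acal$, this immediately gives $\angle_{\pi_q^r(a)}(\pi_q^r(u), \pi_q^r(v)) \leq l(\s) \leq 2^{q+1}$, as required.

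The main technical point is the single-visit reduction: if $\g$ enters and exits $\Hcal_r(hO_i)$ multiple times between $u$ and $v$, one must argue that the overall base-graph distance still satisfies the same exponential estimate. This should follow from the fact that $\g$ is a geodesic in $\A_r$, forcing each horoball visit to be itself a horoball-geodesic of controlled shape; thus one can concatenate the per-visit base-graph replacements to obtain a single path in $h\G_{O_i}$ of the required length. Everything else is a direct application of the horoball shape lemma together with the definitions of $\pi_q^r$ and of the angle metric.
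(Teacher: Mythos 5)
Your proof is correct and follows essentially the same route as the paper: both arguments rest on the observation that a geodesic of penetration depth at most $q$ forces $d_{h\G_{O_i}}(u,v)\le 2^{q+1}$ (via the shape lemma for combinatorial horoballs), and then bound the angle in $\Acal$ by exhibiting a path from $u$ to $v$ that avoids the apex $hO_i$. The only difference is cosmetic: the paper wraps the second step in the auxiliary quantity $\angle_a^{\Hcal}(u,v)$ and the push-forward machinery of the preceding section, whereas you observe directly that a base-graph geodesic $\sigma\subseteq h\G_{O_i}\subseteq\G_A\subseteq\Acal$ is itself such a path; this is a modest simplification.

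One caution about your closing paragraph on the ``single-visit reduction.'' The lemma is to be read as asserting that $\g$ lies in the horoball $\Hcal_r(hO_i)$: this is what the remark after the definition of $\angle^{\Hcal}$ relies on, and it is how the lemma is actually invoked (applied to the maximal in-horoball subsegment of a shallow geodesic). Under that reading the horoball shape lemma gives a single vertical-horizontal-vertical trip, so there is never more than one visit and no reduction is needed. Moreover, your proposed patch would not work if it were needed: if $\g$ were allowed to leave the horoball and shortcut through $\G_A$ at depth $0$, then penetration depth $\le q$ gives no control at all over $d_{h\G_{O_i}}(u,v)$, and concatenating per-visit base-graph replacements fails because the ``outside'' subsegments do not bound the $h\G_{O_i}$-distance between their endpoints. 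So the right move is simply to note that $\g$ stays in the horoball (as you do observe at the start, since the apex is out of reach), rather than to contemplate a multi-visit case.
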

\begin{proof}
    Let  $c$ be a path that penetrates $\Hcal_r(hO_i)$ trivially and whose length realizes $\angle_a^\Hcal(u,v)$. Since $c$ contains no vertical edges in $\Hcal_r(hO_i)$ nor the apex $a$, its push-forward $\hat c$ doesn't contain $\pi_q^r(a)$. Then by \cref{lem:push-forward-continuous} we have
    
    \[
        \angle_{\pi_q^r (a)} (\pi_q^r(u),\pi_q^r( v)) \leq l(\hat c) \leq l(c) =  \angle_{ a}^\Hcal (  u,  v ) .
    \]  

    On the other hand, since $\g$ penetrates $\Hcal_r(hO_i)$ to depth at most $q$, $d_{h\G{O_i}}(u,v)\leq 2^{q+1}$. It follows immediately that
    \[
    \angle_{\pi_q^r (a)} (\pi_q^r(u),\pi_q^r( v))  \leq  \angle_a^\Hcal (u,v) \leq 2^{q+1}.
    \]
\end{proof}

\begin{lemma}
   \label{lem:DeepPen_to_BigAngle}
   For all $D>0$, there exists $r_D=r_D(D)\geq D$ such that, for all $r>r_D$, for all $q\in [r_D, r)$, and for any $u,v\in hO_i \subset \A_r$, if $[u,v]$ penetrates $\Hcal_r (hO_i)$ to depth at least $r_D$, then $\angle_{\pi_q^r (a)}(\pi_q^r (u), \pi_q^r (v)) > D$.
   
\end{lemma}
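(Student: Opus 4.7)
The plan is to argue by contrapositive: assume $\angle_{\pi_q^r(a)}(\pi_q^r(u), \pi_q^r(v)) \leq D$ and deduce that $[u,v]_{\A_r}$ cannot penetrate $\Hcal_r(hO_i)$ as deeply as $r_D$. Since $u, v$ lie at depth $0$ and thus in $\G_A$, the map $\pi_q^r$ fixes them and sends $a$ to the cone vertex $a' = hO_i$ of $\Acal$, so the hypothesis simplifies to $\angle_{a'}(u, v) \leq D$ (and in particular becomes independent of $q$).

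The first key ingredient is a uniform bound: there exists $M(D) > 0$ such that for every cone vertex $a'$ of $\Acal$ and every pair $u, v$ adjacent to $a'$, the inequality $\angle_{a'}(u, v) \leq D$ forces $d_{h\G_{O_i}}(u, v) \leq M(D)$, where $h\G_{O_i}$ is the base graph of the horoball whose apex is $a'$. Fineness of $\Acal$ at cone vertices, which holds because $\Acal$ is an $(A, \Ocal)$--graph in the sense of \cref{def:GPgraph}, gives finiteness of the angle ball $\{w : \angle_{a'}(u_0, w) \leq D\}$ for any fixed $u_0 \in T_{a'}\Acal$. The stabilizer $hO_i h^{-1}$ of $a'$ acts transitively on $T_{a'}\Acal = hO_i$ by isometries of $h\G_{O_i}$ (left multiplication), so the resulting $d_{h\G_{O_i}}$-diameter estimate is independent of the choice of $u$; finiteness of $|\Ocal|$ then bounds the result over the finitely many $A$-orbits of cone vertices, yielding a uniform $M(D)$.

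The second key ingredient bounds $d_{h\G_{O_i}}(u, v)$ from below in terms of the penetration depth. Requiring $r_D > \delta$, \cref{lem:coned-off-horoball-is-convex} forces $\Hcal_r(hO_i)$ to be convex in $\A_r$ and hence to contain $[u,v]_{\A_r}$. By the regular geodesic convention and \cref{lem:shape-geodesic-cusped-horoball}, this geodesic has one of two shapes: (a) two vertical segments of common length $d$ joined by a horizontal piece of length at most $3$, or (b) two vertical segments joined through the apex, with penetration depth $r+1$ and total length $2r+2$. In case (a), the geodesic is simultaneously an $\Hcal$-geodesic, so \cref{lem:length-of-geodesic-in-horoball} yields $d_{h\G_{O_i}}(u, v) \geq 2^d$. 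In case (b), any $\Hcal$-alternative of vertical length $L$ has length $2L + h \geq 2r + 2$ with $h \leq 3$, forcing $L \geq r$ and hence $d_{h\G_{O_i}}(u, v) \geq 2^L \geq 2^r$. In either case, penetration depth at least $r_D$ implies $d_{h\G_{O_i}}(u, v) \geq 2^{r_D}$.

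It then suffices to choose $r_D \geq \max(D, \delta+1)$ large enough that $2^{r_D} > M(D)$; this contradicts the bound from the first ingredient and completes the proof. The main obstacle lies in the equivariance bookkeeping of the first ingredient: converting a pointwise fineness statement into a bound that is uniform in both the pair $(u, v)$ and the apex requires carefully chaining fineness, the transitive stabilizer action, and the finiteness of $\Ocal$. The second ingredient, by contrast, is a direct computation with the known shapes of geodesics in combinatorial horoballs.
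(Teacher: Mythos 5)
Your proposal is correct and takes essentially the same approach as the paper: both proofs rely on fineness of $\Acal$ at cone vertices to reduce to finitely many orbits of pairs with bounded angle, and on the geometry of regular geodesics in combinatorial horoballs to relate penetration depth to base-graph distance. You are simply more explicit, introducing the intermediate quantity $d_{h\G_{O_i}}(u,v)$ together with its two-sided bounds, where the paper refers more tersely to ``an upper bound of maximal depths'' over the finitely many orbits and asserts that this bound eventually does not depend on $r$.

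One small correction: \cref{lem:coned-off-horoball-is-convex} gives convexity of the depth-$\geq L$ portion of a coned horoball for $L>\delta$, not of the full coned horoball, so it does not directly show $[u,v]_{\A_r}\subseteq\Hcal_r(hO_i)$ (your $u,v$ lie at depth $0$). The citation is unnecessary anyway: each edge of $\A_r$ changes depth by at most $1$, so a geodesic joining two depth-$0$ vertices and passing through a vertex at depth $d$ has length at least $2d$; combined with the horoball-path bound $d_{\A_r}(u,v)\leq 2\log_2 d_{h\G_{O_i}}(u,v)+3$ from \cref{lem:length-of-geodesic-in-horoball}, this yields $d_{h\G_{O_i}}(u,v)\geq 2^{d-2}$, which is all your second ingredient needs.
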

\begin{proof}
    For simplicity of notation, let $\hat a = \pi_q^r(a), \hat u = \pi_q^r (u), \hat v = \pi_q^r(v)$. 

    Suppose first $\angle_a^\Hcal (u,v) < \infty$, then there exists a path $c$ connecting $u,v$ that penetrates $\Hcal_r (hO_i)$ trivially. The push-forward of $c$ is an edge path in $\Acal$ that connects $\hat u, \hat v$ and does not go through $\hat a$, hence $\angle_{\hat a}(\hat u, \hat v)< \infty.$  
    Notice that there are finitely many orbits of pairs of points $u,v$ such that $\angle_{\hat a}(\hat u, \hat v) \leq D$. Consider now the geodesics connecting those pairs of $u,v$ in $\A_r$. There exists an upper bound $r_D = r_D(D)$ of their maximal depths. Notice that this bound does not depend on $r$ so long as $r>r_D$.  

    Suppose now $\angle_a^\Hcal(u,v) = \infty$. In this case, the geodesic $[u,v]$ goes through the apex $a$. Hence $[u,v]$ has penetration depth $r+1 >r_D$. 
    If $\angle_{\hat{a}} (\hat u, \hat v)<\infty$, then there exists a path $\eta$ connecting $\hat u, \hat v$ that does not contain $\hat a$. In particular, it does not contain any cone edge of $hO_i$.  
    Hence the pullback of $\eta$ is a path connecting $u,v$ that penetrates $\Hcal_r(hO_i)$ trivially, contradicting the assumption that $\angle_a^\Hcal(u,v) = \infty$. 
    Therefore, $\angle_{\hat{a}} (\hat u, \hat v) = \infty >D$.
       
    Without loss of generality we may assume $r_D>D$. 
\end{proof}

\begin{corollary}
    \label{cor:small-angle-shallow-pullback}
    Let $c$ be any path in $\Acal$. For all $D>0$, let $r_D=r_D(D)$ be the constant from \cref{lem:DeepPen_to_BigAngle}. Then for all $r>r_D$, and for all $q\in [r_D, r)$, if $c$ does not pass any apex with angle greater than $D$, then $\tilde c$ is a $q$-shallow path. Moreover, $l(\tilde c) \leq (q+1) l(c) $. 
\end{corollary}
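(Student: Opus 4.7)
The plan is to combine \cref{lem:DeepPen_to_BigAngle} with an orbit-finiteness argument so that each connecting geodesic in the construction of $\tilde c$ stays at depth exactly $q$ and has length one. The key observation is a uniform bound on the horoball-base distance. Since $\Acal$ is fine at every apex and the axioms of an $(A,\Ocal)$--graph guarantee only finitely many $A$--orbits of apices, for each $D > 0$ there are only finitely many $A$--orbits of triples $(u,v,a)$ in which $u,v$ are adjacent to the apex $a$ with $\angle_a(u,v) \leq D$: fineness provides finitely many $\text{Stab}_A(a)$--orbits of such pairs for each apex, and $a$ itself ranges over finitely many $A$--orbits. Because $A$ acts on $\Acal$ by isometries that preserve each horoball-base graph $h\G_{O_i}$, the quantity $d_{h\G_{O_i}}(u,v)$ depends only on the orbit of the triple, and is therefore bounded by a single constant $C_D$ over all such pairs.

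Next I enlarge the constant $r_D$ supplied by \cref{lem:DeepPen_to_BigAngle} if necessary so that $2^{r_D} \geq C_D$; this enlargement preserves the conclusion of that lemma, since replacing $r_D$ by a larger value only strengthens the hypothesis ``depth $\geq r_D$''. For $r > r_D$, $q \in [r_D, r)$, and any apex $a = hO_i$ traversed by $c$ with adjacent vertices $u,v$ on $c$, the hypothesis on $c$ forces $\angle_a(u,v) \leq D$, so $d_{h\G_{O_i}}(u,v) \leq C_D \leq 2^{r_D} \leq 2^q$. By construction of the combinatorial horoball there is then a horizontal edge between $(u,q)$ and $(v,q)$ at depth $q$ in $\Hcal_r(hO_i)$, and this edge is itself a regular geodesic of length one in the coned-off horoball; I take this edge as the connecting geodesic between the two corresponding components of $\iota_q^r(c)$.

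With these choices, the pullback $\tilde c$ consists of $\G_A$--edges at depth $0$, vertical segments of length $q$ that reach depth exactly $q$ in their respective horoballs, and the connecting horizontal edges at depth $q$. Hence $\tilde c$ penetrates every horoball to depth at most $q$, proving the shallowness claim. For the length estimate, write $l(c) = n_1 + n_2$ where $n_1$ counts $\G_A$--edges and $n_2$ counts cone edges of $c$; then $c$ visits at most $n_2/2$ apices, so the pullback contributes at most $n_2/2$ connecting horizontal edges, giving
\[
l(\tilde c) \;\leq\; n_1 + n_2 q + \tfrac{n_2}{2} \;\leq\; n_1 + n_2(q+1) \;\leq\; (q+1)(n_1+n_2) \;=\; (q+1)\,l(c).
\]

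The main obstacle is establishing the uniform constant $C_D$; once that is in hand, the single horizontal edge at depth $q$ simultaneously delivers the shallowness property and the length bound. The argument for $C_D$ really uses two facts at once: fineness of $\Acal$ at each apex (which makes the pairs with angle $\leq D$ finitely many modulo the apex stabilizer), and the finite $A$--orbit structure of apices inherent to any $(A,\Ocal)$--graph, so that the $A$--invariant function $d_{h\G_{O_i}}(\cdot,\cdot)$ takes only finitely many values on the relevant orbits.
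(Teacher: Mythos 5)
Your proof is correct and follows essentially the same route as the paper: bound the base-graph distance between the cone-edge endpoints at each apex visited by $c$ so that the two pulled-back vertical segments can be joined by a single horizontal edge at depth $q$, then count lengths. The paper achieves the first step simply by invoking the contrapositive of \cref{lem:DeepPen_to_BigAngle} (small angle $\Rightarrow$ penetration depth $<r_D\leq q$), whereas you re-derive the underlying orbit-finiteness bound $C_D$ on the base-graph distance directly from fineness at cone vertices and the finite $A$-orbit structure; this is exactly the mechanism that proves \cref{lem:DeepPen_to_BigAngle}, so conceptually nothing new is introduced. Where you do add care is in handling the exponential conversion: you note that a horizontal edge at depth $q$ needs $d_{h\G_{O_i}}(u,v)\leq 2^q$ and enlarge $r_D$ to guarantee $2^{r_D}\geq C_D$, which cleanly dispatches a constant that the paper's two-sentence proof glides over. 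The one thing to flag is that the corollary is stated with ``the constant $r_D$ from \cref{lem:DeepPen_to_BigAngle}'' already fixed; your argument works with a possibly larger threshold, so strictly you are proving the statement for a (still uniform) $r_D'\geq r_D$. That discrepancy is harmless for every downstream use, but if you want the statement verbatim you should either absorb the enlargement into the choice of $r_D$ made in \cref{lem:DeepPen_to_BigAngle} itself, or observe that the $r_D$ produced there already dominates $\log_2 C_D$ up to the additive $2$ from \cref{lem:length-of-geodesic-in-horoball}. The length bound $l(\tilde c)\leq(q+1)l(c)$ is counted the same way the paper intends, and your accounting of apex visits versus cone edges is sound.
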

\begin{proof}
    Suppose $c$ passes some apex $a$ and let $u,v\in c$ be the vertices adjacent to $a$.  If the geodesic $[\iota_q^r(u), \iota_q^r(v)]$ has penetration depth greater than or equal to $r_D$, then $\angle_a(u,v)\geq D$, contradicting the assumption. Therefore, there exists a horizontal edge connecting $(u,q)$ and $(v,q)$ in the horoball. 

    It follows that $l(\tilde c)$ is $q$-shallow and has length at most $(q+1)l( c)$.
\end{proof}

A straightforward combination of \cref{lem:GeodpassBigAngle,lem:DeepPen_to_BigAngle} gives the following corollary.

\begin{corollary}
\label{cor:Deep-to-Big-Angle-quasigeodesic}
    For all $k,l \geq 1$, let $D=D(\d,k,l)$ be as in \cref{lem:GeodpassBigAngle} and let $r_D = r_D (D)$ be as in \cref{lem:DeepPen_to_BigAngle}. 
    Let $hO_i$ be a coset of hyperbolically embedded subgroup in $\Acal$. 
    Let $c_1$ (resp. $c_2$) be a $(k,l)$-quasi-geodesic starting at $a$, the apex of $hO_i$ and ending at $x$ (resp. $y$). 
    Let $u$ (resp. $v$) be the vertex on $c_1$ (resp. $c_2$) adjacent to $a$. 
    For all  $r>r_D$ and all $q\in [r_D, r)$, if $[\iota_q^r(u), \iota_q^r(v)]$ penetrates $\Hcal_r(hO_i)$ to depth at least $r_D$, then any geodesic $[x,y]$ must pass through $a$.
\end{corollary}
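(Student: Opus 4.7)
The plan is to chain \cref{lem:DeepPen_to_BigAngle} and \cref{lem:GeodpassBigAngle} together, using $\pi_q^r$ and $\iota_q^r$ as a bridge between $\A_r$ and $\Acal$. The deep-penetration hypothesis in $\A_r$ should convert into a large-angle statement at the apex $a$ inside $\Acal$, which then feeds directly into \cref{lem:GeodpassBigAngle}.

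First, I would let $a_r$ denote the apex of $\Hcal_r(hO_i)$ inside $\A_r$; by \cref{construction:pi_Kr_to_hatK} we have $\pi_q^r(a_r) = a$. Since $u$ and $v$ are vertices of $\Acal$ adjacent to the cone vertex $a$, they both lie in the coset $hO_i \subseteq A$, so $\iota_q^r(u)$ and $\iota_q^r(v)$ are the corresponding depth-$0$ vertices in $\A_r$. The hypothesis that the geodesic $[\iota_q^r(u), \iota_q^r(v)]$ penetrates $\Hcal_r(hO_i)$ to depth at least $r_D$ is then exactly what \cref{lem:DeepPen_to_BigAngle} requires, and applying it yields
\[
\angle_{a}\bigl(\pi_q^r \circ \iota_q^r(u),\, \pi_q^r \circ \iota_q^r(v)\bigr) > D.
\]

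Next, I would invoke \cref{lem:composition_pi_iota}, which asserts that $\pi_q^r \circ \iota_q^r$ restricts to the identity on $\Acal$, to collapse the inequality above to $\angle_a(u,v) > D$. Because $u$ is the vertex on $c_1$ adjacent to $a$ and $v$ is the vertex on $c_2$ adjacent to $a$, the convention in \cref{def:angle_of_paths} gives $\angle_a(c_1, c_2) = \angle_a(u, v) > D$. With $D = D(\delta, k, l)$ chosen to match the constant from \cref{lem:GeodpassBigAngle}, that lemma then forces any geodesic $[x,y]$ in $\Acal$ to pass through $a$, which is exactly the conclusion.

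The argument is essentially a direct concatenation of two earlier results, so I do not expect a substantive obstacle. The main bookkeeping is to keep the two apex vertices straight --- the apex $a \in \Acal$ and the apex $a_r \in \A_r$ of the \emph{same} horoball --- and to use $\pi_q^r(a_r) = a$ from the construction. Once this identification is in place, the proof reduces to a one-line diagram chase through the equality $\pi_q^r \circ \iota_q^r = \mathrm{id}_{\Acal}$.
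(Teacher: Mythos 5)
Your proposal is correct and is precisely the "straightforward combination" of \cref{lem:DeepPen_to_BigAngle} and \cref{lem:GeodpassBigAngle} that the paper asserts without further detail. The chain $\iota_q^r(u),\iota_q^r(v)$ at depth $0$, deep penetration, \cref{lem:DeepPen_to_BigAngle}, the identity $\pi_q^r\circ\iota_q^r=\mathrm{id}_{\Acal}$ from \cref{lem:composition_pi_iota}, and then \cref{lem:GeodpassBigAngle} applied to $\angle_a(c_1,c_2)=\angle_a(u,v)>D$ is exactly the intended argument.
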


\subsubsection{Extended pullback}

In this section we construct a class of paths in $\mathbb{A}_r$ that are close to the geodesics. This class of paths plays an important role in the proof of \cref{thm:uniform_qc}.

Recall that we fix $\delta>0$ such that for all $r>0$, $\A_r$ and $\Acal$ are $\delta$-hyperbolic in the sense that all triangles are $\delta$-thin hence $\delta$-slim (See \cref{sec:basic-constructions}). Recall also we assume geodesics are regular (See).

\begin{construction}
    \label{cons:extended-pullback}
    For any $r\in\N \cup \{0\}$, any integer $q\in [1,r-1]$, and any pair of vertices $x,y \in \A_r$, we construct a path $\eta$ in $\A_r$ as follows:
        \begin{enumerate}
        \item If both $x,y$ are at depth  less than $q$, then $\eta$ is the pullback of some geodesic $[\pi_q^r (x), \pi_q^r (y)]$ as defined in \cref{def:pullback_under_pi}. By \cref{lem:pullback-shallow-ends} $\eta$ connects $x,y$.
        
        \item If both $x,y$ are at depth greater than or equal to $ q$ and are contained in the same horoball, then $\eta $ is some geodesic connecting $x,y$.
        
        \item If both $x,y$ are at depth greater than or equal to $ q$ but are contained in different horoballs, then let $[x,y]$ be a geodesic. Let $u\in[x,y]$ (resp. $v\in[x,y]$) be the last (resp. first) vertex that has depth $q$ and is in the same horoball of $x$ (resp. $y$). Let $\eta_0 = [\pi_q^r (u), \pi_q^r (v)]$ be a geodesic in $\Acal$, and let $\tilde{\eta}_0$ be its pullback. Let $u',v'$ be the endpoints of $\tilde \eta_0$, then $u',v'$ are at depth $q$ in the same horoball of $u,v$ respectively. Let $[x,u]$ and $[v,y]$ be subsegments of $[x,y]$, and $[u,u'],[v',v]$ be geodesics. Let $\eta$ be the concatenation of $[x,u]$, $[u,u']$, $\tilde \eta_0$, $[v',v]$ and $[v,y]$.

        \item If only one of $x,y$ is at depth greater than or equal to $ q$,  without loss of generality let  $x$ be the one deep in the horoball. Let $[x,y] $ be a geodesic. Let $u\in[x,y]$ be the last vertex at depth $q$ in the same horoball of $x$. Let $\eta_0$ be a geodesic $[\pi_q^r (u), \pi_q^r (y)]$, then $\tilde \eta_0$ connects $u'=\iota_q^r\circ\pi_q^r(u)$ and $y$. Let $\eta$ be the concatenation of the subsegment $[x,u]\subseteq [x,y]$, a geodesic $[u,u']$ and $\tilde \eta_0$.
    \end{enumerate}

    Such a path is called an \emph{extended pullback between $x,y$ at depth $q$}.  Since neither $\mathbb{A}_r$ nor $\Acal$ needs to be uniquely geodesic, the construction of $\eta$ may not be unique. We denote the class of all extended pullback between $x,y$ at depth $q$ as $\eta_q(x,y)$.
\end{construction}

In general a subsegment of an extended pullback might not be an extended pullback. We now prove that this is true in a special case.

\begin{lemma}
    \label{pullback-is-extended-pullback}
    Let $p$ be a path in $\Acal$. Then there exists a pair of points $x, y \in\A_r$ and $\eta \in \eta_q(x,y) $ such that $\pi_q^r( \eta) = p$. 
    
    Moreover, if $p'$ shares endpoints with $p$, then there exists $ \eta' \in \eta_q(x,y)$ such that $\pi_q^r(\eta)=p, \pi_q^r(\eta') = p'$.
\end{lemma}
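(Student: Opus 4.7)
The plan is to choose $x,y$ directly from the endpoints of $p$ so that $\pi_q^r(x),\pi_q^r(y)$ are exactly those endpoints, and then build $\eta$ from $p$ itself. Let $a,b$ denote the endpoints of $p$. If $a$ lies in $\G_A\subseteq\Acal$, set $x=a\in\G_A\subseteq\A_r$ (so $x$ has depth $0<q$); if $a$ is an apex in $\Acal$, set $x$ to be the apex of the corresponding horoball in $\A_r$ (depth $r+1\geq q$). Define $y$ from $b$ the same way. By \cref{construction:pi_Kr_to_hatK} we have $\pi_q^r(x)=a$ and $\pi_q^r(y)=b$, so endpoints of $\pi_q^r(\eta)$ will match those of $p$ automatically.

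With $x,y$ fixed, the configuration falls into one of the four cases of \cref{cons:extended-pullback}. In Case (1), where both endpoints lie in $\G_A$, take $\eta=\tilde p$, the pullback of $p$ from \cref{def:pullback_under_pi}; since $p$ is a geodesic from $a=\pi_q^r(x)$ to $b=\pi_q^r(y)$, this $\eta$ lies in $\eta_q(x,y)$. In Case (3), where $x,y$ are in different horoballs at depth $\geq q$, pick any geodesic $[x,y]$ in $\A_r$ and extract the prescribed depth-$q$ vertices $u,v$; since $\pi_q^r(u)=a$ and $\pi_q^r(v)=b$, I may take the geodesic $\eta_0$ required by the definition to be $p$ itself, giving $\eta=[x,u]+[u,u']+\tilde p+[v',v]+[v,y]$. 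Case (4) (one endpoint in $\G_A$, one apex) is handled analogously with the correct half of this formula, and Case (2) forces $p$ to be trivial.

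To verify $\pi_q^r(\eta)=p$, I would unfold the push-forward from \cref{def:push-forward_under_pi}. In Case (1), the pullback opens each cone edge of $p$ into a vertical segment of length $q$ and joins consecutive ones by geodesics in the coned horoball; the push-forward collapses these horoball components and closes the partial cone edges at their boundaries, recovering $p$ edge-by-edge. In Cases (3) and (4), the segments $[x,u]$, $[u,u']$, and their $y$-side counterparts live entirely at depth $\geq q$ inside a single horoball, so $\pi_q^r$ sends them all to a single apex. In the push-forward construction they get absorbed into the extremal cone edges of $p$: concretely, $\hat c_x$ is the partial cone edge joining $\pi_q^r(x)=a$ to $\pi_q^r(u_0)=g_0$, where $u_0$ is the first depth-$0$ vertex of $\eta$, which coincides with the vertex $g_0$ at which $\tilde p$ first leaves the horoball of $a$. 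Since $[a,g_0]$ is the first cone edge of $p$, together with the push-forward of the interior of $\tilde p$ and the symmetric piece $\hat c_y$, the concatenation is exactly $p$.

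For the moreover statement, observe that $x,y$ depend only on the endpoints shared by $p$ and $p'$. Reusing the same $x,y$, and in Cases (3), (4) the same geodesic $[x,y]$ in $\A_r$ (hence the same $u,v$), replace $\eta_0=p$ by $\eta_0'=p'$ and $\tilde p$ by $\widetilde{p'}$ (in Case (1) simply take $\eta'=\widetilde{p'}$); the same verification gives $\pi_q^r(\eta')=p'$. The main subtlety throughout is the push-forward bookkeeping in Cases (3) and (4): one must confirm that $\pi_q^r(x)$ and $\pi_q^r(u_0)$ land on the same cone edge of $\Acal$ so that $\hat c_x$ is a single partial cone edge rather than two, and this holds by construction because $u_0=g_0$ is a neighbor in $\G_A$ of the apex $a=\pi_q^r(x)$.
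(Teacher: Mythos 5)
Your proof is correct and rests on the same core idea as the paper's: the desired $\eta$ is built around the pullback $\tilde p$, and the moreover part is obtained by swapping $\tilde p$ for $\widetilde{p'}$ while keeping the same $x,y$. The genuine difference is the choice of $x,y$. The paper takes $x,y$ to be the endpoints of $\tilde p$ itself, so that when an endpoint of $p$ is an apex the corresponding $x$ is the recovered depth-$q$ vertex and $\eta = \tilde p$ is already a member of $\eta_q(x,y)$ with no wrapping needed. You instead place $x,y$ at the apices of the relevant horoballs in $\A_r$ (depth $r+1$), which forces you to invoke Cases (3)/(4) of \cref{cons:extended-pullback} and append the segments $[x,u]$, $[u,u']$ (and their $y$-side counterparts); since these all live at depth $\geq q$, $\pi_q^r$ collapses them to the apex and the image is still $p$. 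Both choices are legitimate; yours is more explicit about which case of \cref{cons:extended-pullback} is being used, while the paper's is more economical. One stylistic remark: the statement asks for the direct image $\pi_q^r(\eta)=p$, and you already compute it correctly by observing that the wrapper segments collapse to apices — the detour through the push-forward $\hat\eta$ of \cref{def:push-forward_under_pi} is more machinery than needed and risks conflating the push-forward with the direct image.
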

\begin{proof}
    Let $u,v$ be the endpoints of $p$. Let $\eta = \tilde p$ be the pullback of $p$ and let $x,y$ be the endpoints of $\eta$. It's clear that $\eta\in\eta_q(x,y)$. By \cref{lem:composition_pi_iota} $p = \pi_q^r(\eta)$.

   Let $\eta'_0$ be the pullback of $p'$ and let $x', y'$ be the endpoints of $\eta'$.  If neither of $u,v$ is apex, then $x' = x, y'=y$. Let $\eta'=\eta'_0$ and we are done. Suppose now $u$ is an apex, then  $x'$ is in the same horoball as $x$, and they are both at depth $q$.  Let $\eta ' $ be the concatenation of $[x,x'],\eta'_0,[y',y]$. Then $\eta' \in \eta(x,y)$.  Notice that any point in $[x,x']$ or $[y,y']$  has depth greater than or equal to $q$. Therefore $\pi_q^r([x,x']) = u$, $\pi_q^r(y,y') = v$. It follows that $\pi_q^r(\eta ' ) = p'$.
\end{proof}

We now prove that extended pullbacks of the same endpoints are actually close to each other. As a result, we may treat $\eta_q(x,y)$ as one extended pullback between $x,y$ at depth $q$. 

\begin{lemma}
    \label{lem:extended-pullback-are-close-bigon}
    Let $x,y\in \A_r$. Let $D=D(\d,1,0)$ be as in \cref{lem:GeodpassBigAngle} and let $r_D = r_D(D)$ be as in \cref{lem:DeepPen_to_BigAngle}. Then for all integer  $r>r_D$ and all integer $q\in [r_D, r-1]$,  the Hausdorff distance in $\Acal$ between $\eta_1,\eta_2 \in \eta_q(x,y)$ is bounded above by $2q\delta$.
\end{lemma}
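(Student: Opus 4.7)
The plan is to argue by case analysis following the construction of extended pullbacks in \cref{cons:extended-pullback}, reducing the general situation to the key Case~(1).

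In Case~(2), both $\eta_1$ and $\eta_2$ are geodesics in $\A_r$ between $x$ and $y$, so by $\delta$-hyperbolicity of $\A_r$ their Hausdorff distance is at most $\delta \leq 2q\delta$.

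Case~(1) is the heart of the argument. Since $x$ and $y$ have depth $<q$, \cref{lem:composition_pi_iota} gives $\pi_q^r(x)=x$ and $\pi_q^r(y)=y$, so each $\eta_i=\tilde p_i$ is the pullback of some $\Acal$-geodesic $p_i$ from $x$ to $y$. By $\delta$-hyperbolicity of $\Acal$, the geodesics $p_1,p_2$ are $\delta$-Hausdorff close. To transfer this bound to $\A_r$, I would exploit that $\iota_q^r$ maps each edge of $\Acal$ to a path of length at most $q$ in $\A_r$ (cone edges become vertical paths of length $q$, while $\G_A$-edges map to themselves), so a $\delta$-length path in $\Acal$ lifts to one of length at most $q\delta$ in $\A_r$ (compare \cref{lem:length-path-pullback}). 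For a point $z$ on a vertical segment of $\tilde p_1$, one has $z=\iota_q^r(\pi_q^r(z))$: taking $w\in p_2$ with $d_{\Acal}(\pi_q^r(z),w)\leq \delta$ and lifting yields a point $z'\in\tilde p_2$ with $d_{\A_r}(z,z')\leq q\delta$. The remaining subtlety is when $z$ lies on an added geodesic of $\tilde p_1$: here $z$ sits at depth $\geq q$ inside a coned horoball whose apex is on $p_1$ and $\delta$-close to $p_2$. Matching $z$ either to an analogous added geodesic in $\tilde p_2$ (if $p_2$ passes through that apex) or to a nearby vertical segment of $\tilde p_2$ entering the same horoball, and estimating via the shape of geodesics in coned horoballs (\cref{lem:shape-geodesic-cusped-horoball}), produces the bound $2q\delta$.

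For Cases~(3) and~(4), each $\eta_i$ decomposes into end geodesic segments in $\A_r$ near the deep endpoint(s), interface geodesics between depth-$q$ points in a single horoball, and a middle pullback $\tilde\eta_0^i$. The end geodesics from the common endpoint into the same horoball are $\delta$-Hausdorff close by $\A_r$-hyperbolicity, while the middle pullbacks share apex endpoints $\pi_q^r(u_i)=a_x$ and $\pi_q^r(v_i)=a_y$ in $\Acal$, reducing them to Case~(1). The main obstacle throughout is the added-geodesic subcase of Case~(1): the bijection between $\eta_i$ and $p_i$ via $\pi_q^r$ and $\iota_q^r$ ceases to be pointwise inverse there, and the factor of $2$ in $2q\delta$ precisely accounts for the cost of entering and leaving coned horoballs when matching such deep points between $\eta_1$ and $\eta_2$.
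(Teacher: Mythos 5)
Your reduction of Cases (2)–(4) to Case (1) is fine and matches the paper, but the core of your Case (1) argument has a real gap. You assert that because $p_1,p_2$ are $\delta$-Hausdorff close in $\Acal$, this "lifts" to a $q\delta$ bound in $\A_r$, citing that $\iota_q^r$ stretches edges by at most $q$. This conflates two different things: $\iota_q^r(c)$ (which indeed has length $\leq q\, l(c)$ but passes through apices of $\A_r$) and the pullback $\tilde c$ (which avoids apices by replacing them with genuine geodesics in the coned horoballs). \cref{lem:length-path-pullback}, which you cite, gives $l(\tilde c) \leq (r+1)\,l(c)$, not $q\,l(c)$ — a pullback of a $\delta$-length connecting geodesic can be as long as $(r+1)\delta$ if it runs through an apex whose neighboring vertices in $\Acal$ are at large angle, because the inserted geodesic in the horoball can penetrate nearly to depth $r$. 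The $q$-dependent bound only holds after showing the connecting geodesic $[\hat p, \hat q]$ is \emph{shallow}, i.e., avoids apices of angle $\geq D$, which is precisely the content of \cref{cor:small-angle-shallow-pullback} together with the second claim in the paper's proof. That claim — a careful hyperbolicity argument ruling out big-angle apices on $[\hat p, \hat q]$, using the thinness of the bigon $\hat\g_1\hat\g_2$ — is the technical heart of the lemma and is absent from your sketch.

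Your treatment of the "subtlety" for points $z$ on an added geodesic of $\tilde p_1$ is where the same gap resurfaces, and the resolution you gesture at doesn't close it. If $p_2$ does \emph{not} pass through the apex $\hat a = \pi_q^r(z)$, it is not automatic that $z$ is close to $\tilde p_2$: a priori the added geodesic could plunge to depth near $r$, taking $z$ arbitrarily far from the depth-$q$ shell and from anything in $\tilde p_2$. The fact that saves you — and which you would need to invoke explicitly — is \cref{lem:GeodpassBigAngle}: if $p_1$ passed through $\hat a$ with angle $\geq D$, then \emph{every} geodesic from $\hat x$ to $\hat y$ (including $p_2$) would pass through $\hat a$. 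So if $p_2$ misses $\hat a$, the angle of $p_1$ at $\hat a$ is $< D$, and by the mechanism in \cref{cor:small-angle-shallow-pullback} the inserted geodesic in $\tilde p_1$ is a single horizontal edge at depth $q$, keeping $z$ near the depth-$q$ shell. (Conversely, if both geodesics pass through $\hat a$ you must split the bigon there and control the angles at $\hat a$ via the paper's first claim.) Without this dichotomy, the estimate "via the shape of geodesics in coned horoballs" has no teeth. So while your overall strategy is the same as the paper's, the proposal as written is not a proof: the angle control and the dichotomy from \cref{lem:GeodpassBigAngle} are missing, and the $q\delta$ lifting claim is false as stated.
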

\begin{proof}
       We discuss the four cases of extended pullback one by one. For simplicity, we denote the concatenation of $n-1$ geodesics $[x_1, x_2], [x_2,x_3],\dots,[x_{n-1}, x_n]$ as $[x_1,x_2,x_3,\dots,x_n]$.
       
    \begin{case}
        Both $x,y$ are at depth less than $q$. 
    \end{case}        
        In this case, $\eta_1$ (resp. $ \eta_2$) is the pullback of geodesics $\hat\g_1$ (resp. $\hat\g_2$)  connecting $\hat x = \pi_q^r(x), \hat y =  \pi_q^r(y) \in \Acal$. Without loss of generality suppose $\hat x, \hat y $ are different. 

        \begin{claim*}
        \label{claim:intersect-at-small-angle}
            Suppose both $\hat x, \hat y$ are vertices. Suppose also $\hat \g_1,\hat \g_2$ has no intersection other than $\hat x, \hat y$. Then 
            \[ 
                \angle_{\hat x} (\hat \g_1,\hat \g_2) <8\d <D, \qquad  \angle_{\hat y} (\hat \g_1,\hat \g_2) <8\d <D.
            \]
        \end{claim*}

        \begin{proof}
                
        By \cref{lem:GeodpassBigAngle}, $\hat \g_1$ and $\hat \g_2$ do not pass any apex with angle greater than or equal to $ D$, otherwise they must intersect at that apex. 
     
        Notice that $l(\hat \g_1)\geq 2$, otherwise $\g_1$ and $\g_2$ must be the same edge. Then there are two different points $\hat u_1 \in \hat \g_1, \hat u_2\in \hat \g_2$ that are adjacent to  $\hat x$.  If $l(\hat \g_1) \leq 3\delta$, then $[\hat u_1,\hat y,\hat u_2]$ is a path of length at most $6\delta$ that avoids $\hat x$ because both segments are geodesics. 

        If  $l(\hat \g_1)>3\delta$, then take $\hat v_1 \in \hat \g_1$ that is $3\delta$ away from $\hat x$. By hyperbolicity of $\hat L$, there exists $\hat v_2 \in \hat \g_2$ such that $d_{\hat L}(\hat v_1,\hat v_2)\leq \delta$. Moreover, $[\hat v_1,\hat v_2]$ cannot contain $\hat x$, otherwise $d_\Acal(\hat x, \hat v_1)\leq \d < 3\d$, contradicting the fact that  $\hat \g_1$ is a geodesic. It follows that $[\hat u_1,\hat v_1,\hat v_2,\hat u_2]$  is a path of length at most $8\delta-2$ that does not contain $\hat x$. See \cref{fig:small-angle}. Either way, $\angle_{\hat x} (\hat \g_1,\hat \g_2)< 8\delta \leq D$. The same argument goes for $\angle_{\hat x} (\hat \g_1,\hat \g_2)$.
  \end{proof}
    
        \begin{figure}[htb]
            \centering
            \includegraphics[width=0.5\linewidth]{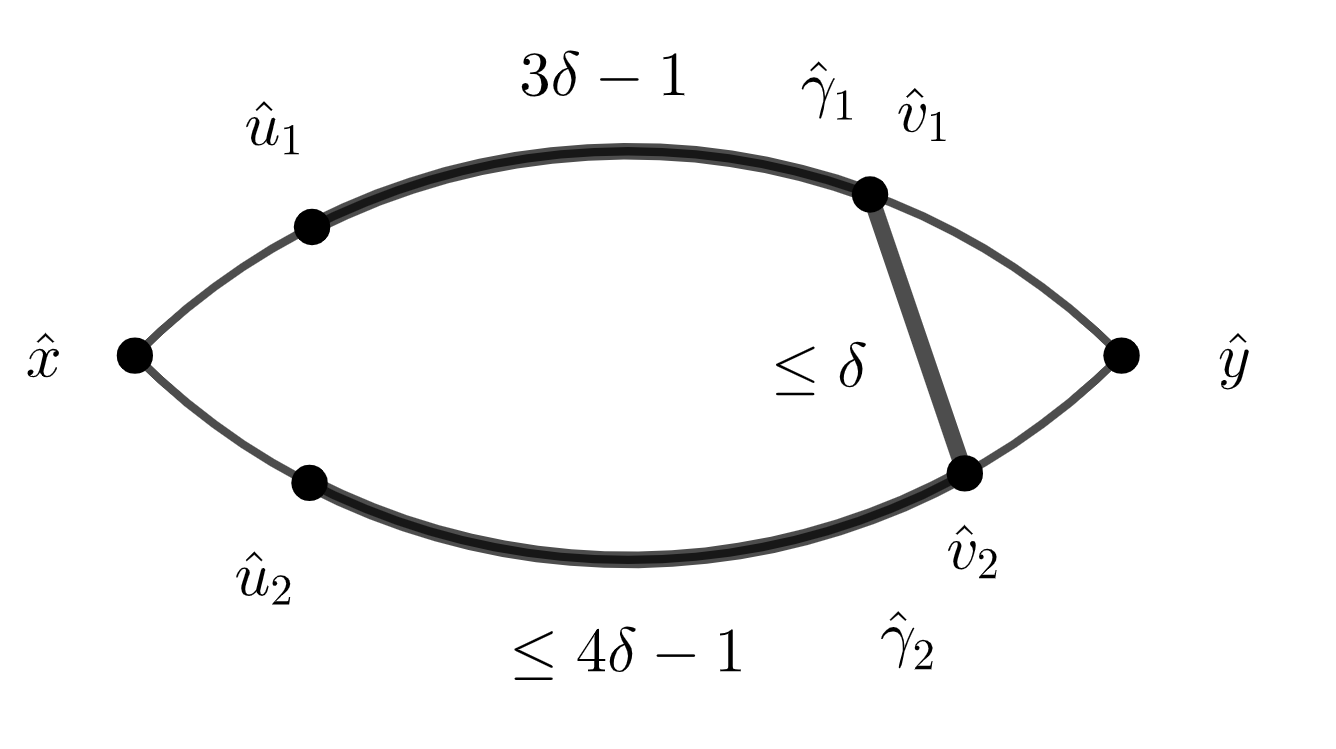}
            \caption{When $l(\hat\g_1)\geq 3\delta$, $\angle_{\hat x}(\hat \g_1, \hat \g_2) < 8\delta$}
            \label{fig:small-angle}
        \end{figure}

    Notice that $\hat \g_1, \hat \g_2$ might have intersection other than $\hat x, \hat y$. If they intersect at an apex $\hat a$, then the pullback of $\hat a$ might be different vertices in $\eta_1, \eta_2$. However, the claim above guarantees that the two pullbacks of $\hat a$ are at distance $1$ from each other.     We can therefore assume that $\hat \g_1, \hat \g_2$ have no intersection other than $\hat x, \hat y$. 
    
    Consider now any point $p\in \eta_1 \backslash \eta_2$. Let $\hat p = \pi_q^r(p)$ and let $\hat q\in \hat\g_2$ be the closest vertex to $\hat p$. Since $\hat\g_1$ and $\hat\g_2$ are geodesics sharing both endpoints, $d_\Acal(\hat p,\hat q)\leq \delta$. 

    \begin{claim*}
        \label{claim:extended-pullback-shallow}
        Under this condition,  $[\hat p, \hat q]$ does not pass through any apex with angle greater than or equal to $ D$.
    \end{claim*}
    \begin{proof}
        
    In order to obtain a contradiction, suppose $[\hat p, \hat q]$ passes some apex $\hat a$ with angle greater than or equal to $D$. Let $\hat a_1,\hat a_2$ be the two adjacent vertices of $\hat a$ on $\hat c$.

     First suppose $\hat p$ is at least $3\delta $ away from $\hat x$ and $\hat y$. Then we can find a point $\hat p_1\in \hat \g_1$ (resp. $\hat p_2\in \hat \g_1$) which is $3\delta$ away from $\hat p$ on $[\hat x, \hat p]$ (resp. $[\hat p, \hat y]$). Let $\hat q_1$ (resp. $\hat q_2$) be the closest point of $\hat p_1$ (resp. $\hat p_2$) on $\hat \g_2$. 
     \begin{figure}[htb]
        \centering
        \includegraphics[width=0.5\linewidth]{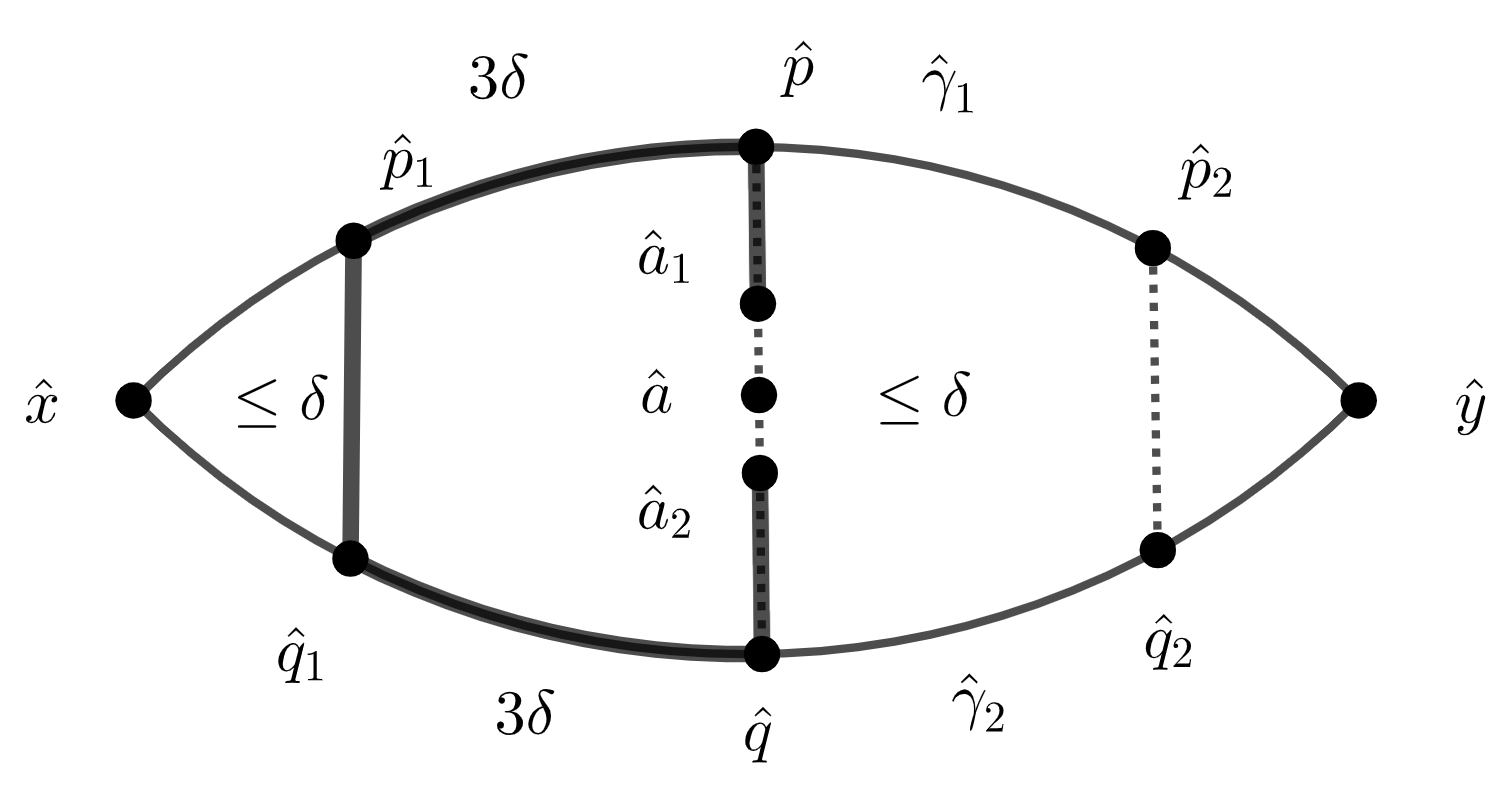}
        \caption {The path that shows $\angle_{\hat a }(\hat a_1, \hat a_2) \leq 8 \d$. }
        \label{fig:bigon-close}
    \end{figure}

    Since $[\hat p, \hat q]$ is a geodesic, $\hat a$ appears once on $[\hat p, \hat q]$. In particular, $\hat a$ is not contained in $[\hat p, \hat a_1]$ nor $[\hat a_2, \hat q]$. If $\hat a$ is contained in $\hat \g_2$, then $\hat a$ is closer to $\hat p$ than $\hat q$, contradicting the choice of $\hat q$. So $\hat a$ is not contained in $[\hat q, \hat q_1]$ nor $[\hat q, \hat q_2]$.
     
    Notice that $ [\hat p,\hat q]$ cannot intersect $ [\hat p_1,\hat q_1]$ (resp.  $[\hat p_2,\hat q_2]$), otherwise there exists a path $[\hat p, \hat p_1]$ (resp.  $[\hat p,\hat p_2]$) with length shorter than $2\d$. Therefore, $\hat a$ is not contained in $[\hat p_1,\hat q_1]$ nor $[\hat p_2, \hat q_2]$. 

    Moreover, $\hat \g_1$ is a geodesic, so $\hat a$ could appear on $\hat \g_1$ at most once. Without loss of generality suppose $\hat a \notin [\hat p, \hat x]$. But then $[\hat a_1, \hat p, \hat p_1, \hat q_1, \hat q, \hat a_2]$ is a path of length at most $10\delta <D$ excluding $\hat a$, a contradiction. See \cref{fig:bigon-close}.

    Suppose now $\hat p$ lies in the $3\delta$ neighborhood of $\hat x$. If $\hat a\notin [\hat x, \hat p]$, then  $[\hat a_1, \hat p, \hat x, \hat q, \hat a_2]$ is a path of length at most $8\delta$ excluding $\hat a$. So $\hat a \in [\hat x, \hat p]$, which implies $\hat a \notin [\hat y, \hat p] $. The same argument applies to this side. We obtain again a contradiction. 
  \end{proof}
     
    By \cref{cor:small-angle-shallow-pullback}, the pullback of $[\hat p, \hat q]$ has length at most $(q+1)\d$. Hence $p$ is in the $(q+1)\d\leq 2q\d$ neighborhood of $\eta_2$. It follows that $d_{\A_r}(p, \iota_q^r(\hat q)) \leq 2q \d$ as desired.

    If $\hat x$ or $\hat y$ is not a vertex, then it is contained in a cone edge, and $\hat \g_1, \hat \g_2$ forms either a geodesic triangle where one side is a cone edge, or a geodesic quadrilateral where two opposite side are cone edges. Take $p\in \eta_1\backslash\eta_2$ and let $\hat p = \pi_q^r(p)$. If $\hat p$ lands on the same cone edge as $\hat x$, then $ p$ is at most $q$ away from $\eta_2$. If $\hat p$ is not contained in the cone edge, then a similar argument shows that $d_{\A_r}(p,\eta_2)$ is at most $2q\d$. 
     
    It follows that in Case 1, the Hausdorff distance between $\eta_1,\eta_2$ is at most $2q\d$.

    \begin{case}
        Both $x,y$ are  at depth greater than or equal to $q$, and they are contained in the same horoball.
    \end{case}
        In this case, $\eta_1$ and $\eta_2$ are two different geodesics between $x,y$. Since $\A_r$ is $\delta$ hyperbolic, the Hausdorff distance between geodesics sharing the same endpoints is at most $\delta$. 

    \begin{case}
        Both $x,y$ are at depth greater than or equal to $ q$, and they are contained in different horoballs.
    \end{case}

        In this case, there exists two geodesics $g_1,g_2$ connecting $x,y$ with $u_1$ (resp. $u_2$, $v_1$, $v_2$) at depth $q$ in the same horoball of $x$ (resp. $x$, $y$, $y$), such that  $\eta_1(x,y)$ (resp. $\eta_2(x,y)$) is the concatenation of $[x,u_1], \eta(u_1,v_1), [v_1,y]$ (resp. $[x,u_2],\eta(u_2,v_2), [v_2,y]$). 

        Let $\hat x = \pi_q^r(x)$ and $\hat y = \pi_q^r(y)$.    Observe that $\pi_q^r(x) = \pi_q^r(u_1) = \pi_q^r(u_2)$ and $\pi_q^r(y) = \pi_q^r(v_1) = \pi_q^r(v_2)$. Therefore, there exists two different geodesic $\hat \g_1,\hat\g_2$ connecting $\hat x, \hat y$  such that $\eta(u_1,v_1)$ (resp. $\eta(u_2,v_2)$) is the pullback of $\hat \g_1$ (resp. $\hat \g_2$). The same argument as Case 1 shows that the Hausdorff distance between $\eta(u_1,v_1)$ and $\eta(u_2,v_2)$ is bounded above by $2q \d$. 
        
        Moreover, since $\angle_{\hat x}(\hat \g_1,\hat g_2)\leq D$, $d_{\A_r}(u_1, u_2) = 1$. Similarly $d_{\A_r}(v_1,v_2) = 1$. Recall that the Hausdorff distance between $\g_1,\g_2$ is bounded above by $\delta<2r_D\d$. It follows that the Hausdorff distance between $\eta_1,\eta_2$ is bounded above by $2q\d$.

    \begin{case}
        Only one of $x,y$ is at depth greater than or equal to  $ q$. 
    \end{case}
        The same argument as in Case 3 shows the Hausdorff distance between $\eta_1,\eta_2$ is bounded above by $2q\d$.
\end{proof}

We use \cite[Proposition 3.1 ``Guessing Geodesics Lemma"]{bowditchUniformHyperbolicityCurve2014} to prove that extended pullbacks are close to geodesics. 

\begin{proposition}[{\cite[Proposition 3.1 ``Guessing Geodesics Lemma"]{bowditchUniformHyperbolicityCurve2014}}]
    \label{prop:guess-geodesic}
    Given $h\geq 0$, there is some $k\geq 0$ with the following property. 
    Suppose that $G$ is a connected graph, and that for each $x,y\in V(G)$, we have associated a connected subgraph, $\ms{L}(x,y) \subseteq G$, with $x,y\in \ms{L}(x,y)$. Suppose that:
    \begin{enumerate}
        \item For all $x,y,z \in V(G)$,
        \[
            \ms{L} (x,y) \subseteq N_h(\ms L (x,z) \cup \ms L (z,y) ).
        \]
         \item For any $x,y\in V(G)$ with $d(x,y)\leq 1$, the diameter of $\ms L (x,y)$ in $G$ is at most $h$.
    \end{enumerate}
    Then $G$ is $k$-hyperbolic. In fact, we can take any $k\geq (3m-10h)/2$, where $m$ is any positive real number satisfying $2h(6+\log_2(m+2)\leq m$. Note that the condition on $m$ is monotonic: if it holds for $m$, it holds strictly for any $m'>m$.
    Moreover, for all $x,y\in V(G)$, the Hausdorff distance between $\ms L (x,y)$ and any geodesic from $x$ to $y$ is bounded above by $m-4h$.
\end{proposition}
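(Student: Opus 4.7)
My plan is to follow the two-step strategy typical for ``guessing geodesics'' criteria: first prove a logarithmic depth lemma comparing $\mathscr{L}(x,y)$ to arbitrary $xy$-paths by bisection, then bootstrap this into a uniform Hausdorff bound, and finally deduce hyperbolicity by triangulating geodesic triangles with the guess paths.

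The first step is the following depth lemma: for any $x, y \in V(G)$ and any edge path $\alpha$ from $x$ to $y$ of length $n \geq 1$, every vertex $w \in \mathscr{L}(x,y)$ lies within distance $h(1 + \lceil \log_2 n \rceil)$ of a vertex of $\alpha$. Prove this by induction on $n$. When $n = 1$, condition (2) gives $\operatorname{diam} \mathscr{L}(x,y) \leq h$, so the bound holds trivially. For $n \geq 2$, choose a vertex $z$ on $\alpha$ that splits it into two subpaths, each of length at most $\lceil n/2 \rceil$; condition (1) yields $w \in N_h(\mathscr{L}(x,z) \cup \mathscr{L}(z,y))$, and the inductive hypothesis applied to each half completes the step.

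Next I would upgrade this to a uniform Hausdorff bound between $\mathscr{L}(x,y)$ and any geodesic $\gamma$ from $x$ to $y$. Applied with $\alpha = \gamma$, the depth lemma already gives $\mathscr{L}(x,y) \subseteq N_{h(1 + \log_2 d(x,y))}(\gamma)$, and by symmetry in the argument one also gets $\gamma$ close to $\mathscr{L}(x,y)$ at logarithmic scale. To remove the logarithmic dependence, I would run a self-improvement: given a putative worst-case point $w \in \mathscr{L}(x,y)$ at distance $R$ from $\gamma$, construct a detour path of length at most $d(x,y) + 2R$ through $w$ (using geodesics $[x,w]$ and $[w,y]$), and feed this detour path back into the depth lemma applied to $\mathscr{L}(x,y)$. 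Together with the reverse comparison, this produces an inequality of the form $R \leq h\bigl(C + \log_2(d(x,y) + 2R)\bigr)$, and the hypothesis $2h(6 + \log_2(m+2)) \leq m$ is precisely the fixed-point condition on $m$ guaranteeing that iterating this estimate stabilizes to the uniform bound $R \leq m - 4h$, independent of $d(x,y)$.

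Finally, given a geodesic triangle with vertices $x,y,z$ and sides $\gamma_{xy}, \gamma_{yz}, \gamma_{zx}$, form the companion ``guess triangle'' with sides $\mathscr{L}(x,y), \mathscr{L}(y,z), \mathscr{L}(z,x)$. Three cyclic applications of condition (1) show each guess side lies in the $h$-neighborhood of the union of the other two; combining this with the $(m-4h)$-Hausdorff bound between each geodesic side and its companion guess side, a careful accounting yields that each geodesic side lies in the $k$-neighborhood of the union of the other two, for any $k \geq (3m - 10h)/2$. The main obstacle I anticipate is the self-improvement step: the transition from a logarithmic bound to a uniform one requires balancing the contribution from the detour length against the logarithmic depth estimate, and it is precisely here that the nonlinear condition on $m$ must be exploited with care to produce the stated constants.
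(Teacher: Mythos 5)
This proposition is cited directly from Bowditch's paper and is not proved in the present paper, so I am comparing your outline against Bowditch's argument rather than against anything here.

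Your high-level strategy — a logarithmic depth lemma by binary subdivision, followed by a self-improvement to a uniform bound, followed by a triangle argument — is indeed the shape of Bowditch's proof, and your depth lemma and its induction are stated and justified correctly. However, the self-improvement step as you describe it has a genuine gap. You propose to take the worst point $w \in \ms L(x,y)$ at distance $R$ from $\g = [x,y]$, build the detour $\alpha = [x,w]\cup[w,y]$ of length at most $d(x,y)+2R$, and feed $\alpha$ into the depth lemma. But $w$ lies \emph{on} $\alpha$, so the depth lemma applied to $\alpha$ gives no information about $d(w,\g)$; and the resulting inequality $R \leq h\bigl(C+\log_2(d(x,y)+2R)\bigr)$ still carries the term $d(x,y)$, so its fixed point grows like $h\log_2 d(x,y)$ and does \emph{not} stabilize to a constant independent of $d(x,y)$. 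The essential ingredient you are missing is localization: Bowditch's self-improvement applies the subdivision estimate to a \emph{short} subsegment of $\g$ near the nearest-point projection of $w$, of length comparable to $R$ rather than to $d(x,y)$, together with condition (1) to pass from $\ms L(x,y)$ to the $\ms L$-set of that short subsegment. It is only after the length scale has been cut down to order $R$ that the inequality becomes $R \lesssim h\log_2 R + \text{const}$, which does close under the hypothesis on $m$. A secondary, more minor issue is the claim that the reverse inclusion ($\g$ close to $\ms L(x,y)$) follows ``by symmetry'': there is no symmetric version of the depth lemma, since conditions (1)--(2) are axioms about $\ms L$ and say nothing about geodesics. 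The reverse inclusion instead follows from the forward one together with the fact that $\ms L(x,y)$ is connected and contains both endpoints, forcing its nearest-point projection to be coarsely onto $\g$.
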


\begin{lemma}
    \label{lem:ext-pullback_close_to_geodesic}
    Let $D = D(\d,1,0)$ be as in \cref{lem:GeodpassBigAngle} and let $r_D= r_D(D)>0$ be as in \cref{lem:DeepPen_to_BigAngle}. Then $\forall r>r_D$,  $\forall q\in [r_D, r-1]$, there exists $N = N(q,\delta)$ satisfying the following property. Let $x,y\in \A_r$. The Hausdorff distance between any extended pullback between $x,y$ at depth $q$ and any geodesic $[x,y]\in L$ is bounded above by $N$.

    In particular, $N$ does not depend on $r$, so long as $r>r_D$. 
\end{lemma}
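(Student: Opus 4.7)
The plan is to apply the Guessing Geodesics Lemma (Proposition~\ref{prop:guess-geodesic}) to the graph $\A_r$ with the subgraph assignment $\mathscr{L}(x,y)$ chosen as a fixed representative of $\eta_q(x,y)$ for each ordered pair of vertices $(x,y)$. By Lemma~\ref{lem:extended-pullback-are-close-bigon}, any two elements of $\eta_q(x,y)$ have Hausdorff distance at most $2q\delta$ in $\A_r$, so the choice of representative affects the final constant by at most an additive $2q\delta$. It therefore suffices to verify the two hypotheses of Proposition~\ref{prop:guess-geodesic} with a constant $h=h(q,\delta)$ depending only on $q$ and $\delta$; the resulting bound $N = m-4h$ will then also depend only on $q$ and $\delta$, as required.

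The verification of Condition~(2) is a short case analysis on Construction~\ref{cons:extended-pullback}. When $d_{\A_r}(x,y)\leq 1$, two deep vertices in different horoballs cannot be adjacent (so Case~3 is vacuous), and in each remaining case the extended pullback has length at most $q+1$. The key inputs are that $\pi_q^r$ is $1$-Lipschitz on vertices (Lemma~\ref{lem:pi_is_r+1_qi}) and the bound $l(\tilde c)\leq (q+1)\,l(c)$ of Lemma~\ref{lem:length-path-pullback}.

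For Condition~(1), given $x,y,z\in\A_r$ and extended pullbacks $\eta_1\in\eta_q(x,y)$, $\eta_2\in\eta_q(x,z)$, $\eta_3\in\eta_q(z,y)$, I push them forward to $\Acal$ via $\pi_q^r$. By the construction of extended pullbacks, each $\pi_q^r(\eta_i)$ consists of a geodesic in $\Acal$ joining the projections of its endpoints, together with at most short partial-cone-edge tails near the deep endpoints. Since $\Acal$ is $\delta$-hyperbolic, the three geodesic cores form a $\delta$-slim triangle, so every $\hat p \in \pi_q^r(\eta_1)$ admits $\hat q \in \pi_q^r(\eta_2)\cup\pi_q^r(\eta_3)$ with $d_{\Acal}(\hat p,\hat q)\leq \delta + O(1)$.

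The main obstacle is translating this $\delta$-closeness in $\Acal$ back into a bounded distance in $\A_r$, since $\iota_q^r$ is only an $(r+1,0)$-quasi-isometry (Lemma~\ref{lem:iota_is_r+1_qi}). The key observation is that a path of length $\ell$ in $\Acal$ pulls back to a path in $\A_r$ of length at most $(q+1)\ell$ (Lemma~\ref{lem:length-path-pullback}), with only isolated apices dropped; additionally, any vertex of $\A_r$ at depth $\geq q$ in a horoball is within $\A_r$-distance $1$ of the horoball's apex (via a cone edge) and within $q+1$ of any chosen vertex at depth $q$ in the same horoball. A case analysis on whether the witness points $\hat p, \hat q$ lie at shallow vertices, in cone-edge interiors, or at apices then yields $d_{\A_r}(p,q) = O(q\delta)$ for the corresponding preimages $p\in\eta_1$ and $q\in\eta_2\cup\eta_3$. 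This gives $h = O(q\delta)$ independent of $r$, and Proposition~\ref{prop:guess-geodesic} produces the desired $N = N(q,\delta)$.
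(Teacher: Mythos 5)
Your overall strategy (apply the Guessing Geodesics Lemma with $\ms L(x,y)$ a fixed representative of $\eta_q(x,y)$) is identical to the paper's, and your handling of Condition~(2) is essentially correct. However, there is a genuine gap in your verification of Condition~(1), and it sits precisely at the point you flag as ``the main obstacle.''

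You write that ``a path of length $\ell$ in $\Acal$ pulls back to a path in $\A_r$ of length at most $(q+1)\ell$ (Lemma~\ref{lem:length-path-pullback}).'' That is a misreading of Lemma~\ref{lem:length-path-pullback}: the bound there is $l(\tilde c)\leq (r+1)\,l(c)$, with an $(r+1)$ factor, not $(q+1)$. If you use that bound, the resulting $h$ depends on $r$ and the whole point of the lemma (uniformity in $r$) is lost. The $(q+1)$ factor you want comes from Corollary~\ref{cor:small-angle-shallow-pullback}, which carries an essential extra hypothesis: the path in $\Acal$ must not pass through any apex with angle $\geq D$. Without that hypothesis, the pullback of a short $\Acal$-geodesic $[\hat p,\hat q]$ may be forced to descend to depth $r$ in some horoball (if $[\hat p,\hat q]$ crosses a high-angle apex), and then $p$ is not $O(q\delta)$-close to $\eta_2\cup\eta_3$ in $\A_r$.

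So the crux of the argument — which your proposal hand-waves as ``a case analysis on whether the witness points lie at shallow vertices, in cone-edge interiors, or at apices'' — is to show that the $\delta$-closeness geodesic $[\hat p,\hat q]$ in the slim triangle in $\Acal$ never passes through an apex with angle $\geq D$. That claim is not about the positions of the two endpoints $\hat p,\hat q$; it is a statement about the interior of the connecting geodesic and requires a real argument (the paper devotes a multi-case estimate involving auxiliary points $\hat p_1,\hat p_2,\hat q_1,\hat q_2,\hat q_3,\hat p_3$ and the angle bound $D>32\delta$ to rule out such an apex). Once that claim is in hand, Corollary~\ref{cor:small-angle-shallow-pullback} applies and gives the $(q+1)\delta$ bound, yielding $h=O(q\delta)$ as you want. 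As stated, your proposal skips this step entirely and cites a lemma that does not give the needed $r$-independent constant.
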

\begin{proof}

    Without loss of generality we may assume $D>32\d$ (See \cref{rmk:GeodpassBigAngle-can-be-arb-large}). For each pair of vertices $x,y$ in $\A_r$, we associate one extended pullback as in \cref{cons:extended-pullback}. By the construction, the extended pullback is connected and contains $x,y$. 
    
    When $d_{\A_r}(x,y)\leq 1$, there are three cases. If both $x,y$ are at depth greater or equal to $q$, then $\eta(x,y)$ is a geodesic of length $1$. If both $x,y$ are at depth smaller than $ q$, then by \cref{lem:composition_pi_iota}  $\eta(x,y)$ is a geodesic connecting $[x,y]$, and hence has length $1$. In the case where only one of $x,y$ lays at depth greater than or equal to $q$, without loss of generality let $x$ be the one at depth less than $q$. Let $z\in [x,y]$ be the closet point to $x$ that has depth $q$. Then $\eta(x,z)$ and $\eta(z,y)$ both has diameter at most $1$. It follows that $\eta(x,y)$ has length at most $2$. 
 
    \begin{claim*}
        For all $x,y,z \in L$, \[\eta(x,y) \subseteq N_{2q\delta}(\eta(x,z) \cup \eta(y,z)). \]
    \end{claim*}


    Consider any three different points $x,y,z\in \A_r$. Let $\hat x =\pi_q^r(x), \hat y = \pi_q^r(y), \hat z = \pi_q^r(z) $.
    Let $[\hat x, \hat y]$, $[\hat y, \hat z]$, and $[\hat x, \hat z]$ be the geodesic in $\Acal$ whose pullbacks are contained in $\eta(x,y)$, $\eta(y,z)$, and $\eta(x,z)$ respectively. Consider now the geodesic triangle formed by $[\hat x, \hat y]$, $[\hat y, \hat z]$, and $[\hat x, \hat z]$.
    In the degenerate case where the triangle is a geodesic bigon, \cref{lem:extended-pullback-are-close-bigon} gives the conclusion.
     
    Suppose now we have a non-degenerate geodesic triangle. If two sides intersect other than at the endpoints, then the respective subsegments between one endpoint and the intersection vertex form a bigon. By \cref{pullback-is-extended-pullback} they correspond to two extended pullbacks with same endpoints respectively, hence by  \cref{lem:extended-pullback-are-close-bigon}, the extended pullbacks are within $2q\d$ of each other. 

    We may therefore assume each pair of sides only intersect at endpoints. Notice that if any two sides meet with angle greater than or equal to $ D$, then by \cref{lem:GeodpassBigAngle} the third side has to pass through that apex, which contradicts the assumption that the triangle is non-degenerate. Hence no two sides meet at angle greater than or equal to $ D$. 

    Let $p$ be a vertex on $\eta(x,y)$ and let $\hat p = \pi_q^r (p)$. If $p$ is at depth $>q$ and is in the same horoball of $x$ (resp. $y$), then $p$ is $ \delta$ close to $\eta (x,z)$ (resp. $\eta(y,z)$). Suppose now $\hat p$ lies in the interior of $[\hat x,\hat y]$. Let $\hat q \in [\hat x, \hat z]\cup [\hat y,\hat z]$ be the closest vertex to $\hat p$. Then $d_\Acal(\hat p,\hat q)<\d$. Let $\hat c =[\hat p, \hat q]$. It suffices to show that $\hat c$ does not pass any apex angle greater than or equal to $ D$.

    Without loss of generality we assume $\hat q$  lands on $[\hat x, \hat z]$. Suppose $\hat c$ passes $\hat a$ with angle greater than or equal to $ D$, and let $\hat a_1, \hat a_2$ be the two adjacent vertices. It is straightforward to see that $\hat a$ does not appear elsewhere on $\hat c$ and that $\hat a$ does not land in $[\hat x, \hat z]$ or $[\hat y, \hat z]$. 
    
    Suppose $\hat a$ lands in $[\hat p, \hat y]$. Then $\hat a \notin [\hat x,\hat p]$. As in \cref{lem:extended-pullback-are-close-bigon}, if $d_\Acal(\hat x, \hat p)\leq3\d$, then $[\hat a_1, \hat p, \hat x, \hat  q, \hat a_2]$ forms a path of length $\leq 8\d -2 < D$ that excludes $\hat a$. If $d_\Acal(\hat x, \hat p)>3\d$, then take $\hat u\in [\hat p, \hat x]$ the vertex $3\d$ away from $\hat p$, and $\hat v \in [\hat x, \hat q]$ the vertex closes to $\hat u$. It follows that $[\hat a_1,\hat p, \hat u, \hat v, \hat q, \hat a_2]$ has length at most $10\d <D$, and does not contain $\hat a$. Therefore $\hat a$ does not land in $[\hat p, \hat y]$.
    
    \begin{figure}[h]
    \centering
    \includegraphics[width=0.5\linewidth]{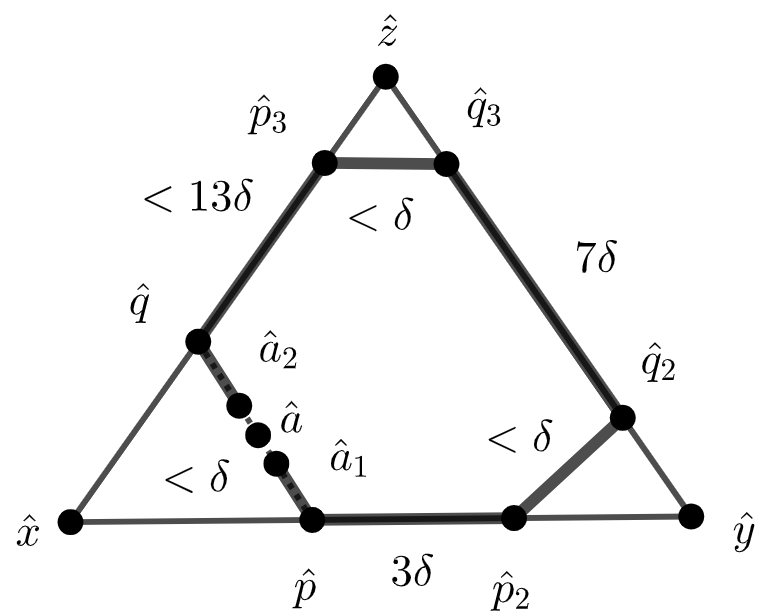}
    \caption{The geodesic triangle in the proof of \cref{lem:ext-pullback_close_to_geodesic}}
    \label{fig:slim-triangle}
\end{figure}

    Let $\hat p_2$ be the vertices $3\delta$  away from $\hat p$ in $[\hat p, \hat y]$ (or $\hat y$ if $l([\hat p,\hat y])\leq 3\delta $) and let $\hat q_2 \in [\hat x,\hat z]\cup [\hat y,\hat z]$ be the closest point to $\hat p_2$. Notice that $\hat a$ can not land on $[\hat p_2, \hat q_2]$, otherwise $d_\Acal(\hat p,\hat p_2)\leq 2\d$, contradicting the fact that $[\hat x,\hat y]$ is a geodesic.  If $\hat q_2 \in [\hat x,\hat z]$, then $[\hat a_1, \hat p, \hat p_2, \hat q_2, \hat q, \hat a_2]$ does not contain $a$ and has length at most $10\d$. Hence $\hat q_2\in [\hat y, \hat z]$.

    Suppose now $\hat a \notin[\hat y, \hat z]$, let $\hat q_3\in [\hat q_2, \hat z]$ be the vertex $7\delta$ away from $\hat q_2$ (or $\hat z$ if $l([\hat q_2, \hat z])< 7\d$). Let $\hat p_3 \in [\hat x, \hat y] \cup [\hat x, \hat z]$ be the vertex closest to $\hat q_3$. Notice that, if $\hat p_3 \in [\hat x,\hat y]$, then $\hat p_3$ must land in $[\hat x, \hat p]$ because $d_\Acal(\hat q_2, \hat q_3)= 6\d$. But then $\hat q$ would land in $[\hat y, \hat z]$, contradicting our assumption. Therefore $\hat p_3 \in [\hat x, \hat z]$.

    We claim that $\hat a \notin [\hat p_3,\hat q_3]$. 
    Otherwise, $[\hat q_3, \hat a, \hat p, \hat p_2, \hat q_2]$ has length at most $6\d < 7\d$, contradicting the fact that $[\hat y, \hat z]$ is a geodesic. But then, the path $[\hat a_1, \hat p, \hat p_2, \hat q_2, \hat q_3, \hat p_3, \hat q, \hat a_2]$ has length at most $26\delta$ and does not contain $\hat a$, contradicting the assumption that $\angle_{\hat a}(\hat a_1,\hat a_2)\geq D$.  

    Therefore, $[\hat p, \hat q]$ can not pass any apex at angle $>D$. 
    It follows that $p$ is in the $2q\delta$ neighborhood of $\eta(x,z)\cup \eta(y,z)$. By \cref{prop:guess-geodesic}, there exists $N = N(q,\delta)\geq q$ such that for any $x,y\in L$, the Hausdorff distance between $\eta(x,y)$ and any geodesic from $x$ to $y$ is bounded above by $N$. We remark that in the case where we choose $q=r_D$, $N = N(D,r_D,\d)$. 

    Notice that this argument works for any choice of $\eta(x,y)$. The conclusion follows.
\end{proof}

\begin{corollary}
\label{cor:pi_shallow_geod_is_good}
    Let $r_D, r, q, N$ be as in \cref{lem:ext-pullback_close_to_geodesic}. For all $0< d\leq q$, let $\g=[x,y]$ be a $d$-shallow geodesic in $\A_r$. The push-forward $\hat \g\subseteq \Acal$ is a $(d+N,0)$-quasi-geodesic.
\end{corollary}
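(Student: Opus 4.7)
The inequality $d_\Acal(\hat u, \hat v) \leq \ell(\hat\gamma|_{[\hat u, \hat v]})$ is immediate for any two points on $\hat\gamma$, so the task reduces to establishing the matching upper bound $\ell(\hat\gamma|_{[\hat u, \hat v]}) \leq (d+N)\, d_\Acal(\hat u, \hat v)$. My plan is to reduce this to an $\A_r$-distance between representatives of $\hat u, \hat v$ on $\gamma$, and then bound that distance via an extended pullback whose depth is forced by that of $\gamma$. Explicitly, for vertices $\hat u, \hat v$ of $\hat\gamma$ I would pick representatives $u', v' \in \gamma$ so that $\hat\gamma|_{[\hat u, \hat v]}$ is, up to an $O(1)$ endpoint correction, the push-forward of $\gamma|_{[u', v']}$: when $\hat u \in \G_A$ set $u' = \hat u$, and when $\hat u$ is an apex of $\Acal$ (arising from the closure step in \cref{def:push-forward_under_pi}) take $u'$ to be a vertex of the corresponding horoball excursion of $\gamma$. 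In either case $u'$ sits at depth at most $d$, and \cref{lem:push-forward-continuous} then gives $\ell(\hat\gamma|_{[\hat u, \hat v]}) \leq \ell(\gamma|_{[u', v']}) = d_{\A_r}(u', v')$ up to $O(1)$.

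The heart of the argument is to bound $d_{\A_r}(u', v')$ by $(d+N)\, d_\Acal(\hat u, \hat v)$. Let $\sigma$ be an $\Acal$-geodesic from $\pi_q^r(u')$ to $\pi_q^r(v')$, so that $\ell(\sigma) \leq d_\Acal(\hat u, \hat v) + O(1)$. By \cref{cons:extended-pullback} there is an extended pullback $\eta \in \eta_q(u', v')$ built from $\sigma$, and $d_{\A_r}(u', v') \leq \ell(\eta)$ since $\eta$ is a path from $u'$ to $v'$. The key geometric input is \cref{lem:ext-pullback_close_to_geodesic}: the Hausdorff distance from $\eta$ to the $d$-shallow geodesic $\gamma|_{[u', v']}$ in $\A_r$ is at most $N$, and because the depth function on $\A_r$ is $1$-Lipschitz this forces every point of $\eta$ to have depth at most $d+N$, so $\eta$ itself is $(d+N)$-shallow.

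The length of $\eta$ is then bounded via its shallowness: each apex visit of $\sigma$ corresponds to a horoball excursion of $\eta$ of depth at most $d+N$, hence of length at most $2(d+N) + O(1)$ by \cref{lem:shape-geodesic-cusped-horoball}; each $\G_A$-edge of $\sigma$ contributes equally to $\ell(\eta)$ and $\ell(\sigma)$. Summing gives $\ell(\eta) \leq (d+N)\,\ell(\sigma) + O(1)$, and after combining inequalities and absorbing the universal additive constants (permitted by enlarging the $N$ of \cref{lem:ext-pullback_close_to_geodesic}, which only provides an upper bound) one obtains $\ell(\hat\gamma|_{[\hat u, \hat v]}) \leq (d+N)\, d_\Acal(\hat u, \hat v)$.

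The main obstacle will be the bookkeeping: choosing $u', v'$ uniformly so the argument covers Cases 1, 3, and 4 of \cref{cons:extended-pullback} together, and carefully tracking the various $O(1)$ corrections (from the closure step in the push-forward, the horoball geodesics attaching $u', v'$ to the pullback in Cases 3 and 4, and the mismatch between $\pi_q^r(u')$ and $\hat u$ when $\hat u$ is an apex) to verify they can be absorbed into the multiplicative factor $d+N$ rather than producing an extra additive quasi-geodesic constant.
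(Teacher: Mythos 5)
Your approach is essentially the same as the paper's: introduce the extended pullback $\eta$ of an $\Acal$-geodesic, use \cref{lem:ext-pullback_close_to_geodesic} together with the $1$-Lipschitz depth function to conclude $\eta$ is $(d+N)$-shallow, and compare $\ell(\gamma)\leq\ell(\eta)$ against the length of the push-forward geodesic in $\Acal$ by counting horoball excursions against cone-edge pairs. The paper runs this chain once for the full geodesic and simply remarks that subsegments satisfy the same inequality, whereas you spell out the subsegment reduction via representatives $u',v'$; the $O(1)$ slack you flag also appears implicitly in the paper's step $\ell(\eta)\leq(d+N)\ell(\hat\eta)$, and in both cases is harmless (absorbable by a trivial enlargement of $N$).
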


\begin{proof}
     Let $\eta = \eta_q(x,y)\subseteq \A_r$ be an extended pullback between $x,y$. Let $\hat \eta$ be the push-forward of $\eta$, as constructed in \cref{def:push-forward_under_pi}. By the construction of extended pullback, $\hat \eta$ is a geodesic connecting $\hat x, \hat y$. By \cref{lem:ext-pullback_close_to_geodesic} the Hausdorff distance between $\eta$ and $\g$ is bounded above by $N$. Since $\g$ is $d$-shallow, $\eta$ is $(d+N)$-shallow. It follows that
    \[\label{eq:pi_shallow_geod_good}
    l(\hat \g) \leq l(\g)  \leq l( \eta) \leq (d+N) l(\hat \eta).
\]  
    
    Notice that any subsegment of $\hat \g$ also satisfies this inequation. The conclusion follows.
\end{proof}

\begin{corollary}
    \label{cor:iota_shallow_geod_is_good}
    Let $r_D$ be as in \cref{lem:ext-pullback_close_to_geodesic}. 
    For some $t>0$, let $\hat\g$ be a $(k,l)$-quasi-geodesic in $\Acal$ such that $\hat\g$ passes any apex with angle $\leq t$. 
    Apply  \cref{lem:DeepPen_to_BigAngle} with $D=t$ and let $r_t = r_D(t)$.  
    Then for all integer $r>\max{(r_D, r_t)}$ and all integer $q \in [\max{(r_D, r_t)} ,r-1]$, the pullback $\g \subseteq \A_r$ is a $q$-shallow, $((q+1)k, (q+1)l)$-quasi-geodesic.
\end{corollary}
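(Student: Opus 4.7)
The plan is to apply \cref{cor:small-angle-shallow-pullback} with $D=t$ both to $\hat\g$ as a whole and to its subsegments, and then chain this with the $(k,l)$-quasi-geodesic hypothesis on $\hat\g$ together with the $1$-Lipschitz estimate $d_\Acal(\pi_q^r(x),\pi_q^r(y))\leq d_{\A_r}(x,y)$ that appears in the proof of \cref{lem:pi_is_r+1_qi}. Because $\hat\g$ passes no apex of angle greater than $t$, neither does any subsegment of $\hat\g$, so \cref{cor:small-angle-shallow-pullback} is available at $D=t$ whenever $r>r_t$ and $q\in[r_t,r-1]$; taking $q\in[\max(r_D,r_t),r-1]$ meets this requirement together with the $q\geq r_D$ hypothesis.

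Applied to the whole path $\hat\g$, the corollary at once gives that $\g=\tilde{\hat\g}$ is $q$-shallow and $l(\g)\leq(q+1)\,l(\hat\g)$. For the quasi-geodesic bound, fix any two vertices $x,y\in\g$ and set $\hat x=\pi_q^r(x)$, $\hat y=\pi_q^r(y)$. By \cref{lem:composition_pi_iota} the points $\hat x,\hat y$ lie on $\hat\g$, and the subpath $\g|_{[x,y]}$ agrees with the pullback of $\hat\g|_{[\hat x,\hat y]}$ up to a connector of length at most $2$ inside a coned horoball at each endpoint (which appears only if $x$ or $y$ sits at depth exactly $q$, where $\pi_q^r$ collapses it to an apex). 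Applying \cref{cor:small-angle-shallow-pullback} now to the subsegment $\hat\g|_{[\hat x,\hat y]}$,
\[
l(\g|_{[x,y]}) \ \leq\ (q+1)\,l(\hat\g|_{[\hat x,\hat y]}) \ \leq\ (q+1)\bigl(k\,d_\Acal(\hat x,\hat y)+l\bigr) \ \leq\ (q+1)k\,d_{\A_r}(x,y)+(q+1)l,
\]
where the second inequality uses that $\hat\g$ is a $(k,l)$-quasi-geodesic and the third uses the distance-decreasing property of $\pi_q^r$. Since any subpath of $\g$ has length at least the $\A_r$-distance between its endpoints, this is the $((q+1)k,(q+1)l)$-quasi-geodesic bound.

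The one delicate point is the identification of $\g|_{[x,y]}$ with the pullback of $\hat\g|_{[\hat x,\hat y]}$ at endpoints where $x$ or $y$ sits at depth $q$. In that situation both candidate endpoints lie at depth $q$ in the same horoball and differ by a path of length at most $2$, contributing a bounded additive error that is either absorbed into the $(q+1)l$ term or removed by first restricting to pairs $x,y\in\G_A$ and extending by boundedness. No other step requires more than direct application of the lemmas already established in this section.
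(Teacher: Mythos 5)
Your proposal is correct and follows essentially the same route as the paper's own argument. Both derive $q$-shallowness from \cref{cor:small-angle-shallow-pullback}, both obtain the length bound $l(\g)\leq(q+1)\,l(\hat\g)$ from the same corollary, and both close the chain by feeding the $(k,l)$-quasi-geodesic estimate for $\hat\g$ into the $1$-Lipschitz property of $\pi_q^r$ (the paper phrases this last step via the push-forward inequality $l(\hat\eta)\leq l(\eta)$ of \cref{lem:push-forward-continuous} for a geodesic $\eta=[x,y]$, which is the same content). The paper then simply asserts that ``the above analysis is true for all subsegments,'' whereas you unwind that claim; you are actually somewhat more careful than the source in flagging the only delicate point, namely what happens when a subsegment endpoint of $\g$ lies at depth $q$, where $\pi_q^r$ degenerates to the apex. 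Your stated connector bound ``at most $2$'' is a hair off (the horizontal segment of a regular horoball geodesic can have length up to $3$, so ``at most $3$'' is safer, and one should also note that because $\g$ is $q$-shallow the connecting geodesics sit at depth exactly $q$, which is what makes the error uniformly bounded), but this is an $O(1)$ additive error absorbed into $(q+1)l$ exactly as you say, so the conclusion is unaffected.
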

\begin{proof}
    By \cref{cor:small-angle-shallow-pullback}, $\g$ is $q$-shallow. Let $\hat x, \hat y$ be the endpoints of $\hat \g$  and let  $x,y$ be the endpoints of $\g$. Let $\eta = [x,y]$ be a geodesic in $\A_r$ and let $\hat \eta$ be its push-forward in $\Acal$ under $\pi_q^r$.  We then have:
    \begin{align*}
    l(\g)   & \leq (q+1) l(\hat \g) \\
            & \leq (q+1) (kd_{\Acal}(\hat x, \hat y) + l) \\
            & \leq (q+1) (kl( \hat \eta) + l) \\
            & \leq (q+1) (k l(\eta) + l) \\
            & =  k(q+1)d_{\A_r}( x ,  y) + l(q +1).
    \end{align*}

    The above analysis is true for all subsegments of $\g$, hence the conclusion holds.
\end{proof}

\section{Uniform Quasiconvexity}
\label{sec:uniformqc}

It is easy to see that $\K_r(G,X, \cup_i X_i ,\Pcal)$ is quasi-isometric to $\hat\G(G,X\cup (\cup_i X_i))$. Therefore, the $\gp$--quasiconvexity of $H<G$ leads to a quasi-isometric embedding $\K_r(H, Y, \cup_j Y_j, \Dcal)\to \K_r(G,X, \cup_i X_i, \Pcal)$ for every $r>0$. Specifically, the map from \cref{construction:qc-extend-map-coned-off-cusped-space} is a quasi-isometric embedding.

\begin{construction}
\label{cons:reparametrization-of-edge}
    In \cref{cons:qc-induce-map-coned-off-graph}, the parametrization of the image of any cone edge under $\phi$ is not specified, as it does not affect the conclusion of \cref{lem:qc-induce-map-is-qi}. In particular, for any positive integer $q$, we could parametrize the image of any cone edge $\{h, sE_i\}$ as follows: 
    Let $u$ be the point on $\{h, sE_i\}$ that is $1/q$ away from $h$, and let $v$ be the point on $\{h, u\}$ that is $1/(q l(\nu_i)+q)$ away from $u$.
    \begin{enumerate}
        \item The subsegment $\{u, sE_i\}$ is sent to the subsegment $\{ uc_{E_i}, sc_{E_i} P_j \}$ proportionally.
        \item The subsegment $\{v,u\}$ is sent to the subsegment  $\{ hc_{E_i}, uc_{E_i} \}$ proportionally.
        \item The subsegment $\{ h, v\}$ is sent to $h.\nu_i$ proportionally.
    \end{enumerate}
\end{construction}

\begin{lemma}
    \label{lem:qc-extend-cusped-map-is-QI}
    Make \cref{assume:fg-heg}. Construct $\hat L = \hat \G (H,\Dcal, \Ycal)$, $\hat K = \hat \G(G, \Pcal, \Xcal)$, $\Hbb_r = \K_r(H, Y, \cup_j Y_j, \Dcal)$, and $\Gbb_r =\K_r(G,X, \cup_i X_i, \Pcal)$ as in \cref{cons:generating-set-in-actual-use}.
    Let $\phi_r :\Hbb_r\to \Gbb_r $ be constructed as in \cref{construction:qc-extend-map-coned-off-cusped-space}.  Then $\phi_r$ is a quasi-isometric embedding and is $\l_\phi$-Lipschitz.
\end{lemma}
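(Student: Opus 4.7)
The plan is to prove the two parts separately. The $\l_\phi$-Lipschitz property is a direct consequence of the specific parametrization in \cref{cons:reparametrization-of-edge} together with the edge-by-edge description of $\phi_r$ in \cref{construction:qc-extend-map-coned-off-cusped-space}. Edges in the base graph $\G_B$ and edges at depth $\geq 1$ in any horoball $\Hcal_r(sE_j)$ map to a single edge of $\Gbb_r$, while each vertical edge connecting a vertex of $\G_B$ to a depth-$1$ vertex, together with each cone edge of the coned-off horoball, is parametrized so that its image has total length at most $\l_\phi$ (the auxiliary path $\nu_{E_j}$ contributes length $\leq \l_\phi - 1$ and a single ambient edge contributes length $1$). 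Summing along any edge-geodesic $\g$ from $x$ to $y$ in $\Hbb_r$ then yields
\[
d_{\Gbb_r}(\phi_r(x),\phi_r(y)) \leq l(\phi_r(\g)) \leq \l_\phi \cdot l(\g) = \l_\phi \cdot d_{\Hbb_r}(x,y).
\]

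For the quasi-isometric embedding property, the plan is to bootstrap from the known QI embedding $\phi:\hat L \to \hat K$ (\cref{lem:qi-embed-injective}, applied via \cref{prop:conedoff-embed-injective-subset-generator} so that $Y \subseteq X$ and hence $\Ycal \subseteq \Xcal$) by passing through the transition maps between the cusped and coned-off Cayley graph models. Specifically, I would construct analogues of the maps $\pi_q^r$ and $\iota_q^r$ from \cref{construction:pi_Kr_to_hatK,construction:iota_hatK_to_Kr} in both the $H$ and $G$ settings, giving quasi-isometries $\pi^H_q:\Hbb_r\to\hat L$ and $\pi^G_q:\Gbb_r\to\hat K$ with quasi-inverses $\iota^H_q,\iota^G_q$ (cf.\ \cref{lem:pi_is_r+1_qi,lem:iota_is_r+1_qi}). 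Inspection of \cref{cons:qc-induce-map-coned-off-graph,construction:qc-extend-map-coned-off-cusped-space} shows that the diagram $\pi^G_q\circ\phi_r = \phi\circ\pi^H_q$ commutes on vertices of $\G_B$, and that $\phi_r$ restricted to each horoball of $\Hbb_r$ is an isometric embedding, up to a bounded additive error coming from the auxiliary paths $\nu_{E_j}$, onto the corresponding horoball of $\Gbb_r$. Combining these facts yields the desired lower bound $d_{\Hbb_r}(x,y) \leq k\, d_{\Gbb_r}(\phi_r(x),\phi_r(y)) + l$ for some $k,l$ depending on $r$.

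The main technical obstacle is handling pairs of points $x,y$ at least one of which lies at depth $\geq q$ in a horoball, since $\pi^H_q$ collapses such points to their apex and the commutative diagram alone loses distance information in the horoball direction. I would resolve this by splitting any regular geodesic (in the sense of \cref{def:regular-geodesic}) in $\Gbb_r$ from $\phi_r(x)$ to $\phi_r(y)$ at the points where it enters and exits each deep horoball: the shallow portion is controlled via the QI diagram, while the portions inside horoballs are controlled by the isometric-embedding behavior of $\phi_r$ on horoballs together with \cref{lem:shape-geodesic-cusped-horoball}. Since the QI constants of $\pi^H_q,\pi^G_q$ depend only on $r$ and the horoball contribution is depth-preserving with additive error bounded by $\l_\phi$, the resulting QI constants for $\phi_r$ depend only on $r$, $\delta$, and the QI constants of $\phi$.
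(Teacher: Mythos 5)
Your Lipschitz argument is correct and matches the paper's: each edge of $\Hbb_r$ is sent by $\phi_r$ to a path of length at most $\l_\phi$ (with only the base-to-depth-$1$ vertical edges picking up the factor $\nu_{E_j}$), so the claim follows edge by edge.

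For the quasi-isometric embedding, you have the right high-level idea — bootstrap from the known QI embedding $\hat\phi\colon \hat L\to\hat K$ via the transition maps $\pi^r_q$ and $\iota^r_q$ — and this is indeed what the paper does (it defines $\Phi_r = \iota_{r-1}^r\circ\hat\phi\circ\pi_{r-1}^r$, a composition of a QI embedding with quasi-isometries, and observes via \cref{cons:reparametrization-of-edge} that $\phi_r$ agrees with $\Phi_r$ on shallow vertices). However, your plan for the deep-horoball case has a gap. You propose to split a regular geodesic in $\Gbb_r$ from $\phi_r(x)$ to $\phi_r(y)$ at the points where it enters and exits deep horoballs, and then control the pieces: the shallow piece by the QI diagram, the horoball pieces by the isometry of $\phi_r$ on horoballs. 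But the splitting points you obtain are vertices of $\G_A$ at depth $0$ that need not lie in $\G_B$ or in the $\phi_r$-image of a horoball of $\Hbb_r$ — the horoball of $\Gbb_r$ containing $\phi_r(x)$ is built on $g\G_{P_i}$, while $\phi_r$ only parametrizes the sub-coset corresponding to $D_j\le P_i^{c}$. Without the splitting points being in the image, neither the ``isometric embedding on horoballs'' nor the ``QI diagram'' controls the corresponding piece, and you cannot convert the split of a $\Gbb_r$-geodesic into a path in $\Hbb_r$.

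The paper avoids this by doing the case analysis on the depths of $x,y$ in $\Hbb_r$ itself, which keeps all auxiliary points inside the domain of $\phi_r$. The crucial elementary observation is that any two vertices of $\Hbb_r$ at depth $\ge r$ are within distance $2$ of one another (via the apex), so the lower bound is trivial when both $x,y$ are deep. When exactly one is deep, one replaces it by the first vertex at depth $r$ on an $\Hbb_r$-geodesic — a vertex guaranteed to be in the domain — and pays an additive correction of $2$. This reduces everything to the shallow case, where $\phi_r=\Phi_r$ applies. I'd encourage you to restructure your argument along these lines: run the case analysis in $\Hbb_r$, not in $\Gbb_r$, so that your splitting points remain in the image of $\phi_r$ by construction.
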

\begin{proof}
    Let $\pi_{r-1}^r : \Hbb_r\to \GH$ be the quasi-isometry constructed as in \cref{construction:pi_Kr_to_hatK}, 
    and let $ \iota_{r-1}^r : \GG \to\Gbb_r$ be the one constructed as in \cref{construction:iota_hatK_to_Kr}. 
    Let $\hat \phi : \GH \to \GG$ be the quasi-isometrical embedding constructed as in \cref{cons:qc-induce-map-coned-off-graph}. 
    We denote the composition $\iota_{r-1}^r \circ \hat \phi \circ \pi_{r-1}^r$ as $\Phi_r$. Clearly $\Phi_r$ is a quasi-isometric embedding.
    
    By \cref{cons:reparametrization-of-edge}, we may construct $\hat\phi$ such that $\phi_r$ coincides with $\Phi_r$ on vertices with depth smaller or equal to $r$. In particular, a vertical edge $\{(h,0), (h,1)\}$ is sent proportionally to the concatenation of $h.\nu_i$ and $\{ ( hc_{E_i},0), ( hc_{E_i}, 1) \}$. All other edges in $\mathbb{H}_r$ are sent to some edge in $\mathbb{G}_r$. It follows that $\phi_r$ is $\l_\phi$-Lipschitz.
    
    By \cref{lem:qc-induce-map-cusp-is-Lipschitz}, $l(\phi_r(c)) \leq \l_\phi l( c) + \l_\phi$ for all path $ c$, so it suffices to show that there exists $\l, \mu >0 $ such that for any $x,y \in \Hbb_r$,
\[
d_{\Gbb_r}(\phi_r(x), \phi_r(y) ) \geq \l d_{\Hbb_r}(x,y) + \mu.
\]

    If $x,y$ both have depth smaller than $r$, then $d_{\Gbb_r}(\phi_r(x), \phi_r(y) ) = d_{\Gbb_r}(\Phi_r(x), \Phi_r(y) )$ and the conclusion follows. If they both have depth larger than or equal to $r$, then $d_{\Hbb_r}(x,y) \leq 2$. Recall that $\phi_r$ does not change the depth of vertices, so $d_{\Gbb_r}(\phi_r(x), \phi_r(y) ) \leq 2 $ and the conclusion follows.

    Suppose now $x$ has depth smaller than $r$ and $y$ has depth larger than or equal to $r$.
    Let $\g$ be a geodesic connecting $ x,y$, and let $u$ be the vertex on $\g$ that has depth $r$ and is closest to $ x$. Let $\phi_r(x) , \phi_r(y), \phi_r(u)$ be the image of $x, y, u$ under $\phi_r$ respectively. We then have:
    \begin{align*}
        d_{\Gbb_r}(\phi_r(x), \phi_r(y)) & \geq d_{\Gbb_r}(\phi_r(x) , \phi_r(u)) - d_{\Gbb_r}(\phi_r(u), \phi_r(y)) \\
        & \geq d_{\Gbb_r}(\phi_r(x) , \phi_r(u)) - 2 \\
        &  = d_{\Gbb_r}(\Phi_r (x) , \Phi_r (u)) -2.
    \end{align*}
    Again, since $\Phi_r$ is a quasi-isometric embedding, the conclusion follows.
\end{proof}

More importantly, we prove in \cref{thm:uniform_qc} that for large enough $r$, the image of $\K_r(H, Y, \cup_j Y_j, \Dcal)$ is always $\l$--quasiconvex, where $\l$ is a constant independent of $r$.

\UniformQC*

\begin{proof}
    Let $X,Y$ be the generating set constructed as in \cref{cons:generating-set-in-actual-use} and construct $\hat L = \hat \G (H,\Dcal, \Ycal), \hat K = \hat \G(G, \Pcal, \Xcal), \Hbb_r = \K_r(H, Y, \cup_j Y_j, \Dcal), \Gbb_r =\K_r(G,X, \cup_i X_i, \Pcal)$ as in \cref{cons:generating-set-in-actual-use}. 
    Let  $\hat \phi :\hat L \to \hat K = \GH $ be the quasi-isometric embedding from \cref{cons:qc-induce-map-coned-off-graph} for some $X\subseteq G$. 

    We fix the following constants for the proof below. 
    \begin{itemize}
        \item Let $k,l$ be the quasi-isometric constants of $\hat \phi:\hat L \to \hat K$. 
        \item Let $\d>0$ be the hyperbolicity constants from \cref{lem:uniform-hyperbolicity-for-coned-off-cusped-space-Kr} such that for all $r>0$,  $\Hbb_r $, $\Gbb_r$, $\hat L$ and $\hat K$ are all $\d$-hyperbolic. Without loss of generality we may assume $\d$ is a positive integer.
        \item  Let  $D_L = D_L(1,0,\d)>32\d$ be the constant for $\hat L$ obtained from \cref{lem:GeodpassBigAngle}. 
        \item  Let $r_L = r_L (D_L)$ be the constant from \cref{lem:DeepPen_to_BigAngle}. 
        \item Let $N=N(q,\d)$ be the constant from \cref{lem:ext-pullback_close_to_geodesic}. 
        \item Let $M_K$ be the upper bound constant from \cref{lem:AngleUpperBoundForHatSpace}. 
        
        \item Let $r_1 = r_1(M_K( 2^{q+1}))$ be the constant from \cref{lem:DeepPen_to_BigAngle}.
        \item  Let $D_K =  D_K(k(q+N),l,\d) >32\d$ be the constant for $\hat K$ obtained from \cref{lem:GeodpassBigAngle}. Let $r_K = r_K(D_K)$ be the constant from \cref{lem:DeepPen_to_BigAngle}.
        \item Let $q =  \max (r_L, r_K,r_1)$. Set $r_\l=2q$ and let $r > r_\l $.
    \end{itemize}
   
    Let $\phi_r:\mb{H}_r \to \Gbb_r$ be the map from \cref{construction:qc-extend-map-coned-off-cusped-space}. By \cref{lem:qc-extend-cusped-map-is-QI} $\phi_r$ is a quasi-isometric embedding.
    Let $\pi^r = \pi_q^r : \Hbb_r\to \hat L$ be constructed as in \cref{construction:pi_Kr_to_hatK}, and let $\bar \iota^r = \iota_q^r : \hat K \to\Gbb_r$ be constructed as in \cref{construction:iota_hatK_to_Kr}. 
    Observe that by \cref{cons:reparametrization-of-edge} we could construct $\hat \phi$ so that $\phi_r$ coincides with the map $\Phi_r = \bar \iota^r \circ \hat \phi \circ \pi^r$ on vertices at depth  less than $q$. We can thus trace the image of a geodesic under $\Phi_r$ and draw conclusions for $\phi_r$.

    \[\begin{tikzcd}
    \label{fig:diagram-uniform-qc}
        {\g_i\subseteq \Hbb_r} & &{\Gbb_r \supseteq \bar{\g}_i} \\
        {\hat{\g}_i \subseteq \hat L} & & { \hat K \supseteq \hat{\bar \g}_i}
        \arrow["{\Phi_r = \bar \iota^r \circ \hat \phi \circ \pi^r}", dashed, hook, from=1-1, to=1-3]
        \arrow["{\pi^r}"', tail reversed, no head, from=2-1, to=1-1]
        \arrow["{\hat \phi}", hook, from=2-1, to=2-3]
        \arrow["{\bar{\iota}^r}"', from=2-3, to=1-3]
    \end{tikzcd}\]

    Given a geodesic $\g = [x,y]$ in $\Hbb_r$, it can be divided into:
    \[
        (\epsilon_0,) \g_0, \epsilon_1,\g_1,\dots, \epsilon_m, \g_m(, \epsilon_{m+1}),
    \]  
    where $\g_i = [x_i,y_i] $'s are $q$-shallow geodesics, and $\epsilon_j$'s are geodesics that lie in a single horoball at depth greater than or equal to $ q$.  

    
    Let $\hat \g_i$ be the push-forward of $\g_i$ in $\hat L$. Let $\hat{\bar \g}_i$ be the image of $\hat \g_i$ in $\hat K$ for all $i$. Let  $\bar \g_i$ be the pullback of $\hat{\bar \g}_i$ in $\Gbb_r$. Observe that $\Phi_r(\g_i)$ lies in the $2$-neighborhood of $\bar \g_i$, whereas $\bar \g_i$ lies in the $q$-neighborhood of $\Phi_r(\g_i)$. Notice also $\bar \g_i$ and $\Phi_r(\g_i)$ share endpoints. Let $\bar \g$ be the concatenation of $\bar \g_i$'s and $\bar \epsilon_j = \Phi_r(\epsilon_j)$'s. Then $\bar \g$ lies in the $q$-neighborhood of $\Phi_r(\g)$.
    
    By \cref{cor:pi_shallow_geod_is_good}, push-forwards of $\g_i$'s in $\hat L$ are $(q+N,0)$ quasi-geodesics. 
    By \cref{lem:Shallow_to_SmallAngle}, none of them pass through any apex with angle greater than or equal to $ 2^{q+1}$. 
    Thus, by \cref{cor:hat-space-path-angle-bound}, $\hat{\bar \g}_i$'s are $((q+N)k,l)$-quasi-geodesics that don't pass through any apex with an angle larger than $M_K( 2^{q+1})$.
    Then by \cref{cor:iota_shallow_geod_is_good}, $\bar \g_i$'s are $q$-shallow $(k',l')$-quasi-geodesics, where $k'=(q+1)(q+N)k, l'=(q+1)l$. In particular, both $k', l'$ do not depend on $r$.

    Let $x', y'$ be the starting point of $\g_0$ and the ending point of $\g_m$ respectively. Denote the image of $x',y'$ in $K$ (resp. $\hat L$, $\hat K$) as $\bar x', \bar y'$ (resp. $\hat{x}'$ and $ \hat{ y}'$, $\hat{\bar x}'$ and $\hat{\bar y}'$) respectively.

    By the construction of $\pi^r$, the concatenation of $\hat \g_i$'s is a continuous path in $\hat L$. Hence the concatenation of $\hat{\bar \g}_i$'s is a continuous path in $\hat K$. Let $\eta$ be a geodesic connecting $\hat{\bar x}'$  and $\hat{\bar y}'$. By \cref{lem:GeodpassBigAngle} $\eta$ passes all the endpoints of $\hat {\bar\g}_i$'s, Hence $\eta$ can be written as the concatenation of geodesic segments $\eta_i$'s, where $\eta_i$ shares the endpoints of $\hat{\bar \g}_i$ for each $i$. See \cref{fig:image-of-geodesic-in-hat-K}.

    \begin{figure}[h!]
    \centering
        \begin{tikzpicture}
            \fill[] (0,0) circle (1pt) node[below left]{$\hat{\bar x'}$} (2,0) circle (1pt) (4,0) circle (1pt) (6,0) circle (1pt) node[below right]{$\hat{\bar y'}$};
            \draw (0,0) -- (2,0) node[midway, below] {$\eta_0$};
            \draw (2,0) -- (4,0) node[midway, below] {$\dots$};
            \draw (4,0) -- (6,0) node[midway, below] {$\eta_m$};
            \draw (0,0) to[out=90, in=90] node[midway, above] {$\hat{\bar \g}_0$} (2,0);
            \draw (2,0) to[out=90, in=90] node[midway, above] {$\dots$} (4,0);
            \draw (4,0) to[out=90, in=90] node[midway, above] {$\hat{\bar \g}_m$} (6,0);
        \end{tikzpicture}
    \caption{A geodesic between $\hat{\bar x}'$ and $\hat{\bar y}'$ in $\hat K$}
    \label{fig:image-of-geodesic-in-hat-K}
    \end{figure}

    Consider now the extended pullback $\bar \eta$ given by $\eta$ in $\Gbb_r$. Recall \cref{cons:extended-pullback}, $\bar\eta$ is a concatenation of:
    \[
    \bar\eta_0, \tau_1,\dots, \tau_m, \bar \eta_m,
    \]
    where $\bar\eta_i$'s are the pullbacks of $\eta_i$'s respectively, and $\tau_j$'s are the geodesics connecting endpoints of $\bar\eta_{j-1}$ and $\bar\eta_j$.

    Let $\zeta$ be the geodesic connecting $\bar x'$ and $\bar y'$, and $\xi_i$'s the geodesics connecting $\bar x_i$'s to $\bar y_i$'s respectively. 
    By \cref{lem:ext-pullback_close_to_geodesic}, $\zeta$ is $N$-close to $\bar \eta$. 

    \begin{figure}[h]
        \centering
            \begin{tikzpicture}[relative]
                \coordinate (a) at (0,0);
                \fill[] (a) circle (1pt) node[below left] {$\bar x'$};
                \coordinate (b) at (8,0);
                \fill[] (b) circle (1pt) node[below right] {$\bar y'$};
                \draw (a) -- (b) node[midway, below] {$\zeta$};
                \coordinate (c) at (1.5,1.5);
                \fill[] (c) circle (1pt);
                \draw (a) to[out= 90, in= 90] node[midway, above left] {${\bar \eta}_0$} (c);       
                \draw (a) -- (c) node[midway, above left]{$\xi_0$};
                \draw (a) to[out= -30, in= -150] node[midway, below right] {${\bar \g}_0$} (c);
                \coordinate (d) at (3.5, 2);
                \fill[] (d) circle (1pt);
                \draw (c) to[out= -60, in= -120] node[midway, below right] {${\bar \epsilon}_1$} (d);       
                \draw (c) -- (d) node[midway, above left]{$\tau_1$};
                \coordinate (e) at (4.5,2);
                \fill[] (e) circle (1pt);
                \path (d) -- node[auto=false]{\ldots} (e);
                \coordinate (f) at (6.5,1.5);
                \fill[] (f) circle (1pt);
                \draw (e) to[out= -60, in= -120] node[midway, below left] {${\bar \epsilon}_m$} (f);       
                \draw (e) -- (f) node[midway, above right]{$\tau_m$};
                \draw (f) to[out= 90, in= 90] node[midway, above right] {${\bar \eta}_m$} (b);       
                \draw (f) -- (b) node[midway, above right]{$\xi_m$};
                \draw (f) to[out= -30, in= -150] node[midway, below left] {${\bar \g}_m$} (b);

            \end{tikzpicture}
        \caption{A geodesic between $\bar x'$ and $\bar y'$ in $\Gbb_r$}
        \label{fig:enter-label}
    \end{figure}
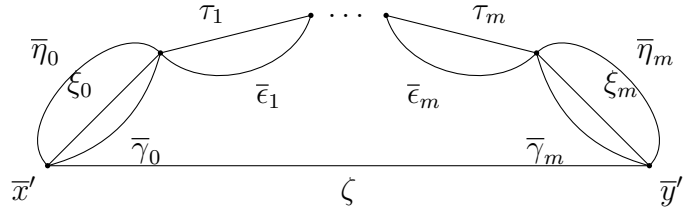

    \begin{claim*}
        The extended pullback $\bar \eta$ lies in a bounded neighborhood of $\bar \g$.
    \end{claim*}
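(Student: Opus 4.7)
The plan is to match $\bar\eta$ and $\bar\g$ piece by piece: I will pair each extended pullback $\bar\eta_i$ with the corresponding $\bar\g_i$, and each joining geodesic $\tau_j$ with the image $\bar\epsilon_j = \Phi_r(\epsilon_j)$ of the deep horoball segment. In each pair the two paths share endpoints (up to a bounded perturbation), so it will suffice to bound the Hausdorff distance between paired pieces uniformly in $r$.

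For the shallow pairs $(\bar\eta_i,\bar\g_i)$, both paths lie in $\Gbb_r$ and are pullbacks of paths in $\hat K$ with common endpoints: $\eta_i$ is a geodesic and $\hat{\bar\g}_i$ is a $((q+N)k,l)$-quasi-geodesic. The Morse lemma in the $\d$-hyperbolic space $\hat K$ bounds the Hausdorff distance between $\eta_i$ and $\hat{\bar\g}_i$ by some $M_1 = M_1(k,l,N,q,\d)$, uniform in $r$. Earlier in the proof we ensured that $\hat{\bar\g}_i$ avoids apices of angle at least $M_K \cdot 2^{q+1}$, and since $D_K$ was chosen sufficiently large, \cref{lem:GeodpassBigAngle} guarantees that $\eta_i$ likewise avoids apices of angle at least $D_K$. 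Hence a short connector realising the Morse bound can be taken to avoid large-angle apices, and \cref{cor:small-angle-shallow-pullback} pulls each such connector back to a $q$-shallow path in $\Gbb_r$ of length at most $(q+1)M_1$. This yields a uniform Hausdorff bound between $\bar\eta_i$ and $\bar\g_i$.

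For the horoball pairs $(\tau_j,\bar\epsilon_j)$, observe that $\epsilon_j$ lies entirely in a single coned horoball of $\Hbb_r$ at depth at least $q$, and on such horoball portions $\phi_r$ acts as an isometric identification by the relevant $c_{E_i}$. Thus $\bar\epsilon_j$ is a geodesic inside a single coned horoball of $\Gbb_r$. Since $q > \d$, \cref{lem:coned-off-horoball-is-convex} implies this coned horoball is convex in $\Gbb_r$, so $\tau_j$ (which shares endpoints with $\bar\epsilon_j$) is also contained in it. Then \cref{lem:shape-geodesic-cusped-horoball} bounds the Hausdorff distance between the two geodesics by at most $5$.

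Combining the two bounds gives $\bar\eta \subseteq N_R(\bar\g)$ for a uniform constant $R$ independent of $r$. The main obstacle in carrying out the plan is the endpoint bookkeeping: $\bar\eta_i$ and $\bar\g_i$ may begin at slightly different depth-$q$ vertices inside a common horoball, since $\eta_i$ and $\hat{\bar\g}_i$ meet the shared apex of $\hat K$ via possibly different cone edges, and similar remarks apply at the interfaces with the $\tau_j$ and $\bar\epsilon_j$. One has to verify these discrepancies are uniformly bounded, which follows because the angle at each shared apex is controlled by \cref{lem:AngleUpperBoundForHatSpace} together with the quasi-geodesic constants already in play, so the endpoint mismatch contributes only an additive constant that can be absorbed into $R$.
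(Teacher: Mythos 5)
Your overall decomposition matches the paper's (treat the shallow pieces $\bar\eta_i$ versus $\bar\g_i$ and the horoball pieces $\tau_j$ versus $\bar\epsilon_j$ separately), and your treatment of the horoball pieces via convexity (\cref{lem:coned-off-horoball-is-convex}) plus \cref{lem:shape-geodesic-cusped-horoball} is a reasonable alternative to the paper's direct description of the shape of $\bar\epsilon_j$ and $\tau_j$. However, your argument for the shallow pieces has a gap, and it is a more laborious route than the one the paper actually takes.

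The gap is in the step where you assert that ``\cref{lem:GeodpassBigAngle} guarantees that $\eta_i$ likewise avoids apices of angle at least $D_K$.'' \cref{lem:GeodpassBigAngle} says: if two quasi-geodesics from an apex $a$ meet with large angle, then any \emph{geodesic} joining their far endpoints passes through $a$. It does not say that if a quasi-geodesic avoids large-angle apices then a geodesic with the same endpoints also avoids them, nor does it say that a quasi-geodesic must pass through an apex that a geodesic with the same endpoints crosses with large angle. So the fact that $\hat{\bar\g}_i$ avoids apices of angle $\geq M_K 2^{q+1}$ does not, via that lemma, constrain the apices that the geodesic $\eta_i$ crosses. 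Similarly, your subsequent step ``a short connector realising the Morse bound can be taken to avoid large-angle apices'' is precisely the kind of claim that, in the paper, requires a careful case analysis (see the claims inside \cref{lem:extended-pullback-are-close-bigon} and \cref{lem:ext-pullback_close_to_geodesic}); those proofs use the geodesic structure on \emph{both} sides of the bigon/triangle. You are trying to prove the analogue for a geodesic-versus-quasi-geodesic pair and would need to redo that analysis with the quasi-geodesic constants in play. In short, the key technical content is asserted rather than proved.

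The paper sidesteps this entirely by comparing both $\bar\eta_i$ and $\bar\g_i$ to the genuine geodesic $\xi_i = [\bar x_i, \bar y_i]$ in $\Gbb_r$. By \cref{lem:ext-pullback_close_to_geodesic}, the extended pullback $\bar\eta_i$ is $N$-close to $\xi_i$; and since $\bar\g_i$ is a $(k',l')$-quasi-geodesic in the $\delta$-hyperbolic $\Gbb_r$ with constants independent of $r$, quasi-geodesic stability in $\Gbb_r$ gives a Hausdorff bound $\mu = \mu(k',l',\delta)$ between $\bar\g_i$ and $\xi_i$. This yields $\bar\eta_i \subseteq N_{\mu+N}(\bar\g_i)$ with no need to pull connectors back from $\hat K$, no need to show $\eta_i$ avoids large-angle apices, and no new pullback-of-connector estimate. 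If you want to pursue your route, you would need to prove the two unsubstantiated claims above; as written, your proof is incomplete.
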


    By \cref{lem:ext-pullback_close_to_geodesic}, $\xi_i$'s are $N$-close to the pullback $\bar\eta_i$'s respectively. On the other hand, $\bar\g_i$'s are $(k',l')$ quasi-geodesics, hence there exists $\mu= \mu(k','l',\delta)$ such that $\bar \g_i$ lies in the $\mu$ neighborhood of $\xi_i$. It follows that $\bar \eta_i$'s lie in the $\mu + N$ neighborhood of $\bar \g_i$'s.

    Consider now the distance between $\tau_j$ and $\bar \epsilon_j$ for each $j$. Notice that $\bar \epsilon_j$ consists of two vertical subsegments and one horizontal subsegment, whereas $\tau_j$ consists of zero or two vertical subsegments and one horizontal subsegment of length at most $3$. Since $\bar \epsilon_j$ shares endpoints with $\tau_j$, and two vertical segments initiating at the same vertex have to lie in the same ray, it is straightforward that $\tau_j$ lies in the $2$ neighborhood of $\bar \epsilon_j$.

    It follows that $\zeta$ lies in the $\mu + 2N + 2$ neighborhood of the subsegment of $\bar \g$ between $\bar x'$ and $\bar y'$. If $\epsilon_0$ and $\epsilon_{m+1}$ don't exist, then we are done. Suppose now $\epsilon_0$ and $\epsilon_{m+1}$ both exist. Then the geodesic $[\bar x, \bar y]$ lies in the $2\delta$ neighborhood of $\zeta \cup \bar \epsilon_0 \cup \bar \epsilon_{m+1}$. In the case where only one of those segments exists, the same bound works. 

    Set $\l = \mu + 2N + 2 + 2\delta + q$. We have $\zeta$ lies in the $\l$-neighborhood of $\Phi_r(\g)\subseteq \Phi_r(\Hbb_r)$.  It follows that $\Phi_r(L)$ is $\l$-quasiconvex. In particular, notice that none of $\mu, N, \delta,q$ depends on the choice of $r$.
\end{proof}

\begin{proposition}
    \label{thm:finite_index_qc}
    Suppose $H$ is $\gp$--quasiconvex. Let $H'$ be a finite index subgroup of $H$, then $H'$ is also $\gp$--quasiconvex.
\end{proposition}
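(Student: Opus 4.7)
The plan is to show that the very same witness graphs for the $(G,\mathcal{P})$--quasiconvexity of $H$ already witness the $(G,\mathcal{P})$--quasiconvexity of $H'$, so that essentially no new construction is needed. Concretely, since $H$ is $(G,\mathcal{P})$--quasiconvex, \cref{def:qc_finegraph} provides a $(G,\mathcal{P})$--graph $K$ together with a nonempty, connected, $H$-invariant, quasi-isometrically embedded subgraph $L\subseteq K$ having only finitely many $H$-orbits of vertices. I will take this same pair $(K,L)$ and verify the four clauses of \cref{def:qc_finegraph} with $H'$ in place of $H$.

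First I would record that $K$ remains a $(G,\mathcal{P})$--graph (it does not depend on $H$ at all), and that the subgraph $L$ is still nonempty, connected, and quasi-isometrically embedded in $K$ because those properties do not involve the $H$-action. Next, since $H'\leq H$, every $H$-invariant subset of $K$ is automatically $H'$-invariant; in particular $L$ is $H'$-invariant. The only content of the proof is therefore to verify the finiteness of $H'$-orbits of vertices of $L$.

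For this, let $n=[H:H']<\infty$ and choose coset representatives $h_1,\dots,h_n$ with $H=\bigsqcup_{i=1}^{n} h_iH'$. For any vertex $v\in L$,
\[
H\cdot v \;=\; \bigcup_{i=1}^{n} h_i(H'\cdot v),
\]
so the $H$-orbit of $v$ is covered by at most $n$ distinct $H'$-orbits. If $V(L)=\bigsqcup_{j=1}^{m}H\cdot v_j$ is the decomposition into the finitely many $H$-orbits, then $V(L)$ is covered by at most $mn$ many $H'$-orbits, which is finite. Thus $(K,L)$ realizes the $(G,\mathcal{P})$--quasiconvexity of $H'$, and the proposition follows immediately. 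There is no genuine obstacle here; the only subtlety is the one-line orbit-counting argument, which uses finite index in an essential way (the statement would fail without it).
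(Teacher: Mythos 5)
Your proposal is correct and is essentially the paper's own argument: reuse the same witness pair $(K,L)$, observe that $L$ remains $H'$-invariant, and use finite index to bound the number of $H'$-orbits. The paper states the orbit-count step more tersely; your coset-representative count $mn$ is just a spelled-out version of the same observation.
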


\begin{proof}

    Since $H$ is $\gp$--quasiconvex, it acts on a subgraph $\mathcal{L}$ of some $\gp$--graph. Consider the action of $H'$ on $\mathcal{L}$. Clearly $\mathcal{L}$ is $H'$-invariant. Since $H'$ is a finite index subgroup of $H$, $\mathcal{L}$ has finitely many $H'$-orbits of vertices. It follows that $H'$ is also $\gp$--quasiconvex.
\end{proof} 

\section{Application}
\label{sec:main}

As an application of \cref{thm:uniform_qc}, we generalize \cite[Corollary 6.6]{grovesHyperbolicGroupsActing2023} below. Recall the definition of full $\gp$-quasiconvex subgroup as in \cref{def:qc_finegraph}.

\MainCube*

The proof for \cite[Corollary 6.6]{grovesHyperbolicGroupsActing2023} relies on a series of technical lemmas. We start by generalizing preliminary results in \cref{sec:geods,sec:meta}. We remark that in most cases, the generalization requires few modifications. \cref{thm:uniform_qc} is used in the proof of \cref{lem:qc-deep-pen-infinite-intersect}.  We give the proof of \cref{thm:main_cube}  in \cref{subsec:main-proof}.

In \cref{sec:image-of-H-under-dehn-filling} we also prove some results that may be of independent interest.

\subsection{Projections of Geodesics}
\label{sec:geods}
In this section, we generalize \cite[Proposition 4.5]{agolResidualFinitenessQCERF2009} to the setting of finitely generated groups with hyperbolically embedded subgroups. As in the original proof, we first prove two technical lemmas which correspond to \cite[Lemma 4.1, Lemma 4.2]{agolResidualFinitenessQCERF2009}, respectively.

\begin{assumption}
    \label{assume:gp-qc}
    We assume $G$ is a finitely generated group, $\Pcal = \{ P_i \}_{1\leq i \leq n} $ a finite collection of subgroups of $G$, and $H$ a $\gp$--quasiconvex subgroup. We assume further that $X, Y$ are the generating set of $G,H$ from \cref{thm:uniform_qc} respectively. 
    
    We write $\Gbb_r$ for $\K_r(G,X,\cup_i X_i,\Pcal)$ and $\Hbb_r$ for $\K_r(H,Y,\cup_j Y_j,\Dcal)$.
\end{assumption}

We write $|\cdot|_{\Gbb_r}$ for $d_{\Gbb_r}(\cdot,1)$. Given a filling $\pi:G\to \bar G$ with kernel $K$, we write the  $\bar \Gbb_r = \Gbb_r/K$, and denote the quotient map as $\pi_r$. Recall that for all $r>0$, $\bar \Gbb_r$ contains a copy of the Cayley graph of $\bar G$. Therefore, by a slight abuse of notation, for all $r>0$ and for all $g\in G$, we denote $\pi_r(g)$ as $\pi(g)$.

\begin{lemma}[Generalization of {\cite[Lemma 4.1]{agolResidualFinitenessQCERF2009}}]
    \label{thm:GendGreendlinger}
    Make \cref{assume:fg-heg}. Let $r_\d>0 $ and $\d>0$ be the constants from \cref{prop:quotient-space-hyperbolic} such that for all $r>r_\d$, for any sufficiently long filling with kernel $K$, the coned-off cusped space $\Gbb_r=\K_r(G,X, \cup_i P_i, \Pcal)$ and $\bar \Gbb_r=\Gbb_r /K$ are both $\d$-hyperbolic.

    Let $L\geq 10\delta$ and let $D\geq 3L$. Let $F=\{g\in G \mid \forall r >0, |g|_{\Gbb_r} \leq 2D\}$, and let $\pi :G\to \bar G = G(N_1, \dots, N_m)$ be a filling which is injective on $F$ and so that for all $r>r_\d$, $\bar \Gbb_r$ is $\delta$-hyperbolic. For all $r>\max(r_\d , D)$, let $\g$ be a geodesic in $\Gbb_r$ joining two elements in $G$. Then one of the following occurs:

\begin{enumerate}
    \item There is a $10\delta$-local geodesic with the same endpoints as $\pi_r(\g)$ which is contained in a $2$-neighborhood of $\pi_r(\g)$ and coincides with $\pi_r(\g)$ everywhere in an $L$-neighborhood of the Cayley graph of $\Bar{G}$. Moreover, $\pi_r(\g)$ is a quasi-geodesic.
    \item There is a coset $tP_i$ whose corresponding horoball intersects $\g$ in a subsegment $[g_1,g_2]$ of length at least $2D-20\delta-4$, but there is some $n\in N_i$ with $d_{\Gbb_r}(g_1,g_2 n) \leq 2L+3$.  
\end{enumerate}
\end{lemma}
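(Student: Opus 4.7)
\emph{Proof plan.} Following the template of \cite[Lemma 4.1]{agolResidualFinitenessQCERF2009}, the plan is to examine $\pi_r(\g)$ in $\bar \Gbb_r$ window by window. When every local failure of $\pi_r(\g)$ to be a geodesic can be repaired by a shortcut in the $L$-neighborhood of the Cayley graph of $\bar G$, we assemble a $10\d$-local geodesic $\bar \eta$ close to $\pi_r(\g)$ and land in case (1). When some local failure forces the shortcut through a horoball beyond depth $L$, the Dehn filling structure pinpoints the coset $tP_i$ and the relation $n \in N_i$ witnessing case (2).

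Concretely, at each failing window between vertices $\bar x, \bar y$ of $\pi_r(\g)$, I would fix a geodesic $[\bar x, \bar y]$ in $\bar \Gbb_r$; together with the failing window, it bounds a bigon $\ell$ in $\bar \Gbb_r$ of perimeter $< 20\d$. Lifting $\ell$ through the $K$-covering $\pi_r:\Gbb_r \to \bar \Gbb_r$ produces a path from some lift $x$ of $\bar x$ to $xk$ for a unique $k \in K$ with $|k|_{\Gbb_r} \leq 20\d$. The key dichotomy is the following: if $\ell$ stays in the $L$-neighborhood of the Cayley graph of $\bar G$, its lift stays in the $L$-neighborhood of the Cayley graph of $G$ in $\Gbb_r$. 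Since the coning parameter $r'$ affects distances only beyond depth $r'$, we have $|k|_{\Gbb_{r'}} \leq 20\d \leq 2D$ for every $r'$, placing $k \in F$. Injectivity of $\pi|_F$ combined with $k \in K$ forces $k=1$, so the lift of the shortcut is a strictly shorter path than the corresponding subsegment of $\g$, contradicting that $\g$ is a geodesic.

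Consequently, any local failure must arise from a bigon that penetrates a horoball to depth exceeding $L$, namely the horoball over some coset $tP_i$; $\g$ enters this horoball at $g_1 \in tP_i$, travels an excursion of length at least $2D-20\d-4$, and exits at $g_2 \in tP_i$. The shortcut in $\bar \Gbb_r$ of length $\leq 2L+3$ lifts, via a specific filling element $n \in N_i$ satisfying $\pi(g_2) = \pi(g_2 n)$, to a path of length $\leq 2L+3$ from $g_1$ to $g_2 n$ in $\Gbb_r$; this is precisely case (2). If instead no window fails, $\pi_r(\g)$ is itself a $10\d$-local geodesic, hence a quasi-geodesic by \cite[Theorem III.H.1.13]{bridsonMetricSpacesNonPositive1999}, and case (1) holds with $\bar \eta = \pi_r(\g)$. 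Failing windows that all admit shallow repairs yield, by iterated shortening, a $10\d$-local geodesic $\bar \eta$ within distance $2$ of $\pi_r(\g)$ coinciding with it in the $L$-neighborhood of the Cayley graph of $\bar G$.

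The principal obstacle is the bookkeeping that identifies the filling element $n \in N_i$ realizing a given deep shortcut, together with the uniform control of $|k|_{\Gbb_{r'}}$ across all $r'$ needed to place $k$ in $F$. The latter rests on the coning parameter only affecting distances beyond its value, so that a bigon confined to depth $\leq L$ lifts to an element of $\Gbb_{r'}$-length uniformly bounded by $20\d$, independently of $r'$.
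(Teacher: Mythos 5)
Your plan follows the same route as the paper: adapt the window-by-window shortening argument of Agol--Groves--Manning \cite[Lemma~4.1]{agolResidualFinitenessQCERF2009}, noting that regular geodesics in $\Gbb_r$ now have the shape described in \cref{lem:shape-geodesic-cusped-horoball} (two vertical segments together with either a short horizontal segment or two cone edges). Your explicit handling of the $\forall r$ quantifier in the definition of $F$---a shortcut confined to shallow depth lifts to an element whose $\Gbb_{r'}$-length is bounded uniformly in $r'$, by monotonicity for $r'<r$ and by the depth bound for $r'\geq r$---is a useful clarification of a point the paper leaves implicit.

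However, there is a gap: your proposal does not justify the final assertion of case~(1), that $\pi_r(\g)$ itself is a quasi-geodesic. In the case where failing windows are repaired you conclude only that $\pi_r(\g)$ lies within Hausdorff distance $2$ of the local geodesic $\bar\eta$; this alone does not bound the arc length of $\pi_r(\g)$ in terms of the distance between its endpoints, since a long path can hug a quasi-geodesic closely. The paper closes this by a length comparison: the excursions of $\pi_r(\g)$ that get replaced penetrate a horoball $B$ to depth at most $r+1$, while, because $D\geq 3L$, the replacement geodesic segments are forced to penetrate $B$ to depth greater than $L$; hence
\begin{equation*}
    \frac{L}{r}\, l(\pi_r(\g)) \;\leq\; l(\bar\eta) \;\leq\; l(\pi_r(\g)),
\end{equation*}
and since $\bar\eta$ is a $(7/3,\,2\d)$-quasi-geodesic by \cite[III.H.1.13]{bridsonMetricSpacesNonPositive1999}, it follows that $\pi_r(\g)$ is a $(7r/3L,\,10r\d/L)$-quasi-geodesic. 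You would need to supply this (or an equivalent) comparison to establish the full statement of case~(1).
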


\begin{proof}
    Recall the shape of regular geodesic (See \cref{def:regular-geodesic}). In horoballs of $\Gbb_r$ the geodesics consist of either two vertical subsegments of length smaller than $r$ and one horizontal subsegment of length 2 or 3, or two vertical subsegments of length $r$ and two cone edges. With this in mind, the proof of \cite[Lemma 4.1]{agolResidualFinitenessQCERF2009} works with the obvious modification.

    Moreover, if conclusion (1) holds, then the proof of \cite[Lemma 4.1]{agolResidualFinitenessQCERF2009} constructs a $10\d$-local geodesic $\g'$ by replacing $\pi_r(\g \cap B)$ with a geodesic of same endpoints if necessary, where $B$ is some horoball that $\g$ penetrates to depth greater than $D-10\d -2$. Recall that $\g$ penetrates $B$ to depth smaller than $r+1$. Observe also that the constructed segment penetrates $B$ to depth greater than $L$.  By the construction of $\g'$, we have:
    \[
       \frac{L}{r} l(\pi_r(\g))  \leq l(\g') \leq  l(\pi_r(\g)).
    \]

    By \cite[III.H.1.13]{bridsonMetricSpacesNonPositive1999}, any $10\d$-local geodesic in $X'$ is a $(7/3, 2\d)$-quasi-geodesic. It follows that $\pi_r(\g)$ is a $(7r/3L, 10r \d /L )$-quasi-geodesic.
\end{proof}

\begin{lemma}[Generalization of {\cite[Lemma 4.2]{agolResidualFinitenessQCERF2009}}]
    \label{thm:GendGreendlinger-qc}
    Make \cref{assume:gp-qc}. Let $r_\l >0 , \l>0$ be the depth constant and quasiconvex constant from \cref{thm:uniform_qc}. 
    Let $r_\d,\d, L\geq 10\delta$ be as in the hypothesis of \cref{thm:GendGreendlinger}.
    Let $\mu$ be the Lipschitz constant $\l_\phi$ of $\phi_r$ from \cref{lem:qc-extend-cusped-map-is-QI}. 
    Let $D \geq 3L + 100 \lambda + 4 \mu $, $r>\max(r_\d, r_\l, D)$
    and let $F = \{g\in G \mid \forall r >0, |g|_{\Gbb_r} \leq 2D\}$. Suppose $\pi: G\to \bar G =  G(N_1,\dots, N_m)$ is an $H$-filling with kernel $K$ which is injective on $F$ and so that $\bar \Gbb_r = \Gbb_r/K$ is $\delta$-hyperbolic. Let $K_H < K \cap H$ be the kernel of the induced filling of $H$ (as in \cref{def:InducedFilling}). Finally, suppose that $\g$ is a geodesic joining $1$ to $h$ for some $h\in H$ in $\Gbb_r$.
    
    If conclusion (2) of \cref{thm:GendGreendlinger} holds, then for some $k\in K_H$, we have $|kh|_{\Gbb_r}< |h|_{\Gbb_r}$.
\end{lemma}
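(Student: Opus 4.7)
The plan is to adapt the Agol--Groves--Manning strategy using the uniform $(G,\Pcal)$-quasiconvexity from \cref{thm:uniform_qc}. Let $m$ be the midpoint of the deep subsegment $[g_1,g_2]\subseteq \g$; since $\g$ is regular, $m$ lies at depth at least $D-10\d-3$ inside $\Hcal_r(tP_i)$. Because both $1$ and $h$ lie in $\phi_r(\Hbb_r)\subseteq \Gbb_r$, \cref{thm:uniform_qc} places $\g$ in the $\l$-neighborhood of $\phi_r(\Hbb_r)$, so there exists $m'\in \phi_r(\Hbb_r)$ with $d_{\Gbb_r}(m,m')\leq \l$.

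The first key step is to show that $m'$ lies in the same horoball $\Hcal_r(tP_i)$ and originates from a horoball vertex of $\Hbb_r$. Apices in the coned-off cusped space are joined only to vertices in their own horoball, so any point at depth $d$ in $\Hcal_r(tP_i)$ has distance at least $d$ from every point of $\Gbb_r$ outside $\Hcal_r(tP_i)\cup \G_G$. The hypothesis $D\geq 3L+100\l+4\mu$ makes the depth of $m$ much larger than $\l$, forcing $m'$ into $\Hcal_r(tP_i)$ at positive depth. By \cref{construction:qc-extend-map-coned-off-cusped-space}, a vertex of $\Hbb_r$ at positive depth in a horoball over a coset $sD_j$ is sent by $\phi_r$ into the $\Gbb_r$-horoball over $sc_{D_j}P_{j'}$, where $D_j\subseteq c_{D_j}P_{j'}c_{D_j}^{-1}$ for some $P_{j'}\in\Pcal$. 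Matching horoballs yields $sc_{D_j}P_{j'}=tP_i$, hence $P_{j'}=P_i$ and $t=sc_{D_j}p$ for some $p\in P_i$.

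From $D_j\subseteq c_{D_j}P_ic_{D_j}^{-1}$ and $s\in H$ it follows that
\[ sD_js^{-1}\subseteq sc_{D_j}P_ic_{D_j}^{-1}s^{-1}=(tp^{-1})P_i(tp^{-1})^{-1}=tP_it^{-1}, \]
so the infinite group $sD_js^{-1}$ sits inside $H\cap tP_it^{-1}$. Since $\pi$ is an $H$-filling, there exist $s'\in H$ and $D_k\in \Dcal$ with $tN_it^{-1}\subseteq s'D_k(s')^{-1}$. Set $k:=g_2ng_2^{-1}$; writing $g_2=tp'$ with $p'\in P_i$ and using $N_i\triangleleft P_i$ gives $k\in tN_it^{-1}\subseteq H\cap K$. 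The element $(s')^{-1}ks'$ lies in $D_k$ and maps to $1$ in $\bar G$, which lies in $c_kN_{j_k}c_k^{-1}$; by \cref{def:InducedFilling}, $(s')^{-1}ks'\in K_k$, so $k\in s'K_k(s')^{-1}\subseteq K_H$.

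For the length comparison, $kg_2=g_2n$ together with left-invariance gives
\[ |kh|_{\Gbb_r}\leq d_{\Gbb_r}(1,g_1)+d_{\Gbb_r}(g_1,g_2n)+d_{\Gbb_r}(g_2n,kh)\leq d_{\Gbb_r}(1,g_1)+(2L+3)+d_{\Gbb_r}(g_2,h), \]
while $|h|_{\Gbb_r}\geq d_{\Gbb_r}(1,g_1)+(2D-20\d-4)+d_{\Gbb_r}(g_2,h)$. Since $D\geq 3L+100\l+4\mu$ easily yields $2D-20\d-4>2L+3$, we conclude $|kh|_{\Gbb_r}<|h|_{\Gbb_r}$. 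The main obstacle is the depth/horoball matching in the second paragraph: one must exploit that $\l$ is independent of $r$ (precisely the content of \cref{thm:uniform_qc}) to prevent $m'$ from escaping to a different horoball, and one needs the explicit horoball-to-horoball behavior of $\phi_r$ from \cref{construction:qc-extend-map-coned-off-cusped-space} to extract the coset of a hyperbolically embedded subgroup of $H$ witnessing that the intersection $H\cap tP_it^{-1}$ is infinite.
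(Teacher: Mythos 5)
Your proposal is correct and follows essentially the same strategy as the paper, which locates points of $\phi_r(\Hbb_r)$ near the deep subsegment of $\g$ in the horoball, matches horoballs to extract a coset $sD_j$ conjugating into $P_i$, and then invokes the $H$-filling condition. Two small differences are worth noting. First, you work from the midpoint $m$ of $[g_1,g_2]$ and a single nearby witness $m'$, whereas the paper produces the two witnesses $z_1,z_2$ at the controlled depth $\l+1$ near $g_1$ and $g_2$; the shallow-depth choice is what lets the paper conclude $d_1,d_2<2\l+2<r$, but your deeper choice also works, since the preimage under $\phi_r$ of a deep vertex or apex is still determined by a coset $sD_j$ via \cref{construction:qc-extend-map-coned-off-cusped-space}. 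Second, your justification that $(s')^{-1}ks'\in K_k$ (\emph{``maps to $1$ in $\bar G$, which lies in $c_kN_{j_k}c_k^{-1}$''}) is too terse to stand on its own: being in $K$ does not by itself place an element of $c_kP_{j_k}c_k^{-1}$ inside $c_kN_{j_k}c_k^{-1}$. The correct reason is that for sufficiently long fillings, \cref{thm:DGO-dehn-filling-of-HEG}(1) gives $P_{j_k}\cap K=N_{j_k}$, hence $c_kP_{j_k}c_k^{-1}\cap K=c_kN_{j_k}c_k^{-1}$, and then $(s')^{-1}ks'\in D_k\cap K\subseteq D_k\cap c_kN_{j_k}c_k^{-1}=K_k$. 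You should make that appeal to DGO injectivity explicit, since it is the real content of that step; once added, the argument closes.
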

\begin{proof}

    Let $g_1, g_2 \in tP_i$ and $n\in N_i$ be as in the conclusion of \cref{thm:GendGreendlinger}, and let $Q$ be a horoball in $\Gbb_r$ which contains $tP_i$. We have
    \begin{equation}
        \label{eq-1}
        d_{\Gbb_r}(g_1,g_2) \geq 2D-20\d - 4 \geq 6L + 200 \l + 8 \mu - 20\d -4, \text{but,}
    \end{equation}
    \begin{equation}
        \label{eq-2}
        d_{\Gbb_r}(g_1, g_2 n) \leq 2L+3.
    \end{equation}

    If $g\in tP_i$ and $d\in \N$, then we write $(g,d)$ for the unique vertex of $B$ that is connected to $g$ by a vertical geodesic of length $d$.

    By Inequality (\ref{eq-1}), the geodesic $\g$ penetrates the horoball $B$ to depth at least $D-10\d -4 > 2\l$; in particular, since $r>D>100\l$,  $\g$ passes through $(g_1, \l+1)$ and $(g_2, \l+1)$. 
    
    Since $H$ is $\l$-$\gp$--quasiconvex, there exist points 
    \[z_1 = \phi_r((s_1 D_{j_1},h_1,d_1)),\quad z_2 = \phi_r((s_2 D_{j_2},h_2,d_2)) \in B\] such that $z_1, z_2$ are within $\l$ of $(g_1,\l+1)$ and $(g_2,\l+1)$, respectively. Notice that both $d_1,d_2$ have to be smaller than $2\l+2 <r$. 
        
    The argument from \cite[Lemma 4.2]{agolResidualFinitenessQCERF2009} then shows that for some $k\in K_H$, $|kh|_{\Gbb_r} < |h|_{\Gbb_r}$.
\end{proof}

\begin{proposition}[Generalization of {\cite[Proposition 4.5]{agolResidualFinitenessQCERF2009}}]
\label{thm:seperate}
Make \cref{assume:gp-qc} and let $I>0$. There is some $F= F(I)$ so that if $G(N_1,\dots, N_n)$ is an $H$-filling of $G$ which is injective on $F$, and $g\in G\backslash H$ satisfies $\forall r>0,|g|_{\Gbb_r} < I$, then $\pi(g) \notin \pi(H)$.
\end{proposition}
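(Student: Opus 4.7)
The plan is to adapt the strategy of Agol--Groves--Manning \cite[Proposition 4.5]{agolResidualFinitenessQCERF2009}. I will set $L = 10\delta$, let $\mu$ denote the Lipschitz constant $\lambda_\phi$ of $\phi_r$ from \cref{lem:qc-extend-cusped-map-is-QI}, and put $D = \max\{3L + 100\lambda + 4\mu,\; I\} + 1$. After choosing an integer $r > \max(r_\delta, r_\lambda, D)$, define
\[
F := \{f \in G \mid \forall r' > 0,\; |f|_{\Gbb_{r'}} \leq 2D\}.
\]
This should be a finite set, because for $r' > D$ no path of length $\leq 2D$ in $\Gbb_{r'}$ can traverse a cone edge (which sits at depth $r' > D$), so the sets $\{f : |f|_{\Gbb_{r'}} \leq 2D\}$ are non-increasing in $r'$ and stabilize for all sufficiently large $r'$.

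Suppose for contradiction that $\pi : G \to \bar G$ is a sufficiently long $H$-filling, injective on $F$, and that $g \in G \setminus H$ satisfies $|g|_{\Gbb_{r'}} < I$ for every $r' > 0$ together with $\pi(g) \in \pi(H)$. In particular $g \in F$ by definition. The strategy is to produce $h \in H \cap F$ with $\pi(h) = \pi(g)$; injectivity of $\pi|_F$ will then force $h = g$, contradicting $g \notin H$.

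To construct such an $h$, I would start with any $h_0 \in H$ satisfying $\pi(h_0) = \pi(g)$ (which exists since $\pi(g) \in \pi(H)$) and iteratively shorten it. At each stage $t \geq 0$, take a geodesic $\gamma_t$ in $\Gbb_r$ from $1$ to $h_t$ and apply \cref{thm:GendGreendlinger}. Whenever conclusion (2) holds, \cref{thm:GendGreendlinger-qc} provides $k_t \in K_H$ with $|k_t h_t|_{\Gbb_r} < |h_t|_{\Gbb_r}$; setting $h_{t+1} = k_t h_t \in H$ preserves $\pi(h_{t+1}) = \pi(g)$ since $K_H \subseteq K$. The sequence $|h_t|_{\Gbb_r}$ strictly decreases in $\N$, so the iteration terminates at some $h^* \in H$ with $\pi(h^*) = \pi(g)$ for which conclusion (1) of \cref{thm:GendGreendlinger} holds. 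At that point $\pi_r(\gamma^*)$ is a quasi-geodesic in $\bar\Gbb_r$ from $1$ to $\pi(g)$, whose length $|h^*|_{\Gbb_r}$ is controlled by $|\pi(g)|_{\bar\Gbb_r} \leq |g|_{\Gbb_r} < I$ through the quasi-geodesic constants.

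The main obstacle I expect is strengthening this a priori bound to $|h^*|_{\Gbb_{r'}} \leq 2D$ for every $r' > 0$, so that $h^* \in F$: the quasi-geodesic constants produced by \cref{thm:GendGreendlinger} grow with $r$, so a direct estimate in $\Gbb_r$ alone is not uniform in $r'$. The resolution should exploit the quasiconvexity of $\pi(H)$ in $\bar\Gbb_r$ inherited from \cref{thm:uniform_qc} via the $H$-filling hypothesis: lifting a quasi-geodesic in $\bar\Gbb_r$ from $1$ to $\pi(g)$ that stays within $\lambda$ of $\pi(H)$ should produce a representative $h^* \in H$ whose length in $\Gbb_{r'}$ is controlled by $|g|_{\Gbb_{r'}}$ plus a bounded filling-theoretic error independent of $r'$. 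A secondary subtlety is checking that $K_H \subseteq K$ throughout the iteration, which follows from the $H$-filling assumption.
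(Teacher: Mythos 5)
Your proposal takes a genuinely different route from the paper's, and that route has a real gap that the paper's argument is specifically designed to avoid.

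The paper does \emph{not} try to produce $h^*\in H\cap F$ and appeal to injectivity of $\pi|_F$. Instead it works in a single fixed $\Gbb_r$ and derives a contradiction entirely at the level of $\bar\Gbb_r$: after choosing $h\in H$ of minimal $\Gbb_r$-length with $\pi(h)=\pi(g)$, conclusion (2) of \cref{thm:GendGreendlinger} would contradict minimality via \cref{thm:GendGreendlinger-qc}, so conclusion (1) holds; the $10\delta$-local geodesic $\gamma'$ in $\bar\Gbb_r$ then forces $d_{\bar\Gbb_r}(1,\pi(g))\geq \tfrac{3}{7}(4I+20\delta)-2\delta > I$, while the quotient map $\pi_r$ is $1$-Lipschitz so $d_{\bar\Gbb_r}(1,\pi(g))\leq |g|_{\Gbb_r} < I$ — done. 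The key to making this numerology work is the choice $L = 2I+10\delta$ (so that $\gamma'$, which agrees with $\pi_r(\gamma)$ on the $L$-neighborhood of the Cayley graph of $\bar G$, has length at least $2L=4I+20\delta$), together with $D=100\lambda+100\delta+6I$. Your choices $L=10\delta$ and $D=\max\{3L+100\lambda+4\mu,I\}+1$ are the ones used in \cref{prop:induce-filling-map-injective} and \cref{prop:image-of-H-convex}, not here; with $L=10\delta$ the lower bound on $l(\gamma')$ only gives $d_{\bar\Gbb_r}(1,\pi(g))\gtrsim \delta$, which is useless against $|g|_{\Gbb_r}<I$.

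Your alternative — iteratively shorten to $h^*\in H$ and then show $h^*\in F$, so injectivity of $\pi|_F$ forces $h^*=g$ — would require the bound $|h^*|_{\Gbb_{r'}}\leq 2D$ uniformly over \emph{all} $r'>0$. You correctly flag this as "the main obstacle," but the sketched resolution (lift a quasi-geodesic in $\bar\Gbb_r$ that fellow-travels $\pi(H)$) is not carried out, and it is genuinely nontrivial: conclusion (1) gives only $(7r/3L,10r\delta/L)$-quasi-geodesic constants, which blow up with $r$, so the bound you obtain on $|h^*|_{\Gbb_r}$ from $|\pi(g)|_{\bar\Gbb_r}<I$ degrades with $r$ and gives you nothing about $|h^*|_{\Gbb_{r'}}$ for other $r'$. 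The paper's argument sidesteps the entire issue because it never needs $h^*$ to land in $F$; it only needs the distance estimate in the quotient space for the one fixed $r$. If you want to keep your strategy, you would need an argument that the terminal $h^*$ has uniformly bounded $\Gbb_{r'}$-length independent of $r'$ (essentially re-proving a version of \cref{prop:induce-filling-map-injective} internally), which is considerably more work than the direct route.
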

\begin{proof}
    Let $r_\d, \d$ be the constants from \cref{prop:quotient-space-hyperbolic} such that for all $r>r_\d$, both $\Gbb_r$ and $\bar \Gbb_r$ are $\d$-hyperbolic. Let $r_\l$, $\l$ be depth constant and the constant of uniform-quasiconvexity from \cref{thm:uniform_qc} for $H$ respectively. 

    Choose $L=2I+10\delta$ and $D=100\l + 100\delta + 6I$. Suppose $r>  \max(r_\l, r_\d , D)$. Let $F=\{ g\in G\mid \forall r>0,  |g|_{\Gbb_r} \leq 2D\}$, and let $G(N_1,\dots, N_n)$ be an $H$-filling of $G$ which is injective on $F$ and so that the associated cusped coned-off Cayley graph is $\d$-hyperbolic.

    If the proposition does not hold, then $\exists g\in G\backslash H$ such that $|g|_{\Gbb_r} <I$ for all $r>0$, and $\exists h\in H$ such that $\pi(g) = \pi(h)$. Without loss of generality, choose $h$ so that $|h|_{\Gbb_r}$ is minimal. Let $\g$ be a geodesic joining $1$ to $h$.

    Apply  \cref{thm:GendGreendlinger}. If conclusion 2 holds, then by \cref{thm:GendGreendlinger-qc} we can find $k\in K$ such that $|kh|_{\Gbb_r}< |h|_{\Gbb_r}$ and $\pi(kh) = \pi(g)$, contradicting the choice of $h$, therefore conclusion 1 must hold. Then there exists a $10\delta$-local geodesic $\g'$ connecting $1$ and $\pi(g)$  which lies in a $2$-neighborhood of $\pi_r(\g)$ and coincides with $\pi_r(\g)$ in a $(2I+10\delta)$-neighborhood of both $1$ and $\pi(g)$. Then $l(\g')\geq 4I+20\delta$. But since $\g'$ is a $10\delta$-local geodesic, it must be a $(7/3,2\delta)$-quasi-geodesic, so the distance between $1$ and $\pi(g)$ is at least
    \[
    \frac37(4I+20\delta) - 2\delta > I.
    \]
    It follows that $|g|_{\Gbb_r}>I$, a contradiction.
\end{proof}

\subsection{The image of H under Dehn Filling}
\label{sec:image-of-H-under-dehn-filling}
The following results are not required for the proof of \cref{thm:main_cube}, but may be of independent interest.

\begin{proposition}[Generalization of {\cite[Proposition 4.4]{agolResidualFinitenessQCERF2009} }]
    \label{prop:induce-filling-map-injective}
    Make \cref{assume:gp-qc}. For any sufficiently long $H$--filling $\bar G$ of $G$, the induced map from the induced filling $\bar H$ into $\bar G$ is injective. 
\end{proposition}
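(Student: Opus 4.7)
The plan is to argue by contradiction, exploiting the length-reduction mechanism built into \cref{thm:GendGreendlinger-qc}. Because each $K_i$ lies in $K$ and $K$ is normal in $G$, the containment $K_H \subseteq H \cap K$ holds automatically, so the induced map $\bar H \to \bar G$ fails to be injective exactly when some $h \in H \cap K$ lies outside $K_H$. Suppose such an $h$ exists. I fix parameters $L \geq 10\delta$ and $D \geq 3L + 100\lambda + 4\mu$ together with $r > \max(r_\delta, r_\lambda, D)$ so that the hypotheses of \cref{thm:GendGreendlinger,thm:GendGreendlinger-qc} are in force, and require the $H$--filling to be sufficiently long that $\pi$ is injective on the finite set $F$ arising in \cref{thm:GendGreendlinger-qc} together with the ball of radius $10r\delta/L$ about $1$ in $\Gbb_r$; this is possible by \cref{thm:DGO-dehn-filling-of-HEG}. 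Among all $h \in (H \cap K) \setminus K_H$, choose one with $|h|_{\Gbb_r}$ minimal; the minimum exists because these values are non-negative integers.

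Let $\g$ be a geodesic in $\Gbb_r$ from $1$ to $h$ and apply \cref{thm:GendGreendlinger}. If conclusion (2) holds, then \cref{thm:GendGreendlinger-qc} (which uses that the filling is an $H$--filling and that $H$ is $\gp$--quasiconvex) produces some $k \in K_H$ with $|kh|_{\Gbb_r} < |h|_{\Gbb_r}$. Since $k \in K_H \subseteq H \cap K$ and $h \in H \cap K$, the product $kh$ lies in $H \cap K$; moreover $kh \notin K_H$, since otherwise $h = k^{-1}(kh) \in K_H$. This contradicts the minimality of $|h|_{\Gbb_r}$. Therefore conclusion (1) of \cref{thm:GendGreendlinger} must hold, so $\pi_r(\g)$ is a quasi-geodesic in $\bar \Gbb_r$ with constants controlled by $r$, $L$, and $\delta$. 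Because $h \in K$, the endpoints of $\pi_r(\g)$ coincide at $\pi(1) = 1$, so $\pi_r(\g)$ is a loop; the quasi-geodesic estimate then forces $|h|_{\Gbb_r} = l(\g) = l(\pi_r(\g)) \leq 10r\delta/L$. Since $\pi$ is injective on the ball of this radius in $\Gbb_r$ and $\pi(h) = \pi(1)$, we conclude $h = 1 \in K_H$, contradicting $h \notin K_H$.

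The main subtlety to verify is that the ball of radius $10r\delta/L$ about $1$ in $\Gbb_r$ meets $G$ in only finitely many elements, so that requiring $\pi$ to be injective on it is a genuinely finite condition compatible with the sufficiently-long-filling hypothesis. This reduces to observing that any geodesic of bounded length in $\Gbb_r$ traverses boundedly many cone edges (each incurring cost at least $1$), and between cone traversals the path lies in the locally finite Cayley graph $\G(G,\Xcal)$, so only finitely many endpoints in $G$ are reachable. A secondary concern is to confirm that the quasi-geodesic constants produced in conclusion (1) of \cref{thm:GendGreendlinger} depend only on $r$, $L$, $\delta$ and not on the filling itself, which is precisely what is established in the proof of that lemma; with this in hand, the bound $|h|_{\Gbb_r} \leq 10r\delta/L$ is independent of the specific sufficiently-long filling chosen.
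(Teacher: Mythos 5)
Your proof is correct and rests on exactly the same machinery (the dichotomy of \cref{thm:GendGreendlinger} and the length reduction in \cref{thm:GendGreendlinger-qc}), but it reorganizes the argument in a way the paper does not. The paper fixes a nontrivial $h\in\ker\pi\cap H$ first, then chooses $r$ large enough depending on $|h|_G$ (using that $|h|_{\Gbb_r}$ stabilizes once $r$ exceeds a threshold depending on $h$), and performs a descent on lengths with the $r$-independent set $F$ as the ``sufficiently long'' witness. You instead fix $r$ once at the start, enlarge the ``sufficiently long'' requirement to include injectivity on the fixed finite ball of radius $10r\delta/L$ in $\Gbb_r$, and run a minimal-counterexample argument in which conclusion (2) contradicts minimality and conclusion (1) places $h$ in the injectivity ball. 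This removes the need for the paper's auxiliary stabilization observation, at the cost of an $r$-dependent enlargement of the injectivity set; both routes are valid, and yours makes the termination of the argument more transparent.

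Two points deserve more care than you give them. First, the bound $l(\pi_r(\g))\leq 10r\delta/L$ does not follow directly from ``$\pi_r(\g)$ is a $(7r/3L,\,10r\delta/L)$-quasi-geodesic'' if one reads quasi-geodesic in the Bridson--Haefliger sense of a quasi-isometric embedding of an interval: under that convention a $(\lambda,\epsilon)$-quasi-geodesic loop has length at most $\lambda\epsilon$, which here is quadratic in $r$ and larger than $r$. The sharper linear bound comes from the $10\delta$-local geodesic $\g'$ appearing in conclusion (1): being a $(7/3,2\delta)$-quasi-geodesic loop forces $l(\g')\leq 14\delta/3$, and then the inequality $\tfrac{L}{r}\,l(\pi_r(\g))\leq l(\g')$ from the proof of \cref{thm:GendGreendlinger} gives $l(\pi_r(\g))\leq 14r\delta/(3L)<10r\delta/L\leq r$. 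You should invoke the local-geodesic estimate directly rather than the ``quasi-geodesic'' tag, since the latter is convention-dependent. Second, the finiteness of the ball you require does not come from counting cone edges and reducing to $\G(G,\Xcal)$ between cone traversals, as paths of length $\leq r$ can also wander through horoballs via vertical and horizontal edges that are not in $\G(G,\Xcal)$; the correct observation is that the radius $10r\delta/L\leq r$ keeps the ball away from the apices (which sit at distance $r+1$ from depth-$0$ vertices), and $\Gbb_r$ minus its apices is locally finite because $\Xcal$ and each $X_i$ are finite. With these adjustments the proof is sound.
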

\begin{proof}
    By \cref{lem:uniform-hyperbolicity-for-coned-off-cusped-space-Kr,prop:quotient-space-hyperbolic}, there exists $r_\d >0$ and  $\d >0$ such that for all $r>r_\d$,  $\Gbb_r$ and $\bar \Gbb_r$ are both $\d$-hyperbolic. 
    Let $r_\l,\l$ be the constants from \cref{thm:uniform_qc}.
    
    We apply \cref{thm:GendGreendlinger,thm:GendGreendlinger-qc} with $L= 10 \d$ and $D = 100 \l + 100 \d$. 
    So ``sufficiently long'' means that the filling is injective on $F=\{g\in G \mid \forall r > 0, |g|_{\Gbb_r} \leq 2D\}$ and so that $\bar \Gbb_r$ is also $\d$-hyperbolic.

    Let $\pi: G\to \bar G$ be such a filling. Let $h\in \ker (\pi) \cap H $ be nontrivial. We must show that $h\in K_H$, the kernel of the induced filling of $H$.  
    Note that $X$ is a generating set of $G$, so $\G(G,X)$ is connected. It follows that there exists $r_h>0$ such that when $r>r_h$, any such geodesics in $\Gbb_r$ that connects $1$ to $h$ does not pass through any apex. That is, for all $r$, $|h|_{\Gbb_r}$ are the same. We denote that length as $|h|_G$.
    We fix $r>\max(|h|_G, r_h, D, r_\d, r_\l)$ in the rest of the proof.
    
    The argument from the proof of \cite[Proposition 4.4]{agolResidualFinitenessQCERF2009} then shows that there is a $k\in K_H$ so that $|kh|_{\Gbb_r} < |h|_G$. 
    We remark that this observation means any geodesic between $1$ and $kh$ does not pass through any apex.
    If $kh\in K_H$, then we are done. If not, we repeat the argument for $kh$. Since $|h|_G$ is finite, the induction on the length of $h$ terminates after finite steps. It follows that $h\in K_H$ as required.
\end{proof}

\cref{prop:image-of-H-convex} is a more detailed version of \cref{prop:preserve-QC-rephrase}.
\begin{theorem}[Generalization of{ \cite[Proposition 4.3]{agolResidualFinitenessQCERF2009}}]
    \label{prop:image-of-H-convex}
    Make \cref{assume:gp-qc}. There exists $\beta >0, \l'>0$ such that for all sufficiently long $H$-fillings $\bar G$ of $G$ with kernel $K$, for all $r>\beta$, the image of the induced filling $\bar \Hbb_r $ in $\bar \Gbb_r = \Gbb_r/K$ is $\l'$--quasiconvex.
    
    Moreover, $\bar H$ is $(\bar G, \bar {\Pcal})$--quasiconvex.
\end{theorem}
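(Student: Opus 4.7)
The plan is to adapt the strategy of Agol--Groves--Manning \cite[Proposition 4.3]{agolResidualFinitenessQCERF2009} by feeding in Theorem~\ref{thm:uniform_qc} as the uniform quasiconvexity input and running the Greendlinger dichotomy of Lemmas~\ref{thm:GendGreendlinger} and~\ref{thm:GendGreendlinger-qc} on top of it. Fix $\delta, r_\delta$ from Proposition~\ref{prop:quotient-space-hyperbolic}, $\lambda, r_\lambda$ from Theorem~\ref{thm:uniform_qc}, and $\mu = \l_\phi$ from Lemma~\ref{lem:qc-extend-cusped-map-is-QI}. Set $L = 10\delta$, $D = 3L + 100\lambda + 4\mu$, $F = \{g \in G \mid \forall r,\ |g|_{\Gbb_r} \leq 2D\}$, and $\beta = \max(r_\delta, r_\lambda, D)$; declare the filling ``sufficiently long'' if it is injective on $F$ and long enough that $\bar{\Gbb}_r$ is $\delta$-hyperbolic for $r > r_\delta$. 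Observe that $D, L, F, \beta$ do not depend on $r$, which is what ultimately produces the uniform constant $\lambda'$.

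Fix vertices $\bar x, \bar y \in \bar\phi_r(\bar{\Hbb}_r)$ and a geodesic $\bar\gamma = [\bar x, \bar y]$ in $\bar{\Gbb}_r$. After a uniformly bounded reduction (apex vertices in $\bar\phi_r(\bar{\Hbb}_r)$ are adjacent to group-element vertices, and a pair of horoball vertices in $\bar\phi_r(\bar{\Hbb}_r)$ can be joined through the apex via at most two edges), we may assume $\bar x, \bar y \in \bar H$. Lift $\bar x$ to $x \in H$; by Proposition~\ref{prop:induce-filling-map-injective} the lifts of $\bar y$ in $H$ form a single coset of $K_H$, so we may choose the lift $y \in H$ minimizing $d_{\Gbb_r}(x, y)$ and translate so $x = 1$. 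Let $\gamma$ be a geodesic in $\Gbb_r$ from $1$ to $y$; by Theorem~\ref{thm:uniform_qc}, $\gamma \subseteq N_\lambda(\phi_r(\Hbb_r))$.

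Apply Lemma~\ref{thm:GendGreendlinger} to $\gamma$. If its conclusion (2) holds, then Lemma~\ref{thm:GendGreendlinger-qc} produces $k \in K_H$ with $|ky|_{\Gbb_r} < |y|_{\Gbb_r}$; but $ky \in H$ is another lift of $\bar y$, contradicting minimality. Hence conclusion (1) holds: there is a $10\delta$-local geodesic $\gamma'$ joining $\bar x$ to $\bar y$ with $\gamma' \subseteq N_2(\pi_r(\gamma))$. By \cite[III.H.1.13]{bridsonMetricSpacesNonPositive1999}, $\gamma'$ is a $(7/3, 2\delta)$-quasi-geodesic with constants depending only on $\delta$, so $\delta$-hyperbolicity of $\bar{\Gbb}_r$ places $\bar\gamma$ within some uniform $C(\delta)$ of $\gamma'$. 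Therefore
\[
\bar\gamma \subseteq N_{C(\delta) + 2}(\pi_r(\gamma)) \subseteq N_{C(\delta) + 2 + \lambda}\bigl(\bar\phi_r(\bar{\Hbb}_r)\bigr),
\]
yielding the desired $\l'$ after absorbing the (uniform) cost of the initial reduction.

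For the final assertion, Theorem~\ref{thm:DGO-dehn-filling-of-HEG} gives both $\bar{\Pcal} \he \bar G$ and $\bar{\Dcal} \he \bar H$, and the cusped-space quasiconvexity just established passes to a quasi-isometric embedding of $\hat\G(\bar H, \bar{\Dcal}, \bar Y)$ into $\hat\G(\bar G, \bar{\Pcal}, \bar X)$ extending the inclusion $\bar H \hookrightarrow \bar G$ via the back-and-forth maps of \cref{subsec:interplay}. Proposition~\ref{prop:equiv-gp-qc-by-coned-off-cayley-graph} then gives the $(\bar G, \bar{\Pcal})$-quasiconvexity of $\bar H$. The main obstacle is verifying that the reduction to group-element vertices and the transfer between the cusped space and the coned-off Cayley graph both cost only constants independent of $r$; the geometric core — minimality plus the Greendlinger dichotomy — is essentially as in the relatively hyperbolic case, with Theorem~\ref{thm:uniform_qc} now supplying the required uniform quasiconvexity upstairs.
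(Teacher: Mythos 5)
Your core argument for the case $\bar x, \bar y \in \bar H$ — minimal-length lift, Greendlinger dichotomy via Lemmas~\ref{thm:GendGreendlinger} and~\ref{thm:GendGreendlinger-qc}, quasi-geodesic stability — matches the paper's Claim~5.4 and uses the right tools. But there are two genuine gaps.

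The ``uniformly bounded reduction'' to $\bar x, \bar y \in \bar H$ does not exist, and the parenthetical justification is false. In the cusped space $\bar\Gbb_r$ an apex is adjacent only to vertices at depth $\geq r$, not to group-element vertices (you appear to be conflating $\bar\Gbb_r$ with the coned-off Cayley graph $\hat\G(\bar H,\bar\Dcal,\bar Y)$, where apices \emph{are} adjacent to group elements), and two horoball vertices can be joined through the apex in two edges only when both lie at depth $\geq r$. Bringing a vertex at depth $d$ with $1 \leq d < r$ down to the base costs $d$ edges, which is not bounded independently of $r$. The paper handles this by two separate cases: if $x_1, x_2$ lie in the same horoball, it invokes convexity of coned-off horoballs (Lemma~\ref{lem:coned-off-horoball-is-convex}); if they lie in different horoballs, it connects each $x_i$ to a base point $h_i c$ by a vertical segment that lies \emph{inside} the image of $\bar\Hbb_r$ — so the segment contributes nothing to the quasiconvexity constant even though it may be long — and then applies the $\bar H$-to-$\bar H$ claim to the base points. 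The key observation is that the connecting paths are in the image, not that they are short.

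For the final assertion you write that cusped-space quasiconvexity ``passes to a quasi-isometric embedding'' of coned-off Cayley graphs, but that implication is not automatic: quasiconvexity of the image of $\bar\Hbb_r$ in $\bar\Gbb_r$ does not by itself compare the intrinsic metric on $\bar\Hbb_r$ with the induced one. The paper proves a separate coarse-distance estimate (its Claim~5.5), again by minimal lift plus the Greendlinger dichotomy — in both branches of the dichotomy, not just conclusion~(1) — to obtain $|\bar h|_{\bar\Hbb_r} \asymp_{\Lambda_r, M_r} |\bar h|_{\bar\Gbb_r}$, and only then transfers this to the coned-off Cayley graphs in three explicit cases (both endpoints group elements, both apices, one of each), tracking the cone edges and the auxiliary paths $\nu_D$ by hand before invoking Proposition~\ref{prop:equiv-gp-qc-by-coned-off-cayley-graph}. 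Your proposal omits this quantitative step entirely.
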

\setcounter{case}{0}

\begin{proof}
     As in the proof of \cref{prop:induce-filling-map-injective}, let $r_\d >0$ and  $\d >0$ such that for all $r>r_\d$,  $\Gbb_r$ and $\bar \Gbb_r$ are both $\d$-hyperbolic. Let $r_\l,\l$ be the constants from \cref{thm:uniform_qc}. 
     Without loss of generality, we may assume that both $\d$ and $\l$ are positive integers.
     Once again, we apply \cref{thm:GendGreendlinger,thm:GendGreendlinger-qc} with $L= 10\d$ and $D= 100\d +100 \l$. Let $\beta = \max(r_\l, r_\d, D)$ and suppose $r>\beta$.
      ``Sufficiently long'' again means that the filling is injective on $F=\{g\in G \mid \forall r >0, d_{\Gbb_r}(g,1) \leq 2D\}$ and that for all $r>r_\d$, $\bar \Gbb_r$ is $\d$-hyperbolic.

    By \cite[III.H.1.13]{bridsonMetricSpacesNonPositive1999}, any $10\d$-local geodesic in a $\d$--hyperbolic space is a $(7/3, 2\d)$-quasi-geodesic. Let $R$ be the constant of quasi-geodesic stability for  $(7/3, 2\d)$-quasi-geodesics in a $\d$-hyperbolic space (see \cite[III.H.1.7]{bridsonMetricSpacesNonPositive1999}). 

    Let $\phi_r : \Hbb_r \to \Gbb_r$ be the quasi-isometric embedding from \cref{thm:uniform_qc}. Let $\pi:G\to \bar G$ be the filling map and let $K$ be the filling kernel. Let  $\pi_r:\Gbb_r \to \bar \Gbb_r$ be the quotient map by $K$.

    \begin{claim}
        \label{claim:geodesic-stay-close-to-image}
        Let $\bar h \in \pi(H)$. Then for all $r>\max (r_\l,r_\d)$, any geodesic in $\bar \Gbb_r$ joining $1$ to $\bar h$ stays in an $(\l + R + 2)$-neighborhood of the image of $\pi\circ \phi_r$.
    \end{claim}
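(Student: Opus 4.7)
The plan is to combine a minimality-of-lift argument with the dichotomy furnished by \cref{thm:GendGreendlinger} and \cref{thm:GendGreendlinger-qc}, and then apply uniform quasiconvexity of $\phi_r(\Hbb_r)$ together with quasi-geodesic stability. Given $\bar h \in \pi(H)$, I would first choose a lift $h \in H$ of $\bar h$ that minimizes $|h|_{\Gbb_r}$. Such a minimum exists because $|\cdot|_{\Gbb_r}$ takes values in a well-ordered set. Let $\gamma$ be any geodesic in $\Gbb_r$ joining $1$ to $h$.

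Next I would apply \cref{thm:GendGreendlinger} to $\gamma$ with the constants $L=10\delta$ and $D=100\delta+100\l$ fixed at the outset of the proof of \cref{prop:image-of-H-convex}. The ``sufficiently long'' hypothesis guarantees that the filling is injective on the corresponding set $F$ and that $\bar \Gbb_r$ is $\delta$-hyperbolic, so the lemma applies. If conclusion (2) of \cref{thm:GendGreendlinger} held, then \cref{thm:GendGreendlinger-qc} (whose hypothesis that $\pi$ is an $H$-filling is built into the proposition) would produce $k\in K_H$ with $|kh|_{\Gbb_r}<|h|_{\Gbb_r}$. Since $K_H\subseteq H\cap K$, the element $kh$ is again a lift of $\bar h$ in $H$, contradicting minimality of $h$. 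Therefore conclusion (1) must hold, giving a $10\delta$-local geodesic $\gamma'$ in $\bar \Gbb_r$ from $1$ to $\bar h$ contained in the $2$-neighborhood of $\pi_r(\gamma)$.

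To finish, I would chain two neighborhood bounds. By \cite[III.H.1.13]{bridsonMetricSpacesNonPositive1999}, the $10\delta$-local geodesic $\gamma'$ is a $(7/3,2\delta)$-quasi-geodesic in the $\delta$-hyperbolic space $\bar \Gbb_r$, so by the stability of quasi-geodesics with constant $R$, any geodesic $\bar\gamma$ from $1$ to $\bar h$ in $\bar \Gbb_r$ lies in the $R$-neighborhood of $\gamma'$, and hence in the $(R+2)$-neighborhood of $\pi_r(\gamma)$. On the other hand, both endpoints $1$ and $h$ of $\gamma$ lie in $H\subseteq \phi_r(\Hbb_r)$ (since $\phi_r$ is the identity on group elements of $H$), so \cref{thm:uniform_qc} shows $\gamma\subseteq N_\l(\phi_r(\Hbb_r))$ in $\Gbb_r$. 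Applying the $1$-Lipschitz quotient map $\pi_r$ gives $\pi_r(\gamma)\subseteq N_\l(\pi_r(\phi_r(\Hbb_r)))$ in $\bar \Gbb_r$. Combining the two bounds yields $\bar\gamma\subseteq N_{\l+R+2}(\pi_r(\phi_r(\Hbb_r)))$, as required.

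The only delicate point is the compatibility of the constants: one needs to check that the $L$, $D$, $r$ chosen at the start of the proof of \cref{prop:image-of-H-convex} are still valid under the hypothesis $r>\max(r_\l,r_\d)$ used in the claim, and that the ``sufficiently long'' hypothesis genuinely implies injectivity on the relevant set $F$ so that the Greendlinger-type lemmas apply. Once this bookkeeping is done, no further geometric input is needed.
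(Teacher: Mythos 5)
Your proof is correct and follows essentially the same route the paper points to: choose a minimal-length lift $h$ of $\bar h$, rule out conclusion (2) of \cref{thm:GendGreendlinger} via \cref{thm:GendGreendlinger-qc} by minimality, then combine the $2$-neighborhood bound from conclusion (1), the quasi-geodesic stability constant $R$ for the resulting $10\delta$-local geodesic, and $\l$-quasiconvexity of $\phi_r(\Hbb_r)$ pushed through the $1$-Lipschitz quotient map. This is precisely the argument of \cite[Claim 4.3.1]{agolResidualFinitenessQCERF2009} that the paper invokes; your concern about the hypothesis $r>\max(r_\l,r_\d)$ versus $r>\beta$ is a reasonable observation about the paper's bookkeeping, not a gap in the argument.
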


    We choose $h\in H$ of minimal $\Gbb_r$ length projecting to $\bar h$, and let $\g$ be a geodesic joining $1$ to $h$ in $\Gbb_r$. The same argument as in \cite[Claim 4.3.1]{agolResidualFinitenessQCERF2009} proves the claim.

    Let $x_1 ,x_2$ be two points in $\pi_r\circ \phi_r (\Hbb_r)$. If $x_1, x_2$ lie in the same horoball, then by \cref{lem:coned-off-horoball-is-convex} any geodesic joining them stays in a $2\d +2$-neighborhood of $\pi_r\circ \phi_r (\Hbb_r)$.

    Suppose therefore that $x_1, x_2$ lie in different horoballs. For $i=1,2$, $x_i$ is connected by a vertical geodesic and at most $2$ cone edges to some $h_ic$ for $h_i\in \pi (H)$ and $|c|_{\Gbb_r} < \l_\phi$, where $\l_\phi$ is the constant from \cref{lem:qc-extend-cusped-map-is-QI}. The vertical segments and the cone edges are in the image of $\Hbb_r$, so a similar argument as in \cite[Proposition 4.3]{agolResidualFinitenessQCERF2009} shows that any geodesic between $x_1,x_2$ stays within $\l+R+2+4\d +\l_\phi$ of $\pi_r\circ \phi_r(\Hbb_r)$.

    We now prove that $\bar H$ is $(\bar G, \bar {\Pcal})$--quasiconvex. 
    
    Given two numbers A and B, we say A and B are $(K,L)$-coarsely equivalent, denoted as $A \asymp_{K,L} B$, if  $\frac 1 K A -L \leq B \leq K A + L$. We say $A,B$ are coarsely equivalent, denoted as $A \asymp B$, if $A,B$ are $(K,L)$-coarsely equivalent for some $K,L$. Clearly a map $\phi:X\to Y$ is a $(K,L)$--quasi-isometric embedding if and only if for any pair $x,y\in X$, $d_X(x,y)\asymp_{K,L} d_Y(\phi(x),\phi(y))$

    \begin{claim}
    \label{claim:geodesic-project-to-qgeod}
        Let $\bar h \in \pi(H)$. Then for all $r>\max (r_\l,r_\d)$, there exists $\L_r,M_r$ such that the length of any geodesic in $\bar \Gbb_r$ joining $1$ to $\bar h$ is $(\L_r, M_r)$--coarsely equivalent to the length of any geodesic in $\bar \Hbb_r$ joining $1$ to $\bar h$.
    \end{claim}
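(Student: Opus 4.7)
The plan is to establish the two inequalities $d_{\bar\Gbb_r}(1,\bar h) \leq \l_\phi \cdot d_{\bar\Hbb_r}(1,\bar h)$ and $d_{\bar\Hbb_r}(1,\bar h) \leq \L_r \cdot d_{\bar\Gbb_r}(1,\bar h) + M_r$ separately. The first is immediate from Lipschitzness: I will take a geodesic $\bar\alpha$ in $\bar\Hbb_r$ from $1$ to $\bar h$, lift it to a path $\alpha \subseteq \Hbb_r$ ending at some $h' \in H$ with $\pi(h') = \bar h$, apply the $\l_\phi$-Lipschitz map $\phi_r$ of \cref{lem:qc-extend-cusped-map-is-QI}, and then project by $\pi_r$ to obtain a path in $\bar\Gbb_r$ of length at most $\l_\phi\, l(\bar\alpha)$.

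For the reverse inequality, I will combine \cref{claim:geodesic-stay-close-to-image} with \cref{thm:seperate}. Set $C = \l + R + 2$, let $L = d_{\bar\Gbb_r}(1,\bar h)$, and let $1 = \bar v_0, \bar v_1, \dots, \bar v_L = \bar h$ be the consecutive vertices along a geodesic from $1$ to $\bar h$ in $\bar\Gbb_r$. By \cref{claim:geodesic-stay-close-to-image}, for each $i$ I can select $\bar y_i \in \bar\Hbb_r$ with $d_{\bar\Gbb_r}(\bar v_i, \bar\phi_r(\bar y_i)) \leq C$, taking $\bar y_0 = 1$ and $\bar y_L = \bar h$. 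Since $\phi_r$ restricts to the inclusion on group elements, so does $\bar\phi_r$, and the element $\bar z_i := \bar y_{i-1}^{-1} \bar y_i \in \bar H$ satisfies
\[
d_{\bar\Gbb_r}(1, \bar z_i) = d_{\bar\Gbb_r}(\bar\phi_r(\bar y_{i-1}), \bar\phi_r(\bar y_i)) \leq 2C + 1.
\]

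The crucial step is to lift each $\bar z_i$ back to an element of $H$ of comparable length. By \cref{thm:seperate} applied with $I = 2C + 2$, I strengthen ``sufficiently long'' to demand injectivity of the filling on the additional finite set $F(I)$. Then any minimal-length preimage $z_i \in G$ of $\bar z_i$ satisfies $|z_i|_{\Gbb_r} \leq 2C + 1 < I$, and the contrapositive of \cref{thm:seperate} forces $z_i \in H$. Since $\phi_r$ restricts to the identity on group elements of $H$ and is a $(k,l)$-quasi-isometric embedding, $|z_i|_{\Hbb_r} \leq k(2C+1) + kl$, and hence $d_{\bar\Hbb_r}(\bar y_{i-1}, \bar y_i) = d_{\bar\Hbb_r}(1, \bar z_i) \leq k(2C+1) + kl$. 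Concatenating geodesics for $i = 1, \dots, L$ produces a path in $\bar\Hbb_r$ from $1$ to $\bar h$ of length at most $L(k(2C+1) + kl)$, yielding the claim with $\L_r = \max(\l_\phi, k(2C+1) + kl)$ and a suitable additive constant $M_r$.

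The main obstacle is that the quasi-isometric embedding constants $k$ and $l$ of $\phi_r$ genuinely depend on $r$---via the composition $\iota_{r-1}^r \circ \hat\phi \circ \pi_{r-1}^r$ from the proof of \cref{lem:qc-extend-cusped-map-is-QI}, where $\iota$ and $\pi$ have $r$-dependent constants---forcing $\L_r$ and $M_r$ to depend on $r$ as well. This matches the claim's formulation. All other ingredients ($\l$, $R$, $\d$, and the set $F(2C+2)$) are fixed at the outset of the proof of \cref{prop:image-of-H-convex}, so the enlarged ``sufficiently long'' requirement is uniform across $\bar h$ and across admissible fillings.
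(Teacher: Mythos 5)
The easy direction of your argument (Lipschitzness of $\phi_r$, modulo the harmless additive constant $\l_\phi$ supplied by \cref{lem:qc-induce-map-cusp-is-Lipschitz}) is fine. The reverse inequality, however, has a genuine gap. \cref{claim:geodesic-stay-close-to-image} places each $\bar v_i$ within $C$ of a point of $\pi_r\circ\phi_r(\Hbb_r)$, but $\Hbb_r$ is a coned-off \emph{cusped} space: its vertices include horoball vertices at every depth up to $r$ and the apices themselves, not only group elements. When a geodesic $[1,\bar h]$ in $\bar\Gbb_r$ penetrates a horoball to depth greater than $C$ (which regular geodesics routinely do), the nearby point $\bar y_i$ you extract is forced to be a deep horoball vertex, not an element of $\bar H$. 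At that point the manipulation $\bar z_i := \bar y_{i-1}^{-1}\bar y_i \in \bar H$ and the left-invariance identity $d_{\bar\Gbb_r}(\bar\phi_r(\bar y_{i-1}),\bar\phi_r(\bar y_i)) = d_{\bar\Gbb_r}(1,\bar z_i)$ simply do not apply, so the chain of unit-scale lifts breaks down precisely where the horoball geometry matters most. One could try to raise each $\bar y_i$ vertically to depth $0$, but for deep excursions this costs up to $r$, and controlling the horoball segments of the path requires a separate argument; your proof as written does not supply it. A further wrinkle (fixable, but unaddressed): \cref{thm:seperate} requires $|z_i|_{\Gbb_{r'}} < I$ for \emph{all} $r'>0$, not merely the fixed $r$; this does follow once $r > C$ because a path of length $\leq 2C+1$ cannot reach the apex at depth $r+1$, but that observation needs to be made.

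For comparison, the paper's proof avoids reconstructing a path in $\bar\Hbb_r$ altogether. It chooses $h \in H$ of minimal $\Hbb_r$-length projecting to $\bar h$ and applies the Greendlinger-type dichotomy (\cref{thm:GendGreendlinger}, supplemented by \cref{thm:GendGreendlinger-qc}) to the geodesics $\g_H\subseteq\Hbb_r$ and $\g_G\subseteq\Gbb_r$, concluding that both project to quasi-geodesics downstairs (replacing $h$ by $kh$ with $k\in K_H$ if conclusion (2) arises). Since $\phi_r$ is an $(\l_r,\mu_r)$-quasi-isometric embedding upstairs, the lengths $|h|_{\Hbb_r}$ and $|h|_{\Gbb_r}$ are coarsely comparable, and the quasi-geodesic conclusions transfer this comparison to $|\bar h|_{\bar\Hbb_r}$ and $|\bar h|_{\bar\Gbb_r}$. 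Working upstairs in this way sidesteps exactly the horoball-vertex issue that defeats your unit-segment lifting scheme.
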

    
    We choose $h\in H$ of minimal $\Hbb_r$ length projecting to $\bar h$, and let $\g_H$ be a geodesic joining $1$ to $h$ in $\Hbb_r$. By \cref{thm:GendGreendlinger,thm:GendGreendlinger-qc}, and the minimality of $h$, $\pi (\g_H) $ is a quasi-geodesic in $\bar \Hbb_r$. 

    Let $\g_G$ be a geodesic joining $1$ to $h$ in $\Gbb_r$.  Apply \cref{thm:GendGreendlinger} to $\g_G$. If conclusion (1) holds, then $\pi(\g_G)$ is a quasi-geodesic in $\bar \Gbb_r$. Recall that  $\phi_r$ is a quasi-isometric embedding. Let the quasi-isometric constants be $\l_r$ and $\mu_r$.  It follows that:
    
    \[ 
        |\bar h|_{\bar \Hbb_r} \asymp_{7r/3L, 10r\d /L} \pi_r(\g_H) \asymp_{\l_r, \mu_r} \pi_r(\g_G) \asymp_{7r/3L, 10r\d /L} | \bar h |_{\bar \Gbb_r}.
    \] 

    If conclusion (2) of \cref{thm:GendGreendlinger} holds, then by \cref{thm:GendGreendlinger-qc}, there exists $k\in K_H$ such that $| kh |_{\Gbb_r} < |h|_{\Gbb_r}$. We choose $k$ so that $kh$ has minimal $\Gbb_r$ length.  
    Since $kh\in H$, by the minimality of $h$, $|kh|_{\Hbb_r} > |h|_{\Hbb_r}$. It follows that:
    \[
      \frac{1}{\l_r} |h|_{\Hbb_r} - \mu_r  < \frac{1}{\l_r} |kh|_{\Hbb_r} - \mu_r   \leq   |kh|_{\Gbb_r}  < |h|_{\Gbb_r}  \leq \l_r |h|_{\Hbb_r}  + \mu_r.
    \]
    Let $\g'_G$ be a geodesic in $\Gbb_r$ connecting $1$ to $kh$. By \cref{thm:GendGreendlinger,thm:GendGreendlinger-qc}, and the minimality of $kh$, $\pi(\g'_G)$ is a quasi-geodesic in $\bar\Gbb_r$. It follows that:
    \[
         |\bar h|_{\bar \Hbb_r} \asymp_{7r/3L, 10r\d /L} \pi_r(\g_H) \asymp_{\l_r, \mu_r} \pi_r(\g'_G) \asymp_{7r/3L, 10r\d /L} | \bar h |_{\bar \Gbb_r}.
    \]
    Combining the coarse equivalent constants, we get $\L = \L(r, L, \d, \l_r,\mu_r)$ and $M = M (r,L, \d, \l_r, \mu_r)$ such that 
    \[
        |\bar h|_{\bar \Hbb_r} \asymp_{\L_r , M_r }| \bar h |_{\bar \Gbb_r} .
    \]

    Notice that $\L_r, M_r$ do not depend on the choice of $\bar h$. 
        
    Consider now the extension of inclusion $\check \iota : \hat \G_{\bar H} = \hat\G(\bar H, \bar \Dcal, \bar Y) \to  \hat \G_{\bar G} =  \hat\G(\bar G, \bar P, \bar X)$ (See \cref{def:extension-of-map-to-coned-off}).

    For any $u,v \in \hat \G_{\bar H}$, one of the following cases holds:
    \begin{case}
        Both $u,v$ are elements in $\bar H$. 
    
    \end{case}
        Then $v=hu$ for some $h\in \bar H$. Recall that \cref{construction:pi_Kr_to_hatK} and \cref{construction:iota_hatK_to_Kr} provide quasi-isometries between the coned-off Cayley graphs and the corresponding coned-off cusped Cayley graphs. Fix $r>\max(r_\l, r_\d)$, it follows that:
    \[
        d_{\hat \G_{\bar H}}(1, \bar h) \asymp_{r+1,0}  d_{\hat \G_{\bar \Hbb_r}}(1, \bar h) \asymp_{\L_r, M_r} d_{\hat \G_{\bar \Gbb_r}}(1, \bar h) \asymp_{2(r+1),0}  d_{\hat \G_{\bar G}}(1, \bar h).
    \]

    \begin{case}
        Both $u,v$ are apices.
    \end{case}

    In this case, let $\eta$ be a geodesic connecting $u,v$. Then $\eta$ is the concatenation of two cone edges and a geodesic $\eta'$ connecting $u',v'$, where both $u',v'\in \bar H$.

    Recall that a cone edge $\{h . D,  h\}$ is sent to the concatenation of $\{  h  c_{ D}. P,  h  c_{ D}\}$ and $ h . \nu_{ D}$. Notice that $\pi(\nu_D)$ is a path connecting $1$ and $\pi(c_D)$ with length at most $\l_\phi$ (See \cref{cons:qc-induce-map-coned-off-graph} and \cref{lem:qc-induce-map-is-qi}).     

    We then have:
    \begin{align*}
      d_{\hat \G_{\bar G}} (\check \iota (u'), \check\iota (v'))  -2\l_\phi 
      \leq  d_{\hat \G_{\bar G}} (\check \iota (u), \check\iota (v)) 
      \leq d_{\hat \G_{\bar G}} (\check \iota (u'), \check\iota (v')) + 2 \l_\phi, \\
      d_{\hat \G_{\bar G}} (\check \iota (u'), \check\iota (v')) \asymp_{2(r+1)^2\L_r, M_r} d_{\hat \G_{\bar H}} (u' , v') =  d_{\hat \G_{\bar H}} (u, v) -2. 
    \end{align*}

    It follows that:
    \[
        d_{\hat \G_{\bar G}} (\check \iota (u), \check\iota (v))  \asymp_{2(r+1)^2\L_r, 3M_r+2\l_\phi}  d_{\hat \G_{\bar H}} (u, v) .
    \]

    \begin{case}
        One of $u,v$ is an apex.
    \end{case}
    This case is similar to the case where both $u,v$ are apices. A similar argument shows that \[ d_{\hat \G_{\bar G}} (\check \iota (u), \check\iota (v))  \asymp_{2(r+1)^2\L_r, 2M_r+ \l_\phi}  d_{\hat \G_{\bar H}} (u, v) . \]

    Thus we can find $\L_r' , M_r'$ such that $d_{\hat \G_{\bar G}} (\check \iota (u), \check\iota (v))  \asymp_{\L_r', M_r'}  d_{\hat \G_{\bar H}} (u, v)$ for all pairs of $u,v$. It follows that $\check \iota$ is a quasi-isometric embedding. By \cref{prop:equiv-gp-qc-by-coned-off-cayley-graph}, $\bar H$ is $(\bar G, \bar \Pcal)$--quasiconvex.
\end{proof}
\subsection{Meta-Conditions}
\label{sec:meta}

A key ingredient in the proof of \cref{thm:main_cube} is \cite[Theorem 6.5]{grovesHyperbolicGroupsActing2023}. This section is dedicated to the proof of \cref{thm:metacondition}, a generalization of \cite[Theorem 6.5]{grovesHyperbolicGroupsActing2023}.

\begin{theorem}
\label{thm:metacondition}
 Suppose $\Hcal$ is a finite collection of full $\gp$--quasiconvex groups. For $1\leq i \leq n$, let $p_i\in G$, $H_i\in \Hcal$ and $S_i\subseteq H_i$ be chosen to satisfy:
\begin{equation}
    \label{eq:metacond}
    1\notin p_1 (H_1-S_1) \cdots p_n(H_n -S_n)
\end{equation}
Then for sufficiently long $\Hcal$-fillings $G\rightarrow G/K$, the kernel $K$ contains no element of the form 
\begin{equation}
    \label{eq:metaresult}
    p_1 t_1 \cdots p_n t_n
\end{equation}
\end{theorem}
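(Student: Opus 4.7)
The plan is to adapt the proof of \cite[Theorem 6.5]{grovesHyperbolicGroupsActing2023} to the present setting, substituting relative quasiconvexity and the Groves--Manning Dehn filling theorem with the uniform quasiconvexity provided by \cref{thm:uniform_qc}, the injectivity of induced fillings (\cref{prop:induce-filling-map-injective}), and the separation statement \cref{thm:seperate}. I argue by contradiction: suppose there are arbitrarily long $\Hcal$--fillings $\pi:G \to \bar G = G/K$ for which $K$ contains an element $w = p_1 t_1 \cdots p_n t_n$ with each $t_i \in H_i - S_i$. Among all such decompositions, choose one minimizing $\max_i |t_i|_{\Gbb_r}$ for a fixed $r$ large enough that \cref{thm:uniform_qc} applies with constant $\lambda$.

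Represent $w$ by a loop $\gamma$ in $\Gbb_r$ obtained by concatenating fixed words for each $p_i$ with geodesic segments in the cusped spaces of the subgroups $H_i$, embedded into $\Gbb_r$ via the quasi-isometric embeddings $\phi_r^{(i)}$ of \cref{thm:uniform_qc}. By \cref{prop:induce-filling-map-injective}, each induced filling $\bar H_i$ embeds in $\bar G$, so $\pi(\gamma)$ is a well-defined null-homotopic loop in $\bar \Gbb_r$. I then apply the Greendlinger-type dichotomy of \cref{thm:GendGreendlinger,thm:GendGreendlinger-qc}: in alternative (1), $\pi(\gamma)$ is a genuine quasi-geodesic loop, which forces the $\Gbb_r$-length of $w$ to be bounded by a constant depending only on fixed data; invoking \cref{thm:seperate} then upgrades the identity $\pi(w)=1$ to an identity $w'=1$ in $G$ of the form $w' = p_1 t_1' \cdots p_n t_n'$ with $t_i' \in H_i - S_i$, contradicting \eqref{eq:metacond}. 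In alternative (2), a subarc of $\gamma$ penetrates some horoball over a coset $tP_i$ very deeply while a filling element $n \in N_i$ provides a short-cut of length at most $2L+3$.

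The crucial feature of \cref{thm:uniform_qc} --- that $\lambda$ is independent of $r$ --- guarantees that this deep excursion lies in the $\lambda$--neighborhood of a single $\phi_r^{(i_0)}(\K_r(H_{i_0},\dots))$. The $\Hcal$--filling condition then forces $n$ to lie in $s D s^{-1}$ for some $s \in H_{i_0}$ and some peripheral subgroup $D$ of $H_{i_0}$, so pulling back across $\phi_r^{(i_0)}$ yields an element $n' \in K_{H_{i_0}}$ such that replacing $t_{i_0}$ by $t_{i_0} n'$ strictly decreases $|t_{i_0}|_{\Gbb_r}$ while preserving the coset $t_{i_0} K_{H_{i_0}}$. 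This contradicts minimality, provided $t_{i_0} n' \in H_{i_0} - S_{i_0}$, which I arrange for sufficiently long fillings by applying \cref{thm:DGO-dehn-filling-of-HEG} to $H_{i_0}$ on a finite subset containing $S_{i_0}$ together with all bounded-length representatives that can occur in the minimization.

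The main obstacle is the bookkeeping in this last step: the finite excluded set $F$ of filling kernel elements has to be fixed in advance so that it simultaneously handles every application of \cref{thm:seperate} and \cref{thm:GendGreendlinger-qc}, together with the ``$t_{i_0} n' \notin S_{i_0}$'' requirement, across all indices $i$ and all bounded-complexity decompositions that can arise. Because all quantitative inputs --- $\delta$, $\lambda$, the word-lengths $|p_i|$, and the sets $S_i$ --- are fixed a priori, and because the uniformity in $r$ of $\lambda$ ensures that the same finite bound suffices at every relevant depth, this bookkeeping is possible; carrying it out rigorously is what makes the argument genuinely technical.
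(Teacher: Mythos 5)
Your high-level plan diverges from the paper's proof at the key technical step, and there are concrete gaps in the diverging steps.

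The paper's argument does not invoke \cref{thm:GendGreendlinger}, \cref{thm:GendGreendlinger-qc}, or \cref{thm:seperate} at all. Instead it represents the candidate element $g = p_1 t_1 \cdots p_n t_n$ as a geodesic $(2n+1)$-gon in $\Gbb_r$ and applies the Dahmani--Guirardel--Osin rotating-family Greendlinger Lemma (cited as \cref{thm:greendlinger}) to the side $\g$ joining $1$ to $g$. That lemma produces a horoball $A=sP$, a nontrivial rotating element $k$ in its stabilizer, and deeply-penetrating points $a,b$ on $\g$ with $d(a,kb)\leq 5\d$. The polygon structure is essential: by $(2n-1)\d$-thinness, $b$ is close to some other side, and since the $p_i$-sides $\rho_i$ cannot go deep (by the choice of $r \geq \max|p_i|+\cdots$), the nearby point must lie on some $\tau_i$, a geodesic between two points of a coset $p_1t_1\cdots p_i H_i$. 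This is what lets \cref{thm:closehoroball} fire and identify $k$ as a conjugate of an element of $K\cap H_i$.

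Your proposal has no analogue of this localization. You claim that the uniformity in $r$ of the quasiconvexity constant $\l$ from \cref{thm:uniform_qc} ``guarantees that this deep excursion lies in the $\l$-neighborhood of a single $\phi_r^{(i_0)}(\K_r(H_{i_0},\dots))$,'' but this does not follow: a single geodesic from $1$ to $g$ (which is what \cref{thm:GendGreendlinger} takes as input) carries no a priori information about the decomposition $p_1 t_1\cdots p_n t_n$, and a deep horoball excursion on such a geodesic need not be near any of the cosets $p_1t_1\cdots p_i H_i$ without the thin-polygon argument. Uniformity of $\l$ matters only once you already know you are near such a coset.

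Two further issues. First, in your alternative (1) you want \cref{thm:seperate} to ``upgrade'' the identity $\pi(w)=1$ in $\bar G$ to an identity $w'=1$ in $G$; \cref{thm:seperate} does not say this --- it is a single-subgroup separation statement ($g\notin H$ with bounded $\Gbb_r$-length implies $\pi(g)\notin\pi(H)$), and the paper never uses it here. In fact the paper's proof has no case split at all: the rotating-family Greendlinger lemma always produces a shortening. Second, your minimization is over $\max_i |t_i|_{\Gbb_r}$ and your replacement $t_{i_0}\mapsto t_{i_0}n'$ is justified via another application of \cref{thm:DGO-dehn-filling-of-HEG}; the paper instead minimizes $|g|_{\Gbb_r}$ and works with the condition $t_i\in H_i - (K\cap H_i)S_i$, under which the replacement $t_i\mapsto k't_i$ with $k'\in K\cap H_i$ is automatically admissible and the length strictly decreases --- no extra Dehn filling argument is needed, and it is not clear that your replacement strictly decreases your chosen complexity measure.
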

The proof is very similar to the original version modulo some constant manipulations.

We first quote \cite[Lemma 5.10]{dahmaniHyperbolicallyEmbeddedSubgroups2017} below:

\begin{lemma}
\label{thm:greendlinger}
Let $\mathbb{X}$ be a hyperbolic geodesic space, equipped with a $200\delta$-separated very rotating family $\mathcal{C}=(C,\{G_c,c\in C\})$, let $Rot=\langle G_c| c\in C\rangle$, given $g\in Rot\backslash \{1\}$ and $x_0\in\mathbb{X}$, either $g\in G_c$ and $d(x_0,c)\leq 25\delta$ for some $c\in C$, or any geodesic $[x_0,gx_0]$ contains some $c\in[x_0,gx_0] \cap C$ and two points $x_1,x_2\subseteq [x_0,gx_0]$ such that $c\in [x_1,x_2]$, $d(c,x_1),d(c,x_2)\in [25\delta, 30\delta]$, and there exists $h\in G_c\backslash \{1\}$ such that $d(x_1,hx_2)\leq 5\delta$.
\end{lemma}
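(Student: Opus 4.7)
The plan is to adapt the Agol--Groves--Manning shortening argument used in \cite[Theorem 6.5]{grovesHyperbolicGroupsActing2023}, using \cref{thm:uniform_qc} as the replacement for the classical relatively quasiconvex input and the Greendlinger-type \cref{thm:greendlinger} as the main geometric tool. Fix $r$ large enough for both \cref{thm:uniform_qc} and \cref{prop:quotient-space-hyperbolic} to apply, and work throughout in the coned-off cusped Cayley graph $\Gbb_r = \K_r(G,X,\cup_i X_i,\Pcal)$. By \cref{thm:DGO-dehn-filling-of-HEG} together with \cite[Proposition 5.28]{dahmaniHyperbolicallyEmbeddedSubgroups2017}, any sufficiently long $\Hcal$-filling yields a filling kernel $K$ which is the normal closure of a $200\delta$-separated very rotating family $\mathcal{C}=(C,\{G_c\}_{c\in C})$ on $\Gbb_r$, with clasps $c$ sitting at apices of horoballs over cosets of the $P\in\Pcal$. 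For each $H_i\in\Hcal$, \cref{thm:uniform_qc} provides a quasi-isometric embedding of its coned-off cusped space whose image is $\lambda$--quasiconvex in $\Gbb_r$, with $\lambda$ independent of $r$; this uniformity is what lets us fix all geometric parameters before committing to the filling.

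The argument proceeds by contradiction. Suppose that for arbitrarily long $\Hcal$-fillings, $K$ contains an element
\[
    g = p_1 t_1 \cdots p_n t_n \qquad \text{with } t_i \in H_i - S_i,
\]
and pick among these a representation minimizing $\sum_i d_{\Gbb_r}(1,t_i)$. Concatenating geodesics from $1$ to each partial product produces a quasi-geodesic $\gamma$ from $1$ to $g$ in $\Gbb_r$. Apply \cref{thm:greendlinger} to $g$ acting on $\Gbb_r$ with basepoint $x_0=1$: either $g$ already fixes some clasp $c$ (the easy case, handled directly by the fact that $G_c$ is conjugate into some $N_j$), or a geodesic $[x_0,gx_0]$ contains a clasp $c$ together with points $x_1,x_2$ and an element $h\in G_c\setminus\{1\}$ with $d(x_1,hx_2)\leq 5\delta$ and $d(c,x_j)\in[25\delta,30\delta]$.

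The heart of the proof is then to absorb $h$ back into the word. Because $\gamma$ and $[x_0,gx_0]$ have uniformly comparable geometry (Morse lemma in $\Gbb_r$), the clasp $c$ sits within bounded distance of some subword segment of $\gamma$, say the one representing $p_1 t_1\cdots p_i$ through some interior vertex of $t_i$. The $\lambda$-quasiconvexity provided by \cref{thm:uniform_qc} then places $c$ within bounded distance of $\phi_r(\mathbb{H}_r^{(i)})$, and fullness of $H_i$ forces $H_i \cap (f P f^{-1})$ to be of finite index in the stabilizer of the horoball through $c$. By choosing the filling long enough (absorbing this finite index into the forbidden set), we may assume $h$ itself lies in the conjugate of $H_i$ acting through $c$. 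Conjugating by the prefix $p_1 t_1 \cdots p_{i-1} t_{i-1} p_i$, we rewrite $t_i$ as $t_i' \cdot u$ with $u\in K\cap H_i$ and $d_{\Gbb_r}(1,t_i') < d_{\Gbb_r}(1,t_i)$, producing a strictly shorter representation $p_1 t_1 \cdots p_i t_i' \cdots p_n t_n \in K$. Minimality forces $t_i' \in S_i$, and iterating this procedure on the remaining indices (or invoking the base case of \cref{thm:greendlinger}) ultimately produces an \emph{equality} in $G$ of the form $1 = p_1 t_1'' \cdots p_n t_n''$ with $t_j'' \in H_j - S_j$, contradicting \eqref{eq:metacond}.

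The main obstacle is the bookkeeping in the absorption step: one must verify that each clasp $c$ produced by \cref{thm:greendlinger} is in fact attached to a coset $fP$ that has infinite (hence, by fullness, finite-index) intersection with the relevant $H_i$, so that the rotation element $h$ genuinely lies in $H_i$ up to a controlled error. This is precisely where uniform quasiconvexity (\cref{thm:uniform_qc}) with $\lambda$ independent of $r$ is indispensable: it lets us first choose $r$ so that $25\delta \gg \lambda$ and the $\phi_r$-image is wide enough that any geodesic penetrating a clasp region must already lie near $\phi_r(\mathbb{H}_r^{(i)})$, before fixing the rotating-family separation and the finite exceptional set $F\subseteq G$ on which the filling must be injective. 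The rest is an induction on word length identical in spirit to \cite[Theorem 6.5]{grovesHyperbolicGroupsActing2023}.
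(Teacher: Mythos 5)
Your proposal does not address the stated lemma at all. \cref{thm:greendlinger} is a purely geometric Greendlinger-type statement about a hyperbolic space $\mathbb{X}$ equipped with a $200\delta$-separated very rotating family; in this paper it is not proved but simply quoted verbatim from \cite[Lemma 5.10]{dahmaniHyperbolicallyEmbeddedSubgroups2017}, and its proof lives entirely in the small-cancellation-over-rotating-families machinery of that paper (windmills, the ``greendlinger property'' of very rotating families), with no reference to quasiconvexity, coned-off cusped spaces, or Dehn filling.

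What you have written is instead a sketch of the proof of \cref{thm:metacondition}. Your argument sets up a minimal-length word $g = p_1 t_1 \cdots p_n t_n \in K$, invokes \cref{thm:uniform_qc}, applies the shortening/absorption step, and --- crucially --- explicitly cites \cref{thm:greendlinger} as the ``main geometric tool.'' That makes the proposal circular if read as a proof of \cref{thm:greendlinger} itself: you cannot use the lemma to prove the lemma. Even setting the circularity aside, none of the ingredients you marshal (uniform quasiconvexity, fullness, $\Hcal$-fillings, the coned-off cusped graph $\Gbb_r$) appear in the statement of the target lemma, which concerns an abstract hyperbolic space with a rotating family and an arbitrary $g\in Rot\setminus\{1\}$. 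You should either recognize that the paper's treatment of this lemma is ``quote \cite[Lemma 5.10]{dahmaniHyperbolicallyEmbeddedSubgroups2017},'' or, if a genuine proof is wanted, reproduce the DGO argument: show that if $g$ does not fix a nearby apex, then by induction on the number of rotation letters (the windmill argument) a geodesic $[x_0, gx_0]$ must pass near some $c\in C$, and use the defining ``very rotating'' condition to produce $h\in G_c\setminus\{1\}$ with $d(x_1, hx_2)\le 5\delta$ at the prescribed distances from $c$.
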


The following proposition imitates \cite[A.6]{manningSeparationRelativelyQuasiconvex2009}. 
\begin{proposition}
\label{thm:mmp09A6}
Suppose $H$ is $\gp$--quasiconvex, then there is a constant $R=R(G, \Pcal,\Scal,H)$ such that for every $r>R$, whenever a coned-off $0$-horoball is $R$-penetrated by $H$, the intersection of $H$ with the stabilizer of the horoball is infinite.
\end{proposition}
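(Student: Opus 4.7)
The plan is to invoke Theorem~\ref{thm:uniform_qc}, which supplies exactly the ingredient that comes for free in the relatively hyperbolic setting of \cite{manningSeparationRelativelyQuasiconvex2009}: a quasiconvexity constant for $\phi_r(\Hbb_r) \subseteq \Gbb_r$ that does not depend on $r$. Let $\lambda$ and $r_\lambda$ be the uniform quasiconvexity constant and depth threshold produced by that theorem for the inclusion $H \leq G$. I would take $R$ to be any integer strictly larger than both $r_\lambda$ and $\lambda + c$, where $c$ is a small constant absorbing the Lipschitz constant $\lambda_\phi$ from Lemma~\ref{lem:qc-extend-cusped-map-is-QI} and the hyperbolicity constant $\delta$; for any $r > R$ we then work inside $\Gbb_r = \K_r(G,X,\cup_i X_i,\Pcal)$.

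Unpacking the hypothesis, I interpret ``$R$-penetrated by $H$'' to mean there are $h_1,h_2 \in H$ and a geodesic $\gamma$ between them in $\Gbb_r$ reaching depth at least $R$ inside a horoball $Q$ over some coset $gP_i$. Let $v$ be a vertex on $\gamma$ of maximal depth. Theorem~\ref{thm:uniform_qc} produces a point $w \in \phi_r(\Hbb_r)$ with $d_{\Gbb_r}(v,w) \leq \lambda$. Because the distance in $\Gbb_r$ from $v$ to any vertex at depth $0$ in $Q$ is at least $R > \lambda$ (by the shape of horoball geodesics in Lemma~\ref{lem:shape-of-geodesic-horoball}), and because any path leaving $Q$ must exit through the base, $w$ must lie in $Q$ itself at strictly positive depth.

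The final step is to read off the algebraic conclusion from Construction~\ref{construction:qc-extend-map-coned-off-cusped-space}. A vertex of $\phi_r(\Hbb_r)$ sitting at positive depth in $Q$ is the image of a vertex $(h,s,D_k,d)$ in the horoball of $\Hbb_r$ over some coset $sD_k$, and that image lives in the horoball of $\Gbb_r$ over $sc_{D_k}P_i$. Equating cosets $sc_{D_k}P_i = gP_i$ forces $s = g p c_{D_k}^{-1}$ for some $p \in P_i$. Combining this with the relation $D_k \subseteq c_{D_k} P_i c_{D_k}^{-1}$ from Construction~\ref{cons:qc-induce-map-coned-off-graph}, a direct conjugation computation gives $sD_ks^{-1} \subseteq gP_ig^{-1}$. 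Since $s \in H$ and $D_k \subseteq H$, this conjugate lies in $H$, and since $D_k$ is infinite by Theorem~\ref{thm:subgraph-is-acy}, the intersection of $H$ with the stabilizer $gP_ig^{-1}$ of $Q$ is infinite.

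The main obstacle I anticipate is bookkeeping rather than conceptual: pinning down the exact meaning of ``$R$-penetrated by $H$'' so that the hypothesis really hands us a deep-penetrating geodesic between $H$-elements in $\Gbb_r$, and tracking the constants $r_\lambda, \lambda, \lambda_\phi, \delta$ well enough to guarantee that the $\lambda$-approximation $w$ genuinely stays inside $Q$ at positive depth. Once those details are set, the argument is a direct translation of \cite[Proposition~A.6]{manningSeparationRelativelyQuasiconvex2009}, with Theorem~\ref{thm:uniform_qc} substituting for the automatic uniform quasiconvexity of relatively quasiconvex subgroups in the relatively hyperbolic setting.
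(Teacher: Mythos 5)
Your proposal is correct in outline, and it is essentially the argument that underlies the paper's appeal to Manning–Mart\'inez-Pedroza, but you arrive at it by a noticeably more direct route. The paper's actual proof is terse: it says to follow the proof of \cite[Proposition A.6]{manningSeparationRelativelyQuasiconvex2009} but substitute \cref{lem:qc-deep-pen-infinite-intersect} for \cite[Lemma A.8]{manningSeparationRelativelyQuasiconvex2009}. That lemma has an \emph{unbounded} penetration hypothesis (``for all $R>0$, $\Hcal_r(Q)$ is $R$-penetrated for every $r>R+3$''), so the paper needs \cref{rmk:stabilizer-same-for-all-r} to move between the spaces $\Gbb_r$ for different $r$, and it invokes \cite[Proposition 9.4]{hruskaRelativeHyperbolicityRelative2010} inside the lemma's proof. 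Your argument collapses this two-stage structure into one step: a single deep penetration forces, via the $r$-independent quasiconvexity constant from \cref{thm:uniform_qc}, a point $w\in\phi_r(\Hbb_r)$ at positive depth inside $Q$; and because \cref{construction:qc-extend-map-coned-off-cusped-space} sends horoball vertices of $\Hbb_r$ depth-preservingly into horoballs of $\Gbb_r$ (and base-graph/path-segment images stay at depth $0$), $w$ must come from the horoball over some coset $sD_k$, forcing $sc_{D_k}P_i = gP_i$ and hence the algebraic conclusion. This buys you a self-contained proof that does not require varying $r$ or deriving unbounded penetration as an intermediate step, so it is arguably cleaner than the paper's.

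Two small items to pin down when you write this out carefully. First, since $\phi_r(\Hbb_r)$ includes the interiors of images of vertical edges (the $\nu_{E_i}$ segments) at depth $0$, your constant $c$ needs to be large enough that the $\lambda$-close point $w$ is forced to depth at least $2$, so that $w$ lies on (or within distance $1$ of) an honest vertex of the form $(hc_{D_k},sc_{D_k},P_i,d)$ with $d\geq 1$; $R>\lambda+2$ suffices. Second, if $r-R<\lambda$ then $w$ could be the apex of $Q$, but this is harmless: the apex is also in $\phi_r(\Hbb_r)$ as the image of the apex over $sD_k$, and the same coset identity $sc_{D_k}P_i=gP_i$ and the same conjugation give the conclusion. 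With those noted, the argument is correct, and I would actually prefer it be spelled out in the form you give rather than delegated to the Manning--Mart\'inez-Pedroza template.
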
 

\begin{remark}
\label{rmk:stabilizer-same-for-all-r}
    Notice that for every $Q\in \Qcal$ as in \cref{def:kr}, the stabilizer of $\Hcal_r(Q)$ for every $r$ is the same.
\end{remark}

Because of \cref{rmk:stabilizer-same-for-all-r}, we can use a series of space $\Gbb_r$ instead of a single space to prove \cref{thm:mmp09A6}. The proof is very similar to the original one, except we use the following lemma instead of \cite[Lemma A.8]{manningSeparationRelativelyQuasiconvex2009}. 

\begin{lemma}
\label{lem:qc-deep-pen-infinite-intersect}
    Suppose $H$ is $\gp$--quasiconvex. Let $Q\in \Qcal$ and $\Hcal_r(Q)$ be as in \cref{def:kr}. Let $P^t$ be the stabilizer of $\Hcal_r(Q)$, $P\in \Pcal$. If for all $R>0$, $\Hcal_r(Q)$ is $R$-penetrated by $H$ for every $r>R+3$, then $H\cap P^t$ is infinite.
\end{lemma}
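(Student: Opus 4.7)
The plan is to use the uniform quasiconvexity of $\phi_r(\mathbb{H}_r)$ inside $\mathbb{G}_r$ guaranteed by \cref{thm:uniform_qc} to convert deep $H$-penetration of $\Hcal_r(Q)$ into the coincidence of $\Hcal_r(Q)$ with the image, under $\phi_r$, of some peripheral horoball of $\mathbb{H}_r$ over a coset $sE_i$ with $E_i\in\Dcal$. Such a coincidence will directly produce an infinite subgroup $sE_is^{-1}\subseteq H\cap P^t$.

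Concretely, let $r_\lambda$ and $\lambda$ be the depth threshold and quasiconvexity constant from \cref{thm:uniform_qc}. I will pick any integer $R$ with $R>\max(r_\lambda,\lambda)+1$, then, applying the hypothesis, choose $r>R+3$ together with a pair $h_1,h_2\in H$ for which a geodesic $\gamma=[h_1,h_2]\subseteq\mathbb{G}_r$ penetrates $\Hcal_r(Q)$ to depth at least $R$. Since $R<r+1$, this depth is realized at an actual vertex $x\in\gamma$ at depth $d\geq R$ in $\Hcal_r(Q)$, not at the apex. Because $h_1,h_2\in H\subseteq\phi_r(\mathbb{H}_r)$ and $r\geq r_\lambda$, the $\lambda$-quasiconvexity of $\phi_r(\mathbb{H}_r)$ produces a vertex $y\in\phi_r(\mathbb{H}_r)$ with $d_{\mathbb{G}_r}(x,y)\leq\lambda$.

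The next step is to argue that $y$ necessarily lies in $\Hcal_r(Q)$ itself at strictly positive depth. Leaving $\Hcal_r(Q)$ through the base graph costs at least $d>\lambda$ steps, while the only cone edges incident to the apex return into the same horoball; on the other hand a path of length $\lambda$ can decrease depth by at most $\lambda$, so $\operatorname{depth}(y)\geq d-\lambda\geq R-\lambda>0$. By the definition of $\phi_r$ in \cref{construction:qc-extend-map-coned-off-cusped-space}, every positive-depth vertex of $\mathbb{H}_r$ lies in some horoball over a coset $sE_i$ with $s\in H$ and $E_i\in\Dcal$, and $\phi_r$ sends that horoball into the horoball of $\mathbb{G}_r$ over $sc_{E_i}P_{j_i}$, where $E_i\subseteq c_{E_i}P_{j_i}c_{E_i}^{-1}$. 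Because $y$ sits at positive depth inside both the image horoball and $\Hcal_r(Q)$, and horoballs of $\mathbb{G}_r$ are pairwise disjoint away from their base graphs, I conclude $\Hcal_r(sc_{E_i}P_{j_i})=\Hcal_r(tP)$; this forces $P_{j_i}=P$ and $sc_{E_i}=tp$ for some $p\in P$. The one-line computation
$$sE_is^{-1}\ \subseteq\ sc_{E_i}\,P\,c_{E_i}^{-1}s^{-1}\ =\ tPt^{-1}\ =\ P^t$$
then exhibits $sE_is^{-1}$ as an infinite subgroup of $H\cap P^t$, as desired.

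The main technical point I expect to have to pin down carefully is the depth bookkeeping in the third paragraph: one must verify that $y$ is a positive-depth vertex of the same horoball as $x$, which requires the slack $R>\lambda$ together with the fact that traveling through the apex only reroutes within the same horoball and that cone edges do not reach other horoballs. Everything else follows mechanically from the structure of $\phi_r$ and the disjointness of horoballs of $\mathbb{G}_r$.
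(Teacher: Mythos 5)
Your proof is correct and follows essentially the strategy the paper intends; the paper's own proof is terse because it simply invokes \cref{thm:uniform_qc} for a quasiconvexity constant independent of $r$ and then defers to \cite[Proposition 9.4]{hruskaRelativeHyperbolicityRelative2010} and the argument of Manning--Mart\'inez-Pedroza's Lemma A.8, whereas you carry out that argument explicitly using the structure of $\phi_r$ and disjointness of horoballs at positive depth. One small imprecision worth fixing: the inequality $\operatorname{depth}(y)\geq d-\lambda$ is not literally forced (the short path from $x$ to $y$ could go up through the apex), but what is true and what you actually need is that any path of length at most $\lambda$ from $x$ (at depth $d\geq R>\lambda$, with $r+2>\lambda$) cannot reach depth $0$, so $y$ remains in $\Hcal_r(Q)$ at strictly positive depth; with that adjustment the argument is complete.
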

\begin{proof}

    Since $H$ is $\gp$--quasiconvex, then by \cref{thm:uniform_qc} there exists $\l, r_\l$ such that for all $r>r_\l$, its image in $\Gbb_r$ is $\l$-quasiconvex. It follows that the quasiconvexity constant provided in \cite[Proposition 9.4]{hruskaRelativeHyperbolicityRelative2010} does not depend on the choice of $r$.

    Then for each $M>0$, we can choose $r>M+C+3$ so that $\Hcal_r(Q)$ is $(M+C)$-penetrated, and the rest of the proof is the same.
\end{proof}

\cref{thm:mmp09A6} gives a quick corollary which is analogous to \cite[Lemma 6.8]{grovesHyperbolicGroupsActing2023}

\begin{corollary}
\label{thm:closehoroball}
Suppose that $H$ is full $\gp$--quasiconvex. Then there exists a constant $\kappa$ satisfying the following:

Suppose that $g\in G$ and that $x_1, x_2\in gH$. Suppose that $\g$ is a geodesic in $X$ between $x_1$ and $x_2$. Further, suppose that $aP$ (for $a\in G$ and $P\in \Pcal$) is a coset so that $\g$ intersects the horoball corresponding to $aP$ to depth at least $\kappa$. Then $P^a\cap H^g$ is infinite.
\end{corollary}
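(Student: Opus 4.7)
The plan is to reduce to the case where the geodesic connects two elements of $H$ itself and then apply \cref{thm:mmp09A6} directly. First I would set $\kappa$ slightly larger than the constant $R = R(G,\Pcal,\Scal,H)$ produced by \cref{thm:mmp09A6} (with a small buffer, as in \cref{lem:qc-deep-pen-infinite-intersect}, to absorb any discrepancy between the geodesic-penetration depth and the ``$R$-penetrated by $H$'' condition), and work in $\Gbb_r$ for $r > \kappa$. Since the whole hypothesis is $G$-equivariant under simultaneous left-translation of $x_1$, $x_2$, and the coset $aP$, I would translate the configuration by $g^{-1}$: the new endpoints $g^{-1}x_1,g^{-1}x_2$ lie in $H$, the length is unchanged, and the translated geodesic penetrates the horoball of $g^{-1}aP$ to the same depth $\kappa$.

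Next I would invoke \cref{thm:mmp09A6}: the existence of a pair of elements of $H$ whose connecting geodesic penetrates a coned-off horoball to depth at least $R$ certifies that the horoball is $R$-penetrated by $H$, so the intersection of $H$ with the $G$-stabilizer of that horoball is infinite. The stabilizer of the horoball corresponding to the coset $g^{-1}aP$ is the conjugate $(g^{-1}a)P(g^{-1}a)^{-1} = (P^a)^{g^{-1}}$, so we conclude that $H \cap (P^a)^{g^{-1}}$ is infinite.

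Finally, I would conjugate this intersection by $g$ to obtain $gHg^{-1} \cap aPa^{-1} = H^g \cap P^a$ infinite, which is exactly the desired conclusion.

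The main potential obstacle here is not substantive; it is the bookkeeping of conjugation conventions (i.e.\ verifying that the stabilizer of $g^{-1}aP$ really is $(P^a)^{g^{-1}}$, and that translating by $g^{-1}$ does not disturb the depth of penetration into the relevant horoball) together with making sure $\kappa$ is chosen compatibly with the definition of ``$R$-penetrated by $H$'' used in \cref{thm:mmp09A6}. Note that the fullness hypothesis on $H$ is not directly required for this argument — it enters only indirectly through the uniform quasiconvexity of \cref{thm:uniform_qc}, which underlies \cref{thm:mmp09A6} via \cref{lem:qc-deep-pen-infinite-intersect}.
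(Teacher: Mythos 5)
Your proof is correct and is exactly the argument the paper has in mind: the paper simply asserts this is a ``quick corollary'' of \cref{thm:mmp09A6} without writing out the details, and the intended argument is precisely the $G$-equivariant reduction you give (translate by $g^{-1}$ so the endpoints land in $H$, observe that the translated geodesic certifies that the horoball of $g^{-1}aP$ is $R$-penetrated by $H$, apply \cref{thm:mmp09A6} to get $H\cap (P^a)^{g^{-1}}$ infinite, then conjugate back by $g$). Your conjugation bookkeeping is right under the paper's convention $P^a = aPa^{-1}$, which is the one used in the proof of \cref{thm:metacondition}. One small inaccuracy in your closing aside: fullness is not used even ``indirectly'' here, since \cref{thm:uniform_qc}, \cref{lem:qc-deep-pen-infinite-intersect}, and \cref{thm:mmp09A6} all assume only $\gp$-quasiconvexity; the fullness hypothesis in the corollary is carried along from \cite[Lemma 6.8]{grovesHyperbolicGroupsActing2023} and is exploited only downstream (in the definition of $\Hcal$-filling used in \cref{thm:metacondition}), not in this statement.
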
 

\begin{proof}[Proof of \cref{thm:metacondition}]
    Let $\Gbb_r$ be the coned-off cusped space associated to $(G,\Pcal)$, where $r=\max\{ |p_i|, \kappa \} + 2(n+100)\delta$ and $\kappa$ is the constant from \cref{thm:closehoroball} and suppose that $\Gbb_r$ is $\delta$-hyperbolic. Then $\Gbb_r$ is $200\delta$-separated. Suppose that $K$ is the kernel of a filling that is long enough to satisfy the conclusion of \cref{thm:greendlinger} with these constants.

    Suppose that there is an element $g\in K$ which is of the form
    \begin{equation}
    \label{eq:proofmetacond}
        g=    p_1 t_1 \cdots p_n t_n,
    \end{equation}
    where $t_i\in H_i - ((K\cap H_i)S_i)$, and suppose that $g$ is chosen so that $d_X(1,g)$ is minimal among all such choices. Notice that $g\neq 1$ because of \cref{eq:metacond}. Then \cref{eq:proofmetacond} can be represented by a geodesic $(2n+1)$-gon in $\Gbb_r$. Let $x_0$ be the basepoint with depth $0$. Let $\g$ be the geodesic for $g$, $\rho_i$ be the geodesic for $p_i$ and $\tau_i$ for the geodesic for $t_i$.

    Since $g\neq 1$ and $d(x_0,c)\geq r >25\delta$, by \cref{thm:greendlinger} there exists a horoball $A=sP$ for some $s\in G$ and $P\in\Hcal$, an element $k$ that stabilizes $A$, and points $a,b\in \g$ at depth at least $r-30\delta$ such that $d(a,kb)\leq 5\delta$. The geodesic $(2n+1)$-gon is $(2n-1)\delta$-thin, so $b$ lies within distance $(2n-1)\delta$ of some point $b'$ on some side other than $\g$. By construction of $r$, $\rho_i$ do not go deep enough into any horoballs, hence $b'$ lies in some $\tau_i$. Because of the choice of $r$, $b'$ has depth at least $\kappa$.

    Notice that $\tau_i$ is a geodesic between $p_1t_1\cdots p_i x_0$ and $p_1t_1\cdots p_i t_i x_0$, two points inside $p_1t_1\cdots p_i H_i$. By \cref{thm:closehoroball}, $H_i^{p_1t_1\cdots p_i} \cap P^a $ is infinite, hence by assumption of $\Hcal$-filling we have $k\in N_P^a \subseteq H_i^{p_1t_1\cdots p_i} $.

    Let $k' = k ^{(p_1t_1\cdots p_i)^{-1}}$, and let $t_i' = k't_i$. Then $k'\in K\cap H_i$. Note that $kg = p_1t_1\cdots p_i t_i' p_{i+1} \cdots p_nt_n $. Since $t_i\notin (K\cap H_i)S_i$, we have that $t_i'\notin (K\cap H_i)S_i$. Therefore $kg$ is another element of the form \cref{eq:proofmetacond}. It also has a shorter length, contradicting the choice of $g$ as the shortest such:
    \begin{align*}
        l(kg) &\leq d(x_0, a) + d(a,kb) + d(kb, kgx_0) \\
        & \leq d(x_0, a) + 5\delta + d(b, gx_0) \\ &< l(\g).
    \end{align*}
\end{proof}

\subsection{Proof of Application}
\label{subsec:main-proof}

\begin{proof}[Proof of \cref{thm:main_cube}]
    The kernels of Dehn fillings are generated by parabolic elements which act elliptically by assumption. So $C/K$ is simply-connected. See \cite[Theorem 4.1]{grovesHyperbolicGroupsActing2023}.

    By \cref{thm:finite_index_qc}, $Q_i$ is $\gp$--quasiconvex for every $i$.  Notice that $Q_i$ has finite cosets in $G_i$. Let $T=\{t_1,\dots, t_s\}$ be a set of representatives of cosets of $Q_i$, let $I = \max \{ l(t_j)\}$. Then by \cref{thm:seperate}, for sufficiently long $\Qcal$-fillings, $\pi(t_j)\notin \pi(Q_i)$ for every $j$.     
    It then follows that $G_i\cap K\subseteq Q_i$. For if there exists some $g\in G_i\cap K$ such that $g\notin Q_i$, then $g=t_jh$ for some $1\leq j\leq s$ and $h\in Q_i$. Since $\pi(g)=1$,  $\pi(t_j)=\pi(h^{-1})\in \pi(Q_i)$, a contradiction. Hence by \cite[Proposition 4.3]{grovesHyperbolicGroupsActing2023} $C/K$ is a cube complex.
	
    It follows from \cref{thm:metacondition}, \cite[Theorem 5.6-5.11]{grovesHyperbolicGroupsActing2023}, and \cite[Theorem II.5.20]{bridsonMetricSpacesNonPositive1999} that for sufficiently long $\Hcal$-fillings, each link of each cell in $C/K$ is non-positively curved.
\end{proof}

    \bibliographystyle{alpha}
    \bibliography{../Library}
\end{document}